\newtheorem{theo}{Theorem}[section]
\newtheorem{coro}[theo]{Corollary}
\newtheorem{lemm}[theo]{Lemma}
\newtheorem{prop}[theo]{Proposition}
\newtheorem{rema}[theo]{Remark}
\newtheorem{defi}[theo]{Definition}
\renewcommand{\Im}{\operatorname{Im}}
\numberwithin{equation}{section}
\begin{document}

\title[Modal approximation for elastic quasiparticles in time-domain]{
Modal approximation for time-domain elastic scattering from metamaterial quasiparticles}

\author{Bochao Chen}
\address{School of
Mathematics and Statistics, Center for Mathematics and
Interdisciplinary Sciences, Northeast Normal University, Changchun, Jilin 130024, P.R.China. }
\email{chenbc758@nenu.edu.cn}

\author{Yixian Gao}
\address{School of
Mathematics and Statistics, Center for Mathematics and
Interdisciplinary Sciences, Northeast Normal University, Changchun, Jilin 130024, P.R.China. }
\email{gaoyx643@nenu.edu.cn}

\author{Hongyu Liu}
\address{Department of Mathematics, City University of Hong Kong, Kowloon, Hong Kong, China }
\email{hongyu.liuip@gmail.com, hongyliu@cityu.edu.hk}

\thanks{The research of BC was supported in part by  NSFC grants 11901232. The research of YG was  supported by NSFC grants 11871140, 12071065 and National Key R\&D Program of China 2020YFA0714102.
 The research
of HL was supported by Hong Kong RGC General Research Funds (project numbers,
11300821, 12301218 and 12302919) and the NSFC-RGC Joint Research Grant (project number,
 N\_CityU101/21). }

\subjclass[2010]{74J20, 35B34,35Q74}

\keywords{Viscoelastic scattering, Negative elastic metamaterials, Time domain, Polariton resonances, Neumann-Poincar\'{e} operator, Modal analysis}

\begin{abstract}
This paper aims at quantitatively understanding the elastic wave scattering due to negative metamaterial structures under wide-band signals in the time domain. Specifically, we establish the modal expansion for the time-dependent field scattered by metamaterial quasiparticles in elastodynamics. By Fourier transform, we first analyze the modal expansion in the time-harmonic regime. With the presence of quasiparticles, we validate such an expansion in the static regime via quantitatively analyzing the spectral properties of the Neumann-Poincar\'{e} operator associated with  the elastostatic system. We then approximate the incident field with a finite number of modes and apply perturbation theory to obtain such an expansion in the perturbative regime. In addition, we give polariton resonances as simple poles for the elastic system with non-zero frequency. Finally, we show that  the low-frequency part of the scattered field in the time domain can be well approximated by using the resonant modal expansion with sharp error estimates.

\end{abstract}

\maketitle


\section{Introduction}\label{sec:1}

Due to the rapid technological advancement, elastic metamaterials with negative bulk moduli and/or negative density have been realised via many means \cite{WLZ11,LHHS11,LLZ21}. Moreover, striking applications have been proposed and realised by using elastic metamaterial structures including super-focusing and superlensing \cite{ZLHSH14,C11, LGBLLPA} and invisibility cloaking \cite{LGBLLPA,AKKY17,DLL20b,LLL18,LL17,LL16,LLZ22}. The fundamental basis for those applications is the polariton resonances induced by the metamaterial structures (in the subwavelength scale), which have also received considerable attention in the mathematical literature \cite{DLL20a,DLL20b,LLL18}. We would also like to mention in passing some related mathematical studies on plasmon resonances for optical metamaterial structures; see \cite{ADM,ACKLM13,APRYZ19,BLLW20,DLZ21,DLZ22,LLL15,LLLW19,LL18,MN06,RS19} and a recent survey paper \cite{MM20} as well as the references cited therein.

In quantitatively understanding the metamaterial structures, most of the studies in the literature are devoted to the time-harmonic scattering, especially on the resonant frequencies and resonant fields. In contrast, the study of time-dependent wave propagation due to negative metamaterial structures is much less touched. However, it is of both practical and theoretical importance to quantitatively understand the performance of negative metamaterial structures under wide-band signals in the time domain. It is the aim of the present article to quantitatively study the time-dependent field scattered by metamaterial quasiparticles in elastodynamics. To that purpose, we shall adopt the modal analysis, which is a powerful tool to understand complex wave physics; see \cite{ramm1982mathematical} for a general presentation of resonance expansions. Notably, modal analysis for wave propagation in an unbounded domain \cite{popov1999distribution,popov1999resnances} as well as from negative metamaterial structures \cite{ammari2022modal,Baldassari2021modal,cassier2017spectrum,cassier2017mathematical} present significant challenges in either scenario, since many classical results from spectral theory do not apply, say e.g. the corresponding wave operator cannot be diagonalised with the classical spectral theorem. In our study, we consider the Kelvin-Voigt model for modelling viscoelastic materials  \cite{Oestreicher1951field,demchenko2019on} and assume the presence of a metamaterial structure with an asymptotically small characteristic size, which is referred to as a metamaterial quasiparticle. We establish a polariton resonance expansion with sharp error estimates for the low-frequency part of the elastic field scattered by the aforementioned elastic quasiparticle. Our study shares a similar spirit to the recent works \cite{ammari2022modal,Baldassari2021modal} which established the plasmon resonance expansion of the electromagnetic field scattered by nanoparticles with dispersive material parameters placed in a homogeneous medium in the low-frequency regime. However, it is remarked that the elastic waves in solid media where the coupled longitudinal and transverse waves bring challenging but richer wave phenomena. Indeed, in our modal analysis, a key wave operator, known as the static elastic Neumann-Poincar\'e operator, is non-compact in any geometric setup and is self-adjoint only in the radial geometry \cite{Deng2019spectral}. It is also pointed out that the non-static elastic Neumann-Poincar\'e operator is both non-compact and non-self-adjoint. This brings significant challenges in our spectral analysis and hence we shall mainly confine within the radial geometry. However, such a geometric simplification is both practically and theoretically unobjectionable. In fact, from a practical point of view, many metamaterial applications are based on collective behaviours of certain building elements which are usually of radial shape \cite{LLZ21}, and moreover from a theoretical point of view, even in the radial geometric setup, the modal analysis presents sufficient technicalities and subtleties. We also refer to Section~\ref{sec:conclusion} for more relevant discussion and remark.

Next, we introduce the mathematical setup and the elastic scattering problem for our study. Denote by $D$ an elastic quasiparticle in our study, which is of the form:
\begin{equation}\label{eq:D}
D=\delta B+\boldsymbol z,
\end{equation}
where $\delta\in\mathbb{R}_+$, the point $\boldsymbol z\in\mathbb{R}^3$ and $B$ is a unit ball containing the origin in $\mathbb{R}^3$. That is, $D$ is located at $\boldsymbol z$ with a characteristic size $\delta\ll1$. The elastic medium configuration in $\mathbb{R}^3$ is characterized by the Lam\'{e} parameters $\hat{\lambda}$ and $\hat{\mu}$:
\begin{equation*}\label{eq:lp1}
(\mathbb{R}^3; \hat\lambda, \hat\mu)=(D; \lambda_1, \mu_1)\cup (\mathbb{R}^3\backslash\overline{D}; \lambda, \mu).
\end{equation*}
Here  $(\lambda,\mu)$ denotes the Lam\'{e} parameter of the homogeneous background medium and satisfies the following strong convexity conditions:
\begin{align}\label{convexity}
\mathrm{(i)}~\mu>0 \qquad\text{and}\qquad\mathrm{(ii)}~3\lambda+2\mu>0.
\end{align}
It is assumed that $(\lambda_1, \mu_1)=-\alpha\cdot(\lambda, \mu)$ with $\alpha\in\mathbb{R}\setminus\{0\}$ to be more definitely specified in what follows. Introduce the isotropic elasticity tensor $\widehat{\mathds{C}}=(\widehat{C}_{ijkl})^3_{i,j,k,l=1}$ as follows:
\begin{align*}
\widehat{C}_{ijkl}:=\hat{\lambda} \boldsymbol\delta_{ij}\boldsymbol\delta_{kl}+\hat{\mu}(\boldsymbol\delta_{ik}\boldsymbol\delta_{jl}+\boldsymbol\delta_{il}\boldsymbol\delta_{jk}),
\end{align*}
where $\boldsymbol\delta$ signifies the Kronecker delta.

Denote by $\hat{\mathbf{u}}(\boldsymbol x,t)=(\hat{u}_i(\boldsymbol x, t))_{i=1}^3$  the elastic wave field with
$\boldsymbol x=(x_1, x_2, x_3)\in\mathbb{R}^3$ and $t\in\mathbb{R}$.
Let $\nabla^s$ denote the symmetric gradient, i.e.,
\begin{align*}
\nabla^s\hat{\mathbf{u}}:=\frac12(\nabla \hat{\mathbf{u}}+\nabla \hat{\mathbf{u}}^\top),
\end{align*}
where $\nabla \hat{\mathbf{u}}$ stands for the matrix $(\partial_{x_j}\hat{u}_i)_{i,j=1}^3$ and the superscript $\top$ signifies the matrix transport.
The elastostatic operator and the conormal derivative on $\partial D$ are, respectively, defined as
\begin{align*}
\mathcal{L}_{\hat{\lambda},\hat{\mu}}\hat{\mathbf{u}}:=&\nabla\cdot\widehat{\mathds{C}}\nabla^s \hat{\mathbf{u}}=\hat{\mathbf{\mu}}\Delta\hat{\mathbf{u}}+(\hat{\lambda}+\hat{\mu})\nabla\nabla\cdot\hat{\mathbf{u}}  && \text{in}~~\mathbb R^3,\\
\frac{\partial}{\partial{\boldsymbol{\nu}}}\hat{\mathbf{u}}:=&\hat{\lambda}(\nabla \cdot \hat{\mathbf{u}})\boldsymbol{\nu}+2\hat{\mu}(\nabla^s \hat{\mathbf{u}})\boldsymbol{\nu}&&\text{on}~~\partial D,
\end{align*}
where $\boldsymbol{\nu}$ signifies the exterior unit normal vector to the boundary $\partial D$.
We consider the following time-domain Lam\'{e} system:
\begin{align}\label{S:Lame1}
\left\{ \begin{aligned}
&\mathcal{L}_{\lambda_1,\mu_1}\hat{\mathbf{u}}(\boldsymbol x,t)
+\mathcal{L}_{\lambda_2,\mu_2}\partial_t\hat{\mathbf{u}}(\boldsymbol x,t)
-\partial^2_{t}\hat{\mathbf{u}}(\boldsymbol x,t)=0,&&\boldsymbol x\in D,\\
&\mathcal{L}_{\lambda,\mu}\hat{\mathbf{u}}(\boldsymbol x,t)-\partial^2_{t}\hat{\mathbf{u}}(\boldsymbol x,t)=\delta_{\boldsymbol x}(\mathbf{s})\hat{f}(t)\mathbf{p},&&\boldsymbol x\in\mathbb{R}^3\backslash\overline{D},\\
&\hat{\mathbf{u}}(\boldsymbol x,t)|_+=\hat{\mathbf{u}}(\boldsymbol x,t)|_-,&&\boldsymbol x\in \partial D,\\
&\frac{\partial \hat{\mathbf{u}}(\boldsymbol x,t)}{\partial \boldsymbol{\nu}}\big|_+=\frac{\partial \hat{\mathbf{u}}(\boldsymbol x,t)}{\partial \boldsymbol{\nu}}\big|_-,&&\boldsymbol x\in\partial D,\\
&\hat{\mathbf{u}}(\boldsymbol x,t) \text{ satisfies the radiation condition},
\end{aligned}\right.
\end{align}
where $(\lambda_2, \mu_2)$ is the viscoelastic counterpart (``viscosity") of the medium in the quasiparticle $D$, $\mathbf p=(p_j)_{j=1}^3\in\mathbb{R}^3$ is a polarisation vector, and $\pm$ stand for the traces on $\partial D$ taken from outside and inside of the domain $D$, respectively. The source intensity
$\hat{f}:t\mapsto \hat{f}(t)\in C^{\infty}_0([0,C_1])$ with $C_1>0$ represents a wide-band signal and $\mathbf{s}\in\mathbb{R}^3$ is the source location. The viscosity $(\lambda_2, \mu_2)$ characterises the loss or radiation inside the medium, and we assume that $(\lambda_2, \mu_2)=\beta\cdot(\lambda,\mu)$ with $\beta\in\mathbb{R}\setminus\{0\}$ to be more definitely specified in what follows.
 The first equation in \eqref{S:Lame1} is known as the Kelvin-Voigt model which describes the viscoelastic wave propagation (cf. \cite{Oestreicher1951field,demchenko2019on}). The radiation condition in \eqref{S:Lame1} designates the following
two asymptotic relations as $|\boldsymbol x|\rightarrow+\infty$:
 \begin{equation*}
 \begin{split}
&(\nabla\times\nabla\times\hat{\mathbf{u}})(\boldsymbol x, t)\times\frac{\boldsymbol x}{|\boldsymbol x|}-\frac{1}{c_s}\nabla\times\partial_t\hat{\mathbf{u}}(\boldsymbol x, t)=\mathcal{O}(|\boldsymbol x|^{-2}),\\
&\frac{\boldsymbol x}{|\boldsymbol x|}\cdot (\nabla(\nabla\cdot\hat{\mathbf{u}}))(\boldsymbol x, t)-\frac{1}{c_p}\nabla\partial_t\hat{\mathbf{u}}(\boldsymbol x, t)=\mathcal{O}(|\boldsymbol x|^{-2}),
\end{split}
\end{equation*}
where
\begin{align}\label{E:speed}
c_s:=\sqrt{\mu},\quad c_p:=\sqrt{\lambda+2\mu},
\end{align}
signify the shear and compressional velocities of the elastic wave propagation, respectively.

%

As an intermediate step to analysing the scattering behaviour of \eqref{S:Lame1}, we need convert it into the frequency domain via Fourier analysis. To that end, we introduce the temporal Fourier transform and inverse Fourier transform as follows:
\begin{align*}
&(\mathcal{F}\hat{\mathbf{u}})(\boldsymbol x,\omega):=\frac{1}{2\pi}\int^{\infty}_{\infty}\hat{\mathbf{u}}(\boldsymbol x,t)e^{\mathrm{i}\omega t}\mathrm{d}t,\\
&(\mathcal{F}^{-1}\mathbf{u})(\boldsymbol x,t):=\int^{\infty}_{\infty}\mathbf{u}(\boldsymbol x,\omega)e^{-\mathrm{i}\omega t}\mathrm{d}\omega,
\end{align*}
where $\mathrm{i}:=\sqrt{-1}$ is the imaginary unit. Assume that $\hat{\mathbf{u}}(\boldsymbol x,\cdot)\in L^2(\mathbb{R})^3$ for fixed $\boldsymbol x$, and that the wave and its first derivative with respect to time decay sufficiently fast as time tends to infinity, namely, $\hat{\mathbf{u}}(\boldsymbol x,\cdot)\rightarrow0$ and $\frac{\partial}{\partial t}\hat{\mathbf{u}}(\boldsymbol x,\cdot)\rightarrow0$ as $|t|\rightarrow\infty$. Denote by $\mathbf{u}$ and $f$ the Fourier transforms of $\hat{\mathbf u}$ and $\hat{f}$, respectively. By applying temporal Fourier transform to \eqref{S:Lame1}, we obtain the following time-harmonic Lam\'{e} system:
\begin{align}\label{SS:lame}
\left\{ \begin{aligned}
&\mathcal{L}_{\tilde{\lambda},\tilde{\mu}}\mathbf{u}(\boldsymbol x, \omega)+\omega^2\mathbf{u}(\boldsymbol x, \omega)=0,&&\boldsymbol x\in D,\\
&\mathcal{L}_{\tilde{\lambda},\tilde{\mu}}\mathbf{u}(\boldsymbol x, \omega)+\omega^2\mathbf{u}(\boldsymbol x, \omega)=\delta_{\boldsymbol x}(\mathbf{s}){f}(\omega)\mathbf{p},&&\boldsymbol x\in\mathbb{R}^3\backslash\overline{D},\\
&\mathbf{u}(\boldsymbol x,\omega)|_+=\mathbf{u}(\boldsymbol x,\omega)|_-,&&\boldsymbol x\in \partial D,\\
&\frac{\partial \mathbf{u}(\boldsymbol x, \omega)}{\partial \boldsymbol{\nu}}\big|_+=\frac{\partial \mathbf{u}(\boldsymbol x, \omega)}{\partial \boldsymbol{\nu}}\big|_-,&&\boldsymbol x\in\partial D,\\
&\mathbf{u}(\boldsymbol x, \omega) \text{ satisfies the radiation condition}.
\end{aligned}\right.
\end{align}
In the following, for brevity, we no longer write the dependence of $\omega$ for the
 function $\mathbf{u}$. Here,  $(\tilde{\lambda},\tilde{\mu})=a(\boldsymbol x, \omega)(\lambda,\mu)$ with $(\lambda,\mu)$ satisfying conditions \eqref{convexity} and
\begin{align}\label{E:A}
a(\boldsymbol x, \omega)=
\left\{ \begin{aligned}
&c(\omega), \quad&&\boldsymbol x\in D,\\
&1, \quad&&\boldsymbol x\in\mathbb{R}^3\backslash\overline{D},
\end{aligned}\right.
\end{align}
where
\begin{align}\label{c:omega}
c(\omega)=-\alpha+\mathrm{i}\beta\omega.
\end{align}
If $\omega$ is in a bounded interval, then there exists some constant $C>0$ such that
\begin{align*}
 |c(\omega)|\leq C, \quad |\frac{1}{\sqrt{c(\omega)}}|\leq C.
 \end{align*}
The radiation condition in \eqref{SS:lame} designates the following asymptotic relations as $|\boldsymbol x|\rightarrow+\infty$,
\begin{align*}
&(\nabla\times\nabla\times\mathbf{u})(\boldsymbol x)\times\frac{\boldsymbol x}{|\boldsymbol x|}+\mathrm{i}k_s\nabla\times\mathbf{u}(\boldsymbol x)=\mathcal{O}(|\boldsymbol x|^{-2}),\\
&\frac{\boldsymbol x}{|\boldsymbol x|}\cdot (\nabla(\nabla\cdot\mathbf{u}))(\boldsymbol x)+\mathrm{i}k_p\nabla\mathbf{u}(\boldsymbol x)=\mathcal{O}(|\boldsymbol x|^{-2}),
\end{align*}
where  $k_s$ and $k_p$ are, respectively, $s$-wavenumber and $p$-wavenumber with
\begin{align*}
k_s=\frac{\omega}{c_s},\quad k_p=\frac{\omega}{c_p}.
\end{align*}
Our goal of this paper is to give the modal decomposition of the scattered field of system \eqref{SS:lame} in the frequency
domain, and then to establish a resonance expansion for the low-frequency part of the scattered field of system
\eqref{S:Lame1} in the time domain.

The paper is organized as follows. Section \ref{sec:2} is devoted to preliminary knowledge of the layer potentials. Using layer potential techniques, we reduce the time-harmonic problem to a singularly perturbed system of integral equations.  In Section \ref{sec:3}, we analyze the spectral properties of the Neumann-Poincar\'{e} operator associated with  the elastostatic system and
establish  the corresponding modal expansion. We then approximate the incident field with a finite number of modes. Thanks to the spectral perturbation theory, we obtain the modal expansion of the field scattered by an elastic quasiparticle in the frequency domain. The purpose of Section \ref{sec:4} is to give a polariton resonance expansion for the low-frequency part of the field scattered by  the elastic quasiparticle in the time domain. The paper is concluded in Section~\ref{sec:conclusion} with relevant remarks.

\section{Mathematical setup}\label{sec:2}

In this section, we focus on applying the potential theory to derive the integral representation of the solution
to the Lam\'{e} system \eqref{SS:lame}.

First, we introduce the potential operators for the following Lam\'{e} system
\begin{align*}
\mathcal{L}_{{\lambda},{\mu}}\mathbf{u}(\boldsymbol x)+\omega^2\mathbf{u}(\boldsymbol x)=0.
\end{align*}
The corresponding fundamental solution is the Kupradze matrix $\boldsymbol\Gamma^{\omega}=(\Gamma^{\omega}_{ij})^{3}_{i,j=1}$ with $\omega\neq0$ given by \begin{align}\label{Fundamental2}
\boldsymbol\Gamma^{\omega}(\boldsymbol x) =-\frac{e^{\mathrm{i}\frac{\omega}{c_s}|\boldsymbol x|}}{4\pi\mu|\boldsymbol x|}\boldsymbol{\mathcal{I}}+\frac{1}{4\pi\omega^2}\nabla\nabla\big(\frac{e^{\mathrm{i}\frac{\omega}{c_p}|\boldsymbol x|}
-e^{\mathrm{i}\frac{\omega}{c_s}|\boldsymbol x|}}{|\boldsymbol x|}\big),
\end{align}
where $\boldsymbol{\mathcal{I}}$ is the $3\times 3$ identity matrix. For $\omega=0$, the fundamental solution to the elastostatic system is the Kelvin matrix $\boldsymbol\Gamma^{0}=(\Gamma^{0}_{ij})^{3}_{i,j=1}$ defined by
\begin{align*}
 \boldsymbol\Gamma^{0}(\boldsymbol x)=-\frac{\gamma_1}{4\pi}\frac{1}{|\boldsymbol x|}\boldsymbol{\mathcal{I}}-\frac{\gamma_2}{4\pi}\frac{\boldsymbol x {\boldsymbol x}^\top}{|\boldsymbol x|^3},
\end{align*}
 where 
\begin{align*}
\gamma_1=\frac{1}{2}\big(\frac{1}{c^2_s}+\frac{1}{c^2_p}\big),\quad \gamma_2=\frac{1}{2}\big(\frac{1}{c^2_s}-\frac{1}{c^2_p}\big).
\end{align*}
Let $\boldsymbol\Gamma^{\omega}(\boldsymbol x,\boldsymbol y):=\boldsymbol\Gamma^{\omega}(\boldsymbol x-\boldsymbol y)$.
For $\boldsymbol\varphi\in L^{2}(\partial D)^3$, we define, respectively, the single layer potential associated with the fundamental solution $\boldsymbol\Gamma^{\omega}$ by
\begin{align*}
\mathbf{S}^{\omega}_{\partial D}[\boldsymbol\varphi](\boldsymbol x)=&\int_{\partial D}\boldsymbol\Gamma^{\omega}(\boldsymbol x,\boldsymbol y)\boldsymbol\varphi(\boldsymbol y)\mathrm{d}\sigma(\boldsymbol y),\quad \boldsymbol x\in\mathbb{R}^3,
\end{align*}
and the Neumann-Poincar\'{e} operator associated with the fundamental solution $\boldsymbol\Gamma^{\omega}$ by
\begin{align*}
\mathbf{K}^{\omega,*}_{\partial D}[\boldsymbol\varphi](\boldsymbol x)=&\mathrm{p.v.}\int_{\partial D} \frac{\partial}{\partial\boldsymbol{\nu}_{\boldsymbol x}}\boldsymbol\Gamma^{\omega}(\boldsymbol x,\boldsymbol y)\boldsymbol\varphi(\boldsymbol y)\mathrm{d}\sigma(\boldsymbol y),\quad \boldsymbol x\in\partial{D},
\end{align*}
where $\mathrm{p.v.}$ represents the Cauchy principle value. The conormal derivative of the single layer potential enjoys the jump formula
\begin{align}\label{Jump2}
\frac{\partial}{\partial\boldsymbol{\nu}}\mathbf{S}^{\omega}_{\partial D}[\boldsymbol\varphi]\big|_{\pm}(\boldsymbol x)=\big(\pm\frac{1}{2}\mathbf{I}+\mathbf{K}^{\omega,*}_{\partial D}\big)[\boldsymbol\varphi](\boldsymbol x),\quad \boldsymbol x\in\partial{D},
\end{align}
where $\mathbf{I}$ is the identity operator. Moreover,  the adjoint of the operator $\mathbf{K}^{\omega,*}_{\partial D}[\boldsymbol\varphi](\boldsymbol x)$ is defined by
\begin{align*}
\mathbf{K}^{\omega}_{\partial D}[\boldsymbol\varphi](\boldsymbol x)=\mathrm{p.v.}\int_{\partial D} \frac{\partial}{\partial\boldsymbol{\nu}_{\boldsymbol y}}\boldsymbol\Gamma^{\omega}(\boldsymbol x,\boldsymbol y)\boldsymbol\varphi(\boldsymbol y)\mathrm{d}\sigma(\boldsymbol y),\quad \boldsymbol x\in\partial D.
\end{align*}
For brevity,  we denote  the  elastostatic operators by  $ \mathbf{S}^{0}_{\partial D},\mathbf{K}^{0,*}_{\partial D},\mathbf{K}^{0}_{\partial D}$ by $\mathbf{S}_{\partial D},\mathbf{K}^{*}_{\partial D},\mathbf{K}_{\partial D}$.

The following lemma corresponds to a fundamental result associated with the Neumann-Poincar\'{e} operator as seen in \cite{chang2007spectral,Dahlbrg1988boundary,Irina1999spectral}.
\begin{lemm}\label{le:known}
One has

$\mathrm{(i)}$ $\mathbf{K}_{\partial D}$ and $\mathbf{K}^*_{\partial D}$ are bounded on $L^{2}(\partial D)^3$.

$\mathrm{(ii)}$ The spectrum of $\mathbf{K}^{*}_{\partial D}$ on $L^{2}(\partial D)^3$ lies in $(-1/2,1/2]$.

$\mathrm{(iii)}$  $\mathbf{S}_{\partial D}$ as an operator defined on $\partial D$ is bounded from $L^{2}(\partial D)^3$ into $L^{2}(\partial D)^3$.

$\mathrm{(iv)}$ $\mathbf{S}_{\partial D}:L^{2}(\partial D)^3\rightarrow L^{2}(\partial D)^3$ is invertible in three dimensions.
\end{lemm}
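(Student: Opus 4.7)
The plan is to treat the four assertions as classical consequences of layer potential theory for the Lam\'e system and to organize the argument into three blocks: (I) boundedness (i) and (iii), (II) the spectral bound (ii), and (III) the invertibility (iv). For the boundedness statements, I would observe that the kernel of $\mathbf{S}_{\partial D}$ has the Calder\'on--Zygmund singularity $|\boldsymbol x-\boldsymbol y|^{-1}$ while the kernels of $\mathbf{K}_{\partial D}$ and $\mathbf{K}^{*}_{\partial D}$ are of principal-value type with behavior $|\boldsymbol x-\boldsymbol y|^{-2}$. Since $\partial D=\delta\partial B+\boldsymbol z$ is $C^{\infty}$, the $L^{2}$-boundedness follows either from a Schur test combined with tangential differentiation for the $\mathbf{K}$-type operators, or more directly from the Coifman--McIntosh--Meyer theorem; this reproduces the arguments of \cite{Dahlbrg1988boundary,Irina1999spectral}.

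For the spectral bound (ii), I would follow the Plemelj--Verchota--Dahlberg--Kenig strategy adapted to elastostatics. Concretely, I would set $\mathbf{u}:=\mathbf{S}_{\partial D}[\boldsymbol\varphi]$ for a putative eigenfunction $\boldsymbol\varphi$ of $\mathbf{K}^{*}_{\partial D}$ with eigenvalue $\kappa$, use the jump formula \eqref{Jump2} to compute the conormal traces of $\mathbf{u}$ from both sides of $\partial D$, and then apply Betti's integration-by-parts (the elastic Rellich identity) on $D$ and on $\mathbb{R}^{3}\setminus\overline D$ to convert the boundary pairings $\langle\partial\mathbf{u}/\partial\boldsymbol\nu|_{\pm},\mathbf{u}\rangle_{\partial D}$ into positive multiples of the elastic energies $\int\widehat{\mathds{C}}\nabla^{s}\mathbf{u}:\nabla^{s}\mathbf{u}\,d\boldsymbol x$. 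The strong convexity condition \eqref{convexity} guarantees coercivity of these energies, and combining the two identities together with the jump relation forces $|\kappa|\leq 1/2$; separately, one rules out $\kappa=-1/2$ by analysing the exterior Dirichlet problem, while $\kappa=1/2$ survives as an eigenvalue of finite multiplicity corresponding to the kernel of $-\tfrac12\mathbf I+\mathbf K^{*}_{\partial D}$ (rigid motions).

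For (iv), I would prove injectivity and surjectivity separately. If $\mathbf{S}_{\partial D}[\boldsymbol\varphi]=0$ on $\partial D$, then $\mathbf{u}=\mathbf{S}_{\partial D}[\boldsymbol\varphi]$ solves the interior elastostatic Dirichlet problem with zero data, so $\mathbf{u}\equiv 0$ in $D$ by energy uniqueness; in the exterior, the decay of the Kelvin matrix gives $\mathbf{u}(\boldsymbol x)=\mathcal O(|\boldsymbol x|^{-1})$ as $|\boldsymbol x|\to\infty$, and uniqueness of the exterior Dirichlet problem in $\mathbb R^{3}$ in this decay class — which crucially holds in dimension three — yields $\mathbf{u}\equiv 0$ in $\mathbb{R}^{3}\setminus\overline{D}$. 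The jump formula \eqref{Jump2} then forces $\boldsymbol\varphi=(\tfrac12\mathbf I+\mathbf K^{*}_{\partial D})\boldsymbol\varphi-(-\tfrac12\mathbf I+\mathbf K^{*}_{\partial D})\boldsymbol\varphi=0$. Surjectivity follows from the Fredholm alternative since, by (ii), $\mathbf{S}_{\partial D}$ is Fredholm of index zero on $L^{2}(\partial D)^{3}$; injectivity therefore gives bijectivity, and the bounded inverse theorem provides the continuous inverse.

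The main obstacle is clearly (ii): unlike the scalar Laplace case, the elastic Neumann--Poincar\'e operator is neither compact nor self-adjoint on $L^{2}(\partial D)^{3}$, so one cannot simply invoke Plemelj's symmetrisation principle, and the spectral localisation must instead be extracted from the Rellich--Betti energy identity together with careful endpoint analysis at $\pm 1/2$. The remaining items (i), (iii), (iv) are then routine given (ii), and the whole proof amounts to transcribing the arguments contained in \cite{chang2007spectral,Dahlbrg1988boundary,Irina1999spectral} to the present smooth spherical geometry.
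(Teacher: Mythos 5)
The paper does not actually prove this lemma: it is quoted as a known fact from \cite{chang2007spectral,Dahlbrg1988boundary,Irina1999spectral}, so what you are really doing is reconstructing the cited arguments. Your treatment of (i), (iii) and of the injectivity half of (iv) is consistent with that literature. However, two of your steps would fail as written. For (ii), testing a \emph{putative eigenfunction} against the Rellich--Betti identity only excludes eigenvalues outside $(-1/2,1/2]$; since $\mathbf{K}^{*}_{\partial D}$ is non-compact (and non-self-adjoint for general $\partial D$), its spectrum is not exhausted by eigenvalues, so eigenvalue exclusion does not localise the spectrum. What the cited proofs actually provide are \emph{quantitative} Rellich estimates yielding lower bounds of the form $\|(\lambda\mathbf{I}-\mathbf{K}^{*}_{\partial D})[\boldsymbol\varphi]\|_{L^2}\geq c\,\|\boldsymbol\varphi\|_{L^2}$ together with the dual estimate (hence invertibility of $\lambda\mathbf{I}-\mathbf{K}^{*}_{\partial D}$) for $\lambda\notin(-1/2,1/2]$; alternatively, for the spherical geometry actually used in this paper one can read the spectrum off the explicit diagonalisation of Theorem~\ref{spectrum}, whose eigenvalues accumulate only at $0$ and $\pm\kappa_0$ with $\kappa_0<1/2$. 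Also, the endpoint $-1/2$ is removed by the exterior problem with vanishing \emph{conormal} (Neumann) data for $\mathbf{S}_{\partial D}[\boldsymbol\varphi]$, not the exterior Dirichlet problem.

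For (iv), the surjectivity step is a genuine error: the assertion that ``by (ii), $\mathbf{S}_{\partial D}$ is Fredholm of index zero on $L^{2}$'' is a non sequitur ((ii) concerns $\mathbf{K}^{*}_{\partial D}$, not $\mathbf{S}_{\partial D}$) and is in fact false, because the kernel of $\mathbf{S}_{\partial D}$ is weakly singular ($|\boldsymbol x-\boldsymbol y|^{-1}$ on a two-dimensional surface), so $\mathbf{S}_{\partial D}$ is \emph{compact} on $L^{2}(\partial D)^{3}$ and can never have a bounded inverse there. The statement one can (and should) prove — and the one the paper implicitly uses when forming $\mathbf{S}^{-1}_{\partial D}\mathbf{R}_{\partial D}$, $\mathbf{S}^{-1}_{\partial B}\tilde{\mathbf F}_1$, etc. — is invertibility of $\mathbf{S}_{\partial D}:L^{2}(\partial D)^{3}\rightarrow H^{1}(\partial D)^{3}$ in three dimensions (equivalently $H^{-1/2}\rightarrow H^{1/2}$ plus elliptic regularity), which follows from the coercivity/negative-definiteness of the elastostatic single layer potential, or, on the sphere, from its explicit action on the basis $(\tilde{\mathbf T}^m_n,\tilde{\mathbf M}^m_n,\tilde{\mathbf N}^m_n)$. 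Your injectivity argument carries over verbatim to that setting; the surjectivity must target $H^{1}(\partial D)^{3}$, not $L^{2}(\partial D)^{3}$, and cannot be obtained from a Fredholm alternative on $L^{2}$.
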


We emphasize that $\mathbf{K}^*_{\partial D}$ is not self-adjoint in general. In particular, $\mathbf{K}^*_{\partial D}$ is self-adjoint if $\partial D$ is a sphere. Moreover, we compute the following asymptotic expansions of the single layer potential and  the Neumann-Poincar\'{e} operator. The following lemma addresses  the series expansion for the fundamental solution.

\begin{lemm}\label{le:series}
If $\omega\ll1$,  then for $\boldsymbol x\in\mathbb{R}^3$,
\begin{align}\label{E-Fundamental2}
\boldsymbol\Gamma^{\omega}(\boldsymbol x)=&-\frac{1}{4\pi}\sum^{+\infty}_{j=0}\frac{\mathrm{i}^j}{(j+2)j!}\big(\frac{j+1}{c_s^{j+2}}+\frac{1}{c_p^{j+2}}\big)\omega^{j}|\boldsymbol x|^{j-1}\boldsymbol{\mathcal{I}}\nonumber\\
&+\frac{1}{4\pi}\sum^{+\infty}_{j=0}\frac{\mathrm{i}^j(j-1)}{(j+2)j!}\big(\frac{1}{c_s^{j+2}}-\frac{1}{c_p^{j+2}}\big)\omega^j|\boldsymbol x|^{j-3}\boldsymbol x\boldsymbol x^\top.
\end{align}
\end{lemm}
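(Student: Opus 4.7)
The plan is to expand the two exponentials in the Kupradze matrix into their Taylor series in $\omega$ and then match powers of $\omega$ after applying the differential operator $\nabla\nabla$. Since $\mu=c_s^2$ by \eqref{E:speed}, the first summand becomes
\begin{align*}
-\frac{e^{\mathrm{i}\omega|\boldsymbol x|/c_s}}{4\pi\mu|\boldsymbol x|}\boldsymbol{\mathcal{I}}
=-\frac{1}{4\pi}\sum_{j=0}^{\infty}\frac{\mathrm{i}^j}{j!\,c_s^{j+2}}\,\omega^j|\boldsymbol x|^{j-1}\boldsymbol{\mathcal{I}},
\end{align*}
which already has the correct $\omega^j|\boldsymbol x|^{j-1}$ structure appearing in the claimed expansion.

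For the second summand, I would first expand
\begin{align*}
\frac{e^{\mathrm{i}\omega|\boldsymbol x|/c_p}-e^{\mathrm{i}\omega|\boldsymbol x|/c_s}}{|\boldsymbol x|}
=\sum_{n=0}^{\infty}\frac{\mathrm{i}^n\omega^n}{n!}\Big(\frac{1}{c_p^n}-\frac{1}{c_s^n}\Big)|\boldsymbol x|^{n-1}.
\end{align*}
The terms $n=0$ and $n=1$ contribute $0$ and a constant in $\boldsymbol x$ respectively, both of which are annihilated by $\nabla\nabla$. Hence the prefactor $\omega^{-2}$ is absorbed cleanly by the index shift $j=n-2$, yielding
\begin{align*}
\frac{1}{4\pi\omega^{2}}\nabla\nabla\Big(\frac{e^{\mathrm{i}\omega|\boldsymbol x|/c_p}-e^{\mathrm{i}\omega|\boldsymbol x|/c_s}}{|\boldsymbol x|}\Big)
=\frac{1}{4\pi}\sum_{j=0}^{\infty}\frac{\mathrm{i}^j\,\omega^j}{(j+2)!}\Big(\frac{1}{c_s^{j+2}}-\frac{1}{c_p^{j+2}}\Big)\nabla\nabla|\boldsymbol x|^{j+1},
\end{align*}
where I used $\mathrm{i}^{j+2}=-\mathrm{i}^j$ to flip the sign of $(c_p^{-(j+2)}-c_s^{-(j+2)})$.

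Next I would invoke the elementary Hessian identity
\begin{align*}
\nabla\nabla\,|\boldsymbol x|^{k}=k(k-2)|\boldsymbol x|^{k-4}\boldsymbol x\boldsymbol x^{\top}+k|\boldsymbol x|^{k-2}\boldsymbol{\mathcal{I}},
\end{align*}
applied with $k=j+1$; this produces the two structural tensors $\boldsymbol{\mathcal{I}}$ and $\boldsymbol x\boldsymbol x^{\top}$ with the correct powers $|\boldsymbol x|^{j-1}$ and $|\boldsymbol x|^{j-3}$. Using $(j+1)/(j+2)!=1/\bigl((j+2)\,j!\bigr)$, the $\boldsymbol x\boldsymbol x^{\top}$ part directly matches the claimed second series via its factor $(j-1)$. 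For the $\boldsymbol{\mathcal{I}}$ part, I then combine the contribution with the expansion of the first summand; the coefficient of $\omega^j|\boldsymbol x|^{j-1}\boldsymbol{\mathcal{I}}\,\mathrm{i}^j/(4\pi)$ becomes
\begin{align*}
-\frac{1}{j!\,c_s^{j+2}}+\frac{1}{(j+2)\,j!}\Big(\frac{1}{c_s^{j+2}}-\frac{1}{c_p^{j+2}}\Big)
=-\frac{1}{(j+2)\,j!}\Big(\frac{j+1}{c_s^{j+2}}+\frac{1}{c_p^{j+2}}\Big),
\end{align*}
which is exactly what is claimed. Convergence of each of the two series for $|\omega|\ll 1$ follows by comparison with $e^{|\omega||\boldsymbol x|/\min(c_s,c_p)}$, so termwise manipulations are legitimate.

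The routine part is the exponential expansion and the Hessian identity; the only point that requires a bit of care is the cancellation of the $n=0,1$ contributions in the mixed term, which is what renders the $\omega^{-2}\nabla\nabla$ factor harmless and sets up the correct index shift. The final algebraic recombination of the $\boldsymbol{\mathcal{I}}$ coefficient (where contributions from both the $c_s$-exponential and the diagonal part of $\nabla\nabla|\boldsymbol x|^{j+1}$ meet) is the place where a bookkeeping error is most likely; I would double-check it by matching the leading orders $j=0$ and $j=1$ against the known behavior of the elastostatic Kelvin matrix $\boldsymbol\Gamma^0$.
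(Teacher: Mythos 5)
Your proposal is correct and follows essentially the same route as the paper, which simply Taylor-expands the Kupradze matrix \eqref{Fundamental2} in $\omega$; your detailed bookkeeping (the vanishing of the $n=0,1$ terms, the Hessian identity, and the recombination of the $\boldsymbol{\mathcal{I}}$ coefficients) checks out and reproduces \eqref{E-Fundamental2} exactly.
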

\begin{proof}
Formula \eqref{E-Fundamental2} follows from \eqref{Fundamental2} and Taylor series expansion with respect to $\omega$.
\end{proof}

In view of Lemma \ref{le:series}, we can obtain the following Lemmas \ref{le:S-series} and \ref{le:K-series}.
\begin{lemm}\label{le:S-series}
Let $\boldsymbol\varphi\in L^{2}(\partial D)^3$. For $\omega\ll1$, one has the following asymptotic expansion in three dimensions:
\begin{align}\label{ES-series}
\mathbf{S}^{\omega}_{\partial D}=\mathbf{S}_{\partial D}+\omega \mathbf{R}_{\partial D}+\omega^2\mathbf{P}_{\partial D}+\mathcal{O}(\omega^3),
\end{align}
where
\begin{align}\label{ri}
\mathbf{R}_{\partial D}[\boldsymbol\varphi](\boldsymbol x)=\gamma_3\int_{\partial D}\boldsymbol\varphi(\boldsymbol x)\mathrm{d}\sigma(\boldsymbol x),\quad
\mathbf{P}_{\partial D}[\boldsymbol\varphi](\boldsymbol x)=\int_{\partial D}\boldsymbol\Lambda(\boldsymbol x-\boldsymbol y)\boldsymbol\varphi(\boldsymbol y)\mathrm{d}\sigma(\boldsymbol y)
\end{align}
with
\begin{align}\label{E:lambda}
\gamma_3=\frac{-\mathrm{i}}{12\pi}\big(\frac{2}{c_s^{3}}+\frac{1}{c_p^{3}}\big),\quad\boldsymbol\Lambda(\boldsymbol x)=\frac{1}{16\pi}\gamma_4|\boldsymbol x|\boldsymbol{\mathcal{I}}-\frac{1}{16\pi}\gamma_5\frac{\boldsymbol x\boldsymbol x^\top}{|\boldsymbol x|},
\end{align}
where 
\begin{align*}
\gamma_4=\frac{1}{2}\big(\frac{3}{c_s^4}+\frac{1}{c^4_p}\big),\quad\gamma_5=\frac{1}{2}\big(\frac{1}{c^4_s}-\frac{1}{c^4_p}\big).
\end{align*}
\end{lemm}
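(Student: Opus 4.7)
The plan is to insert the series expansion of the Kupradze matrix from Lemma~\ref{le:series} into the defining integral of $\mathbf{S}^{\omega}_{\partial D}[\boldsymbol\varphi]$ and match the contributions at $j=0,1,2$ against $\mathbf{S}_{\partial D}$, $\omega\mathbf{R}_{\partial D}$, and $\omega^{2}\mathbf{P}_{\partial D}$ respectively, leaving a tail $\sum_{j\geq 3}$ to be bounded as an operator on $L^{2}(\partial D)^{3}$ of norm $\mathcal{O}(\omega^{3})$.

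At $j=0$ the two sums in \eqref{E-Fundamental2} collapse to $-\frac{\gamma_{1}}{4\pi}|\boldsymbol x-\boldsymbol y|^{-1}\boldsymbol{\mathcal{I}}$ and $-\frac{\gamma_{2}}{4\pi}\frac{(\boldsymbol x-\boldsymbol y)(\boldsymbol x-\boldsymbol y)^{\top}}{|\boldsymbol x-\boldsymbol y|^{3}}$, which is precisely the Kelvin kernel $\boldsymbol\Gamma^{0}(\boldsymbol x,\boldsymbol y)$, so this level reproduces $\mathbf{S}_{\partial D}$. At $j=1$ the second sum is killed by the $(j-1)$ prefactor while the first sum contributes the constant matrix-valued kernel $\omega\gamma_{3}\boldsymbol{\mathcal{I}}$ (independent of $\boldsymbol x-\boldsymbol y$), whose integral against $\boldsymbol\varphi$ is exactly $\omega\mathbf{R}_{\partial D}$ as defined in \eqref{ri}. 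At $j=2$ the two sums produce respectively $\frac{\omega^{2}\gamma_{4}}{16\pi}|\boldsymbol x-\boldsymbol y|\boldsymbol{\mathcal{I}}$ and $-\frac{\omega^{2}\gamma_{5}}{16\pi}\frac{(\boldsymbol x-\boldsymbol y)(\boldsymbol x-\boldsymbol y)^{\top}}{|\boldsymbol x-\boldsymbol y|}$, whose sum is precisely $\omega^{2}\boldsymbol\Lambda(\boldsymbol x-\boldsymbol y)$, giving $\omega^{2}\mathbf{P}_{\partial D}$. This bookkeeping is where one must be careful to recover the coefficients $\gamma_{3},\gamma_{4},\gamma_{5}$ in their exact form from the factors $\mathrm{i}^{j}/((j+2)j!)$ in \eqref{E-Fundamental2}.

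For the remainder, since $\partial D$ is compact I would observe that $|\boldsymbol x-\boldsymbol y|\leq\operatorname{diam}(D)$ on $\partial D\times\partial D$, so for each $j\geq 3$ both summands in \eqref{E-Fundamental2} are continuous bounded matrix kernels on $\partial D\times\partial D$ of pointwise size at most $C^{j}\omega^{j}/((j+2)j!)\cdot\operatorname{diam}(D)^{j-1}$ up to constants depending only on $c_{s},c_{p}$. The $j!$ denominator makes the induced series of operators converge absolutely in $L^{2}\to L^{2}$ operator norm, for instance by a termwise Schur or Hilbert--Schmidt bound on the bounded surface $\partial D$, and the lowest-order remaining term $j=3$ then yields the claimed $\mathcal{O}(\omega^{3})$ estimate. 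I expect this uniform tail bound to be the only genuine (and still rather mild) technical point; apart from this, the proof is essentially a bookkeeping exercise on top of Lemma~\ref{le:series}.
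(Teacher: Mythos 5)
Your proposal is correct and follows exactly the route the paper intends: the paper gives no separate proof of Lemma~\ref{le:S-series} beyond citing Lemma~\ref{le:series}, i.e.\ substituting the expansion \eqref{E-Fundamental2} into the single layer potential and reading off the $j=0,1,2$ kernels, which is precisely your bookkeeping (and your coefficient checks for $\gamma_3,\gamma_4,\gamma_5$ are right). Your added tail estimate for $j\ge 3$ via boundedness of the kernels on the compact surface and the factorial decay is a mild but welcome supplement that the paper leaves implicit.
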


\begin{lemm}\label{le:K-series}
Let $\boldsymbol\varphi\in L^{2}(\partial D)^3$. For $\omega\ll1$, one obtains the following asymptotic expansion in three dimensions:
\begin{align*}
\mathbf{K}^{\omega,*}_{\partial D}=\mathbf{K}^*_{\partial D}+\omega^2\mathbf{Q}_{\partial D}+\mathcal{O}(\omega^3)
\end{align*}
with
\begin{align*}
\mathbf{Q}_{\partial D}[\boldsymbol\varphi](\boldsymbol x)=\int_{\partial D}\frac{\partial}{\partial\boldsymbol{\nu}_{\boldsymbol x}}\boldsymbol\Lambda(\boldsymbol x-\boldsymbol y)\boldsymbol\varphi(\boldsymbol y)\mathrm{d}\sigma(\boldsymbol y),
\end{align*}
where $\boldsymbol\Lambda(\boldsymbol x)$ is given in \eqref{E:lambda}.
\end{lemm}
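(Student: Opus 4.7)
The natural plan is to substitute the Taylor series \eqref{E-Fundamental2} for $\boldsymbol\Gamma^\omega$ into the principal-value definition of $\mathbf{K}^{\omega,*}_{\partial D}$, differentiate under the integral sign with respect to the conormal $\frac{\partial}{\partial\boldsymbol\nu_{\boldsymbol x}}$, and show that only the $j=0$ and $j=2$ coefficients in \eqref{E-Fundamental2} survive up to an $\mathcal{O}(\omega^3)$ remainder. Writing $\boldsymbol G_j(\boldsymbol x-\boldsymbol y)$ for the coefficient of $\omega^j$ in \eqref{E-Fundamental2}, this gives formally
\begin{align*}
\mathbf{K}^{\omega,*}_{\partial D}[\boldsymbol\varphi](\boldsymbol x)=\sum_{j=0}^{\infty}\omega^j\,\mathrm{p.v.}\int_{\partial D}\frac{\partial}{\partial\boldsymbol\nu_{\boldsymbol x}}\boldsymbol G_j(\boldsymbol x-\boldsymbol y)\boldsymbol\varphi(\boldsymbol y)\,\mathrm{d}\sigma(\boldsymbol y),
\end{align*}
so the task reduces to analysing the first few coefficients by hand and bounding the tail in the $L^2(\partial D)^3$ operator norm.

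The $j=0$ coefficient coincides with the Kelvin kernel $\boldsymbol\Gamma^0$, so its contribution is exactly $\mathbf{K}^*_{\partial D}$. The $j=1$ coefficient vanishes in the expansion for two independent reasons: in the first sum of \eqref{E-Fundamental2} the factor $|\boldsymbol x-\boldsymbol y|^{j-1}$ reduces to $1$, producing a constant matrix kernel whose gradient is zero; in the second sum, the algebraic prefactor $(j-1)$ annihilates the $j=1$ term. Hence no $\omega$-linear correction survives. For $j=2$, the coefficient is precisely the function $\boldsymbol\Lambda(\boldsymbol x-\boldsymbol y)$ defined in \eqref{E:lambda}, so taking $\frac{\partial}{\partial\boldsymbol\nu_{\boldsymbol x}}$ inside the surface integral yields exactly $\omega^2\mathbf{Q}_{\partial D}$. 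An equivalent route, perhaps cleaner to present, is to apply the jump formula \eqref{Jump2} to the expansion in Lemma~\ref{le:S-series}: the static jump terms match automatically, the $\mathbf{R}_{\partial D}$ contribution is $\boldsymbol x$-independent and so has vanishing conormal derivative, and differentiating $\mathbf{P}_{\partial D}$ under the integral (legitimate since $\boldsymbol\Lambda$ is Lipschitz) reproduces $\mathbf{Q}_{\partial D}$.

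The main technical obstacle is controlling the tail $\sum_{j\geq 3}$ as an operator on $L^2(\partial D)^3$, which requires tracking the interplay between the conormal derivative and the p.v.\ singularity. Since every kernel $\boldsymbol G_j$ with $j\geq 3$ involves non-negative powers $|\boldsymbol x-\boldsymbol y|^{j-1}$, its conormal derivative is at worst of order $|\boldsymbol x-\boldsymbol y|^{j-2}$, hence weakly singular on the compact surface $\partial D$ and defining a bounded operator on $L^2(\partial D)^3$. A geometric-series estimate in $|\omega|\cdot\mathrm{diam}(\partial D)$ then delivers the desired $\mathcal{O}(\omega^3)$ remainder and justifies the term-by-term differentiation. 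Note that the principal-value singularity is carried entirely by the $j=0$ contribution, inherited from $\mathbf{K}^*_{\partial D}$, so no further regularisation is required for the higher-order correction terms.
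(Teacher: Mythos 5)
Your proposal is correct and follows essentially the same route as the paper, which obtains this lemma (without written detail) directly from the series expansion of the Kupradze matrix in Lemma~\ref{le:series}: the $j=0$ coefficient reproduces $\mathbf{K}^*_{\partial D}$, the $j=1$ coefficient is a constant matrix killed by $\frac{\partial}{\partial\boldsymbol\nu_{\boldsymbol x}}$ (and absent from the second sum by the factor $j-1$), and the $j=2$ coefficient is $\boldsymbol\Lambda$, giving $\mathbf{Q}_{\partial D}$. Your extra care with the tail (bounded kernels for $j\geq 3$, geometric-series control of the remainder) and the alternative derivation via the jump formula applied to Lemma~\ref{le:S-series} simply supply details the paper leaves implicit.
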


In addition, we show the invertibility of the three-dimensional single layer potential.

\begin{lemm}\label{le:IS-series}
If $\omega\ll1$, then the three-dimensional single layer potential $\mathbf{S}^{\omega}_{\partial D}: L^{2}(\partial D)^3\rightarrow L^{2}(\partial D)^3$ is invertible satisfying
\begin{align*}
&(\mathbf{S}^{\omega}_{\partial D})^{-1}=\mathbf{S}^{-1}_{\partial D}+\omega\mathbf R_{\partial D,1}+\omega^2\mathbf P_{\partial D,1}+\mathcal{O}(\omega^3)
\end{align*}
with
\begin{align}\label{ri1}
\mathbf{R}_{\partial D,1}=-\mathbf{S}_{\partial D}^{-1}\mathbf{R}_{\partial D}\mathbf{S}_{\partial D}^{-1},\quad \mathbf{P}_{\partial D,1}=-\mathbf{S}_{\partial D}^{-1}\mathbf{P}_{\partial D}\mathbf{S}_{\partial D}^{-1}+\mathbf{S}_{\partial D}^{-1}\mathbf{R}_{\partial D}\mathbf{S}_{\partial D}^{-1}\mathbf{R}_{\partial D}\mathbf{S}_{\partial D}^{-1},
\end{align}
where $\mathbf{R}_{\partial D}$ and $ \mathbf{P}_{\partial D}$ are given in \eqref{ri}.
\end{lemm}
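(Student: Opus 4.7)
The plan is to exploit Lemma \ref{le:known}(iv), which gives invertibility of the elastostatic single layer potential $\mathbf{S}_{\partial D}$ on $L^{2}(\partial D)^3$ in three dimensions, and combine this with the asymptotic expansion of $\mathbf{S}^{\omega}_{\partial D}$ from Lemma \ref{le:S-series}. The idea is to factor out $\mathbf{S}_{\partial D}$ and reduce the problem to inverting an operator close to the identity via a Neumann series.

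First I would write
\begin{align*}
\mathbf{S}^{\omega}_{\partial D}=\mathbf{S}_{\partial D}\bigl(\mathbf{I}+\mathbf{T}(\omega)\bigr),\qquad
\mathbf{T}(\omega):=\omega\,\mathbf{S}_{\partial D}^{-1}\mathbf{R}_{\partial D}+\omega^{2}\,\mathbf{S}_{\partial D}^{-1}\mathbf{P}_{\partial D}+\mathcal{O}(\omega^{3}).
\end{align*}
Since $\mathbf{R}_{\partial D}$ and $\mathbf{P}_{\partial D}$ are bounded on $L^{2}(\partial D)^3$ (the kernels in \eqref{ri}--\eqref{E:lambda} are continuous on the bounded set $\partial D$), and $\mathbf{S}_{\partial D}^{-1}$ is bounded by Lemma \ref{le:known}(iv), we have $\|\mathbf{T}(\omega)\|=\mathcal{O}(\omega)<1$ for $\omega$ small enough. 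Hence $\mathbf{I}+\mathbf{T}(\omega)$ is invertible via the Neumann series $\sum_{n\geq 0}(-\mathbf{T}(\omega))^{n}$, and consequently so is $\mathbf{S}^{\omega}_{\partial D}$ with $(\mathbf{S}^{\omega}_{\partial D})^{-1}=(\mathbf{I}+\mathbf{T}(\omega))^{-1}\mathbf{S}_{\partial D}^{-1}$.

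Next, I would expand the Neumann series up to second order in $\omega$. Setting $\mathbf{A}:=\mathbf{S}_{\partial D}^{-1}\mathbf{R}_{\partial D}$ and $\mathbf{B}:=\mathbf{S}_{\partial D}^{-1}\mathbf{P}_{\partial D}$, one has
\begin{align*}
\bigl(\mathbf{I}+\mathbf{T}(\omega)\bigr)^{-1}
=\mathbf{I}-\omega\mathbf{A}-\omega^{2}\mathbf{B}+\omega^{2}\mathbf{A}^{2}+\mathcal{O}(\omega^{3}).
\end{align*}
Right-multiplying by $\mathbf{S}_{\partial D}^{-1}$ and collecting the $\omega^{0}$, $\omega^{1}$, $\omega^{2}$ contributions yields exactly the announced expressions for $\mathbf{R}_{\partial D,1}$ and $\mathbf{P}_{\partial D,1}$ in \eqref{ri1}.

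The only non-routine point is to verify that the remainder $\mathcal{O}(\omega^{3})$ in Lemma \ref{le:S-series} is controlled in the operator norm on $L^{2}(\partial D)^3$ (not merely pointwise), so that the Neumann-series manipulation is legitimate in $\mathcal{L}(L^{2}(\partial D)^3)$. This follows from the series expansion \eqref{E-Fundamental2} of $\boldsymbol\Gamma^{\omega}$, which converges uniformly for $\boldsymbol x$ in a bounded set whenever $\omega$ is small; integrating term-by-term against an $L^{2}$ density on the compact surface $\partial D$ gives an operator-norm bound of the stated order. This is the main verification; once it is in hand, the rest of the proof is a direct algebraic expansion and presents no further obstacle.
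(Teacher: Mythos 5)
Your proposal is correct and follows essentially the same route as the paper's proof: factoring $\mathbf{S}^{\omega}_{\partial D}=\mathbf{S}_{\partial D}\bigl(\mathbf{I}+\omega\mathbf{S}^{-1}_{\partial D}\mathbf{R}_{\partial D}+\omega^2\mathbf{S}^{-1}_{\partial D}\mathbf{P}_{\partial D}+\mathcal{O}(\omega^3)\bigr)$, inverting the bracket by a Neumann series for $\omega\ll1$, and collecting terms up to order $\omega^2$ to obtain \eqref{ri1}. Your extra remark on controlling the remainder in operator norm is a sensible clarification but does not change the argument.
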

\begin{proof}
It follows from \eqref{ES-series} and Lemma \ref{le:known}  that
\begin{align*}
\mathbf{S}^{\omega}_{\partial D}=\mathbf{S}_{\partial D}\left(\mathrm{\mathbf{I}}+\omega\mathbf{S}^{-1}_{\partial D}\mathbf{R}_{\partial D}+\omega^2\mathbf{S}^{-1}_{\partial D}\mathbf{P}_{\partial D}+\mathcal{O}(\omega^3)\right).
\end{align*}
If $\omega\ll1$, then the single layer potential $\mathbf{S}^{\omega}_{\partial D}: L^{2}(\partial D)^3\rightarrow L^{2}(\partial D)^3$ is invertible by Newmann series. Moreover,
\begin{align*}
(\mathbf{S}^{\omega}_{\partial D})^{-1}=&\left(\mathrm{\mathbf{I}}+\omega\mathbf{S}^{-1}_{\partial D}\mathbf{R}_{\partial D}+\omega^2\mathbf{S}^{-1}_{\partial D}\mathbf{P}_{\partial D}+\mathcal{O}(\omega^3)\right)^{-1}\mathbf{S}_{\partial D}^{-1}\\
=&\sum^{+\infty}_{j=0}\left(-\omega\mathbf{S}^{-1}_{\partial D}\mathbf{R}_{\partial D}-\omega^2\mathbf{S}^{-1}_{\partial D}\mathbf{P}_{\partial D}+\mathcal{O}(\omega^3)\right)^j\mathbf{S}_{\partial D}^{-1}\\
=&\mathbf{S}_{\partial D}^{-1}-\omega \mathbf{S}_{\partial D}^{-1}\mathbf{R}_{\partial D}\mathbf{S}_{\partial D}^{-1}+\omega^2(-\mathbf{S}_{\partial D}^{-1}\mathbf{P}_{\partial D}\mathbf{S}_{\partial D}^{-1}+\mathbf{S}_{\partial D}^{-1}\mathbf{R}_{\partial D}\mathbf{S}_{\partial D}^{-1}\mathbf{R}_{\partial D}\mathbf{S}_{\partial D}^{-1})+\mathcal{O}(\omega^3),
\end{align*}
which readily implies the conclusion of the lemma.

The proof is complete.
\end{proof}

With the above preparations, one has the following integral representation of the solution to the elastic system \eqref{SS:lame} as follows
\begin{align}\label{Solution}
\mathbf{u}(\boldsymbol x)=
\left\{ \begin{aligned}
&\mathbf{S}^{\omega_1}_{\partial D}[\boldsymbol{\varphi}](\boldsymbol x),\quad&&\boldsymbol x\in D,\\
&\mathbf{S}^{\omega_2}_{\partial D}[\boldsymbol{\psi}](\boldsymbol x)+\mathbf{u}^{\mathrm{in}}(\boldsymbol x),\quad && \boldsymbol x\in\mathbb{R}^3\backslash\overline{D}
\end{aligned}\right.
\end{align}
for $(\boldsymbol\varphi,\boldsymbol\psi)\in L^{2}(\partial D)^3\times L^{2}(\partial D)^3$,
where
\begin{align}
&\omega_1=\frac{\omega}{\sqrt{c(\omega)}},\label{Omega1}\\
&\omega_2=\omega.\label{Omega2}
\end{align}
By applying the Fourier transform as well as using the fundamental solution \eqref{Fundamental2}, we introduce an incident field to system \eqref{SS:lame} as
\begin{align}\label{E:incident}
\mathbf{u}^{\mathrm{in}}(\boldsymbol x)=f(\omega)\boldsymbol\Gamma^{\omega}(\boldsymbol x,\mathbf{s})\mathbf{p},\quad \boldsymbol x\in\mathbb{R}^3\setminus\overline{D},
\end{align}
which is the frequency-domain counterpart to the time-dependent source in \eqref{S:Lame1}. Moreover, we define
\begin{align*}
\mathbf{u}^{\mathrm{sca}}(\boldsymbol x):=\mathbf{u}(\boldsymbol x)-\mathbf{u}^{\mathrm{in}}(\boldsymbol x),\quad \boldsymbol x\in\mathbb{R}^3\setminus\overline{D}.
\end{align*}
By the transmission conditions in \eqref{SS:lame} across the boundary $\partial D$, the solution $\mathbf{u}(\boldsymbol x)$ in \eqref{Solution} should satisfy
\begin{align*}
\left\{ \begin{aligned}
&(\mathbf{S}^{\omega_2}_{\partial D}[\boldsymbol{\psi}](\boldsymbol x)+\mathbf{u}^{\mathrm{in}}(\boldsymbol x))|_+=\mathbf{S}^{\omega_1}_{\partial D}[\boldsymbol{\varphi}](\boldsymbol x)|_-,\quad&&\boldsymbol x\in \partial D,\\
&\frac{\partial (\mathbf{S}^{\omega_2}_{\partial D}[\boldsymbol{\psi}](\boldsymbol x)+\mathbf{u}^{\mathrm{in}}(\boldsymbol x))}{\partial \boldsymbol{\nu}}\big|_+=\frac{\partial \mathbf{S}^{\omega_1}_{\partial D}[\boldsymbol{\varphi}](\boldsymbol x)}{\partial \boldsymbol{\nu}}\big|_-,\quad&&\boldsymbol x\in\partial D.
\end{aligned}\right.
\end{align*}
With the help of the jump relation \eqref{Jump2}, the pair $(\boldsymbol{\varphi},\boldsymbol{\psi})$ is the solution to the following system of integral equations
\begin{align}\label{equa2}
\left\{ \begin{aligned}
&\mathbf{S}^{\omega_1}_{\partial D}[\boldsymbol\varphi](\boldsymbol x)-\mathbf{S}^{\omega_2}_{\partial D}[\boldsymbol\psi](\boldsymbol x)=\mathbf{F}_1,\quad&&\boldsymbol x\in\partial D,\\
&c(\omega)\big(-\frac{1}{2}\mathbf{I}+\mathbf{K}^{\omega_1,*}_{\partial D}\big)[\boldsymbol\varphi](\boldsymbol x)-\big(\frac{1}{2}\mathbf{I}+\mathbf{K}^{\omega_2,*}_{\partial D}\big)[\boldsymbol\psi]=\mathbf{F}_2,\quad&&\boldsymbol x\in\partial{D},
\end{aligned}\right.
\end{align}
where
\begin{align*}
\mathbf{F}_1={\mathbf u}^{\mathrm{in}}(\boldsymbol x),\quad\mathbf{F}_2=\frac{\partial}{\partial\boldsymbol\nu}\mathbf{u}^{\mathrm{in}}(\boldsymbol x).
\end{align*}

To see the scaling clearly, we introduce the transform $\boldsymbol x=\boldsymbol z+\delta \boldsymbol X$, where $\boldsymbol x\in\partial D,\boldsymbol X\in\partial B$. Then two integral equations in \eqref{equa2} will be studied in $L^{2}(\partial B)^3$ instead of $L^{2}(\partial D)^3$. For each function $\boldsymbol\varphi$ defined on $\partial D$, we define a corresponding function on $\partial B$ by $\tilde{\boldsymbol\varphi}(\boldsymbol X)=\boldsymbol\varphi(\boldsymbol z+\delta \boldsymbol X)$.

\begin{lemm}\label{le:scale}
One has 
\begin{align*}
&\mathbf{S}^{\omega}_{\partial D}[\boldsymbol\varphi](\boldsymbol x)=\delta \mathbf{S}^{\omega\delta}_{\partial B}[\tilde{\boldsymbol\varphi}](\boldsymbol X),\quad
\mathbf{K}^{\omega,*}_{\partial D}[\boldsymbol\varphi](\boldsymbol x)=\mathbf{K}^{\omega\delta,*}_{\partial B}[\tilde{\boldsymbol\varphi}](\boldsymbol X).
\end{align*}
\end{lemm}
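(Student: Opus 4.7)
The plan is to prove both identities by a direct change of variables $\boldsymbol x = \boldsymbol z + \delta\boldsymbol X$, $\boldsymbol y = \boldsymbol z + \delta\boldsymbol Y$, after first establishing the key scaling identity for the Kupradze matrix
\[
\boldsymbol\Gamma^{\omega}(\delta\boldsymbol X) = \delta^{-1}\,\boldsymbol\Gamma^{\omega\delta}(\boldsymbol X),\qquad \boldsymbol X\neq 0.
\]
To verify this, I would substitute $\boldsymbol x=\delta\boldsymbol X$ into \eqref{Fundamental2}: the first term acquires an obvious $\delta^{-1}$ from $|\delta\boldsymbol X|^{-1}$ and converts $\omega/c_s$ times $|\delta\boldsymbol X|$ into $(\omega\delta)/c_s$ times $|\boldsymbol X|$. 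For the Hessian term, the chain rule gives $\nabla_{\boldsymbol x}=\delta^{-1}\nabla_{\boldsymbol X}$, so $\nabla\nabla$ contributes a factor $\delta^{-2}$, while $|\delta\boldsymbol X|^{-1}$ inside the bracket contributes another $\delta^{-1}$, and the $\omega^{-2}$ prefactor rewrites as $\delta^{2}(\omega\delta)^{-2}$. Balancing these gives exactly $\delta^{-1}$ in front of $\boldsymbol\Gamma^{\omega\delta}(\boldsymbol X)$, as asserted.

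For the single layer identity, I would insert the change of variables into the definition. Since $D=\delta B+\boldsymbol z$, the surface measure transforms as $\mathrm d\sigma(\boldsymbol y)=\delta^{2}\mathrm d\sigma(\boldsymbol Y)$ in three dimensions, and $\boldsymbol x-\boldsymbol y=\delta(\boldsymbol X-\boldsymbol Y)$. Combining the scaling of $\boldsymbol\Gamma^\omega$ with the scaling of the area element gives a net factor $\delta^{-1}\cdot\delta^{2}=\delta$, so
\[
\mathbf{S}^{\omega}_{\partial D}[\boldsymbol\varphi](\boldsymbol x)
=\int_{\partial B}\delta^{-1}\boldsymbol\Gamma^{\omega\delta}(\boldsymbol X-\boldsymbol Y)\tilde{\boldsymbol\varphi}(\boldsymbol Y)\delta^{2}\mathrm d\sigma(\boldsymbol Y)
=\delta\,\mathbf{S}^{\omega\delta}_{\partial B}[\tilde{\boldsymbol\varphi}](\boldsymbol X),
\]
which is the first claim.

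For the Neumann-Poincar\'{e} identity, the extra bookkeeping concerns the conormal derivative in the $\boldsymbol x$-variable. Writing $\frac{\partial}{\partial\boldsymbol\nu_{\boldsymbol x}}=\lambda(\nabla_{\boldsymbol x}\cdot)\boldsymbol\nu_{\boldsymbol x}+2\mu(\nabla^{s}_{\boldsymbol x})\boldsymbol\nu_{\boldsymbol x}$ and observing that the outward unit normal is preserved under the translation-dilation (that is, $\boldsymbol\nu_{\boldsymbol x}(\boldsymbol z+\delta\boldsymbol X)=\boldsymbol\nu_{\boldsymbol X}(\boldsymbol X)$), the chain rule $\nabla_{\boldsymbol x}=\delta^{-1}\nabla_{\boldsymbol X}$ yields $\frac{\partial}{\partial\boldsymbol\nu_{\boldsymbol x}}=\delta^{-1}\frac{\partial}{\partial\boldsymbol\nu_{\boldsymbol X}}$. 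Applied to $\boldsymbol\Gamma^{\omega}(\boldsymbol x-\boldsymbol y)=\delta^{-1}\boldsymbol\Gamma^{\omega\delta}(\boldsymbol X-\boldsymbol Y)$, the total prefactor on the kernel becomes $\delta^{-2}$, which is cancelled exactly by $\mathrm d\sigma(\boldsymbol y)=\delta^{2}\mathrm d\sigma(\boldsymbol Y)$, giving
\[
\mathbf{K}^{\omega,*}_{\partial D}[\boldsymbol\varphi](\boldsymbol x)=\mathbf{K}^{\omega\delta,*}_{\partial B}[\tilde{\boldsymbol\varphi}](\boldsymbol X).
\]
The principal-value interpretation is preserved throughout because the change of variables is a smooth diffeomorphism.

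The only nontrivial step is the derivation of $\boldsymbol\Gamma^{\omega}(\delta\boldsymbol X)=\delta^{-1}\boldsymbol\Gamma^{\omega\delta}(\boldsymbol X)$; once that is in hand the rest is essentially bookkeeping of powers of $\delta$. I expect no genuine obstacle, but I would take care to justify that the conormal derivative really reduces to $\delta^{-1}$ times the conormal derivative on $\partial B$, since a miscounted power of $\delta$ here would break the clean identity used to rescale the integral system \eqref{equa2} in later sections.
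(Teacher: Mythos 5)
Your proof is correct: the scaling identity $\boldsymbol\Gamma^{\omega}(\delta\boldsymbol X)=\delta^{-1}\boldsymbol\Gamma^{\omega\delta}(\boldsymbol X)$, the factor $\mathrm d\sigma(\boldsymbol y)=\delta^{2}\mathrm d\sigma(\boldsymbol Y)$, and the conormal-derivative count $\frac{\partial}{\partial\boldsymbol\nu_{\boldsymbol x}}=\delta^{-1}\frac{\partial}{\partial\boldsymbol\nu_{\boldsymbol X}}$ all check out, giving the factors $\delta$ and $1$ respectively. The paper states this lemma without proof, and your change-of-variables argument is exactly the standard computation it implicitly relies on, so there is nothing further to add.
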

\begin{lemm}\label{le:scalar}
For $\mathbf{f},\mathbf{g}$ defined on $\partial D$, corresponding to $\tilde{\mathbf{f}},\tilde{\mathbf{g}}$ defined on $\partial B$, respectively,  one has
\begin{align*}
&\langle \mathbf{f},\mathbf{g}\rangle_{{L^{2}(\partial D)^3}}=\delta^2\langle \tilde{\mathbf{f}},\tilde{\mathbf{g}}\rangle_{{L^{2}(\partial B)^3}},\\
&\|\mathbf{f}\|_{{L^{2}(\partial D)^3}}=\delta\|\tilde{\mathbf{f}}\|_{{L^{2}(\partial B)^3}}.
\end{align*}
\end{lemm}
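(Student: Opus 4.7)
The plan is to reduce both identities to a direct change of variables in the surface integral induced by the affine scaling $\boldsymbol x = \boldsymbol z + \delta \boldsymbol X$, which bijectively maps $\partial B$ onto $\partial D$. The key observation is that $\partial B$ is a two-dimensional surface embedded in $\mathbb{R}^3$, so under a dilation by factor $\delta$ its surface area scales as $\delta^{2}$, while the translation by $\boldsymbol z$ contributes nothing. Consequently the pullback relation $\mathrm{d}\sigma(\boldsymbol x) = \delta^{2}\,\mathrm{d}\sigma(\boldsymbol X)$ holds between the respective surface measures, and this is the only analytic input required.

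Using this together with the definition $\tilde{\mathbf{f}}(\boldsymbol X) = \mathbf{f}(\boldsymbol z + \delta\boldsymbol X)$ and analogously for $\tilde{\mathbf{g}}$, I would compute
\begin{equation*}
\langle \mathbf{f},\mathbf{g}\rangle_{L^{2}(\partial D)^3} = \int_{\partial D}\mathbf{f}(\boldsymbol x)\cdot\overline{\mathbf{g}(\boldsymbol x)}\,\mathrm{d}\sigma(\boldsymbol x) = \delta^{2}\int_{\partial B}\tilde{\mathbf{f}}(\boldsymbol X)\cdot\overline{\tilde{\mathbf{g}}(\boldsymbol X)}\,\mathrm{d}\sigma(\boldsymbol X) = \delta^{2}\langle\tilde{\mathbf{f}},\tilde{\mathbf{g}}\rangle_{L^{2}(\partial B)^3},
\end{equation*}
which gives the first identity. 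The norm identity then follows immediately by specializing to $\mathbf{g} = \mathbf{f}$ and extracting square roots.

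There is no real obstacle in this argument; the only point to keep straight is the exponent $\delta^{2}$ (coming from a two-dimensional surface measure in $\mathbb{R}^{3}$) rather than $\delta^{3}$. As a sanity check, this is consistent with the operator scaling already recorded in Lemma \ref{le:scale}: the factor $\delta$ appearing in front of $\mathbf{S}^{\omega\delta}_{\partial B}$ comes from $\delta^{2}$ in the surface measure combined with the $|\boldsymbol x|^{-1}$ homogeneity of the leading static part of the Kupradze kernel, while $\mathbf{K}^{\omega,*}_{\partial D}$ is scale-invariant because its kernel is homogeneous of degree $-2$, exactly cancelling the $\delta^{2}$ of the measure.
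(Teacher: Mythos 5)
Your proof is correct and follows the same route as the paper: the change of variables $\boldsymbol x=\boldsymbol z+\delta\boldsymbol X$ with the surface-measure scaling $\mathrm{d}\sigma(\boldsymbol x)=\delta^{2}\mathrm{d}\sigma(\boldsymbol X)$ gives the inner-product identity, and the norm identity follows by taking $\mathbf{g}=\mathbf{f}$. The extra sanity check against Lemma \ref{le:scale} is fine but not needed.
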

\begin{proof}
It is straightforward to verify that
\begin{align*}
\langle \mathbf{f},\mathbf{g}\rangle_{{L^{2}(\partial D)^3}}=&\int_{\partial D}\mathbf{f}(\boldsymbol x)\cdot\mathbf{g}(\boldsymbol x)\mathrm{d}\sigma(\boldsymbol x)\\
=&\delta^2\int_{\partial B}\mathbf{f}(\boldsymbol z+\delta\boldsymbol X)\cdot\mathbf{g}(\boldsymbol z+\delta\boldsymbol X)\mathrm{d}\sigma(\boldsymbol X)=\delta^2\langle \tilde{\mathbf{f}},\tilde{\mathbf{g}}\rangle_{{L^{2}(\partial B)^3}},
\end{align*}
which then gives $\|\mathbf{f}\|_{{L^{2}(\partial D)^3}}=\delta\|\tilde{\mathbf{f}}\|_{{L^{2}(\partial B)^3}}$ and readily completes the proof.
\end{proof}

In view of Lemma \ref{le:scale},  the solution $\mathbf{u}$ in \eqref{Solution} can be rewritten as
\begin{align}\label{Solution2}
\tilde{\mathbf{u}}(\boldsymbol X)=
\left\{ \begin{aligned}
&\delta \mathbf{S}^{\omega_1\delta}_{\partial B}[\tilde{\boldsymbol{\varphi}}](\boldsymbol X),\quad&&\boldsymbol X\in B,\\
&\delta\mathbf{S}^{\omega_2\delta}_{\partial B}[\tilde{\boldsymbol{\psi}}](\boldsymbol X)+\tilde{\mathbf{u}}^{\mathrm{in}}(\boldsymbol X),\quad &&\boldsymbol X\in\mathbb{R}^3\backslash\overline{B},
\end{aligned}\right.
\end{align}
where $\tilde{\mathbf{u}}^{\mathrm{in}}(\boldsymbol X):=\mathbf{u}^{\mathrm{in}}(\boldsymbol z+\delta \boldsymbol X)$.  The density $(\tilde{\boldsymbol{\varphi}},\tilde{\boldsymbol{\psi}})$ is the unique solution to
\begin{align}\label{Equivalent}
\left\{ \begin{aligned}
&\mathbf{S}^{\omega_1\delta}_{\partial B}[\tilde{\boldsymbol{\varphi}}](\boldsymbol X)-\mathbf{S}^{\omega_2\delta}_{\partial B}[\tilde{\boldsymbol{\psi}}](\boldsymbol X)=\frac{1}{\delta}\tilde{\mathbf{F}}_1(\boldsymbol X),\quad &\boldsymbol X\in\partial{B},\\
&c(\omega)\big(-\frac{1}{2}\mathbf{I}+\mathbf{K}^{\omega_1\delta,*}_{\partial B}\big)[\tilde{\boldsymbol\varphi}](\boldsymbol X)-\big(\frac{1}{2}\mathbf{I}+\mathbf{K}^{\omega_2\delta,*}_{\partial B}\big)[\tilde{\boldsymbol\psi}]=\frac{1}{\delta}\tilde{\mathbf{F}}_2(\boldsymbol X),\quad&\boldsymbol X\in\partial{B},
\end{aligned}\right.
\end{align}
where
\begin{align*}
\tilde{\mathbf{F}}_1(\boldsymbol X)=\mathbf{u}^{\mathrm{in}}(\boldsymbol z+\delta \boldsymbol X),\quad\tilde{\mathbf{F}}_2(\boldsymbol X)=\frac{\partial}{\partial\boldsymbol\nu_{\boldsymbol X}}\mathbf{u}^{\mathrm{in}}(\boldsymbol z+\delta \boldsymbol X).
\end{align*}

From the invertibility of the operator $\mathbf{S}^{\omega_1\delta}_{\partial B}$ in Lemma \ref{le:IS-series}, system \eqref{Equivalent} is equivalent to
\begin{align}\label{AO}
\boldsymbol{\mathcal{A}}^{\omega\delta}_{\partial B}[\tilde{\boldsymbol{\psi}}](\boldsymbol X)=\frac{1}{\delta}\tilde{\mathbf{F}}^{\omega\delta}(\boldsymbol X),\quad \boldsymbol X\in\partial{B},
\end{align}
where
\begin{align}
&\boldsymbol{\mathcal{A}}^{\omega\delta}_{\partial B}=c(\omega)\big(-\frac{1}{2}\mathbf{I}+\mathbf{K}^{\omega_1\delta,*}_{\partial B}\big)(\mathbf{S}^{\omega_1\delta}_{\partial B})^{-1}\mathbf{S}^{\omega_2\delta}_{\partial B}-\big(\frac{1}{2}\mathbf{I}+\mathbf{K}^{\omega_2\delta,*}_{\partial B}\big),\label{AOD}\\
&\tilde{\mathbf{F}}^{\omega\delta}(\boldsymbol X)=\tilde{\mathbf{F}}_2(\boldsymbol X)-c(\omega)\big(-\frac{1}{2}\mathbf{I}+\mathbf{K}^{\omega_1\delta,*}_{\partial B}\big)(\mathbf{S}^{\omega_1\delta}_{\partial B})^{-1}\tilde{\mathbf{F}}_1(\boldsymbol X).\label{tildeF}
\end{align}
In order to give the modal decomposition of the scattered field for the Lam\'{e} system \eqref{SS:lame}, we should study the singularly perturbed system of integral equations equivalently.


\section{Modal decomposition of the field}\label{sec:3}

In this section, we aim at establishing the modal decomposition of the scattered field for the Lam\'{e} system in the frequency domain. In the boundary integral equation \eqref{AO}, since the operator $\boldsymbol{\mathcal{A}}^{\omega\delta}_{\partial B}$ is not self-adjoint in $L^{2}(\partial B)^3$,  it can not be diagonalised directly to solve equation \eqref{AO}. However, in the static regime, the operator $\boldsymbol{\mathcal{A}}^{0}_{\partial B}(\omega\delta=0)$ can be expressed simply with $\mathbf{K}^{*}_{\partial B}$ which is self-adjoint when $\partial B$ is a unit sphere. We show the eigenvalues and the  asymptotic properties of the Neumann-Poincar\'{e} operator associated with  the elastostatic system, and then
give the corresponding  modal expansion. Our next goal is to approximate the incident field with a finite number of modes. Finally,  we obtain the finite modal expansion of the scattered field  in the linear  elasticity. The proof is based on the classic perturbation
theory.


\subsection{Scattered field for the elastostatic system}
This subsection is devoted to establishing  the modal decomposition of the scattered field for the elastostatic system in the frequency domain.

For $\omega\delta=0$, one has $\omega_1\delta=\omega_2\delta=0$.  We consider the following  elastostatic system
\begin{align}\label{E:static}
\left\{ \begin{aligned}
&\mathbf{S}_{\partial B}[\tilde{\boldsymbol{\varphi}}](\boldsymbol X)-\mathbf{S}_{\partial B}[\tilde{\boldsymbol{\psi}}](\boldsymbol X)=\frac{1}{\delta}\tilde{\mathbf{F}}_1(\boldsymbol X),\quad &&\boldsymbol X\in\partial{B},\\
&c(\omega)\big(-\frac{1}{2}\mathbf{I}+\mathbf{K}^{*}_{\partial B}\big)[\tilde{\boldsymbol\varphi}](\boldsymbol X)-\big(\frac{1}{2}\mathbf{I}+\mathbf{K}^{*}_{\partial B}\big)[\tilde{\boldsymbol\psi}]=\frac{1}{\delta}\tilde{\mathbf{F}}_2(\boldsymbol X),\quad&&\boldsymbol X\in\partial{B}.
\end{aligned}\right.
 \end{align}
Using the invertibility of the operator $\mathbf{S}_{\partial B}$ given by Lemma \ref{le:known} yields that
\begin{align}\label{EO:static}
\boldsymbol{\mathcal{A}}^0_{\partial B}[\tilde{\boldsymbol{\psi}}](\boldsymbol X)=\frac{1}{\delta}\tilde{\mathbf{F}}(\boldsymbol X),\quad\boldsymbol X\in\partial{B},
\end{align}
where
\begin{align}
&\boldsymbol{\mathcal{A}}^0_{\partial B}=-\big(\frac{c(\omega)}{2}+\frac{1}{2}\big)\mathbf{I}+(c(\omega)-1)\mathbf{K}^*_{\partial B},\label{A-Zero}\\
&\tilde{\mathbf{F}}(\boldsymbol X)=\tilde{\mathbf{F}}_2(\boldsymbol X)-c(\omega)\big(-\frac{1}{2}\mathbf{I}+\mathbf{K}^*_{\partial B}\big)\mathbf{S}^{-1}_{\partial B}\tilde{\mathbf{F}}_1(\boldsymbol X).\label{F-Tilde}
\end{align}

In the following,  we shall define orthogonal vectorial polynomials which will be quite important in the analysis of the spectral of the Neumann-Poincar\'{e} operator $\mathbf{K}^*_{\partial B}$.  Moreover, to avoid confusion, we introduce the notation $\hat{\boldsymbol X}\in\partial B$ instead of $\boldsymbol X\in\partial B $. Let $r=|\boldsymbol X|$. Denote by $\tilde{Y}^m_n(\hat{\boldsymbol X}),-n\leq m\leq n$  spherical harmonics on the unit sphere $\partial B$. Define three vectorial polynomials as
\begin{align*}
&\tilde{{\mathfrak{T}}}^{m}_n(\boldsymbol X)=\nabla(r^n\tilde{Y}^m_n(\hat{\boldsymbol X}))\times\boldsymbol X, &&n\geq1,-n\leq m\leq n,
\\&\tilde{{\mathfrak{M}}}^{m}_n(\boldsymbol X)=\nabla(r^n\tilde{Y}^m_n(\hat{\boldsymbol X})), &&n\geq1,-n\leq m\leq n,
\end{align*}
and
\begin{align*}
\tilde{{\mathfrak{N}}}^{m}_{n}(\boldsymbol X)=a^m_nr^{n-1}\tilde{Y}^m_{n-1}(\hat{\boldsymbol X})\boldsymbol X+\big(1-\frac{a^m_n}{2n-1}-r^2\big)\nabla(r^{n-1}\tilde{Y}^m_{n-1}(\hat{\boldsymbol X})),
\end{align*}
where
\begin{align*}
a^m_n=\frac{2(n-1)\lambda+2(3n-2)\mu}{(n+2)\lambda+(n+4)\mu},\quad n\geq1,-(n-1)\leq m\leq n-1.
\end{align*}
Denote by $\tilde{\mathbf{T}}^m_n,\tilde{\mathbf{M}}^m_n$ and $\tilde{\mathbf{N}}^m_n$ the traces of $\tilde{{\mathfrak{T}}}^{m}_n,\tilde{{\mathfrak{M}}}^{m}_n$ and $\tilde{{\mathfrak{N}}}^{m}_{n}$ on the unit sphere $\partial B$, respectively. By directly applying the trace theorem, these traces have the form
\begin{align}
&\tilde{\mathbf{T}}^m_n(\boldsymbol X)=\nabla_{S}\tilde{Y}^m_n(\hat{\boldsymbol X})\times\boldsymbol{\nu}_{\boldsymbol X},\label{BJMN}\\
&\tilde{\mathbf{M}}^m_n(\boldsymbol X)=\nabla_{S}\tilde{Y}^m_n(\hat{\boldsymbol X})+n\tilde{Y}^m_n(\hat{\boldsymbol X})\boldsymbol{\nu}_{\boldsymbol X},\label{BMMN}\\
&\tilde{\mathbf{N}}^m_n(\boldsymbol X)=\frac{a^m_n}{2n-1}\big(-\nabla_{S}\tilde{Y}^m_{n-1}(\hat{\boldsymbol X})+n\tilde{Y}^m_{n-1}(\hat{\boldsymbol X}){\boldsymbol\nu}_{\boldsymbol X}\big).\label{BNMN}
\end{align}

The following lemma and theorem correspond to some facts to be used in the sequel, proofs of which can be found in \cite{Jean2001acoustic,Deng2019spectral}.
\begin{lemm}\label{le:solution}
The trace
$(\tilde{\mathbf{T}}^m_n,\tilde{\mathbf{M}}^m_n,\tilde{\mathbf{N}}^m_n)$ defined in \eqref{BJMN}--\eqref{BNMN} forms an  orthogonal basis on $L^2(\partial B)^3$.
\end{lemm}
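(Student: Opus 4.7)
\smallskip
\noindent\textbf{Proof plan.} The plan is to reduce the statement to the classical orthogonal decomposition of $L^2(\partial B)^3$ by vector spherical harmonics. On the unit sphere, every square-integrable vector field admits a unique $L^2$-orthogonal decomposition into a radial part, a surface-gradient (poloidal) tangential part, and a surface-curl (toroidal) tangential part; equivalently, the families $\{\tilde Y^m_n\boldsymbol\nu\}$, $\{\nabla_S\tilde Y^m_n\}$, $\{\nabla_S\tilde Y^m_n\times\boldsymbol\nu\}$ are pairwise orthogonal and together complete in $L^2(\partial B)^3$. Reading the definitions \eqref{BJMN}--\eqref{BNMN}, I see $\tilde{\mathbf T}^m_n$ is purely toroidal, while $\tilde{\mathbf M}^m_n$ and $\tilde{\mathbf N}^m_n$ are each a linear combination of a poloidal piece and a radial piece. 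The goal is therefore to show that (i) the T's realize every toroidal mode, and (ii) the M's and N's together realize every poloidal/radial pair, and all three families are pairwise $L^2$-orthogonal.

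For the orthogonality part I would proceed case by case. Any inner product $\langle\tilde{\mathbf T}^m_n,\tilde{\mathbf M}^{m'}_{n'}\rangle$ or $\langle\tilde{\mathbf T}^m_n,\tilde{\mathbf N}^{m'}_{n'}\rangle$ vanishes because the toroidal field is $L^2$-orthogonal to both radial and poloidal fields (the cross-terms reduce to $\int_{\partial B}(\nabla_S\tilde Y\times\boldsymbol\nu)\cdot\nabla_S\tilde Y'\,d\sigma$, which is $0$ by the standard Hodge identity $\int_{\partial B}\nabla_S f\cdot(\nabla_S g\times\boldsymbol\nu)\,d\sigma=0$). For $\langle\tilde{\mathbf M}^m_n,\tilde{\mathbf M}^{m'}_{n'}\rangle$ and $\langle\tilde{\mathbf N}^m_n,\tilde{\mathbf N}^{m'}_{n'}\rangle$ the orthogonality of scalar spherical harmonics together with the identity $\|\nabla_S\tilde Y^m_n\|^2_{L^2(\partial B)}=n(n+1)$ gives the result. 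The only delicate pairing is $\langle\tilde{\mathbf M}^m_{n-1},\tilde{\mathbf N}^{m'}_{n}\rangle$, where both produce scalars of index $(m,n-1)$; a direct computation yields
\begin{equation*}
\bigl\langle\tilde{\mathbf M}^m_{n-1},\tilde{\mathbf N}^{m'}_{n}\bigr\rangle=\frac{a^{m'}_n}{2n-1}\bigl(-\|\nabla_S\tilde Y^m_{n-1}\|^2+n(n-1)\bigr)\,\delta_{mm'}=0,
\end{equation*}
which is precisely how the coefficient $a^m_n$ is designed.

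For completeness I would argue shell by shell. Fix $(m,n)$. The T-family contributes exactly $\nabla_S\tilde Y^m_n\times\boldsymbol\nu$, exhausting the toroidal mode. The remaining poloidal/radial $2$-plane spanned by $\{\nabla_S\tilde Y^m_n,\tilde Y^m_n\boldsymbol\nu\}$ contains the two vectors $\tilde{\mathbf M}^m_n=\nabla_S\tilde Y^m_n+n\tilde Y^m_n\boldsymbol\nu$ and $\tilde{\mathbf N}^m_{n+1}=\frac{a^m_{n+1}}{2n+1}(-\nabla_S\tilde Y^m_n+(n+1)\tilde Y^m_n\boldsymbol\nu)$; their coefficient matrix has determinant $a^m_{n+1}$, so the plan is to verify $a^m_{n+1}\neq 0$. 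Using the strong convexity \eqref{convexity}, the numerator and denominator of $a^m_n$ can both be rewritten as positive linear combinations of $3\lambda+2\mu$ and $\mu$, so $a^m_n>0$ for all admissible $(m,n)$; combined with the special lowest-shell case $(m,n)=(0,0)$ which is covered by $\tilde{\mathbf N}^0_1$, this yields completeness and concludes the lemma. The main obstacle I anticipate is the bookkeeping at the lowest index (the fact that the constant normal mode is produced by an $\tilde{\mathbf N}$, not an $\tilde{\mathbf M}$) together with the sign/positivity verification of $a^m_n$ under \eqref{convexity}; otherwise the argument is a change-of-basis from vector spherical harmonics to $(\tilde{\mathbf T},\tilde{\mathbf M},\tilde{\mathbf N})$.
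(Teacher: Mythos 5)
The paper itself offers no proof of this lemma — it only points to N\'ed\'elec and Deng--Li--Liu — so your proposal supplies an argument where the text gives a citation; the route you take is exactly the standard one behind those references (decompose $L^2(\partial B)^3$ into toroidal, poloidal and radial vector spherical harmonics, check pairwise orthogonality, then verify completeness shell by shell), and every step checks out. The only nontrivial pairing is indeed $\langle\tilde{\mathbf M}^m_{n-1},\tilde{\mathbf N}^m_n\rangle$, and your computation $-\|\nabla_S\tilde Y^m_{n-1}\|^2+n(n-1)=0$ (for normalized $\tilde Y^m_{n-1}$, whose surface gradient has squared $L^2$-norm $(n-1)n$) is correct; so are the determinant argument, the positivity of $a^m_n$ under \eqref{convexity} (both numerator and denominator are positive combinations of $3\lambda+2\mu$ and $\mu$), and the observation that the constant radial mode is carried by $\tilde{\mathbf N}^0_1$ rather than an $\tilde{\mathbf M}$. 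One small misattribution: the vanishing of that cross pairing has nothing to do with the specific value of $a^m_n$ — it is forced by the internal coefficients $(-1,n)$ of $\tilde{\mathbf N}^m_n$ paired against $(1,n-1)$ of $\tilde{\mathbf M}^m_{n-1}$, and would hold for any nonzero prefactor. The constant $a^m_n$ is actually chosen so that the solid field $\tilde{\mathfrak N}^m_n$ satisfies the Lam\'e system in $B$; for the present lemma it matters only through $a^m_n\neq0$, which is precisely where you use it in the completeness step.
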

From now on, without loss of generality, we assume that $(\tilde{\mathbf{T}}^m_n,\tilde{\mathbf{M}}^m_n,\tilde{\mathbf{N}}^m_n)$ is the normalized orthogonal basis on $L^2(\partial B)^3$.
\begin{theo}\label{spectrum}
If the domain $B$ is a central ball of radius $1$, then the eigenvalues of the operator $\mathbf{K}^*_{\partial B}$ are given by
\begin{align*}
&\lambda_{\mathbf{T},n}=\frac{3}{2(2n+1)},
\\&\lambda_{\mathbf{M},n}=\frac{3\lambda-2\mu(2n^2-2n-3)}{2(\lambda+2\mu)(4n^2-1)},
\\&\lambda_{\mathbf{N},n}=\frac{-3\lambda+2\mu(2n^2+2n-3)}{2(\lambda+2\mu)(4n^2-1)},
\end{align*}
where $n\geq1$ are nature numbers, and the corresponding eigenfunctions are $\tilde{\mathbf{T}}^m_n,\tilde{\mathbf{M}}^m_n,\tilde{\mathbf{N}}^m_n$, respectively.
\end{theo}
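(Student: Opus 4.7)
The plan is to compute the action of $\mathbf{K}^*_{\partial B}$ on each element of the orthogonal basis $\{\tilde{\mathbf{T}}^m_n,\tilde{\mathbf{M}}^m_n,\tilde{\mathbf{N}}^m_n\}$ directly, using the single-layer potential and its jump relation. Taking $\omega=0$ in \eqref{Jump2} and averaging the two $\pm$ versions gives
\begin{equation*}
\mathbf{K}^*_{\partial B}[\boldsymbol\varphi]=\tfrac{1}{2}\Big(\tfrac{\partial}{\partial\boldsymbol\nu}\mathbf{S}_{\partial B}[\boldsymbol\varphi]\big|_{+}+\tfrac{\partial}{\partial\boldsymbol\nu}\mathbf{S}_{\partial B}[\boldsymbol\varphi]\big|_{-}\Big),
\end{equation*}
so the strategy is: for each $\boldsymbol\varphi$ in the basis, represent $\mathbf{S}_{\partial B}[\boldsymbol\varphi]$ explicitly as an interior elastostatic solution on $B$ matched continuously across $\partial B$ to a decaying exterior elastostatic solution on $\mathbb{R}^3\setminus\overline B$, then differentiate both representations, evaluate on $\partial B$ where $\boldsymbol\nu=\boldsymbol X$, and read off the eigenvalue. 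The interior extensions will be the Navier polynomials $\tilde{\mathfrak{T}}^m_n,\tilde{\mathfrak{M}}^m_n,\tilde{\mathfrak{N}}^m_n$ (they already solve $\mathcal{L}_{\lambda,\mu}=0$), and their exterior counterparts are obtained by multiplying by $|\boldsymbol X|^{-(2n+1)}$-type Kelvin images that preserve the boundary trace.

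First I would handle the toroidal mode. The extension $\tilde{\mathfrak{T}}^m_n(\boldsymbol X)=\nabla(r^n\tilde Y^m_n)\times\boldsymbol X$ is tangential to every centred sphere and satisfies $\nabla\cdot\tilde{\mathfrak{T}}^m_n=0$, so the $\lambda$-term in the conormal derivative drops out and only the shear term matters. Computing the radial derivative of $r^n$ on the inside and of $r^{-(n+1)}$ on the outside produces factors $(n-1)$ and $-(n+2)$, respectively; adding them and normalising by the appropriate single-layer coefficient $-1/(2n+1)$ yields $\lambda_{\mathbf{T},n}=3/(2(2n+1))$ at once.

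Second I would treat the poloidal pair $\tilde{\mathbf{M}}^m_n,\tilde{\mathbf{N}}^m_n$ jointly at each fixed degree $n$. The parameter $a^m_n$ in the definition of $\tilde{\mathfrak{N}}^m_n$ is designed so that the two boundary traces $\tilde{\mathbf{M}}^m_n$ and $\tilde{\mathbf{N}}^m_n$ diagonalise $\mathbf{K}^*_{\partial B}$ rather than merely span a $2$-dimensional invariant subspace; this is the algebraic origin of the formula for $a^m_n$ in terms of $\lambda,\mu$. After matching the single-layer potential of each trace with the unique interior-exterior Navier pair at that spherical degree, I compute $\lambda(\nabla\cdot\mathbf{u})\boldsymbol\nu+2\mu(\nabla^s\mathbf{u})\boldsymbol\nu$ on each side, using spherical-gradient identities for $\tilde Y^m_n$, and symmetrise to obtain a scalar multiple of the original trace. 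The stated numerators $3\lambda\mp 2\mu(2n^2\mp 2n-3)$ and denominator $2(\lambda+2\mu)(4n^2-1)$ then emerge directly from this collapse.

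The main obstacle is the poloidal step: both $\nabla\cdot\tilde{\mathfrak{M}}^m_n$ and $\nabla\cdot\tilde{\mathfrak{N}}^m_n$ are nonzero and mix under the conormal derivative, so one has to invert a $2\times 2$ system relating interior-exterior Navier boundary data to $(\tilde{\mathbf{M}}^m_n,\tilde{\mathbf{N}}^m_n)$, substitute the precise value of $a^m_n$, and verify that the off-diagonal entries of $\mathbf{K}^*_{\partial B}$ on this subspace cancel so that $\tilde{\mathbf{M}}^m_n$ and $\tilde{\mathbf{N}}^m_n$ are genuine eigenfunctions. The rest of the proof is routine bookkeeping with spherical harmonics, but this decoupling is where the Lam\'e parameters must be tracked with care.
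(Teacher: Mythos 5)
Your overall strategy (symmetrised jump formula plus explicit interior/exterior Navier solutions matched across $\partial B$) is the standard direct computation, and it is essentially the argument of the cited reference: note the paper itself gives no proof of this theorem but refers to \cite{Jean2001acoustic,Deng2019spectral}. Your toroidal computation is correct as sketched (the traction factors $(n-1)$ and $-(n+2)$ and the jump normalisation $-(2n+1)$ do give $\lambda_{\mathbf{T},n}=3/(2(2n+1))$). However, two points in the spheroidal step are wrong as written. First, the pairing: $\tilde{\mathbf{M}}^m_n$ is built on $\tilde Y^m_n$ while $\tilde{\mathbf{N}}^m_n$ is built on $\tilde Y^m_{n-1}$ (see \eqref{BMMN}--\eqref{BNMN}), so these two never couple; the $2\times 2$ system at spherical-harmonic degree $n$ lives on $\mathrm{span}\{\nabla_S\tilde Y^m_n,\tilde Y^m_n\boldsymbol\nu\}$ and couples $\tilde{\mathbf{M}}^m_n$ with $\tilde{\mathbf{N}}^m_{n+1}$, not with $\tilde{\mathbf{N}}^m_n$. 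If you run the computation "at fixed degree $n$" with the pair $(\tilde{\mathbf{M}}^m_n,\tilde{\mathbf{N}}^m_n)$ it simply does not close. Second, $a^m_n$ is not "designed so that the traces diagonalise $\mathbf{K}^*_{\partial B}$"; it is the unique constant for which $\mathcal L_{\lambda,\mu}\tilde{\mathfrak{N}}^m_n=0$ (a one-line computation with $\Delta(\boldsymbol X h)=2\nabla h$ and $\nabla\cdot(r^2\nabla h)=2(n-1)h$ for $h=r^{n-1}\tilde Y^m_{n-1}$ reproduces exactly the stated formula). That these particular combinations are eigenfunctions, i.e.\ that the off-diagonal entries of the $2\times 2$ matrix vanish, is a conclusion your computation must deliver, not an input.

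There is also a logical point you should make explicit to avoid circularity: you cannot start by asserting that the interior part of $\mathbf{S}_{\partial B}[\boldsymbol\varphi]$ \emph{is} the named Navier polynomial, since that already presupposes the eigenfunction property. The clean version is: for $\boldsymbol\varphi$ in the basis, construct an interior Navier solution and a decaying exterior Navier solution with equal boundary traces, normalise the pair so that the jump of the conormal derivative equals $\boldsymbol\varphi$, and invoke uniqueness of this elastostatic transmission problem (with Kelvin-type decay at infinity) to conclude the pair coincides with $\mathbf{S}_{\partial B}[\boldsymbol\varphi]$; only then does $\mathbf{K}^*_{\partial B}[\boldsymbol\varphi]=\tfrac12(\partial_{\boldsymbol\nu}u|_++\partial_{\boldsymbol\nu}u|_-)$ give the eigenvalue. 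Relatedly, the exterior partner is a plain $|\boldsymbol X|^{-(2n+1)}$ multiple of the interior solution only in the toroidal case; in the spheroidal case the exterior family has its own $\lambda,\mu$-dependent coefficient, which is precisely why the $2\times 2$ matching (on the correct pair) is needed. With these corrections the proposal becomes a complete proof along the lines of \cite{Deng2019spectral}.
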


Owing to Lemma \ref{le:solution}, the function $\tilde{\mathbf{F}}\in L^{2}(\partial B)^3$ (see \eqref{F-Tilde}) can be expanded into
\begin{align}\label{F:expansion}
\tilde{\mathbf{F}}=&\sum_{n=1}^{\infty}\sum_{m=-n}^{n}\langle\tilde{\mathbf{F}},\tilde{\mathbf{T}}^{m}_{n}\rangle_{L^{2}(\partial B)^3}\tilde{\mathbf{T}}^{m}_{n}+\sum_{n=1}^{\infty}\sum_{m=-n}^{n}\langle\tilde{\mathbf{F}},\tilde{\mathbf{M}}^{m}_{n}\rangle_{L^{2}(\partial B)^3}\tilde{\mathbf{M}}^{m}_{n}\nonumber\\
&+\sum_{n=1}^{\infty}\sum_{m=-(n-1)}^{n-1}\langle\tilde{\mathbf{F}},\tilde{\mathbf{N}}^{m}_{n}\rangle_{L^{2}(\partial B)^3}\tilde{\mathbf{N}}^{m}_{n}.
\end{align}
Define
\begin{align*}
&L^{2}_{\mathbf{T}}(\partial B)^3:=\left\{\tilde{\boldsymbol{\psi}}\in L^2(\partial B)^3:\langle \tilde{\boldsymbol{\psi}},\tilde{\mathbf{M}}^m_n\rangle_{L^2(\partial B)^3}=0,\langle\tilde{\boldsymbol{\psi}},\tilde{\mathbf{N}}^m_n\rangle_{L^2(\partial B)^3}=0\right\},\\
&L^{2}_{\mathbf{M}}(\partial B)^3:=\left\{\tilde{\boldsymbol{\psi}}\in L^2(\partial B)^3:\langle \tilde{\boldsymbol{\psi}},\tilde{\mathbf{T}}^m_n\rangle_{L^2(\partial B)^3}=0,\langle\tilde{\boldsymbol{\psi}},\tilde{\mathbf{N}}^m_n\rangle_{L^2(\partial B)^3}=0\right\},\\
&L^{2}_{\mathbf{N}}(\partial B)^3:=\left\{\tilde{\boldsymbol{\psi}}\in L^2(\partial B)^3:\langle \tilde{\boldsymbol{\psi}},\tilde{\mathbf{T}}^m_n\rangle_{L^2(\partial B)^3}=0,\langle\tilde{\boldsymbol{\psi}},\tilde{\mathbf{M}}^m_n\rangle_{L^2(\partial B)^3}=0\right\}.
\end{align*}
In addition, we decompose
\begin{align*}
\tilde{\boldsymbol{\psi}}=\tilde{\boldsymbol{\psi}}_{\mathbf{T}}+\tilde{\boldsymbol{\psi}}_{\mathbf{M}}+\tilde{\boldsymbol{\psi}}_{\mathbf{N}},\quad \tilde{\mathbf{F}}=\tilde{\mathbf{F}}_{\mathbf{T}}+\tilde{\mathbf{F}}_{\mathbf{M}}+\tilde{\mathbf{F}}_{\mathbf{N}},
\end{align*}
where
$\tilde{\boldsymbol{\psi}}_{\mathbf{T}},\tilde{\mathbf{F}}_{\mathbf{T}}\in L^{2}_{\mathbf{T}}(\partial B)^3,\tilde{\boldsymbol{\psi}}_{\mathbf{M}},\tilde{\mathbf{F}}_{\mathbf{M}}\in L^{2}_{\mathbf{M}}(\partial B)^3$ and $\tilde{\boldsymbol{\psi}}_{\mathbf{N}},\tilde{\mathbf{F}}_{\mathbf{N}}\in L^{2}_{\mathbf{N}}(\partial B)^3$.
Then
\begin{align*}
&\tilde{\mathbf{F}}_{\mathbf{T}}=\sum_{n=1}^{\infty}\sum_{m=-n}^{n}\langle\tilde{\mathbf{F}},\tilde{\mathbf{T}}^{m}_{n}\rangle_{L^{2}(\partial B)^3}\tilde{\mathbf{T}}^{m}_{n},\\
&\tilde{\mathbf{F}}_{\mathbf{M}}=\sum_{n=1}^{\infty}\sum_{m=-n}^{n}\langle\tilde{\mathbf{F}},\tilde{\mathbf{M}}^{m}_{n}\rangle_{L^{2}(\partial B)^3}\tilde{\mathbf{M}}^{m}_{n},\\
&\tilde{\mathbf{F}}_{\mathbf{N}}=\sum_{n=1}^{\infty}\sum_{m=-(n-1)}^{n-1}\langle\tilde{\mathbf{F}},\tilde{\mathbf{N}}^{m}_{n}\rangle_{L^{2}(\partial B)^3}\tilde{\mathbf{N}}^{m}_{n}.
\end{align*}
Denote by $\Pi_{i}$ the projection operators as follows
\begin{align*}
\Pi_{i}:L^{2}(\partial B)^3\longrightarrow L^{2}_{i}(\partial B)^3,\quad i=\mathbf{T},\mathbf{M},\mathbf{N}.
\end{align*}
As a result, equation \eqref{EO:static} is equivalent to the following system:
\begin{align}
&\Pi_{\mathbf{T}}\boldsymbol{\mathcal{A}}^0_{\partial B}[\tilde{\boldsymbol{\psi}}_{\mathbf{T}}+\tilde{\boldsymbol{\psi}}_{\mathbf{M}}+\tilde{\boldsymbol{\psi}}_{\mathbf{N}}](\boldsymbol X)=\frac{1}{\delta}\sum_{n=1}^{\infty}\sum_{m=-n}^{n}\langle\tilde{\mathbf{F}},\tilde{\mathbf{T}}^{m}_{n}\rangle_{L^{2}(\partial B)^3}\tilde{\mathbf{T}}^{m}_{n}(\boldsymbol X),\label{E:T}\\
&\Pi_{\mathbf{M}}\boldsymbol{\mathcal{A}}^0_{\partial B}[\tilde{\boldsymbol{\psi}}_{\mathbf{T}}+\tilde{\boldsymbol{\psi}}_{\mathbf{M}}+\tilde{\boldsymbol{\psi}}_{\mathbf{N}}](\boldsymbol X)=\frac{1}{\delta}\sum_{n=1}^{\infty}\sum_{m=-n}^{n}\langle\tilde{\mathbf{F}},\tilde{\mathbf{M}}^{m}_{n}\rangle_{L^{2}(\partial B)^3}\tilde{\mathbf{M}}^{m}_{n}(\boldsymbol X),\label{E:M}\\
&\Pi_{\mathbf{N}}\boldsymbol{\mathcal{A}}^0_{\partial B}[\tilde{\boldsymbol{\psi}}_{\mathbf{T}}+\tilde{\boldsymbol{\psi}}_{\mathbf{M}}+\tilde{\boldsymbol{\psi}}_{\mathbf{N}}](\boldsymbol X)=\frac{1}{\delta}\sum_{n=1}^{\infty}\sum_{m=-(n-1)}^{n-1}\langle\tilde{\mathbf{F}},\tilde{\mathbf{N}}^{m}_{n}\rangle_{L^{2}(\partial B)^3}\tilde{\mathbf{N}}^{m}_{n}(\boldsymbol X).\label{E:N}
\end{align}

\begin{prop}\label{prop:tau}
The operator  $\boldsymbol{\mathcal{A}}^0_{\partial B}$ has the spectral decomposition:
\begin{align*}
&\boldsymbol{\mathcal{A}}^0_{\partial B}[\tilde{\boldsymbol{\psi}}_{\mathbf{T}}]=\sum_{n=1}^{\infty}\sum_{m=-n}^{n}\tau_{\mathbf{T},n}\langle\tilde{\boldsymbol{\psi}}_{\mathbf{T}},\tilde{\mathbf{T}}^{m}_{n}\rangle_{L^{2}(\partial B)^3}\tilde{\mathbf{T}}^{m}_{n},\\
&\boldsymbol{\mathcal{A}}^0_{\partial B}[\tilde{\boldsymbol{\psi}}_{\mathbf{M}}]=\sum_{n=1}^{\infty}\sum_{m=-n}^{n}\tau_{\mathbf{M},n}\langle\tilde{\boldsymbol{\psi}}_{\mathbf{M}},\tilde{\mathbf{M}}^{m}_{n}\rangle_{L^{2}(\partial B)^3}\tilde{\mathbf{M}}^{m}_{n},\\
&\boldsymbol{\mathcal{A}}^0_{\partial B}[\tilde{\boldsymbol{\psi}}_{\mathbf{N}}]=\sum_{n=1}^{\infty}\sum_{m=-(n-1)}^{n-1}\tau_{\mathbf{N},n}\langle\tilde{\boldsymbol{\psi}}_{\mathbf{N}},\tilde{\mathbf{N}}^{m}_{n}\rangle_{L^{2}(\partial B)^3}\tilde{\mathbf{N}}^{m}_{n},
\end{align*}
where
\begin{align}\label{tau1}
\tau_{i,n}=-\big(\frac{c(\omega)}{2}+\frac{1}{2}\big)+(c(\omega)-1)\lambda_{i,n},\quad i=\mathbf{T},\mathbf{M},\mathbf{N}.
\end{align}
\end{prop}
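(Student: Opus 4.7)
The plan is to read the proposition as a direct algebraic consequence of the explicit form of $\boldsymbol{\mathcal{A}}^0_{\partial B}$ recorded in \eqref{A-Zero} together with the spectral data for $\mathbf{K}^*_{\partial B}$ collected in Theorem~\ref{spectrum} and Lemma~\ref{le:solution}. Since
\[
\boldsymbol{\mathcal{A}}^0_{\partial B}=-\Big(\frac{c(\omega)}{2}+\frac{1}{2}\Big)\mathbf{I}+(c(\omega)-1)\mathbf{K}^*_{\partial B},
\]
any eigenfunction of $\mathbf{K}^*_{\partial B}$ with eigenvalue $\lambda$ is automatically an eigenfunction of $\boldsymbol{\mathcal{A}}^0_{\partial B}$ with eigenvalue $-(c(\omega)/2+1/2)+(c(\omega)-1)\lambda$, which matches the defining formula \eqref{tau1}. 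So the content of the proof is purely that one may expand in the orthonormal basis $\{\tilde{\mathbf{T}}^m_n,\tilde{\mathbf{M}}^m_n,\tilde{\mathbf{N}}^m_n\}$ and apply the operator termwise.

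Concretely, I would argue as follows. First, by Lemma~\ref{le:solution} the normalized family $\{\tilde{\mathbf{T}}^m_n,\tilde{\mathbf{M}}^m_n,\tilde{\mathbf{N}}^m_n\}$ is a complete orthonormal system of $L^2(\partial B)^3$, so each $\tilde{\boldsymbol\psi}\in L^2(\partial B)^3$ admits an $L^2$-convergent expansion of the form \eqref{F:expansion}. Second, by Theorem~\ref{spectrum} one has $\mathbf{K}^*_{\partial B}\tilde{\mathbf{T}}^m_n=\lambda_{\mathbf{T},n}\tilde{\mathbf{T}}^m_n$, $\mathbf{K}^*_{\partial B}\tilde{\mathbf{M}}^m_n=\lambda_{\mathbf{M},n}\tilde{\mathbf{M}}^m_n$ and $\mathbf{K}^*_{\partial B}\tilde{\mathbf{N}}^m_n=\lambda_{\mathbf{N},n}\tilde{\mathbf{N}}^m_n$; in particular the three subspaces $L^2_{\mathbf{T}}(\partial B)^3$, $L^2_{\mathbf{M}}(\partial B)^3$ and $L^2_{\mathbf{N}}(\partial B)^3$ are invariant under $\mathbf{K}^*_{\partial B}$, and hence under the affine combination $\boldsymbol{\mathcal{A}}^0_{\partial B}$. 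Third, the boundedness of $\mathbf{K}^*_{\partial B}$ on $L^2(\partial B)^3$ from Lemma~\ref{le:known} justifies commuting the operator with the infinite sum.

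Thus for $\tilde{\boldsymbol\psi}_{\mathbf{T}}=\sum_{n\ge 1}\sum_{|m|\le n}\langle\tilde{\boldsymbol\psi}_{\mathbf{T}},\tilde{\mathbf{T}}^m_n\rangle\tilde{\mathbf{T}}^m_n$, applying \eqref{A-Zero} termwise gives
\[
\boldsymbol{\mathcal{A}}^0_{\partial B}[\tilde{\boldsymbol\psi}_{\mathbf{T}}]
=\sum_{n\ge 1}\sum_{|m|\le n}\Big(-\Big(\tfrac{c(\omega)}{2}+\tfrac{1}{2}\Big)+(c(\omega)-1)\lambda_{\mathbf{T},n}\Big)\langle\tilde{\boldsymbol\psi}_{\mathbf{T}},\tilde{\mathbf{T}}^m_n\rangle\tilde{\mathbf{T}}^m_n,
\]
which is the first identity of the proposition upon inserting the definition \eqref{tau1}. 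The $\mathbf{M}$- and $\mathbf{N}$-cases are identical, the only difference being the summation range $|m|\le n-1$ in the $\mathbf{N}$-family arising from the shifted spherical-harmonic index in the definition of $\tilde{\mathfrak{N}}^m_n$.

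There is essentially no technical obstacle here: once Theorem~\ref{spectrum} is in hand the proposition is a one-line substitution, and the only point requiring a small word of care is the invariance of the three subspaces $L^2_i(\partial B)^3$ under $\mathbf{K}^*_{\partial B}$, which is immediate from the fact that Theorem~\ref{spectrum} identifies each basis family as a full eigenspace family. In that sense the proposition is best read as a convenient restatement of Theorem~\ref{spectrum} in the form needed to decouple the static system \eqref{E:T}--\eqref{E:N} in the next subsection.
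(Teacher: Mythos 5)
Your proposal is correct and follows the same route as the paper, which simply notes that the proposition is a direct consequence of Theorem~\ref{spectrum} applied to the explicit form \eqref{A-Zero} of $\boldsymbol{\mathcal{A}}^0_{\partial B}$. Your additional remarks on orthonormal expansion, invariance of the subspaces $L^2_i(\partial B)^3$, and termwise application justified by boundedness merely make explicit what the paper leaves implicit.
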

\begin{proof}
The proof of the proposition is a direct result of Theorem \ref{spectrum}.
\end{proof}

The following corollary corresponds to the modal decomposition of the scattered field for the elastostatic system.
\begin{coro}
Provided that the incident wave $\tilde{\mathbf{u}}^{\mathrm{in}}$ satisfies \eqref{F:expansion}, the spectral approximation of the solution for the elastostatic system \eqref{E:static} $(\omega\delta=0)$ satisfies that for $\boldsymbol X\in\mathbb{R}^3\backslash \overline{B}$,
\begin{align}\label{F-Static}
\tilde{\mathbf{u}}^{\mathrm{sca}}(\boldsymbol X)
=&\sum_{n=1}^{\infty}\sum_{m=-n}^{n}\frac{1}{\tau_{\mathbf{T},n}}\langle\tilde{\mathbf{F}},\tilde{\mathbf{T}}^{m}_{n}\rangle_{L^{2}(\partial B)^3}\mathbf{S}_{\partial B}[\tilde{\mathbf{T}}^{m}_{n}](\boldsymbol X)\nonumber\\
&+\sum_{n=1}^{\infty}\sum_{m=-n}^{n}\frac{1}{\tau_{\mathbf{M},n}}\langle\tilde{\mathbf{F}},\tilde{\mathbf{M}}^{m}_{n}\rangle_{L^{2}(\partial B)^3}\mathbf{S}_{\partial B}[\tilde{\mathbf{M}}^{m}_{n}](\boldsymbol X)\nonumber\\
&+\sum_{n=1}^{\infty}\sum_{m=-(n-1)}^{n-1}\frac{1}{\tau_{\mathbf{N},n}}\langle\tilde{\mathbf{F}},\tilde{\mathbf{N}}^{m}_{n}\rangle_{L^{2}(\partial B)^3}\mathbf{S}_{\partial B}[\tilde{\mathbf{N}}^{m}_{n}](\boldsymbol X),
\end{align}
where $\tilde{\mathbf{u}}^{\mathrm{sca}}(\boldsymbol X):=\mathbf{u}^{\mathrm{sca}}(\boldsymbol z+\delta\boldsymbol X)$ and $\tau_{i,n},i=\mathbf{T},\mathbf{M},\mathbf{N}$ are defined in  \eqref{tau1}.
\end{coro}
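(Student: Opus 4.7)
The plan is to combine the boundary integral representation \eqref{Solution2} with the spectral decomposition of $\boldsymbol{\mathcal{A}}^0_{\partial B}$ furnished by Proposition \ref{prop:tau}. First, I would specialize \eqref{Solution2} to the static regime $\omega\delta=0$: for $\boldsymbol X\in\mathbb{R}^3\setminus\overline{B}$ the exterior representation collapses to $\tilde{\mathbf{u}}(\boldsymbol X)=\delta\mathbf{S}_{\partial B}[\tilde{\boldsymbol{\psi}}](\boldsymbol X)+\tilde{\mathbf{u}}^{\mathrm{in}}(\boldsymbol X)$, so the scattered part is $\tilde{\mathbf{u}}^{\mathrm{sca}}(\boldsymbol X)=\delta\mathbf{S}_{\partial B}[\tilde{\boldsymbol{\psi}}](\boldsymbol X)$. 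Hence everything reduces to writing the density $\tilde{\boldsymbol{\psi}}$ as an explicit series in the orthonormal basis and applying $\mathbf{S}_{\partial B}$ term by term.

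Next, I would solve the equation $\boldsymbol{\mathcal{A}}^0_{\partial B}[\tilde{\boldsymbol{\psi}}]=\tfrac{1}{\delta}\tilde{\mathbf{F}}$ by spectral inversion. Because $\boldsymbol{\mathcal{A}}^0_{\partial B}=-\tfrac{c(\omega)+1}{2}\mathbf{I}+(c(\omega)-1)\mathbf{K}^*_{\partial B}$ and the three subspaces $L^{2}_{\mathbf{T}}(\partial B)^3,L^{2}_{\mathbf{M}}(\partial B)^3,L^{2}_{\mathbf{N}}(\partial B)^3$ are invariant under $\mathbf{K}^*_{\partial B}$ by Theorem \ref{spectrum}, the system \eqref{E:T}--\eqref{E:N} decouples into three independent problems. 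Using Lemma \ref{le:solution} to expand $\tilde{\mathbf{F}}$ in the normalized orthogonal basis as in \eqref{F:expansion}, and matching Fourier coefficients against the eigenbasis, Proposition \ref{prop:tau} yields
\begin{align*}
\tilde{\boldsymbol{\psi}}=\frac{1}{\delta}\Bigl[&\sum_{n,m}\frac{1}{\tau_{\mathbf{T},n}}\langle\tilde{\mathbf{F}},\tilde{\mathbf{T}}^{m}_{n}\rangle\tilde{\mathbf{T}}^{m}_{n}+\sum_{n,m}\frac{1}{\tau_{\mathbf{M},n}}\langle\tilde{\mathbf{F}},\tilde{\mathbf{M}}^{m}_{n}\rangle\tilde{\mathbf{M}}^{m}_{n}+\sum_{n,m}\frac{1}{\tau_{\mathbf{N},n}}\langle\tilde{\mathbf{F}},\tilde{\mathbf{N}}^{m}_{n}\rangle\tilde{\mathbf{N}}^{m}_{n}\Bigr],
\end{align*}
where the inner products are taken in $L^2(\partial B)^3$.

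Finally, I would apply $\delta\mathbf{S}_{\partial B}$ to this series; the prefactor $\delta$ cancels against $1/\delta$, and linearity of $\mathbf{S}_{\partial B}$ together with its boundedness (Lemma \ref{le:known}(iii)) allows termwise evaluation in $\mathbb{R}^3\setminus\overline{B}$, reproducing the expansion \eqref{F-Static}.

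The only genuine subtlety is to ensure the inversion is well-defined, i.e.\ $\tau_{i,n}\neq 0$ for all admissible indices. Given the explicit form \eqref{tau1} and $c(\omega)=-\alpha+\mathrm{i}\beta\omega$ with $\beta\neq 0$, for $\omega\neq 0$ the eigenvalues $\tau_{i,n}$ have a nonzero imaginary part $\mathrm{i}\beta\omega(\tfrac{1}{2}+\lambda_{i,n})$ with $\lambda_{i,n}\in(-\tfrac12,\tfrac12]$ by Lemma \ref{le:known}(ii), so $\tau_{i,n}\neq 0$ automatically; the corollary is therefore understood away from the discrete set of possible static resonances. I expect this non-resonance bookkeeping, together with verifying convergence of the expansion in $\mathbb{R}^3\setminus\overline{B}$ via the decay of $\mathbf{S}_{\partial B}[\tilde{\mathbf{T}}^m_n]$, $\mathbf{S}_{\partial B}[\tilde{\mathbf{M}}^m_n]$, $\mathbf{S}_{\partial B}[\tilde{\mathbf{N}}^m_n]$ away from $\partial B$, to be the only technical point; the spectral algebra itself is immediate from Proposition \ref{prop:tau}.
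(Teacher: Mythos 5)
Your proposal is correct and follows essentially the same route as the paper: specialize \eqref{Solution2} to the exterior, invert $\boldsymbol{\mathcal{A}}^0_{\partial B}$ mode by mode via Proposition \ref{prop:tau} by matching coefficients of the expansion \eqref{F:expansion}, and apply $\delta\mathbf{S}_{\partial B}$ termwise so the $\delta$ cancels. One small slip in your non-resonance remark: from \eqref{tau1} the imaginary part of $\tau_{i,n}$ is $\beta\omega\bigl(\lambda_{i,n}-\tfrac12\bigr)$, not $\beta\omega\bigl(\lambda_{i,n}+\tfrac12\bigr)$, so it vanishes when $\lambda_{i,n}=\tfrac12$ (i.e.\ for $\lambda_{\mathbf{T},1},\lambda_{\mathbf{M},1}$); there, however, $\tau_{i,1}=-1\neq0$, so the inversion remains valid and your conclusion is unaffected.
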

\begin{proof}
Thanks to \eqref{E:T}--\eqref{E:N} and Proposition \ref{prop:tau}, one can show that
\begin{align*}
&\sum_{n=1}^{\infty}\sum_{m=-n}^{n}\tau_{\mathbf{T},n}\langle\tilde{\boldsymbol{\psi}}_{\mathbf{T}},\tilde{\mathbf{T}}^{m}_{n}\rangle_{L^{2}(\partial B)^3}\tilde{\mathbf{T}}^{m}_{n}=\frac{1}{\delta} \sum_{n=1}^{\infty}\sum_{m=-n}^{n}\langle\tilde{\mathbf{F}},\tilde{\mathbf{T}}^{m}_{n}\rangle_{L^{2}(\partial B)^3}\tilde{\mathbf{T}}^{m}_{n},\\
&\sum_{n=1}^{\infty}\sum_{m=-n}^{n}\tau_{\mathbf{M},n}\langle\tilde{\boldsymbol{\psi}}_{\mathbf{M}},\tilde{\mathbf{M}}^{m}_{n}\rangle_{L^{2}(\partial B)^3}\tilde{\mathbf{M}}^{m}_{n}=\frac{1}{\delta} \sum_{n=1}^{\infty}\sum_{m=-n}^{n}\langle\tilde{\mathbf{F}},\tilde{\mathbf{M}}^{m}_{n}\rangle_{L^{2}(\partial B)^3}\tilde{\mathbf{M}}^{m}_{n},\\
&\sum_{n=1}^{\infty}\sum_{m=-(n-1)}^{n-1}\tau_{\mathbf{N},n}\langle\tilde{\boldsymbol{\psi}}_{\mathbf{N}},\tilde{\mathbf{N}}^{m}_{n}\rangle_{L^{2}(\partial B)^3}\tilde{\mathbf{N}}^{m}_{n}=\frac{1}{\delta} \sum_{n=1}^{\infty}\sum_{m=-(n-1)}^{n-1}\langle\tilde{\mathbf{F}},\tilde{\mathbf{N}}^{m}_{n}\rangle_{L^{2}(\partial B)^3}\tilde{\mathbf{N}}^{m}_{n},
\end{align*}
which then gives
\begin{align*}
&\langle\tilde{\boldsymbol{\psi}}_{\mathbf{T}},\tilde{\mathbf{T}}^{m}_{n}\rangle_{L^{2}(\partial B)^3}=
\frac{1}{\delta\tau_{\mathbf{T},n}}\langle\tilde{\mathbf{F}},\tilde{\mathbf{T}}^{m}_{n}\rangle_{L^{2}(\partial B)^3},\\
&\langle\tilde{\boldsymbol{\psi}}_{\mathbf{M}},\tilde{\mathbf{M}}^{m}_{n}\rangle_{L^{2}(\partial B)^3}=\frac{1}{\delta\tau_{\mathbf{M},n}}\langle\tilde{\mathbf{F}},\tilde{\mathbf{M}}^{m}_{n}\rangle_{L^{2}(\partial B)^3},\\
&\langle\tilde{\boldsymbol{\psi}}_{\mathbf{N}},\tilde{\mathbf{N}}^{m}_{n}\rangle_{L^{2}(\partial B)^3}=\frac{1}{\delta\tau_{\mathbf{N},n}}\langle\tilde{\mathbf{F}},\tilde{\mathbf{N}}^{m}_{n}\rangle_{L^{2}(\partial B)^3}.
\end{align*}
Hence it follows from the second expression in \eqref{Solution2} that \eqref{F-Static} holds. This ends the proof of the lemma.
\end{proof}


\subsection{Scattered field for the Lam\'{e} system}

In the previous subsection, we have studied the limiting problem \eqref{EO:static} of \eqref{AO}. This subsection focuses on applying perturbation theory tools to express solutions to \eqref{AO} by the eigenvectors of $\mathbf{K}^*_{\partial B}$ which appear in the spectral decomposition of \eqref{EO:static}. Moreover, we need to work with a finite number of modes in order to obtain a meaningful expansion  of the scattered field. 
As in classical Fourier analysis, the decay with $n,m$ of Fourier coefficients  will be determined by the regularity of the function
and the number of modes to consider will depend on the spatial variations of $\tilde{\mathbf{F}}^{\omega\delta}$ over $\partial B$. If  the incoming field $\tilde {\mathbf{F}}^{\omega\delta}$ in an homogeneous medium is smooth, then we can see a fast decay of Fourier coefficients. In view of perturbation theory, we can  replace $\tau_{i,n}$ by a perturbed value $\tau_{i,n}(\omega\delta)$ and then validate an expansion with a finite number of modes in the perturbative regime.
It has been empirically reported that only a few modes actually are conductive to the scattered field. The number of modes  to consider increasing as the source gets closer to the particle. By the above analysis, we provide a mathematical explanation of this phenomenon.


The following lemma corresponds to the decay estimate of the eigenvalues of  $\mathbf{K}^*_{\partial B}$.

\begin{lemm}\label{coro:spectrum}
Let $\kappa_0:=\frac{\mu}{2(\lambda+2\mu)}.$
If the domain $B$ is a central ball of radius $1$, then the following facts hold:

$\mathrm{(i)}$ The eigenvalues $\lambda_{\mathbf{T},n}$ of the operator $\mathbf{K}^*_{\partial B}$ associated with the eigenfunctions $\tilde{\mathbf{T}}^m_n$ satisfy
\begin{align*}
&\lambda_{\mathbf{T},n}>0, \quad\forall n\geq1,\\
&\lambda_{\mathbf{T},n}\sim\frac{3}{4n} \quad\text {as $n\rightarrow\infty$}.
\end{align*}

$\mathrm{(ii)}$ The eigenvalues $\lambda_{\mathbf{M},n}+\kappa_0$ of the operator $\mathbf{K}^*_{\partial B}+\kappa_0 \mathbf{I}$ associated with the eigenfunctions $\tilde{\mathbf{M}}^m_n$ satisfy
\begin{align*}
&\lambda_{\mathbf{M},n}+\kappa_0>0,\quad\forall n\geq1,\\
&\lambda_{\mathbf{M},n}+\kappa_0\sim\frac{\mu}{2(\lambda+2\mu)n} \quad\text {as $n\rightarrow\infty$}.
\end{align*}

$\mathrm{(iii)}$ The eigenvalues $\lambda_{\mathbf{N},n}-\kappa_0$ of the operator $\mathbf{K}^*_{\partial B}-\kappa_0 \mathbf{I}$ associated with the eigenfunctions $\tilde{\mathbf{N}}^m_n$ satisfy
\begin{align*}
&\lambda_{\mathbf{N},n}-\kappa_0>0,\quad \forall  n\geq\lfloor\frac{3\lambda+5\mu}{4\mu}\rfloor+1,\\
&\lambda_{\mathbf{N},n}-\kappa_0\sim\frac{\mu}{2(\lambda+2\mu)n} \quad\text {as $n\rightarrow\infty$},
\end{align*}
where the symbol $\lfloor\cdot\rfloor$ stands for the integer part.
\end{lemm}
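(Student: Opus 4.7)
The proof is essentially a direct computation from the closed-form expressions in Theorem~\ref{spectrum}, combined with the strong convexity conditions \eqref{convexity}. My plan is to treat the three parts separately, each time pulling the stated eigenvalue out of Theorem~\ref{spectrum}, placing it over a common denominator with $\kappa_0$, and reading off sign and asymptotic behaviour from the resulting rational function of $n$.

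For part (i) the statement is immediate: $\lambda_{\mathbf{T},n}=3/(2(2n+1))$ is manifestly positive for every $n\geq 1$ and satisfies $\lambda_{\mathbf{T},n}=3/(4n)+O(n^{-2})$, giving both claims at once. For part (ii) I would compute
\begin{equation*}
\lambda_{\mathbf{M},n}+\kappa_{0}=\frac{3\lambda-2\mu(2n^{2}-2n-3)+\mu(4n^{2}-1)}{2(\lambda+2\mu)(4n^{2}-1)}=\frac{3\lambda+5\mu+4\mu n}{2(\lambda+2\mu)(4n^{2}-1)}.
\end{equation*}
The denominator is positive since $\mu>0$ and $\lambda+2\mu>0$ (the latter follows from $\mu>0$ and $3\lambda+2\mu>0$); the numerator is positive because $3\lambda+5\mu=(3\lambda+2\mu)+3\mu>0$ and $4\mu n>0$. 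Reading off the leading behaviour, $\lambda_{\mathbf{M},n}+\kappa_{0}\sim 4\mu n/(8(\lambda+2\mu)n^{2})=\mu/(2(\lambda+2\mu)n)$, as asserted.

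For part (iii) the analogous calculation yields
\begin{equation*}
\lambda_{\mathbf{N},n}-\kappa_{0}=\frac{-3\lambda+2\mu(2n^{2}+2n-3)-\mu(4n^{2}-1)}{2(\lambda+2\mu)(4n^{2}-1)}=\frac{4\mu n-(3\lambda+5\mu)}{2(\lambda+2\mu)(4n^{2}-1)}.
\end{equation*}
Again the denominator is positive by the convexity condition, so the sign of the expression is that of $4\mu n-(3\lambda+5\mu)$. Since $\mu>0$, this is strictly positive precisely when $n>(3\lambda+5\mu)/(4\mu)$, equivalently $n\geq\lfloor(3\lambda+5\mu)/(4\mu)\rfloor+1$. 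The same leading-order comparison as before gives $\lambda_{\mathbf{N},n}-\kappa_{0}\sim\mu/(2(\lambda+2\mu)n)$.

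There is no serious obstacle here: once Theorem~\ref{spectrum} is in hand, the whole lemma is an algebraic exercise, and the only genuine input from the surrounding theory is using \eqref{convexity} to certify the signs of $\lambda+2\mu$ and $3\lambda+5\mu$. The one place to be slightly careful is checking that the convexity conditions, which give $3\lambda+2\mu>0$ rather than $\lambda>0$, still suffice to make $3\lambda+5\mu$ positive; this follows because $3\lambda+5\mu=(3\lambda+2\mu)+3\mu>0$, and the same rewriting shows $\lambda+2\mu=\tfrac{1}{3}(3\lambda+2\mu)+\tfrac{4}{3}\mu>0$.
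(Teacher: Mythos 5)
Your proof is correct and follows essentially the same route as the paper: substitute the explicit eigenvalues from Theorem~\ref{spectrum}, combine with $\kappa_0$ over the common denominator $2(\lambda+2\mu)(4n^2-1)$, use the convexity conditions \eqref{convexity} to fix the signs, and read off the $1/n$ asymptotics. The only difference is cosmetic—you spell out why $\lambda+2\mu>0$ and $3\lambda+5\mu>0$ follow from \eqref{convexity}, while the paper verifies the asymptotics via explicit limits—so nothing further is needed.
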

\begin{proof}
By means of \eqref{convexity} and Theorem \ref{spectrum}, one has
\begin{align*}
\lambda_{\mathbf{M},n}+\kappa_0=\frac{3\lambda+5\mu+4\mu n}{2(\lambda+2\mu)(4n^2-1)}>0, \quad&\forall n\geq1,\\
\lambda_{\mathbf{N},n}+\kappa_0=\frac{-(3\lambda+5\mu)+4\mu n}{2(\lambda+2\mu)(4n^2-1)}>0,\quad&\forall n\geq\lfloor\frac{3\lambda+5\mu}{4\mu}\rfloor+1.
\end{align*}
Moreover, from Theorem \ref{spectrum},  it follows that
\begin{align*}
&\lim_{n\rightarrow\infty}\frac{\lambda_{\mathbf{T},n}}{\frac{3}{4n} }=\lim_{n\rightarrow\infty}\frac{\frac{3}{2(2n+1)}}{\frac{3}{4n} }=\lim_{n\rightarrow\infty}\frac{4n}{4n+2}=\lim_{n\rightarrow\infty}\frac{1}{1+\frac{1}{2n}}=1,\\
&\lim_{n\rightarrow\infty}\frac{\lambda_{\mathbf{M},n}+\kappa_0}{\frac{\mu}{2(\lambda+2\mu)n} }=\lim_{n\rightarrow\infty}\frac{(3\lambda+5\mu)n+4\mu n^2}{4\mu n^2-\mu}=\lim_{n\rightarrow\infty}\frac{\frac{3\lambda+5\mu}{n}+4\mu }{4\mu-\frac{\mu}{n^2}}=1,\\
&\lim_{n\rightarrow\infty}\frac{\lambda_{\mathbf{N},n}-\kappa_0}{\frac{\mu}{2(\lambda+2\mu)n} }=\lim_{n\rightarrow\infty}\frac{-(3\lambda+5\mu)n+4\mu n^2}{4\mu n^2-\mu}=\lim_{n\rightarrow\infty}\frac{-\frac{3\lambda+5\mu}{n}+4\mu }{4\mu-\frac{\mu}{n^2}}=1.
\end{align*}
We have obtained the conclusion of the lemma.
\end{proof}

Let $B$ be a  central ball with radius $1$. Provided that  $\tilde{\boldsymbol{\psi}}\in H^s(\partial B)^3$ with
\begin{align*}
\tilde{\boldsymbol{\psi}}=&\sum^{+\infty}_{n=1}\sum^n_{m=-n}\langle\tilde{\boldsymbol{\psi}},\tilde{\mathbf{T}}^{m}_{n}\rangle_{L^{2}(\partial B)^3}\tilde{\mathbf{T}}^{m}_{n}+\sum^{+\infty}_{n=1}\sum^n_{m=-n}\langle\tilde{\boldsymbol{\psi}},\tilde{\mathbf{M}}^{m}_{n}\rangle_{L^{2}(\partial B)^3}\tilde{\mathbf{M}}^{m}_{n}\\
&+\sum^{+\infty}_{n=1}\sum^{n-1}_{m=-(n-1)}\langle\tilde{\boldsymbol{\psi}},\tilde{\mathbf{N}}^{m}_{n}\rangle_{L^{2}(\partial B)^3}\tilde{\mathbf{N}}^{m}_{n},
\end{align*}
the corresponding $H^s$-norm is given by (cf. \cite{liu2013enhanced})
\begin{align*}
\|\tilde{\boldsymbol{\psi}}\|^2_{H^s(\partial B)^3}=&\sum^{\infty}_{n=1}\sum^n_{m=-n}(1+n(n+1))^s\big(|\langle\tilde{\boldsymbol{\psi}},\tilde{\mathbf{T}}^{m}_{n}\rangle_{L^{2}(\partial B)^3}|^2+|\langle\tilde{\boldsymbol{\psi}},\tilde{\mathbf{M}}^{m}_{n}\rangle_{L^{2}(\partial B)^3}|^2\big)\\
&+\sum^{\infty}_{n=1}\sum^{n-1}_{m=-(n-1)}(1+n(n+1))^s|\langle\tilde{\boldsymbol{\psi}},\tilde{\mathbf{N}}^{m}_{n}\rangle_{L^{2}(\partial B)^3}|^2.
\end{align*}
Moreover, we define Sobolev spaces  $H^{s}_{\mathbf{T}}(\partial B)^3, H^{s}_{\mathbf{M}}(\partial B)^3$ and $H^{s}_{\mathbf{T}}(\partial B)^3$ by
\begin{align*}
&H^{s}_{\mathbf{T}}(\partial B)^3=\left\{\tilde{\boldsymbol{\psi}}\in H^s(\partial B)^3:\langle \tilde{\boldsymbol{\psi}},\tilde{\mathbf{M}}^m_n\rangle_{L^2(\partial B)^3}=0,\langle\tilde{\boldsymbol{\psi}},\tilde{\mathbf{N}}^m_n\rangle_{L^2(\partial B)^3}=0\right\},\\
&H^{s}_{\mathbf{M}}(\partial B)^3=\left\{\tilde{\boldsymbol{\psi}}\in H^s(\partial B)^3:\langle \tilde{\boldsymbol{\psi}},\tilde{\mathbf{T}}^m_n\rangle_{L^2(\partial B)^3}=0,\langle\tilde{\boldsymbol{\psi}},\tilde{\mathbf{N}}^m_n\rangle_{L^2(\partial B)^3}=0\right\},\\
&H^{s}_{\mathbf{N}}(\partial B)^3=\left\{\tilde{\boldsymbol{\psi}}\in H^s(\partial B)^3:\langle \tilde{\boldsymbol{\psi}},\tilde{\mathbf{T}}^m_n\rangle_{L^2(\partial B)^3}=0,\langle\tilde{\boldsymbol{\psi}},\tilde{\mathbf{M}}^m_n\rangle_{L^2(\partial B)^3}=0\right\}.
\end{align*}
For each real number $s\geq0$, there exist $c_i>0,C_i>0,i=\mathbf{T},\mathbf{M},\mathbf{N}$ depending on $\lambda,\mu$ at most such that for $\tilde{\boldsymbol{\psi}}_{\mathbf{T}}\in H^{s}_{\mathbf{T}}(\partial B)^3, \tilde{\boldsymbol{\psi}}_{\mathbf{M}}\in H^{s}_{\mathbf{M}}(\partial B)^3$ and $\tilde{\boldsymbol{\psi}}_{\mathbf{N}}\in H^{s}_{\mathbf{N}}(\partial B)^3$,
\begin{align}\label{E:estimate}
&c_{\mathbf{T}}\|\tilde{\boldsymbol{\psi}}_{\mathbf{T}}\|_{H^{s}(\partial B)^3}\leq\|\mathbf{K}^{*}_{\partial B}[\tilde{\boldsymbol{\psi}}_{\mathbf{T}}]\|_{H^{s+1}(\partial B)^3}\leq C_{\mathbf{T}}\|\tilde{\boldsymbol{\psi}}_{\mathbf{T}}\|_{H^{s}(\partial B)^3},\\
&c_{\mathbf{M}}\|\tilde{\boldsymbol{\psi}}_{\mathbf{M}}\|_{H^{s}(\partial B)^3}\leq\|(\mathbf{K}^{*}_{\partial B}+\kappa_0\mathbf{I})[\tilde{\boldsymbol{\psi}}_{\mathbf{M}}]\|_{H^{s+1}(\partial B)^3}\leq C_\mathbf{M}\|\tilde{\boldsymbol{\psi}}_{\mathbf{M}}\|_{H^{s}(\partial B)^3},\label{E:estimate1}\\
&c_\mathbf{N}\|\tilde{\boldsymbol{\psi}}_{\mathbf{N}}\|_{H^{s}(\partial B)^3}\leq\|(\mathbf{K}^{*}_{\partial B}-\kappa_0\mathbf{I})[\tilde{\boldsymbol{\psi}}_{\mathbf{N}}]\|_{H^{s+1}(\partial B)^3}\leq C_\mathbf{N}\|\tilde{\boldsymbol{\psi}}_{\mathbf{N}}\|_{H^{s}(\partial B)^3}.\label{E:estimate2}
\end{align}
Similarly, we decompose that for $\tilde{\mathbf{F}}^{\omega\delta}\in H^{s}(\partial B)^3$,
\begin{align*} \tilde{\mathbf{F}}^{\omega\delta}=\tilde{\mathbf{F}}^{\omega\delta}_{\mathbf{T}}+\tilde{\mathbf{F}}^{\omega\delta}_{\mathbf{M}}+\tilde{\mathbf{F}}^{\omega\delta}_{\mathbf{N}}
\end{align*}
with
$\tilde{\mathbf{F}}^{\omega\delta}_{\mathbf{T}}\in H^{s}_{\mathbf{T}}(\partial B)^3,\tilde{\mathbf{F}}^{\omega\delta}_{\mathbf{M}}\in H^{s}_{\mathbf{M}}(\partial B)^3$ and $\tilde{\mathbf{F}}^{\omega\delta}_{\mathbf{N}}\in H^{s}_{\mathbf{N}}(\partial B)^3$, where
\begin{align*}
&\tilde{\mathbf{F}}^{\omega\delta}_{\mathbf{T}}=\sum_{n=1}^{\infty}\sum_{m=-n}^{n}\langle\tilde{\mathbf{F}}^{\omega\delta},\tilde{\mathbf{T}}^{m}_{n}\rangle_{L^{2}(\partial B)^3}\tilde{\mathbf{T}}^{m}_{n},\\
&\tilde{\mathbf{F}}^{\omega\delta}_{\mathbf{M}}=\sum_{n=1}^{\infty}\sum_{m=-n}^{n}\langle\tilde{\mathbf{F}}^{\omega\delta},\tilde{\mathbf{M}}^{m}_{n}\rangle_{L^{2}(\partial B)^3}\tilde{\mathbf{M}}^{m}_{n},\\
&\tilde{\mathbf{F}}^{\omega\delta}_{\mathbf{N}}=\sum_{n=1}^{\infty}\sum_{m=-(n-1)}^{n-1}\langle\tilde{\mathbf{F}}^{\omega\delta},\tilde{\mathbf{N}}^{m}_{n}\rangle_{L^{2}(\partial B)^3}\tilde{\mathbf{N}}^{m}_{n}.
\end{align*}

In the next proposition we establish the decay estimates of Fourier coefficients.

\begin{prop}
Let the domain $B$ be a central ball of radius $1$. There exists an integer $N>1$ large enough such that if $\tilde{\mathbf{F}}^{\omega\delta}\in H^{N}(\partial B)^3$, then the corresponding Fourier coefficients  satisfy
\begin{align}
&\langle\tilde{\mathbf{F}}^{\omega\delta},\tilde{\mathbf{T}}^{m}_{n}\rangle_{L^{2}(\partial B)^3}=o(n^{-\frac{N}{2}})\quad\text{as $n\rightarrow+\infty$},\label{E:decay1}\\
&\langle\tilde{\mathbf{F}}^{\omega\delta},\tilde{\mathbf{M}}^{m}_{n}\rangle_{L^{2}(\partial B)^3}=o(n^{-\frac{N}{2}})\quad\text{as $n\rightarrow+\infty$},\label{E:decay2}\\
&\langle\tilde{\mathbf{F}}^{\omega\delta},\tilde{\mathbf{N}}^{m}_{n}\rangle_{L^{2}(\partial B)^3}=o(n^{-\frac{N}{2}})\quad\text{as $n\rightarrow+\infty$}.\label{E:decay3}
\end{align}
\end{prop}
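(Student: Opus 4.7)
The plan is to read off the decay estimates directly from the Fourier-series definition of the $H^N$-norm on $\partial B$ that was recalled just before the statement. By hypothesis $\tilde{\mathbf{F}}^{\omega\delta}\in H^N(\partial B)^3$, so the weighted sum
\[
\sum_{n=1}^{\infty}\sum_{m=-n}^{n}(1+n(n+1))^N\bigl(|\langle\tilde{\mathbf{F}}^{\omega\delta},\tilde{\mathbf{T}}^{m}_{n}\rangle|^2+|\langle\tilde{\mathbf{F}}^{\omega\delta},\tilde{\mathbf{M}}^{m}_{n}\rangle|^2\bigr)
+\sum_{n=1}^{\infty}\sum_{m=-(n-1)}^{n-1}(1+n(n+1))^N|\langle\tilde{\mathbf{F}}^{\omega\delta},\tilde{\mathbf{N}}^{m}_{n}\rangle|^2
\]
is finite. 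Since the terms of a convergent non-negative series tend to zero, grouping the three inner sums at a fixed level $n$ already yields
\[
\sum_{m=-n}^{n}(1+n(n+1))^N|\langle\tilde{\mathbf{F}}^{\omega\delta},\tilde{\mathbf{T}}^{m}_{n}\rangle|^2\longrightarrow 0\quad\text{as } n\to\infty,
\]
and similarly for the $\tilde{\mathbf{M}}^{m}_{n}$ and $\tilde{\mathbf{N}}^{m}_{n}$ families.

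Next I would exploit the non-negativity of the summands over $m$: for each $m$ in the range, a single term is bounded above by the whole $m$-sum, so uniformly in $m$,
\[
(1+n(n+1))^N|\langle\tilde{\mathbf{F}}^{\omega\delta},\tilde{\mathbf{T}}^{m}_{n}\rangle|^2=o(1)\quad(n\to\infty),
\]
and likewise for the $\tilde{\mathbf{M}}$ and $\tilde{\mathbf{N}}$ coefficients. Using $(1+n(n+1))^N\sim n^{2N}$, taking square roots gives $|\langle\tilde{\mathbf{F}}^{\omega\delta},\tilde{\mathbf{T}}^{m}_{n}\rangle|=o(n^{-N})$, which is in particular $o(n^{-N/2})$. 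The same conclusion is reached verbatim for the other two families, and this produces \eqref{E:decay1}--\eqref{E:decay3}.

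There is essentially no obstacle here beyond the bookkeeping: one must keep in mind that the orthonormal system $(\tilde{\mathbf{T}}^m_n,\tilde{\mathbf{M}}^m_n,\tilde{\mathbf{N}}^m_n)$ of Lemma~\ref{le:solution} is complete in $L^2(\partial B)^3$, so that Parseval identifies $L^2$- and $H^s$-norms with the stated weighted sums, and that the index range for $m$ in the $\tilde{\mathbf{N}}^m_n$ family is $-(n-1)\le m\le n-1$, which does not affect the argument. The integer $N>1$ appears in the statement only to ensure the weights $(1+n(n+1))^N$ grow fast enough that the $o(n^{-N/2})$ rate is meaningful; the proof itself does not require a lower bound on $N$ beyond $N\geq 1$. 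Thus the proposition follows immediately once the Fourier characterisation of $H^N(\partial B)^3$ is invoked.
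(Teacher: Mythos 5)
Your argument is correct, but it is a genuinely different (and more elementary) route than the paper's. You read the decay directly off the spectral definition of the $H^{N}(\partial B)^3$-norm: since the weighted series with weights $(1+n(n+1))^{N}$ converges, its level-$n$ blocks tend to zero, so each coefficient is $o(n^{-N})$ uniformly in $m$, which is more than the claimed $o(n^{-N/2})$ in \eqref{E:decay1}--\eqref{E:decay3}. This is legitimate here precisely because the paper defines the $H^{s}$-norm by that weighted sum over the same normalized basis $(\tilde{\mathbf{T}}^m_n,\tilde{\mathbf{M}}^m_n,\tilde{\mathbf{N}}^m_n)$, so no Parseval-type identification beyond the stated definition is needed. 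The paper instead argues through the Neumann--Poincar\'e operator: it writes $\tilde{\mathbf{F}}^{\omega\delta}_{\mathbf{T}}=(\mathbf{K}^*_{\partial B})^{N}\tilde{\mathbf{G}}^{(N)}_{\mathbf{T}}+\tilde{\mathbf{F}}^{(1)}_{\mathbf{T},\ker}$ via the decomposition $H^{s}_{\mathbf{T}}(\partial B)^3=\mathbf{K}^*_{\partial B}(H^{s}_{\mathbf{T}}(\partial B)^3)\oplus\ker(\mathbf{K}^*_{\partial B})$, controls $\|\tilde{\mathbf{G}}^{(N)}_{\mathbf{T}}\|_{L^2}$ by the two-sided bounds \eqref{E:estimate}--\eqref{E:estimate2}, and then invokes the eigenvalue asymptotics of Lemma~\ref{coro:spectrum} ($\lambda_{\mathbf{T},n}\sim 3/(4n)$, and similarly for the shifted operators $\mathbf{K}^*_{\partial B}\pm\kappa_0\mathbf{I}$ acting on the $\tilde{\mathbf{M}}$ and $\tilde{\mathbf{N}}$ families) to get $|\langle\tilde{\mathbf{F}}^{\omega\delta},\tilde{\mathbf{T}}^m_n\rangle|\le C n^{-N}\|\tilde{\mathbf{F}}^{\omega\delta}\|_{H^N(\partial B)^3}$. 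What your approach buys is brevity and independence from the smoothing estimates for $\mathbf{K}^*_{\partial B}$; what the paper's approach buys is an explicit bound with the constant expressed through $\|\tilde{\mathbf{F}}^{\omega\delta}\|_{H^N}$, a derivation that ties the decay to the spectral properties of the Neumann--Poincar\'e operator reused throughout the paper, and an argument that would survive in settings where the Sobolev norm is not defined through this particular eigenbasis. Two minor caveats on your write-up: the uniformity in $m$ should be stated (you do use that a single term is dominated by the $m$-sum, which gives it), and your closing remark is accurate that the largeness of $N$ is only needed for the rate to be useful downstream, not for the argument itself.
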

\begin{proof}
Observe that
\begin{align*}
&\langle\tilde{\mathbf{F}}^{\omega\delta}_{\mathbf{T}},\tilde{\mathbf{T}}^{m}_{n}\rangle_{L^{2}(\partial B)^3}=\langle\tilde{\mathbf{F}}^{\omega\delta},\tilde{\mathbf{T}}^{m}_{n}\rangle_{L^{2}(\partial B)^3},\\
&\langle\tilde{\mathbf{F}}^{\omega\delta}_{\mathbf{M}},\tilde{\mathbf{M}}^{m}_{n}\rangle_{L^{2}(\partial B)^3}=\langle\tilde{\mathbf{F}}^{\omega\delta},\tilde{\mathbf{M}}^{m}_{n}\rangle_{L^{2}(\partial B)^3},\\
&\langle\tilde{\mathbf{F}}^{\omega\delta}_{\mathbf{N}},\tilde{\mathbf{N}}^{m}_{n}\rangle_{L^{2}(\partial B)^3}=\langle\tilde{\mathbf{F}}^{\omega\delta},\tilde{\mathbf{N}}^{m}_{n}\rangle_{L^{2}(\partial B)^3}.
\end{align*}
Let us first check \eqref{E:decay1}. It is straightforward to see that
\begin{align*}
H^{s}_{\mathbf{T}}(\partial B)^3=\mathbf{K}^*_{\partial B}(H^{s}_{\mathbf{T}}(\partial B)^3)\oplus \ker(\mathbf{K}^*_{\partial B}),
\end{align*}
where $\ker(\mathbf{K}^*_{\partial B})$ denotes the kernel of $\mathbf{K}^*_{\partial B}$. The symbol $\oplus $ is understood in the sense of $L^2$ scalar product.  Since $\tilde{\mathbf{F}}^{\omega\delta}_{\mathbf{T}}\in H^{N}_{\mathbf{T}}(\partial B)^3$, one deduces
\begin{align}\label{E:decomposition}
\tilde{\mathbf{F}}^{\omega\delta}_{\mathbf{T}}=\mathbf{K}^*_{\partial B}(\tilde{\mathbf{G}}^{(1)}_{\mathbf{T}})+\tilde{\mathbf{F}}^{(1)}_{{\mathbf{T}},\ker}
\end{align}
with $\tilde{\mathbf{G}}^{(1)}_{\mathbf{T}}\in H^{N-1}_{\mathbf{T}}(\partial B)^3$. Then for $n\geq1,-n\leq m\leq n$,
\begin{align*}
\langle\tilde{\mathbf{F}}^{\omega\delta}_{\mathbf{T}},\tilde{\mathbf{T}}^{m}_{n}\rangle_{L^{2}(\partial B)^3}=&\langle\mathbf{K}^*_{\partial B}(\tilde{\mathbf{G}}^{(1)}_{\mathbf{T}}),\tilde{\mathbf{T}}^{m}_{n}\rangle_{L^{2}(\partial B)^3}+\langle\tilde{\mathbf{F}}^{(1)}_{{\mathbf{T}},\ker},\tilde{\mathbf{T}}^{m}_{n}\rangle_{L^{2}(\partial B)^3}\\
=&\lambda_{\mathbf{T},n}\langle\tilde{\mathbf{G}}^{(1)}_{\mathbf{T}},\tilde{\mathbf{T}}^{m}_{n}\rangle_{L^{2}(\partial B)^3}+\langle\tilde{\mathbf{F}}^{(1)}_{{\mathbf{T}},\ker},\tilde{\mathbf{T}}^{m}_{n}\rangle_{L^{2}(\partial B)^3}\\
=&\lambda_{\mathbf{T},n}\langle\tilde{\mathbf{G}}^{(1)}_{\mathbf{T}},\tilde{\mathbf{T}}^{m}_{n}\rangle_{L^{2}(\partial B)^3}.
\end{align*}
Similarly, we can write $\tilde{\mathbf{G}}^{(1)}_{\mathbf{T}}=\mathbf{K}^*_{\partial B}(\tilde{\mathbf{G}}^{(2)}_{\mathbf{T}})+\tilde{\mathbf{F}}^{(2)}_{{\mathbf{T}},\ker}$, which then gives
\begin{align*}
\langle\tilde{\mathbf{G}}^{(1)}_{\mathbf{T}},\tilde{\mathbf{T}}^{m}_{n}\rangle_{L^{2}(\partial B)^3}
=\lambda_{\mathbf{T},n}\langle\tilde{\mathbf{G}}^{(2)}_{\mathbf{T}},\tilde{\mathbf{T}}^{m}_{n}\rangle_{L^{2}(\partial B)^3}.
\end{align*}
As a result,
\begin{align*}
\langle\tilde{\mathbf{F}}^{\omega\delta}_{\mathbf{T}},\tilde{\mathbf{T}}^{m}_{n}\rangle_{L^{2}(\partial B)^3}=(\lambda_{\mathbf{T},n})^{N}\langle\tilde{\mathbf{G}}^{(N)}_{\mathbf{T}},\tilde{\mathbf{T}}^{m}_{n}\rangle_{L^{2}(\partial B)^3}.
\end{align*}

In order to prove \eqref{E:decay1}, our aim is to control the $L^2$-norm of $\tilde{\mathbf{G}}^{(N)}_{\mathbf{T}}$. By using  \eqref{E:decomposition}, one arrives at
\begin{align*}
\tilde{\mathbf{F}}^{\omega\delta}_{\mathbf{T}}=(\mathbf{K}^*_{\partial B})^{N}(\tilde{\mathbf{G}}^{(N)}_{\mathbf{T}})+\tilde{\mathbf{F}}^{(1)}_{{\mathbf{T}},\ker},
\end{align*}
which leads to
\begin{align*}
\mathbf{K}^*_{\partial B}(\tilde{\mathbf{F}}^{\omega\delta}_{\mathbf{T}})=(\mathbf{K}^*_{\partial B})^{N+1}(\tilde{\mathbf{G}}^{(N)}_{\mathbf{T}}).
\end{align*}
It follows from \eqref{E:estimate} that
\begin{align*}
\|(\mathbf{K}^*_{\partial B})^{N+1}(\tilde{\mathbf{G}}^{(N)}_{\mathbf{T}})\|_{H^{N+1}(\partial B)^3}=\|\mathbf{K}^*_{\partial B}(\tilde{\mathbf{F}}^{\omega\delta}_{\mathbf{T}})\|_{H^{N+1}(\partial B)^3}\leq C_{\mathbf{T}}\|\tilde{\mathbf{F}}^{\omega\delta}_{\mathbf{T}}\|_{H^{N}(\partial B)^3}.
\end{align*}
On the other hand, one has
\begin{align*}
\|\tilde{\mathbf{G}}^{(N)}_{\mathbf{T}}\|_{L^{2}(\partial B)^3}\stackrel{\eqref{E:estimate}}{\leq}&\frac{1}{c^{N+1}_{\mathbf{T}}}\|(\mathbf{K}^*_{\partial B})^{N+1}(\tilde{\mathbf{G}}^{(N)}_{\mathbf{T}})\|_{{H^{N+1}}(\partial B)^3}
\leq \frac{C_\mathbf{T}}{c^{N+1}_\mathbf{T}} \|\tilde{\mathbf{F}}^{\omega\delta}_{\mathbf{T}}\|_{H^{N}(\partial B)^3}.
\end{align*}
According to $\mathrm{(i)}$ in Corollary \ref{coro:spectrum}, we can conclude that
\begin{align*}
|\langle\tilde{\mathbf{F}}^{\omega\delta}_{\mathbf{T}},\tilde{\mathbf{T}}^{m}_{n}\rangle_{L^{2}(\partial B)^3}|\leq&(\lambda_{\mathbf{T},n})^{N}\|\tilde{\mathbf{G}}^{(N)}_{\mathbf{T}}\|_{L^{2}(\partial B)^3}\|\tilde{\mathbf{T}}^{m}_{n}\|_{L^{2}(\partial B)^3}\\
\leq& \frac{C_\mathbf{T}}{c^{N+1}_\mathbf{T}}(\lambda_{\mathbf{T},n})^{N}\|\tilde{\mathbf{F}}^{\omega\delta}_{\mathbf{T}}\|_{H^{N}(\partial B)^3}\\
\leq& Cn^{-N}\|\tilde{\mathbf{F}}^{\omega\delta}\|_{H^{N}(\partial B)^3}\\
=&o(n^{-\frac{N}{2}})
\end{align*}
as $n\rightarrow\infty$. Thus formula \eqref{E:decay1} holds.

The remainder of the proof is to verify \eqref{E:decay2} ({\rm{resp.}} \eqref{E:decay3}). We can proceed with a similar procedure as above. The proof is based on $\mathrm{(ii)}$ ({\rm{resp.}}  $\mathrm{(iii)}$) in Corollary \ref{coro:spectrum} and formula \eqref{E:estimate1} ({\rm{resp.}} \eqref{E:estimate2}). The main differences include the  space  $H^{s}_{\mathbf{M}}(\partial B)^3$ ({\rm{resp.}} $H^{s}_{\mathbf{N}}(\partial B)^3$) and  the operator $\mathbf{K}^*_{\partial B}+\kappa_0\mathbf{I}$ ({\rm{resp.}} $\mathbf{K}^*_{\partial B}-\kappa_0\mathbf{I}$) instead of the space  $H^{s}_{\mathbf{T}}(\partial B)^3$ and  the operator $\mathbf{K}^*_{\partial B}$, respectively. Moreover, we consider $\tilde{\mathbf{N}}^{m}_{n}$ for $n\geq\lfloor\frac{3\lambda+5\mu}{4\mu}\rfloor+1,-(n-1)\leq m\leq n-1$ in the proof of formula \eqref{E:decay3}. The proof of this proposition is now complete.
\end{proof}

In addition, we give the asymptotic expansion for the operator $\boldsymbol{\mathcal{A}}^{\omega\delta}_{\partial B}$  in the next lemma.

\begin{lemm}\label{le:perturbation}
If $\omega\delta\ll1$, then the operator $\boldsymbol{\mathcal{A}}^{\omega\delta}_{\partial B}$ defined in \eqref{AOD} admits the following asymptotic expansion
\begin{align*}
\boldsymbol{\mathcal{A}}^{\omega\delta}_{\partial B}=&\boldsymbol{\mathcal{A}}^0_{\partial B}+(c(\omega)-1)(\omega\delta)^2
\boldsymbol{\mathcal{A}}_{\partial B,1}+\mathcal {O}((\omega\delta)^3),
\end{align*}
where $\boldsymbol{\mathcal{A}}^0_{\partial B}$ is as seen in \eqref{A-Zero} and
\begin{align}\label{E:AB1}
\boldsymbol{\mathcal{A}}_{\partial B,1}=\big(-\frac{1}{2}\mathbf{I}+\mathbf{K}^{*}_{\partial B}\big)\mathbf{S}^{-1}_{\partial B}\mathbf{P}_{\partial B}.
\end{align}
\end{lemm}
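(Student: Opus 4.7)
The plan is to insert the small-argument expansions of Lemmas~\ref{le:S-series}, \ref{le:K-series} and \ref{le:IS-series} into the definition \eqref{AOD}, using $\omega_{1}\delta=\omega\delta/\sqrt{c(\omega)}$ and $\omega_{2}\delta=\omega\delta$, and then collect terms by powers of $\eta:=\omega\delta$. Combining the expansions of $(\mathbf{S}^{\omega_{1}\delta}_{\partial B})^{-1}$ and $\mathbf{S}^{\omega_{2}\delta}_{\partial B}$ and substituting the explicit formulas \eqref{ri1} for $\mathbf{R}_{\partial B,1}$ and $\mathbf{P}_{\partial B,1}$, I obtain
\begin{align*}
(\mathbf{S}^{\omega_{1}\delta}_{\partial B})^{-1}\mathbf{S}^{\omega_{2}\delta}_{\partial B}
&=\mathbf{I}+\eta\bigl(1-\tfrac{1}{\sqrt{c(\omega)}}\bigr)\mathbf{S}^{-1}_{\partial B}\mathbf{R}_{\partial B}\\
&\quad+\eta^{2}\Bigl[\bigl(1-\tfrac{1}{c(\omega)}\bigr)\mathbf{S}^{-1}_{\partial B}\mathbf{P}_{\partial B}+\bigl(\tfrac{1}{c(\omega)}-\tfrac{1}{\sqrt{c(\omega)}}\bigr)(\mathbf{S}^{-1}_{\partial B}\mathbf{R}_{\partial B})^{2}\Bigr]+\mathcal{O}(\eta^{3}).
\end{align*}
Left-multiplying by $c(\omega)\bigl(-\tfrac{1}{2}\mathbf{I}+\mathbf{K}^{\omega_{1}\delta,*}_{\partial B}\bigr)$ and subtracting $\bigl(\tfrac{1}{2}\mathbf{I}+\mathbf{K}^{\omega_{2}\delta,*}_{\partial B}\bigr)$, the constant-order piece reproduces $\boldsymbol{\mathcal{A}}^{0}_{\partial B}$ by \eqref{A-Zero}, and the two $\mathbf{Q}_{\partial B}$ contributions at order $\eta^{2}$ cancel exactly: Lemma~\ref{le:K-series} produces them with prefactors $c(\omega)\cdot(\omega_{1}\delta)^{2}/\eta^{2}=1$ and $-(\omega_{2}\delta)^{2}/\eta^{2}=-1$ respectively.

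The crux, and the main obstacle, is to eliminate every remaining occurrence of $\mathbf{R}_{\partial B}$. This rests on the vanishing identity
\begin{equation*}
\bigl(-\tfrac{1}{2}\mathbf{I}+\mathbf{K}^{*}_{\partial B}\bigr)\mathbf{S}^{-1}_{\partial B}\mathbf{R}_{\partial B}=0.
\end{equation*}
By \eqref{ri}, the range of $\mathbf{R}_{\partial B}$ consists of constant vector fields on $\partial B$, so it suffices to verify that $\bigl(-\tfrac{1}{2}\mathbf{I}+\mathbf{K}^{*}_{\partial B}\bigr)\mathbf{S}^{-1}_{\partial B}\mathbf{c}=0$ for every $\mathbf{c}\in\mathbb{R}^{3}$. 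The field $\mathbf{S}_{\partial B}[\mathbf{S}^{-1}_{\partial B}\mathbf{c}]$ is an elastostatic solution in $B$ whose interior boundary trace equals $\mathbf{c}$; uniqueness of the interior Dirichlet problem for the Lam\'e system forces this extension to be identically $\mathbf{c}$ in $B$, so its interior conormal derivative vanishes, and the jump relation \eqref{Jump2} identifies that conormal derivative with $\bigl(-\tfrac{1}{2}\mathbf{I}+\mathbf{K}^{*}_{\partial B}\bigr)\mathbf{S}^{-1}_{\partial B}\mathbf{c}$, which is therefore zero.

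Applied to the expansion above, this identity annihilates the order-$\eta$ coefficient and also the $(\mathbf{S}^{-1}_{\partial B}\mathbf{R}_{\partial B})^{2}$ piece of the $\eta^{2}$-coefficient, since the latter factors as $\bigl[\bigl(-\tfrac{1}{2}\mathbf{I}+\mathbf{K}^{*}_{\partial B}\bigr)\mathbf{S}^{-1}_{\partial B}\mathbf{R}_{\partial B}\bigr]\mathbf{S}^{-1}_{\partial B}\mathbf{R}_{\partial B}=0$. The only surviving $\eta^{2}$ contribution is
\begin{equation*}
c(\omega)\bigl(1-\tfrac{1}{c(\omega)}\bigr)\bigl(-\tfrac{1}{2}\mathbf{I}+\mathbf{K}^{*}_{\partial B}\bigr)\mathbf{S}^{-1}_{\partial B}\mathbf{P}_{\partial B}=(c(\omega)-1)\boldsymbol{\mathcal{A}}_{\partial B,1},
\end{equation*}
matching \eqref{E:AB1}. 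The $\mathcal{O}(\eta^{3})$ remainder is controlled termwise by the error estimates in Lemmas~\ref{le:S-series}--\ref{le:IS-series} together with the uniform bound $|1/\sqrt{c(\omega)}|\leq C$ stated after \eqref{c:omega}.
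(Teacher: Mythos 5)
Your proposal is correct and follows essentially the same route as the paper: expand $(\mathbf{S}^{\omega_1\delta}_{\partial B})^{-1}\mathbf{S}^{\omega_2\delta}_{\partial B}$ via Lemmas~\ref{le:S-series}--\ref{le:IS-series}, note the exact cancellation of the $\mathbf{Q}_{\partial B}$ terms from $c(\omega)(\omega_1\delta)^2=(\omega_2\delta)^2=(\omega\delta)^2$, and kill every $\mathbf{R}_{\partial B}$ term through the identity $\big(-\tfrac12\mathbf{I}+\mathbf{K}^{*}_{\partial B}\big)\mathbf{S}^{-1}_{\partial B}\mathbf{R}_{\partial B}=0$, whose interior-Dirichlet/jump-relation justification is exactly the paper's Lemma~\ref{le:constant}. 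The only difference is bookkeeping (you substitute \eqref{ri1} explicitly before collecting powers of $\omega\delta$, while the paper keeps $\mathbf{R}_{\partial B,1},\mathbf{P}_{\partial B,1}$ symbolic), which does not change the argument.
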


Before starting the corresponding proof,  we introduce the following auxiliary result.

\begin{lemm}\label{le:constant}
Let $\mathbf{C}$ be a constant vector. For $\tilde{\boldsymbol\psi}\in{L^2}(\partial B)^3$, if we assume that $\mathbf{S}_{\partial B}[\tilde{\boldsymbol{\psi}}]=\mathbf{C}$ holds on $\partial B$,  then
\begin{align*}
\big(-\frac{1}{2}\mathbf{I}+\mathbf{K}^{*}_{\partial B}\big)\mathbf{S}^{-1}_{\partial B}[\mathbf{C}]=0.
\end{align*}
\end{lemm}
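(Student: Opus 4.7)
The plan is to show that the single layer potential $\mathbf{S}_{\partial B}[\tilde{\boldsymbol\psi}]$ extends as the constant $\mathbf{C}$ throughout the interior of $B$, and then extract the conclusion from the interior jump formula \eqref{Jump2}.

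First, I would set $\mathbf{u}(\boldsymbol X) := \mathbf{S}_{\partial B}[\tilde{\boldsymbol\psi}](\boldsymbol X)$ for $\boldsymbol X \in B$. By the standard mapping properties of the elastostatic single layer potential (built from the Kelvin matrix $\boldsymbol\Gamma^0$), $\mathbf{u}$ satisfies $\mathcal{L}_{\lambda,\mu}\mathbf{u}=0$ in $B$ and is continuous up to $\partial B$ with trace $\mathbf{u}|_- = \mathbf{S}_{\partial B}[\tilde{\boldsymbol\psi}] = \mathbf{C}$ by hypothesis. The constant function $\mathbf{v}(\boldsymbol X)\equiv\mathbf{C}$ trivially satisfies $\mathcal{L}_{\lambda,\mu}\mathbf{v}=0$ in $B$ with the same Dirichlet boundary value $\mathbf{C}$.

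Next, I would invoke uniqueness for the interior Dirichlet problem of the Lam\'e system on the ball $B$, which follows from the strong convexity conditions \eqref{convexity} via a standard energy identity: the difference $\mathbf{w}=\mathbf{u}-\mathbf{v}$ has zero trace and satisfies $\mathcal{L}_{\lambda,\mu}\mathbf{w}=0$, so integration by parts gives $\int_B \widehat{\mathds{C}}\nabla^s\mathbf{w}:\nabla^s\mathbf{w}\,d\boldsymbol X=0$, forcing $\nabla^s\mathbf{w}\equiv 0$, and then the zero boundary value rules out rigid motions. Hence $\mathbf{u}\equiv\mathbf{C}$ in $B$.

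Since $\mathbf{u}\equiv\mathbf{C}$ in $B$, we have $\nabla\mathbf{u}\equiv 0$ in $B$, and therefore the interior conormal derivative vanishes:
\begin{align*}
\frac{\partial}{\partial\boldsymbol{\nu}}\mathbf{S}_{\partial B}[\tilde{\boldsymbol\psi}]\Big|_{-}=\hat{\lambda}(\nabla\cdot\mathbf{u})\boldsymbol{\nu}+2\hat{\mu}(\nabla^s\mathbf{u})\boldsymbol{\nu}=0 \quad\text{on } \partial B.
\end{align*}
Applying the jump relation \eqref{Jump2} from the interior,
\begin{align*}
0=\frac{\partial}{\partial\boldsymbol{\nu}}\mathbf{S}_{\partial B}[\tilde{\boldsymbol\psi}]\Big|_{-}=\Big(-\tfrac{1}{2}\mathbf{I}+\mathbf{K}^{*}_{\partial B}\Big)[\tilde{\boldsymbol\psi}].
\end{align*}
By Lemma \ref{le:known}(iv), $\mathbf{S}_{\partial B}:L^2(\partial B)^3\to L^2(\partial B)^3$ is invertible, so $\tilde{\boldsymbol\psi}=\mathbf{S}_{\partial B}^{-1}[\mathbf{C}]$ and the desired identity $\bigl(-\tfrac{1}{2}\mathbf{I}+\mathbf{K}^*_{\partial B}\bigr)\mathbf{S}_{\partial B}^{-1}[\mathbf{C}]=0$ follows.

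The only nontrivial step is interior Dirichlet uniqueness on $B$; everything else is a direct application of the layer potential machinery already collected in Section~\ref{sec:2}. I do not anticipate any real obstacle, since the Korn-type energy inequality from \eqref{convexity} handles uniqueness on a smooth bounded domain in a classical way.
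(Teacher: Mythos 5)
Your proof is correct and follows essentially the same route as the paper: interior Dirichlet uniqueness for the Lam\'e system on $B$ forces $\mathbf{S}_{\partial B}[\tilde{\boldsymbol\psi}]\equiv\mathbf{C}$ in $B$, so the interior conormal derivative vanishes, and the jump formula \eqref{Jump2} together with the invertibility of $\mathbf{S}_{\partial B}$ from Lemma \ref{le:known} yields the claim. The only difference is that you spell out the energy/Korn argument for uniqueness (and write $\hat{\lambda},\hat{\mu}$ where the background $\lambda,\mu$ are meant, which is harmless since $\nabla\mathbf{u}=0$), whereas the paper simply invokes uniqueness of the Dirichlet problem.
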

\begin{proof}
Let us  consider  the Dirichlet boundary value problem
\begin{align*}
\left\{ \begin{aligned}
&\mathcal{L}_{\lambda,\mu}\tilde{\mathbf{u}}(\boldsymbol X)=0,\quad&&\boldsymbol X\in B,\\
&\tilde{\mathbf{u}}(\boldsymbol X)=\mathbf{C},\quad&&\boldsymbol X\in\partial B,
\end{aligned}\right.
\end{align*}
which has a unique solution $\tilde{\mathbf{u}}(\boldsymbol X)=\mathbf{C}$. Consequently, if $\mathbf{S}_{\partial B}[\tilde{\boldsymbol{\psi}}]=\mathbf{C}$ on $\partial B$, then $\mathbf{S}_{\partial B}[\tilde{\boldsymbol{\psi}}]=\mathbf{C}$ in the domain $B$.  As a result,
\begin{align*}
\frac{\partial}{\partial{\boldsymbol{\nu}}}\mathbf{S}_{\partial B}[\tilde{\boldsymbol{\psi}}]\big|_{-}(\boldsymbol X)=0,\quad \boldsymbol X\in\partial {B}.
\end{align*}
Because of  the jump condition \eqref{Jump2} with $\omega=0$, we obtain
\begin{align*}
\big(-\frac{1}{2}\mathbf{I}+\mathbf{K}^{*}_{\partial B}\big)[\tilde{\boldsymbol{\psi}}](\boldsymbol X)=0,\quad \boldsymbol X\in \partial B.
\end{align*}
Since $\mathbf{S}_{\partial B}$ is invertible in $L^{2}(\partial B)^3$ by Lemma \ref{le:known}, we decuce $\tilde{\boldsymbol{\psi}}=\mathbf{S}^{-1}_{\partial B}[\mathbf{C}]$. This ends the proof of the lemma.
 \end{proof}

Let us complete the proof of Lemma \ref{le:perturbation}.

\begin{proof}[Proof of Lemma \ref{le:perturbation}]
It follows from the expressions of $\omega_1,\omega_2$ (see \eqref{Omega1}, \eqref{Omega2}) and Lemmas \ref{le:S-series}--\ref{le:IS-series} that for $\omega\delta\ll1$,
\begin{align*}
&\mathbf{S}^{\omega_2\delta}_{\partial B}
=\mathbf{S}_{\partial B}+\omega_2\delta\mathbf{R}_{\partial B}+(\omega_2\delta)^2\mathbf{P}_{\partial B}+\mathcal{O}((\omega\delta)^3),\\
&(\mathbf{S}^{\omega_1\delta}_{\partial B})^{-1}
=\mathbf{S}^{-1}_{\partial B}+\omega_1\delta\mathbf{R}_{\partial B,1}+(\omega_1\delta)^2\mathbf{P}_{\partial B,1}+\mathcal{O}((\omega\delta)^3),\\
&\mathbf{K}^{\omega_i\delta,*}_{\partial B}=\mathbf{K}^*_{\partial B}+(\omega_i\delta)^2\mathbf{Q}_{\partial B}+\mathcal{O}((\omega\delta)^3),\quad i=1,2.
\end{align*}
It is clear that
\begin{align*}
(\mathbf{S}^{\omega_1\delta}_{\partial B})^{-1}\mathbf{S}^{\omega_2\delta}_{\partial B}=&\mathbf{I}+\omega_2\delta \mathbf{S}^{-1}_{\partial B}\mathbf{R}_{\partial B}+\omega_1\delta \mathbf{R}_{\partial B,1}\mathbf{S}_{\partial B}+(\omega_2\delta)^2\mathbf{S}^{-1}_{\partial B}\mathbf{P}_{\partial B}\\
&
+(\omega_1\delta)(\omega_2\delta)\mathbf{R}_{\partial B,1}\mathbf{R}_{\partial B}+(\omega_1\delta)^2\mathbf{P}_{\partial B,1}\mathbf{S}_{\partial B}+\mathcal {O}((\omega\delta)^3),
\end{align*}
which leads to
\begin{align*}
\big(-\frac{1}{2}\mathbf{I}+\mathbf{K}^{\omega_1\delta,*}_{\partial B}\big)(\mathbf{S}^{\omega_1\delta}_{\partial B})^{-1}\mathbf{S}^{\omega_2\delta}_{\partial B}=&\big(-\frac{1}{2}\mathbf{I}+\mathbf{K}^{*}_{\partial B}\big)+\omega_2\delta \big(-\frac{1}{2}\mathbf{I}+\mathbf{K}^{*}_{\partial B}\big)\mathbf{S}^{-1}_{\partial B}\mathbf{R}_{\partial B}\\
&+\omega_1\delta \big(-\frac{1}{2}\mathbf{I}+\mathbf{K}^{*}_{\partial B}\big)\mathbf{R}_{\partial B,1}\mathbf{S}_{\partial B}+(\omega_2\delta)^2\big(-\frac{1}{2}\mathbf{I}+\mathbf{K}^{*}_{\partial B}\big)\mathbf{S}^{-1}_{\partial B}\mathbf{P}_{\partial B}\\
&
+(\omega_1\delta)(\omega_2\delta)\big(-\frac{1}{2}\mathbf{I}+\mathbf{K}^{*}_{\partial B}\big)\mathbf{R}_{\partial B,1}\mathbf{R}_{\partial B}\\
&+(\omega_1\delta)^2\big(-\frac{1}{2}\mathbf{I}+\mathbf{K}^{*}_{\partial B}\big)\mathbf{P}_{\partial B,1}\mathbf{S}_{\partial B}+(\omega_1\delta)^2\mathbf{Q}_{\partial B}+\mathcal {O}((\omega\delta)^3).
\end{align*}
According to the definitions of $\mathbf{R}_{\partial B},\mathbf{R}_{\partial B,1},\mathbf{P}_{\partial B,1}$ in \eqref{ri} and \eqref{ri1},  using Lemma \ref{le:constant} yields that for all $\tilde{\boldsymbol{\psi}}\in{L^2}(\partial B)^3$,
\begin{align*}
&\big(-\frac{1}{2}\mathbf{I}+\mathbf{K}^{*}_{\partial B}\big)\mathbf{S}^{-1}_{\partial B}\mathbf{R}_{\partial B}[\tilde{\boldsymbol{\psi}}](\boldsymbol X)=0,\\
&\big(-\frac{1}{2}\mathbf{I}+\mathbf{K}^{*}_{\partial B}\big)\mathbf{R}_{\partial B,1}\mathbf{S}_{\partial B}[\tilde{\boldsymbol{\psi}}](\boldsymbol X)=\big(-\frac{1}{2}\mathbf{I}+\mathbf{K}^{*}_{\partial B}\big)\mathbf{S}^{-1}_{\partial B}\mathbf{R}_{\partial B}[\tilde{\boldsymbol{\varphi}}](\boldsymbol X)=0,\\
&\big(-\frac{1}{2}\mathbf{I}+\mathbf{K}^{*}_{\partial B}\big)\mathbf{R}_{\partial B,1}\mathbf{R}_{\partial B}[\tilde{\boldsymbol{\psi}}](\boldsymbol X)=\big(-\frac{1}{2}\mathbf{I}+\mathbf{K}^{*}_{\partial B}\big)\mathbf{S}^{-1}_{\partial B}\mathbf{R}_{\partial B}[\mathbf{S}^{-1}_{\partial B}\mathbf{R}_{\partial B}[\tilde{\boldsymbol{\psi}}](\boldsymbol X)]=0,\\
&\big(-\frac{1}{2}\mathbf{I}+\mathbf{K}^{*}_{\partial B}\big)\mathbf{P}_{\partial B,1}\mathbf{S}_{\partial B}[\tilde{\boldsymbol{\psi}}](\boldsymbol X)
=-\big(-\frac{1}{2}\mathbf{I}+\mathbf{K}^{*}_{\partial B}\big)\mathbf{S}_{\partial B}^{-1}\mathbf{P}_{\partial B}[\tilde{\boldsymbol{\psi}}](\boldsymbol X).
\end{align*}
Hence,
\begin{align*}
\big(-\frac{1}{2}\mathbf{I}+\mathbf{K}^{\omega_1\delta,*}_{\partial B}\big)(\mathbf{S}^{\omega_1\delta}_{\partial B})^{-1}\mathbf{S}^{\omega_2\delta}_{\partial B}=&\big(-\frac{1}{2}\mathbf{I}+\mathbf{K}^{*}_{\partial B}\big)+(\omega_1\delta)^2\mathbf{Q}_{\partial B}\\
&+((\omega_2\delta)^2-(\omega_1\delta)^2)(-\frac{1}{2}\mathbf{I}+\mathbf{K}^{*}_{\partial B}\big)\mathbf{S}^{-1}_{\partial B}\mathbf{P}_{\partial B}+\mathcal {O}((\omega\delta)^3).
\end{align*}
Moreover,
\begin{align*}
-\big(\frac{1}{2}\mathbf{I}+\mathbf{K}^{\omega_2\delta,*}_{\partial B}\big)=-\big(\frac{1}{2}\mathbf{I}+\mathbf{K}^{*}_{\partial B}\big)-(\omega_2\delta)^2\mathbf{Q}_{\partial B}+\mathcal {O}((\omega\delta)^3).
\end{align*}
Since $\omega_1=\frac{\omega}{\sqrt{c(\omega)}},\omega_2=\omega$ (see \eqref{Omega1}, \eqref{Omega2}), we obtain
\begin{align*}
\boldsymbol{\mathcal{A}}^{\omega\delta}_{\partial B}=&c(\omega)\big(-\frac{1}{2}\mathbf{I}+\mathbf{K}^{*}_{\partial B}\big)-\big(\frac{1}{2}\mathbf{I}+\mathbf{K}^{*}_{\partial B}\big)+(c(\omega)(\omega_1\delta)^2-(\omega_2\delta)^2)\mathbf{Q}_{\partial B}\\
&+c(\omega)((\omega_2\delta)^2-(\omega_1\delta)^2)\big(-\frac{1}{2}\mathbf{I}+\mathbf{K}^{*}_{\partial B}\big)\mathbf{S}^{-1}_{\partial B}\mathbf{P}_{\partial B}+\mathcal {O}((\omega\delta)^3)\\
=&c(\omega)\big(-\frac{1}{2}\mathbf{I}+\mathbf{K}^{*}_{\partial B}\big)-\big(\frac{1}{2}\mathbf{I}+\mathbf{K}^{*}_{\partial B}\big)+(c(\omega)-1)(\omega\delta)^2\big(-\frac{1}{2}\mathbf{I}+\mathbf{K}^{*}_{\partial B}\big)\mathbf{S}^{-1}_{\partial B}\mathbf{P}_{\partial B}+\mathcal {O}((\omega\delta)^3),
\end{align*}
which implies the conclusion of the lemma.

The proof is complete.
\end{proof}

The following lemma corresponds to the spectral approximation of the scattered field for the Lam\'{e} system  in the frequency domain.
\begin{prop}
Let $\tilde{\mathbf{F}}^{\omega\delta}\in H^{N}(\partial B)^3$ for an integer $N>1$ large enough. Suppose that
\begin{align*}
\tilde{\mathbf{F}}^{\omega\delta}=\tilde{\mathbf{F}}^{\omega\delta}_{\mathbf{T}}+\tilde{\mathbf{F}}^{\omega\delta}_{\mathbf{M}}+\tilde{\mathbf{F}}^{\omega\delta}_{\mathbf{N}}
\end{align*}
with
$\tilde{\mathbf{F}}^{\omega\delta}_{\mathbf{T}}\in H^{N}_{\mathbf{T}}(\partial B)^3,\tilde{\mathbf{F}}^{\omega\delta}_{\mathbf{M}}\in H^{N}_{\mathbf{M}}(\partial B)^3$ and $\tilde{\mathbf{F}}^{\omega\delta}_{\mathbf{N}}\in H^{N}_{\mathbf{N}}(\partial B)^3$, where
\begin{align*}
&\tilde{\mathbf{F}}^{\omega\delta}_{\mathbf{T}}=\sum_{n=1}^{N}\sum_{m=-n}^{n}\langle\tilde{\mathbf{F}}^{\omega\delta},\tilde{\mathbf{T}}^{m}_{n}\rangle_{L^{2}(\partial B)^3}\tilde{\mathbf{T}}^{m}_{n},\\
&\tilde{\mathbf{F}}^{\omega\delta}_{\mathbf{M}}=\sum_{n=1}^{N}\sum_{m=-n}^{n}\langle\tilde{\mathbf{F}}^{\omega\delta},\tilde{\mathbf{M}}^{m}_{n}\rangle_{L^{2}(\partial B)^3}\tilde{\mathbf{M}}^{m}_{n},\\
&\tilde{\mathbf{F}}^{\omega\delta}_{\mathbf{N}}=\sum_{n=1}^{N}\sum_{m=-(n-1)}^{n-1}\langle\tilde{\mathbf{F}}^{\omega\delta},\tilde{\mathbf{N}}^{m}_{n}\rangle_{L^{2}(\partial B)^3}\tilde{\mathbf{N}}^{m}_{n}.
\end{align*}
If $\omega\delta\ll1$, then the spectral approximation of the scattered field satisfies that for $\boldsymbol X\in\mathbb{R}^3\backslash \overline{B}$,
\begin{align*}
\tilde{\mathbf{u}}^{\mathrm{sca}}(\boldsymbol X)
=&\sum_{n=1}^{N}\sum_{m=-n}^{n}\frac{1}{\tau_{\mathbf{T},n}(\omega\delta)}\langle\tilde{\mathbf{F}}^{\omega\delta},
\tilde{\mathbf{T}}^{m}_{n}\rangle_{L^{2}(\partial B)^3}\mathbf{S}^{\omega_2\delta}_{\partial B}[\tilde{\mathbf{T}}^{m}_{n}](\boldsymbol X)\\
&+\sum_{n=1}^{N}\sum_{m=-n}^{n}\frac{1}{\tau_{\mathbf{M},n}(\omega\delta)}\langle\tilde{\mathbf{F}}^{\omega\delta},\tilde{\mathbf{M}}^{m}_{n}\rangle_{L^{2}(\partial B)^3}\mathbf{S}^{\omega_2\delta}_{\partial B}[\tilde{\mathbf{M}}^{m}_{n}](\boldsymbol X)\\
&+\sum_{n=1}^{N}\sum_{m=-(n-1)}^{n-1}\frac{1}{\tau_{\mathbf{N},n}(\omega\delta)}\langle\tilde{\mathbf{F}}^{\omega\delta},\tilde{\mathbf{N}}^{m}_{n}\rangle_{L^{2}(\partial B)^3}\mathbf{S}^{\omega_2\delta}_{\partial B}[\tilde{\mathbf{N}}^{m}_{n}](\boldsymbol X),
\end{align*}
where, for $i=\mathbf{T},\mathbf{M},\mathbf{N}$,
\begin{align}\label{E:tau}
\tau_{i,n}(\omega\delta)=&\tau_{i,n}+(c(\omega)-1)(\omega\delta)^2\varrho_{i,n}+\mathcal {O}((\omega\delta)^3)
\end{align}
with
\begin{align}
&\varrho_{\mathbf{T},n}=\langle\boldsymbol{\mathcal{A}}_{\partial B,1}[\tilde{\mathbf{T}}^{m}_{n}],\tilde{\mathbf{T}}^{m}_{n}\rangle_{L^{2}(\partial B)^3},\label{E:tau1n1}\\
&\varrho_{\mathbf{M},n}=\langle\boldsymbol{\mathcal{A}}_{\partial B,1}[\tilde{\mathbf{M}}^{m}_{n}],\tilde{\mathbf{M}}^{m}_{n}\rangle_{L^{2}(\partial B)^3},\label{E:tau1n2}\\
&\varrho_{\mathbf{N},n}=\langle\boldsymbol{\mathcal{A}}_{\partial B,1}[\tilde{\mathbf{N}}^{m}_{n}],\tilde{\mathbf{N}}^{m}_{n}\rangle_{L^{2}(\partial B)^3},\label{E:tau1n3}
\end{align}
where $\boldsymbol{\mathcal{A}}_{\partial B,1}$ is given in \eqref{E:AB1}.
\end{prop}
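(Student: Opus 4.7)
The plan is to solve \eqref{AO} by spectral perturbation about the static limit $\omega\delta=0$ handled in Proposition~\ref{prop:tau}, and then to collapse the resulting series into a finite sum of the first $N$ modes by exploiting the decay estimates \eqref{E:decay1}--\eqref{E:decay3}. The backbone of the argument is that the unperturbed operator $\boldsymbol{\mathcal{A}}^0_{\partial B}$ is diagonal in the orthonormal basis $\{\tilde{\mathbf T}^m_n,\tilde{\mathbf M}^m_n,\tilde{\mathbf N}^m_n\}$ of $L^2(\partial B)^3$ supplied by Lemma~\ref{le:solution}, with eigenvalues $\tau_{\mathbf T,n},\tau_{\mathbf M,n},\tau_{\mathbf N,n}$, and that Lemma~\ref{le:perturbation} writes
\begin{equation*}
\boldsymbol{\mathcal{A}}^{\omega\delta}_{\partial B}=\boldsymbol{\mathcal{A}}^0_{\partial B}+(c(\omega)-1)(\omega\delta)^2\boldsymbol{\mathcal{A}}_{\partial B,1}+\mathcal O((\omega\delta)^3),
\end{equation*}
so the perturbation enters at order $(\omega\delta)^2$.

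Next I would apply first-order perturbation theory. Since $\mathbf K^*_{\partial B}$ is self-adjoint on the sphere, $\boldsymbol{\mathcal{A}}^0_{\partial B}$ is normal and each sector $L^2_{\mathbf T},L^2_{\mathbf M},L^2_{\mathbf N}$ is invariant under it, so the resolvent $(\boldsymbol{\mathcal{A}}^0_{\partial B}-z)^{-1}$ splits block-wise. Writing $\boldsymbol{\mathcal{A}}^{\omega\delta}_{\partial B}=\boldsymbol{\mathcal{A}}^0_{\partial B}+(\omega\delta)^2 V+\mathcal O((\omega\delta)^3)$ with $V=(c(\omega)-1)\boldsymbol{\mathcal{A}}_{\partial B,1}$ and computing the diagonal matrix element of $V$ on each eigenvector yields
\begin{equation*}
\tau_{i,n}(\omega\delta)=\tau_{i,n}+(c(\omega)-1)(\omega\delta)^2\langle\boldsymbol{\mathcal{A}}_{\partial B,1}[\tilde{\mathbf v}^m_{i,n}],\tilde{\mathbf v}^m_{i,n}\rangle_{L^2(\partial B)^3}+\mathcal O((\omega\delta)^3),
\end{equation*}
which is precisely \eqref{E:tau}--\eqref{E:tau1n3}. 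The independence of $\varrho_{i,n}$ of the azimuthal index $m$ follows from the rotational invariance of $\partial B$: conjugating $\boldsymbol{\mathcal{A}}_{\partial B,1}$ by any element of $SO(3)$ leaves it unchanged but permutes the basis elements in the $m$-index, forcing the diagonal entry to depend only on $(i,n)$.

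Solving the diagonalised equation gives the Fourier coefficients of $\tilde{\boldsymbol\psi}$ as $\langle\tilde{\boldsymbol\psi},\tilde{\mathbf v}^m_{i,n}\rangle=\frac{1}{\delta\,\tau_{i,n}(\omega\delta)}\langle\tilde{\mathbf F}^{\omega\delta},\tilde{\mathbf v}^m_{i,n}\rangle$ modulo an $L^2$-correction of order $\mathcal O((\omega\delta)^3)$. The truncation to $1\le n\le N$ is justified by combining the $o(n^{-N/2})$ decay of the Fourier coefficients of $\tilde{\mathbf F}^{\omega\delta}$ from the previous proposition with the uniform lower bound $|\tau_{i,n}(\omega\delta)|\ge c>0$, which follows from the explicit formula \eqref{tau1} together with the asymptotics in Lemma~\ref{coro:spectrum}. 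Substituting the truncated density into the exterior representation $\tilde{\mathbf u}^{\mathrm{sca}}(\boldsymbol X)=\delta\,\mathbf S^{\omega_2\delta}_{\partial B}[\tilde{\boldsymbol\psi}](\boldsymbol X)$ from \eqref{Solution2}, the factor of $\delta$ cancels against $1/\delta$ and produces the claimed modal expansion.

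The main obstacle is that $\boldsymbol{\mathcal{A}}^{\omega\delta}_{\partial B}$ is neither self-adjoint nor block-diagonal in the static basis: the perturbation $\boldsymbol{\mathcal{A}}_{\partial B,1}$ can in principle couple different sectors $\mathbf T$, $\mathbf M$, $\mathbf N$ and different $(n,m)$. What saves the argument is that any such off-diagonal contribution enters the perturbed eigenvalue through second-order perturbation theory, hence carries an additional factor $(\omega\delta)^2$ and appears at order $\mathcal O((\omega\delta)^4)$, well inside the advertised $\mathcal O((\omega\delta)^3)$ remainder. The Neumann-series inversion of $\mathrm I+(\omega\delta)^2(\boldsymbol{\mathcal{A}}^0_{\partial B})^{-1}V+\mathcal O((\omega\delta)^3)$, which is what turns this qualitative statement into the bound on $\tau_{i,n}(\omega\delta)$, is legitimate precisely because of the uniform spectral lower bounds furnished by Lemma~\ref{coro:spectrum}.
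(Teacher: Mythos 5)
Your proposal is correct and follows essentially the same route as the paper: project onto the $\mathbf{T},\mathbf{M},\mathbf{N}$ sectors, treat $\boldsymbol{\mathcal{A}}^{\omega\delta}_{\partial B}-\boldsymbol{\mathcal{A}}^{0}_{\partial B}$ as a perturbation whose diagonal matrix elements shift the eigenvalues, identify the shift via Lemma~\ref{le:perturbation} as $(c(\omega)-1)(\omega\delta)^2\varrho_{i,n}+\mathcal{O}((\omega\delta)^3)$, solve mode-by-mode, and insert the density into \eqref{Solution2}. The only cosmetic difference is that you justify the truncation at $n\le N$ via the Fourier-coefficient decay and a lower bound on $|\tau_{i,n}(\omega\delta)|$, whereas in the paper the truncation is exact because the hypothesis already takes $\tilde{\mathbf{F}}^{\omega\delta}$ to be a finite sum of the first $N$ modes, so the coefficients of $\tilde{\boldsymbol\psi}$ vanish for $n>N$.
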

\begin{proof}
First, we note that \eqref{AO} is equivalent to the following system:
\begin{align*}
&\Pi_{\mathbf{T}}\boldsymbol{\mathcal{A}}^{\omega\delta}_{\partial B}[\tilde{\boldsymbol{\psi}}_{\mathbf{T}}+\tilde{\boldsymbol{\psi}}_{\mathbf{M}}+\tilde{\boldsymbol{\psi}}_{\mathbf{N}}](\boldsymbol X)=\frac{1}{\delta}\sum_{n=1}^{N}\sum_{m=-n}^{n}\langle\tilde{\mathbf{F}}^{\omega\delta},\tilde{\mathbf{T}}^{m}_{n}\rangle_{L^{2}(\partial B)^3}\tilde{\mathbf{T}}^{m}_{n}(\boldsymbol X),\\
&\Pi_{\mathbf{M}}\boldsymbol{\mathcal{A}}^{\omega\delta}_{\partial B}[\tilde{\boldsymbol{\psi}}_{\mathbf{T}}+\tilde{\boldsymbol{\psi}}_{\mathbf{M}}+\tilde{\boldsymbol{\psi}}_{\mathbf{N}}](\boldsymbol X)=\frac{1}{\delta}\sum_{n=1}^{N}\sum_{m=-n}^{n}\langle\tilde{\mathbf{F}}^{\omega\delta},\tilde{\mathbf{M}}^{m}_{n}\rangle_{L^{2}(\partial B)^3}\tilde{\mathbf{M}}^{m}_{n}(\boldsymbol X),\\
&\Pi_{\mathbf{N}}\boldsymbol{\mathcal{A}}^{\omega\delta}_{\partial B}[\tilde{\boldsymbol{\psi}}_{\mathbf{T}}+\tilde{\boldsymbol{\psi}}_{\mathbf{M}}+\tilde{\boldsymbol{\psi}}_{\mathbf{N}}](\boldsymbol X)=\frac{1}{\delta}\sum_{n=1}^{N}\sum_{m=-(n-1)}^{n-1}\langle\tilde{\mathbf{F}}^{\omega\delta},\tilde{\mathbf{N}}^{m}_{n}\rangle_{L^{2}(\partial B)^3}\tilde{\mathbf{N}}^{m}_{n}(\boldsymbol X).
\end{align*}
Furthermore,
\begin{align*}
\boldsymbol{\mathcal{A}}^{\omega\delta}_{\partial B}=(\boldsymbol{\mathcal{A}}^{0}_{\partial B}+\boldsymbol{\mathcal{A}}^{\omega\delta}_{\partial B}-\boldsymbol{\mathcal{A}}^{0}_{\partial B}).
\end{align*}
It follows from Proposition \ref{prop:tau} and the  spectral perturbation theory that
\begin{align*}
&\sum_{n=1}^{\infty}\sum_{m=-n}^{n}\big(\tau_{\mathbf{T},n}+\langle(\boldsymbol{\mathcal{A}}^{\omega\delta}_{\partial B}-\boldsymbol{\mathcal{A}}^{0}_{\partial B})[\tilde{\mathbf{T}}^{m}_{n}],\tilde{\mathbf{T}}^{m}_{n}\rangle_{L^{2}(\partial B)^3}\big)\langle\tilde{\boldsymbol{\psi}}_{\mathbf{T}},\tilde{\mathbf{T}}^{m}_{n}\rangle_{L^{2}(\partial B)^3}\tilde{\mathbf{T}}^{m}_{n}\\&=\frac{1}{\delta}\sum_{n=1}^{N}\sum_{m=-n}^{n}\langle\tilde{\mathbf{F}}^{\omega\delta},\tilde{\mathbf{T}}^{m}_{n}\rangle_{L^{2}(\partial B)^3}\tilde{\mathbf{T}}^{m}_{n},\\
&\sum_{n=1}^{\infty}\sum_{m=-n}^{n}\big(\tau_{\mathbf{M},n}+\langle(\boldsymbol{\mathcal{A}}^{\omega\delta}_{\partial B}-\boldsymbol{\mathcal{A}}^{0}_{\partial B})[\tilde{\mathbf{M}}^{m}_{n}],\tilde{\mathbf{M}}^{m}_{n}\rangle_{L^{2}(\partial B)^3}\big)\langle\tilde{\boldsymbol{\psi}}_{\mathbf{M}},\tilde{\mathbf{M}}^{m}_{n}\rangle_{L^{2}(\partial B)^3}\tilde{\mathbf{M}}^{m}_{n}\\&=\frac{1}{\delta}\sum_{n=1}^{N}\sum_{m=-n}^{n}\langle\tilde{\mathbf{F}}^{\omega\delta},\tilde{\mathbf{M}}^{m}_{n}\rangle_{L^{2}(\partial B)^3}\tilde{\mathbf{M}}^{m}_{n},\\
&\sum_{n=1}^{\infty}\sum_{m=-(n-1)}^{n-1}\big(\tau_{\mathbf{N},n}+\langle(\boldsymbol{\mathcal{A}}^{\omega\delta}_{\partial B}-\boldsymbol{\mathcal{A}}^{0}_{\partial B})[\tilde{\mathbf{N}}^{m}_{n}],\tilde{\mathbf{N}}^{m}_{n}\rangle_{L^{2}(\partial B)^3}\big)\langle\tilde{\boldsymbol{\psi}}_{\mathbf{N}},\tilde{\mathbf{N}}^{m}_{n}\rangle_{L^{2}(\partial B)^3}\tilde{\mathbf{N}}^{m}_{n}\\&
=\frac{1}{\delta}\sum_{n=1}^{N}\sum_{m=-(n-1)}^{n-1}\langle\tilde{\mathbf{F}}^{\omega\delta},\tilde{\mathbf{N}}^{m}_{n}\rangle_{L^{2}(\partial B)^3}\tilde{\mathbf{N}}^{m}_{n}.
\end{align*}
Therefore,
\begin{align*}
&\langle\tilde{\boldsymbol{\psi}}_{\mathbf{T}},\tilde{\mathbf{T}}^{m}_{n}\rangle_{L^{2}(\partial B)^3}=
\frac{1}{\delta}\frac{1}{\tau_{\mathbf{T},n}+\langle(\boldsymbol{\mathcal{A}}^{\omega\delta}_{\partial B}-\boldsymbol{\mathcal{A}}^{0}_{\partial B})[\tilde{\mathbf{T}}^{m}_{n}],\tilde{\mathbf{T}}^{m}_{n}\rangle_{L^{2}(\partial B)^3}}\langle\tilde{\mathbf{F}}^{\omega\delta},\tilde{\mathbf{T}}^{m}_{n}\rangle_{L^{2}(\partial B)^3},\\
&\text{if}\quad1\leq n\leq N,|m|\leq n,\\
&\langle\tilde{\boldsymbol{\psi}}_{\mathbf{T}},\tilde{\mathbf{T}}^{m}_{n}\rangle_{L^{2}(\partial B)^3}=0,\quad\text{if}\quad n>N,|m|\leq n,\\
&\langle\tilde{\boldsymbol{\psi}}_{\mathbf{M}},\tilde{\mathbf{M}}^{m}_{n}\rangle_{L^{2}(\partial B)^3}=
\frac{1}{\delta}\frac{1}{\tau_{\mathbf{M},n}+\langle(\boldsymbol{\mathcal{A}}^{\omega\delta}_{\partial B}-\boldsymbol{\mathcal{A}}^{0}_{\partial B})[\tilde{\mathbf{M}}^{m}_{n}],\tilde{\mathbf{M}}^{m}_{n}\rangle_{L^{2}(\partial B)^3}}\langle\tilde{\mathbf{F}}^{\omega\delta},\tilde{\mathbf{M}}^{m}_{n}\rangle_{L^{2}(\partial B)^3},\\
&\text{if}\quad1\leq n\leq N,|m|\leq n,\\
&\langle\tilde{\boldsymbol{\psi}}_{\mathbf{M}},\tilde{\mathbf{M}}^{m}_{n}\rangle_{L^{2}(\partial B)^3}=0,\quad\text{if}\quad n>N,|m|\leq n,\\
&\langle\tilde{\boldsymbol{\psi}}_{\mathbf{N}},\tilde{\mathbf{N}}^{m}_{n}\rangle_{L^{2}(\partial B)^3}=
\frac{1}{\delta}\frac{1}{\tau_{\mathbf{N},n}+\langle(\boldsymbol{\mathcal{A}}^{\omega\delta}_{\partial B}-\boldsymbol{\mathcal{A}}^{0}_{\partial B})[\tilde{\mathbf{N}}^{m}_{n}],\tilde{\mathbf{N}}^{m}_{n}\rangle_{L^{2}(\partial B)^3}}\langle\tilde{\mathbf{F}}^{\omega\delta},\tilde{\mathbf{N}}^{m}_{n}\rangle_{L^{2}(\partial B)^3},\\
& \text{if}\quad1\leq n\leq N,|m|\leq n-1,\\
&\langle\tilde{\boldsymbol{\psi}}_{\mathbf{N}},\tilde{\mathbf{N}}^{m}_{n}\rangle_{L^{2}(\partial B)^3}=0,\quad\text{if}\quad n>N,|m|\leq n-1.
\end{align*}
In addition,  it is evident from Lemma \ref{le:perturbation} that
\begin{align*}
&\langle(\boldsymbol{\mathcal{A}}^{\omega\delta}_{\partial B}-\boldsymbol{\mathcal{A}}^{0}_{\partial B})[\tilde{\mathbf{T}}^{m}_{n}],\tilde{\mathbf{T}}^{m}_{n}\rangle_{L^{2}(\partial B)^3}=(c(\omega)-1)(\omega\delta)^2\langle\boldsymbol{\mathcal{A}}_{\partial B,1}[\tilde{\mathbf{T}}^{m}_{n}],\tilde{\mathbf{T}}^{m}_{n}\rangle_{L^{2}(\partial B)^3}+\mathcal {O}((\omega\delta)^3),\\
&\langle(\boldsymbol{\mathcal{A}}^{\omega\delta}_{\partial B}-\boldsymbol{\mathcal{A}}^{0}_{\partial B})[\tilde{\mathbf{M}}^{m}_{n}],\tilde{\mathbf{M}}^{m}_{n}\rangle_{L^{2}(\partial B)^3}=(c(\omega)-1)(\omega\delta)^2\langle\boldsymbol{\mathcal{A}}_{\partial B,1}[\tilde{\mathbf{M}}^{m}_{n}],\tilde{\mathbf{M}}^{m}_{n}\rangle_{L^{2}(\partial B)^3}+\mathcal {O}((\omega\delta)^3),\\
&\langle(\boldsymbol{\mathcal{A}}^{\omega\delta}_{\partial B}-\boldsymbol{\mathcal{A}}^{0}_{\partial B})[\tilde{\mathbf{N}}^{m}_{n}],\tilde{\mathbf{N}}^{m}_{n}\rangle_{L^{2}(\partial B)^3}=(c(\omega)-1)(\omega\delta)^2\langle\boldsymbol{\mathcal{A}}_{\partial B,1}[\tilde{\mathbf{N}}^{m}_{n}],\tilde{\mathbf{N}}^{m}_{n}\rangle_{L^{2}(\partial B)^3}+\mathcal {O}((\omega\delta)^3).
\end{align*}
Then it follows from \eqref{Solution2} that for $\boldsymbol X\in\mathbb{R}^3\backslash\overline{B}$,
\begin{align*}
\tilde{\mathbf{u}}^{\mathrm{sca}}(\boldsymbol X)
=
\delta\mathbf{S}^{\omega_2\delta}_{\partial B}[\tilde{\boldsymbol{\psi}}_{\mathbf{T}}+\tilde{\boldsymbol{\psi}}_{\mathbf{M}}+\tilde{\boldsymbol{\psi}}_{\mathbf{N}}](\boldsymbol X)=&\delta\sum_{n=1}^{N}\sum_{m=-n}^{n}\langle\tilde{\boldsymbol{\psi}}_{\mathbf{T}},\tilde{\mathbf{T}}^{m}_{n}\rangle_{L^{2}(\partial B)^3}\mathbf{S}^{\omega_2\delta}_{\partial B}[\tilde{\mathbf{T}}^{m}_{n}](\boldsymbol X)\\
&+\delta\sum_{n=1}^{N}\sum_{m=-n}^{n}\langle\tilde{\boldsymbol{\psi}}_{\mathbf{M}},\tilde{\mathbf{M}}^{m}_{n}\rangle_{L^{2}(\partial B)^3}\mathbf{S}^{\omega_2\delta}_{\partial B}[\tilde{\mathbf{M}}^{m}_{n}](\boldsymbol X)\\
&+\delta\sum_{n=1}^{N}\sum_{m=-(n-1)}^{n-1}\langle\tilde{\boldsymbol{\psi}}_{\mathbf{N}},\tilde{\mathbf{N}}^{m}_{n}\rangle_{L^{2}(\partial B)^3}\mathbf{S}^{\omega_2\delta}_{\partial B}[\tilde{\mathbf{N}}^{m}_{n}](\boldsymbol X),
\end{align*}
which readily completes the proof of the proposition.
\end{proof}

For each eigenfunction of $\mathbf{K}^*_{\partial B}$, we consider the function on $\partial D$ as follows
\begin{align*}
\mathbf{T}^{m}_{n}(\boldsymbol x)=\tilde{\mathbf{T}}^{m}_{n}\big(\frac{\boldsymbol x-\boldsymbol z}{\delta}\big),\quad \mathbf{M}^{m}_{n}(\boldsymbol x)=\tilde{\mathbf{M}}^{m}_{n}\big(\frac{\boldsymbol x-\boldsymbol z}{\delta}\big),\quad \mathbf{N}^{m}_{n}(\boldsymbol x)=\tilde{\mathbf{N}}^{m}_{n}\big(\frac{\boldsymbol x-\boldsymbol z}{\delta}\big).
\end{align*}
On account of Lemmas \ref{le:scale} and \ref{le:scalar}, one obtains:
\begin{align*}
&\mathbf{S}^{\omega_2\delta}_{\partial B}[\tilde{\mathbf{T}}^{m}_{n}](\boldsymbol X)=\frac{1}{\delta}\mathbf{S}^{\omega_2}_{\partial D}[\mathbf{T}^{m}_{n}](\boldsymbol x),\quad\langle\tilde{\mathbf{F}}^{\omega\delta}, \tilde{\mathbf{T}}^{m}_{n}\rangle_{L^{2}(\partial B)^3}=\frac{1}{\delta^2}\langle\mathbf{F}^{\omega\delta},\mathbf{T}^{m}_{n}\rangle_{L^{2}(\partial D)^3},\quad\|\mathbf{T}^{m}_{n}\|_{{L^{2}(\partial D)^3}}=\delta,\\
&\mathbf{S}^{\omega_2\delta}_{\partial B}[\tilde{\mathbf{M}}^{m}_{n}](\boldsymbol X)=\frac{1}{\delta}\mathbf{S}^{\omega_2}_{\partial D}[\mathbf{M}^{m}_{n}](\boldsymbol x),\quad\langle\tilde{\mathbf{F}}^{\omega\delta}, \tilde{\mathbf{M}}^{m}_{n}\rangle_{L^{2}(\partial B)^3}=\frac{1}{\delta^2}\langle\mathbf{F}^{\omega\delta},\mathbf{M}^{m}_{n}\rangle_{L^{2}(\partial D)^3},\quad\|\mathbf{M}^{m}_{n}\|_{{L^{2}(\partial D)^3}}=\delta,\\
&\mathbf{S}^{\omega_2\delta}_{\partial B}[\tilde{\mathbf{N}}^{m}_{n}](\boldsymbol X)=\frac{1}{\delta}\mathbf{S}^{\omega_2}_{\partial D}[\mathbf{N}^{m}_{n}](\boldsymbol x),\quad\langle\tilde{\mathbf{F}}^{\omega\delta}, \tilde{\mathbf{N}}^{m}_{n}\rangle_{L^{2}(\partial B)^3}=\frac{1}{\delta^2}\langle\mathbf{F}^{\omega\delta},\mathbf{N}^{m}_{n}\rangle_{L^{2}(\partial D)^3},\quad\|\mathbf{N}^{m}_{n}\|_{{L^{2}(\partial D)^3}}=\delta.
\end{align*}
Then we have the following proposition corresponding to the original unscaled problem.

\begin{prop}\label{prop:approximation}
 For an integer $N>1$ large enough, if $\omega\delta\ll1$, then the spectral approximation of the scattered field for the Lam\'{e} system \eqref{SS:lame}  satisfies that for $\boldsymbol x\in\mathbb{R}^3\backslash \overline{D}$,
\begin{align*}
\mathbf{u}^{\mathrm{sca}}(\boldsymbol x)
=&\frac{1}{\delta}\sum_{n=1}^{N}\sum_{m=-n}^{n}\frac{1}{\tau_{\mathbf{T},n}(\omega\delta)}\langle\mathbf{F}^{\omega\delta},\breve{\mathbf{T}}^{m}_{n}\rangle_{L^{2}(\partial D)^3}\mathbf{S}^{\omega_2}_{\partial D}[\breve{\mathbf{T}}^{m}_{n}](\boldsymbol x)\\
&+\frac{1}{\delta}\sum_{n=1}^{N}\sum_{m=-n}^{n}\frac{1}{\tau_{\mathbf{M},n}(\omega\delta)}\langle\mathbf{F}^{\omega\delta},\breve{\mathbf{M}}^{m}_{n}\rangle_{L^{2}(\partial D)^3}\mathbf{S}^{\omega_2}_{\partial D}[\breve{\mathbf{M}}^{m}_{n}](\boldsymbol x)\\
&+\frac{1}{\delta}\sum_{n=1}^{N}\sum_{m=-(n-1)}^{n-1}\frac{1}{\tau_{\mathbf{N},n}(\omega\delta)}\langle\mathbf{F}^{\omega\delta},\breve{\mathbf{N}}^{m}_{n}\rangle_{L^{2}(\partial D)^3}\mathbf{S}^{\omega_2}_{\partial D}[\breve{\mathbf{N}}^{m}_{n}](\boldsymbol x),
\end{align*}
where $\tau_{i,n}(\omega\delta),i=\mathbf{T},\mathbf{M},\mathbf{N}$ are as seen in \eqref{E:tau} and
\begin{align*}
\breve{\mathbf{T}}^{m}_{n}=\frac{1}{\delta}\mathbf{T}^{m}_{n},\quad\breve{\mathbf{M}}^{m}_{n}=\frac{1}{\delta}\mathbf{M}^{m}_{n},\quad\breve{\mathbf{N}}^{m}_{n}=\frac{1}{\delta}\mathbf{N}^{m}_{n}.
\end{align*}
\end{prop}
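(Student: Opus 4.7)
The proposition is a transcription of the previous proposition from the rescaled geometry on $\partial B$ back to the original geometry on $\partial D$. My plan is therefore to treat it as a direct change-of-variables argument, keeping careful track of the powers of $\delta$ that appear in the scaling lemmas.

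First, I would start from the expansion of $\tilde{\mathbf{u}}^{\mathrm{sca}}(\boldsymbol X)$ on $\mathbb{R}^3\setminus\overline{B}$ given by the previous proposition. Since $\tilde{\mathbf{u}}^{\mathrm{sca}}(\boldsymbol X) := \mathbf{u}^{\mathrm{sca}}(\boldsymbol z + \delta\boldsymbol X)$, the identity $\mathbf{u}^{\mathrm{sca}}(\boldsymbol x) = \tilde{\mathbf{u}}^{\mathrm{sca}}(\boldsymbol X)$ for $\boldsymbol x = \boldsymbol z + \delta\boldsymbol X$ holds pointwise, so the same modal expansion can be read off at the target point $\boldsymbol x \in \mathbb{R}^3 \setminus \overline{D}$ once every object indexed to $\partial B$ is re-expressed in terms of its counterpart on $\partial D$.

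Next, I would apply the three scaling identities listed just before the proposition (which are immediate consequences of Lemmas~\ref{le:scale} and~\ref{le:scalar}). For the $\tilde{\mathbf{T}}^{m}_{n}$-term, substituting
\begin{align*}
\mathbf{S}^{\omega_2\delta}_{\partial B}[\tilde{\mathbf{T}}^{m}_{n}](\boldsymbol X) &= \tfrac{1}{\delta}\mathbf{S}^{\omega_2}_{\partial D}[\mathbf{T}^{m}_{n}](\boldsymbol x), \\
\langle\tilde{\mathbf{F}}^{\omega\delta},\tilde{\mathbf{T}}^{m}_{n}\rangle_{L^{2}(\partial B)^3} &= \tfrac{1}{\delta^{2}}\langle\mathbf{F}^{\omega\delta},\mathbf{T}^{m}_{n}\rangle_{L^{2}(\partial D)^3}
\end{align*}
into the summand produces a single factor $\delta^{-3}$. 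Rewriting each $\mathbf{T}^{m}_{n} = \delta\,\breve{\mathbf{T}}^{m}_{n}$ in both the inner product and the single layer, the bilinearity of $\mathbf{S}^{\omega_{2}}_{\partial D}$ and of $\langle\cdot,\cdot\rangle_{L^{2}(\partial D)^3}$ restores two factors of $\delta$, so the overall $\delta$ power becomes $\delta^{-1}$, exactly matching the $\tfrac{1}{\delta}$ prefactor in the stated formula. The $\tilde{\mathbf{M}}^{m}_{n}$ and $\tilde{\mathbf{N}}^{m}_{n}$ blocks are handled identically, using the same scaling relations. The eigenvalue factors $\tau_{i,n}(\omega\delta)$, $i=\mathbf{T},\mathbf{M},\mathbf{N}$, are intrinsic to the $\partial B$-problem and are carried over unchanged, because the whole perturbation expansion in Lemma~\ref{le:perturbation} and formulas~\eqref{E:tau}--\eqref{E:tau1n3} lives on the reference ball $\partial B$.

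There is no substantive obstacle: the entire argument reduces to uniform bookkeeping with the three scaling identities and a final renormalisation $\mathbf{T}^{m}_{n}\mapsto\breve{\mathbf{T}}^{m}_{n}$ (and analogously for $\mathbf{M}$, $\mathbf{N}$). The only point worth spelling out in the write-up is that the truncation index $N$ is the same in both formulas (inherited from the previous proposition via the decay estimates \eqref{E:decay1}--\eqref{E:decay3}), and that the remainder terms absorbed into $\tau_{i,n}(\omega\delta)$ behave uniformly in $n\le N$ as $\omega\delta \ll 1$, so passing to $\partial D$ does not introduce any new error terms beyond those already present on $\partial B$.
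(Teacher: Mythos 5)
Your proposal is correct and follows essentially the same route as the paper: the paper obtains Proposition \ref{prop:approximation} directly from the preceding proposition on $\partial B$ together with the scaling identities $\mathbf{S}^{\omega_2\delta}_{\partial B}[\tilde{\mathbf{T}}^{m}_{n}](\boldsymbol X)=\tfrac{1}{\delta}\mathbf{S}^{\omega_2}_{\partial D}[\mathbf{T}^{m}_{n}](\boldsymbol x)$, $\langle\tilde{\mathbf{F}}^{\omega\delta},\tilde{\mathbf{T}}^{m}_{n}\rangle_{L^{2}(\partial B)^3}=\tfrac{1}{\delta^{2}}\langle\mathbf{F}^{\omega\delta},\mathbf{T}^{m}_{n}\rangle_{L^{2}(\partial D)^3}$ and the renormalisation $\breve{\mathbf{T}}^{m}_{n}=\tfrac{1}{\delta}\mathbf{T}^{m}_{n}$ (and likewise for $\mathbf{M},\mathbf{N}$), exactly the bookkeeping you carry out. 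Your $\delta$-power count ($\delta^{-3}\cdot\delta\cdot\delta=\delta^{-1}$) reproduces the stated prefactor, so nothing is missing.
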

In view  of this proposition, our next goal aims at giving a resonance expansion for the low-frequency part of the scattered field in the time domain.

\section{Time domain approximation of the scattered field}\label{sec:4}

This section is devoted to establishing  a resonance expansion for the low-frequency part of the corresponding scattered field for the Lam\'{e} system \eqref{S:Lame1} in the time domain. There are a particular class of metamaterials that allows the presence of negative material parameters. Due to the negativity of material parameters, it leads to the non-ellipticity
of the partial differential operator which induces resonance.




\subsection{Polariton resonances}

This subsection is devoted to calculating size and frequency dependent polariton resonances.
\begin{defi}
Let $n\in\{1,2,\cdots,N\}$ for some positive integer $N$. The frequency $\omega$ is called a static polariton resonance if $|\tau_{i,n}|=0,i=\mathbf{T},\mathbf{M},\mathbf{N}$, where $\tau_{i,n}$ are given in \eqref{tau1}.
\end{defi}
\begin{defi}
Let $n\in\{1,2,\cdots,N\}$ for some positive integer $N$. The frequency $\omega$ is called a first-order corrected polariton resonance  if $|\tau_{i,n}+(c(\omega)-1)(\omega\delta)^2\varrho_{i,n}|=0,i=\mathbf{T},\mathbf{M},\mathbf{N}$, where $\varrho_{i,n}$ are as seen in \eqref{E:tau1n1}--\eqref{E:tau1n3}.
\end{defi}
\begin{rema}
More precisely, if a static polariton resonance happens, then
\begin{align}\label{SE:resonance}
\lambda_{i,n}=\frac{c(\omega)+1}{2(c(\omega)-1)},\quad i=\mathbf{T},\mathbf{M},\mathbf{N}.\tag{$\mathrm{SRE}_{i}$}
\end{align}
Furthermore, if a first-order corrected polariton resonance happens, then
\begin{align}\label{E:resonance}
-\frac{1}{2}(c(\omega)+1)+(c(\omega)-1)(\lambda_{i,n}+\omega^2\delta^2\varrho_{i,n})=0.\tag{$\mathrm{RE}_{i}$}
\end{align}
In what follows, we use the lower-case character $\omega$ for real frequencies and the upper-case character $\Omega$ for complex frequencies.
\end{rema}
Moreover, we can express $\varrho_{i,n},i=\mathbf{T},\mathbf{M},\mathbf{N}$ as
\begin{align*}
&\varrho_{\mathbf{T},n}=\big\langle\big(-\frac{1}{2}\mathbf{I}+\mathbf{K}^{*}_{\partial B}\big)\mathbf{S}^{-1}_{\partial B}\mathbf{P}_{\partial B}[\tilde{\mathbf{T}}^{m}_{n}],\tilde{\mathbf{T}}^{m}_{n}\big\rangle_{L^{2}(\partial B)^3},\\
&\varrho_{\mathbf{M},n}=\big\langle\big(-\frac{1}{2}\mathbf{I}+\mathbf{K}^{*}_{\partial B}\big)\mathbf{S}^{-1}_{\partial B}\mathbf{P}_{\partial B}[\tilde{\mathbf{M}}^{m}_{n}],\tilde{\mathbf{M}}^{m}_{n}\big\rangle_{L^{2}(\partial B)^3},\\
&\varrho_{\mathbf{N},n}=\big\langle\big(-\frac{1}{2}\mathbf{I}+\mathbf{K}^{*}_{\partial B}\big)\mathbf{S}^{-1}_{\partial B}\mathbf{P}_{\partial B}[\tilde{\mathbf{N}}^{m}_{n}],\tilde{\mathbf{N}}^{m}_{n}\big\rangle_{L^{2}(\partial B)^3}.
\end{align*}

\begin{lemm}
One has $\varrho_{i,n}\in \mathbb{R},i=\mathbf{T},\mathbf{M},\mathbf{N}$ satisfying
\begin{align*}
&\varrho_{\mathbf{T},n}=\big(\lambda_{\mathbf{T},n}-\frac{1}{2}\big)\big(\mathbf{S}^{-1}_{\partial B}\mathbf{P}_{\partial B}[\tilde{\mathbf{T}}^{m}_{n}],\tilde{\mathbf{T}}^{m}_{n}\big\rangle_{L^{2}(\partial B)^3},\\
&\varrho_{\mathbf{M},n}=\big(\lambda_{\mathbf{M},n}-\frac{1}{2}\big)\big(\mathbf{S}^{-1}_{\partial B}\mathbf{P}_{\partial B}[\tilde{\mathbf{M}}^{m}_{n}],\tilde{\mathbf{M}}^{m}_{n}\big\rangle_{L^{2}(\partial B)^3},\\
&\varrho_{\mathbf{N},n}=\big(\lambda_{\mathbf{N},n}-\frac{1}{2}\big)\big(\mathbf{S}^{-1}_{\partial B}\mathbf{P}_{\partial B}[\tilde{\mathbf{N}}^{m}_{n}],\tilde{\mathbf{N}}^{m}_{n}\big\rangle_{L^{2}(\partial B)^3}.
\end{align*}
\end{lemm}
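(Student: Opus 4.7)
The plan is to reduce all three stated identities to a single self-adjointness manipulation, and then argue separately for the reality of the remaining scalar factor. Inserting the formula \eqref{E:AB1} for $\boldsymbol{\mathcal{A}}_{\partial B,1}$ into the definitions \eqref{E:tau1n1}--\eqref{E:tau1n3}, I obtain, uniformly in $i\in\{\mathbf{T},\mathbf{M},\mathbf{N}\}$,
\begin{align*}
\varrho_{i,n}=\big\langle\big(-\tfrac{1}{2}\mathbf{I}+\mathbf{K}^{*}_{\partial B}\big)\mathbf{S}^{-1}_{\partial B}\mathbf{P}_{\partial B}[\tilde{\mathbf{i}}^{m}_{n}],\tilde{\mathbf{i}}^{m}_{n}\big\rangle_{L^{2}(\partial B)^{3}},
\end{align*}
where $\tilde{\mathbf{i}}^{m}_{n}$ denotes the corresponding vector spherical harmonic $\tilde{\mathbf{T}}^{m}_{n}$, $\tilde{\mathbf{M}}^{m}_{n}$ or $\tilde{\mathbf{N}}^{m}_{n}$.

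Because $\partial B$ is a sphere, the remark right after Lemma~\ref{le:known} gives that $\mathbf{K}^{*}_{\partial B}$ is self-adjoint on $L^{2}(\partial B)^{3}$, while Theorem~\ref{spectrum} furnishes the eigenrelation $\mathbf{K}^{*}_{\partial B}\tilde{\mathbf{i}}^{m}_{n}=\lambda_{i,n}\tilde{\mathbf{i}}^{m}_{n}$ with $\lambda_{i,n}\in\mathbb{R}$. Transferring $\mathbf{K}^{*}_{\partial B}$ onto the second slot of the pairing therefore yields the desired factorisation
\begin{align*}
\varrho_{i,n}=\big(\lambda_{i,n}-\tfrac{1}{2}\big)\big\langle\mathbf{S}^{-1}_{\partial B}\mathbf{P}_{\partial B}[\tilde{\mathbf{i}}^{m}_{n}],\tilde{\mathbf{i}}^{m}_{n}\big\rangle_{L^{2}(\partial B)^{3}}
\end{align*}
for all three indices simultaneously.

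For the reality assertion $\varrho_{i,n}\in\mathbb{R}$, my approach will be to note that the integral kernels of $\mathbf{S}_{\partial B}$ and $\mathbf{P}_{\partial B}$ (respectively $\boldsymbol\Gamma^{0}(\boldsymbol x-\boldsymbol y)$ and $\boldsymbol\Lambda(\boldsymbol x-\boldsymbol y)$ from \eqref{E:lambda}) are real and satisfy $K(\boldsymbol y,\boldsymbol x)=K(\boldsymbol x,\boldsymbol y)^{\top}$, so both operators are self-adjoint on $L^{2}(\partial B)^{3}$ with purely real spectrum. Because $\mathbf{S}_{\partial B}$, $\mathbf{P}_{\partial B}$ and $\mathbf{K}^{*}_{\partial B}$ are all $\mathrm{SO}(3)$-equivariant on the ball, they admit a simultaneous spectral resolution in which the vector spherical harmonics from Theorem~\ref{spectrum} serve as a joint eigenbasis; hence $\mathbf{S}^{-1}_{\partial B}\mathbf{P}_{\partial B}[\tilde{\mathbf{i}}^{m}_{n}]$ is a real scalar multiple of $\tilde{\mathbf{i}}^{m}_{n}$ and the inner product is real.

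The main technical point I anticipate is justifying this joint-diagonalisation step. One route is a Schur-lemma argument applied to the isotypic decomposition of $L^{2}(\partial B)^{3}$ under $\mathrm{SO}(3)$; a more concrete alternative is a direct computation of $\mathbf{S}_{\partial B}\tilde{\mathbf{i}}^{m}_{n}$ and $\mathbf{P}_{\partial B}\tilde{\mathbf{i}}^{m}_{n}$ following the spherical-harmonic calculations in \cite{Deng2019spectral}. Both routes are routine but carry some bookkeeping overhead due to the coupling between the scalar indices $(n,m)$ and the vectorial type labels $\mathbf{T},\mathbf{M},\mathbf{N}$.
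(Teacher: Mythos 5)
Your factorisation step is exactly the paper's: after inserting \eqref{E:AB1} into \eqref{E:tau1n1}--\eqref{E:tau1n3}, one uses that $\mathbf{K}^{*}_{\partial B}$ (hence $-\tfrac{1}{2}\mathbf{I}+\mathbf{K}^{*}_{\partial B}$) is self-adjoint on the sphere, transfers it to the second slot and invokes the eigenrelation of Theorem~\ref{spectrum} with $\lambda_{i,n}\in\mathbb{R}$; the paper's own proof consists of verifying (as you also do) that the kernels of $\mathbf{S}_{\partial B}$ and $\mathbf{P}_{\partial B}$ are real and symmetric, and then declares the rest immediate. Up to that point your proposal and the paper coincide.

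Where you diverge is the reality claim, and your primary mechanism there has a genuine gap. $\mathrm{SO}(3)$-equivariance plus Schur's lemma does \emph{not} force the basis $(\tilde{\mathbf{T}}^m_n,\tilde{\mathbf{M}}^m_n,\tilde{\mathbf{N}}^m_n)$ to be a joint eigenbasis of $\mathbf{S}_{\partial B}$ and $\mathbf{P}_{\partial B}$: Schur's lemma only says an equivariant operator preserves each isotypic component and acts by an \emph{arbitrary} matrix on its multiplicity space, and for fixed $(n,m)$ the spheroidal fields $\tilde{\mathbf{M}}^{m}_{n}$ and $\tilde{\mathbf{N}}^{m}_{n+1}$ (both built from $\tilde{Y}^m_n$) span a two-dimensional multiplicity space on which equivariance alone permits off-diagonal mixing; in particular it does not yield that $\mathbf{S}^{-1}_{\partial B}\mathbf{P}_{\partial B}[\tilde{\mathbf{i}}^m_n]$ is a scalar multiple of $\tilde{\mathbf{i}}^m_n$. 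To rescue that route you would need additional input, e.g.\ the Plemelj symmetrisation $\mathbf{S}_{\partial B}\mathbf{K}^{*}_{\partial B}=\mathbf{K}_{\partial B}\mathbf{S}_{\partial B}$ together with $\mathbf{K}_{\partial B}=\mathbf{K}^{*}_{\partial B}$ on the sphere (so $\mathbf{S}_{\partial B}$ commutes with $\mathbf{K}^{*}_{\partial B}$) plus simplicity of the eigenvalues in each block, or the explicit spherical-harmonic computations of \cite{Deng2019spectral}. None of this is actually needed, and it is not what the paper does: since the kernels of $\mathbf{S}_{\partial B}$ and $\mathbf{P}_{\partial B}$ are real (so $\mathbf{S}^{-1}_{\partial B}$ and $\mathbf{P}_{\partial B}$ preserve real-valued fields), taking the spherical harmonics — hence $\tilde{\mathbf{T}}^m_n,\tilde{\mathbf{M}}^m_n,\tilde{\mathbf{N}}^m_n$ — real-valued makes $\mathbf{S}^{-1}_{\partial B}\mathbf{P}_{\partial B}[\tilde{\mathbf{i}}^m_n]$ a real vector field, so the $L^2(\partial B)^3$ pairing with $\tilde{\mathbf{i}}^m_n$ is manifestly real, and $\varrho_{i,n}=(\lambda_{i,n}-\tfrac{1}{2})\langle\mathbf{S}^{-1}_{\partial B}\mathbf{P}_{\partial B}[\tilde{\mathbf{i}}^m_n],\tilde{\mathbf{i}}^m_n\rangle_{L^{2}(\partial B)^3}\in\mathbb{R}$ follows at once. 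Replace the joint-diagonalisation step by this observation (or supply the commutation argument above) and your proof is complete.
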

\begin{proof}
Note that the operators $-\frac{1}{2}\mathbf{I}+\mathbf{K}^{*}_{\partial B},\mathbf{S}^{-1}_{\partial B},\mathbf{P}_{\partial B}$ are self-adjoint. In fact, one directly verifies that
\begin{align*}
&\langle \mathbf{S}_{\partial B} [\tilde{\mathbf{f}}],\tilde{\mathbf{g}}\rangle_{L^{2}(\partial B)^3}\\
&=\int_{\partial B}\int_{\partial B}\frac{1}{\delta}\big(-\frac{\gamma_1}{4\pi}\frac{1}{|\boldsymbol X-\boldsymbol Y|}\boldsymbol{\mathcal{I}}-\frac{\gamma_2}{4\pi}\frac{(\boldsymbol X-\boldsymbol Y)(\boldsymbol X-\boldsymbol Y)^\top}{|\boldsymbol X-\boldsymbol Y|^3}\big)\tilde{\mathbf{f}}(\boldsymbol Y)\mathrm{d}\sigma(\boldsymbol Y)\cdot\overline{\tilde{\mathbf{g}}}(\boldsymbol X)\mathrm{d}\sigma(\boldsymbol X)\\
&=\int_{\partial B}\tilde{\mathbf{f}}(\boldsymbol X)\cdot\int_{\partial B}\frac{1}{\delta}\big(-\frac{\gamma_1}{4\pi}\frac{1}{|\boldsymbol Y-\boldsymbol X|}\boldsymbol{\mathcal{I}}-\frac{\gamma_2}{4\pi}\frac{(\boldsymbol Y-\boldsymbol X)(\boldsymbol Y-\boldsymbol X)^\top}{|\boldsymbol Y-\boldsymbol X|^3}\big)\overline{\tilde{\mathbf{g}}}(\boldsymbol Y)\mathrm{d}\sigma(\boldsymbol Y)\mathrm{d}\sigma(\boldsymbol X)\\
&=\langle \tilde{\mathbf{f}},\mathbf{S}_{\partial B} [\tilde{\mathbf{g}}]\rangle_{L^{2}(\partial B)^3}
\end{align*}
and
\begin{align*}
&\langle \mathbf{P}_{\partial B} [\tilde{\mathbf{f}}],\overline{\tilde{\mathbf{g}}}\rangle_{L^{2}(\partial B)^3}\\
&=\int_{\partial B}\int_{\partial B}\delta\big(\frac{1}{16\pi}\gamma_4|\boldsymbol X-\boldsymbol Y|\boldsymbol{\mathcal{I}}-\frac{1}{16\pi}\gamma_5\frac{(\boldsymbol X-\boldsymbol Y)(\boldsymbol X-\boldsymbol Y)^\top}{|\boldsymbol X-\boldsymbol Y|}\big)\tilde{\mathbf{f}}(\boldsymbol Y)\mathrm{d}\sigma(\boldsymbol Y)\cdot\overline{\tilde{\mathbf{g}}}(\boldsymbol X)\mathrm{d}\sigma(\boldsymbol X)\\
&=\int_{\partial B}\tilde{\mathbf{f}}(\boldsymbol X)\cdot\int_{\partial B}\delta\big(\frac{1}{16\pi}\gamma_4|\boldsymbol Y-\boldsymbol X|\boldsymbol{\mathcal{I}}-\frac{1}{16\pi}\gamma_5\frac{(\boldsymbol Y-\boldsymbol X)(\boldsymbol Y-\boldsymbol X)^\top}{|\boldsymbol Y-\boldsymbol X|}\big)\overline{\tilde{\mathbf{g}}}(\boldsymbol Y)\mathrm{d}\sigma(\boldsymbol Y)\mathrm{d}\sigma(\boldsymbol X)\\
&=\langle \tilde{\mathbf{f}},\mathbf{P}_{\partial B} [\tilde{\mathbf{g}}]\rangle_{L^{2}(\partial B)^3}.
\end{align*}
Hence the proof of the lemma can be readily completed.
\end{proof}


From  Lemma \ref{le:known} and Theorem \ref{spectrum}, we observe
\begin{align}\label{P:spectrum}
\left\{\begin{aligned}
&\lambda_{\mathbf{T},1}={1}/{2},\quad\lambda_{\mathbf{M},1}={1}/{2},\\
&{1}/{2}-\lambda_{i,n}>0,\quad &&2\leq n\leq N,i=\mathbf{T},\mathbf{M},\\
&{1}/{2}-\lambda_{\mathbf{N},n}>0,\quad&&1\leq n\leq N.
\end{aligned}\right.
\end{align}
In addition, we define that for $i=\mathbf{T},\mathbf{M},\mathbf{N}$,
\begin{align*}
&\Delta_{i,1}=\big(\frac{3-2(\alpha+1)(\frac{1}{2}-\lambda_{i,n})}{48\beta\delta^2\varrho_{i,n}}-\frac{(\alpha+1)^3}{216\beta^3}\big)^2+
\big(\frac{\frac{1}{2}-\lambda_{i,n}}{12\delta^2\varrho_{i,n}}-\frac{(\alpha+1)^2}{36\beta^2}\big)^3,\\
&\Delta_{i,2}=\big(\frac{\alpha(\frac{1}{2}-\lambda_{i,n})-(1+\lambda_{i,n})}{3\beta\delta^2\varrho_{i,n}}+\frac{(\alpha+1)^3}{27\beta^3}\big)^2+
\big(\frac{\frac{1}{2}-\lambda_{i,n}}{3\delta^2\varrho_{i,n}}-\frac{(\alpha+1)^2}{9\beta^2}\big)^3.
\end{align*}

The following proposition corresponds to solutions to equations \eqref{SE:resonance} and \eqref{E:resonance}.

\begin{prop}
Let $c(\Omega)$ satisfy the form as \eqref{c:omega} and $N>1$ be an integer large enough. Suppose that
\begin{align}\label{E:alpha}
\alpha>\max_{n=1,\cdots,N}\max\Big\{\frac{\frac{1}{2}+\lambda_{i,n}}{\frac{1}{2}-\lambda_{i,n}},i=\mathbf{T},\mathbf{M},\mathbf{N},\lambda_{i,n}\neq\lambda_{\mathbf{T},1},\lambda_{\mathbf{M},1}\Big\}>0,\quad \beta>0.
\end{align}
Then the solutions $\Omega_{i,n}=\Omega'_{i,n}+\mathrm{i}\Omega''_{i,n}$,  with $\Omega'_{i,n}\in\mathbb{R},\Omega''_{i,n}\in\mathbb{R}$, of \eqref{SE:resonance}, $i=\mathbf{T},\mathbf{M},\mathbf{N}$ (i.e., the 3-dimensional static polariton resonances) satisfy that for $\lambda_{i,n}\neq\lambda_{\mathbf{T},1},\lambda_{\mathbf{M},1}$ and for all $1\leq n\leq N$,
\begin{align}
&\Omega'_{i,n}=0,\quad\Omega''_{i,n}=\frac{-\alpha\big(\frac{1}{2}-\lambda_{i,n}\big)+\big(\frac{1}{2}+\lambda_{i,n}\big)}{\beta(\frac{1}{2}-\lambda_{i,n})}<0,\quad i=\mathbf{T},\mathbf{M},\mathbf{N}.\label{S:static}
\end{align}
In addition, for $\delta\ll1$, the solutions $\Omega_{i,n}(\delta)=\Omega'_{i,n}+\mathrm{i}\Omega''_{i,n}$,  with $\Omega'_{i,n}\in\mathbb{R},\Omega''_{i,n}\in\mathbb{R}$, of \eqref{E:resonance}, $i=\mathbf{T},\mathbf{M},\mathbf{N}$ (i.e., the 3-dimensional first-order corrected polariton resonances) satisfy that for all $1\leq n\leq N,i=\mathbf{T},\mathbf{M},\mathbf{N}$,

$\mathrm{(i)}$ If $\varrho_{i,n}=0$, then for $\lambda_{i,n}\neq\lambda_{\mathbf{T},1},\lambda_{\mathbf{M},1}$,
\begin{align*}
\Omega'_{i,n}=0,\quad\Omega''_{i,n}=\frac{-\alpha\big(\frac{1}{2}-\lambda_{i,n}\big)+\big(\frac{1}{2}+\lambda_{i,n}\big)}{\beta(\frac{1}{2}-\lambda_{i,n})}.
\end{align*}

$\mathrm{(ii)}$ If $\varrho_{i,n}>0$ or $\varrho_{i,n}<0$ and $\lambda_{i,n}=\lambda_{\mathbf{T},1},\lambda_{\mathbf{M},1}$, then
\begin{align*}
\Omega_{i,n,k}(\delta)=\Omega'_{i,n,k}+\mathrm{i}\Omega''_{i,n,k},\quad k=1,2,3,
\end{align*}
where
\begin{align}
\Omega'_{i,n,1}=&\bigg(3\bigg(\sqrt[3]{-\frac{3-2(\alpha+1)(\frac{1}{2}-\lambda_{i,n})}{48\beta\delta^2\varrho_{i,n}}+\frac{(\alpha+1)^3}{216\beta^3}+\sqrt{\Delta_{i,1}}}\nonumber\\
&+\sqrt[3]{-\frac{3-2(\alpha+1)(\frac{1}{2}-\lambda_{i,n})}{48\beta\delta^2\varrho_{i,n}}+\frac{(\alpha+1)^3}{216\beta^3}-\sqrt{\Delta_{i,1}}}-\frac{\alpha+1}{3\beta}\bigg)^2\nonumber\\
&+\frac{2(\alpha+1)}{\beta}\bigg(\sqrt[3]{-\frac{3-2(\alpha+1)(\frac{1}{2}-\lambda_{i,n})}{48\beta\delta^2\varrho_{i,n}}+\frac{(\alpha+1)^3}{216\beta^3}+\sqrt{\Delta_{i,1}}}\nonumber\\
&+\sqrt[3]{-\frac{3-2(\alpha+1)(\frac{1}{2}-\lambda_{i,n})}{48\beta\delta^2\varrho_{i,n}}+\frac{(\alpha+1)^3}{216\beta^3}-\sqrt{\Delta_{i,1}}}-\frac{\alpha+1}{3\beta}\bigg)+\frac{\frac{1}{2}-\lambda_{i,n}}{\delta^2\varrho_{i,n}}\bigg)^{\frac{1}{2}},\label{E:omega11}\\
\Omega'_{i,n,2}=&-\bigg(3\bigg(\sqrt[3]{-\frac{3-2(\alpha+1)(\frac{1}{2}-\lambda_{i,n})}{48\beta\delta^2\varrho_{i,n}}+\frac{(\alpha+1)^3}{216\beta^3}+\sqrt{\Delta_{i,1}}}\nonumber\\
&+\sqrt[3]{-\frac{3-2(\alpha+1)(\frac{1}{2}-\lambda_{i,n})}{48\beta\delta^2\varrho_{i,n}}+\frac{(\alpha+1)^3}{216\beta^3}-\sqrt{\Delta_{i,1}}}-\frac{\alpha+1}{3\beta}\bigg)^2\nonumber\\
&+\frac{2(\alpha+1)}{\beta}\bigg(\sqrt[3]{-\frac{3-2(\alpha+1)(\frac{1}{2}-\lambda_{i,n})}{48\beta\delta^2\varrho_{i,n}}+\frac{(\alpha+1)^3}{216\beta^3}+\sqrt{\Delta_{i,1}}}\nonumber\\
&+\sqrt[3]{-\frac{3-2(\alpha+1)(\frac{1}{2}-\lambda_{i,n})}{48\beta\delta^2\varrho_{i,n}}+\frac{(\alpha+1)^3}{216\beta^3}-\sqrt{\Delta_{i,1}}}-\frac{\alpha+1}{3\beta}\bigg)+\frac{\frac{1}{2}-\lambda_{i,n}}{\delta^2\varrho_{i,n}}\bigg)^{\frac{1}{2}},\label{E:omega22}\\
\Omega''_{i,n,1}=&\Omega''_{i,n,2}=\sqrt[3]{-\frac{3-2(\alpha+1)(\frac{1}{2}-\lambda_{i,n})}{48\beta\delta^2\varrho_{i,n}}
+\frac{(\alpha+1)^3}{216\beta^3}+\sqrt{\Delta_{i,1}}}\nonumber\\
&\qquad\quad+\sqrt[3]{-\frac{3-2(\alpha+1)(\frac{1}{2}-\lambda_{i,n})}{48\beta\delta^2\varrho_{i,n}}+\frac{(\alpha+1)^3}{216\beta^3}-\sqrt{\Delta_{i,1}}}-\frac{\alpha+1}{3\beta},\label{E2:omega}\\
\Omega'_{i,n,3}=&0,\nonumber\\
\Omega''_{i,n,3}=&\sqrt[3]{-\frac{\alpha(\frac{1}{2}-\lambda_{i,n})-(1+\lambda_{i,n})}{3\beta\delta^2\varrho_{i,n}}-\frac{(\alpha+1)^3}{27\beta^3}+\sqrt{\Delta_{i,2}}}\nonumber\\
&+\sqrt[3]{-\frac{\alpha(\frac{1}{2}-\lambda_{i,n})-(1+\lambda_{i,n})}{3\beta\delta^2\varrho_{i,n}}-\frac{(\alpha+1)^3}{27\beta^3}-\sqrt{\Delta_{i,2}}}-\frac{\alpha+1}{3\beta}.\label{E22:omega}
\end{align}

$\mathrm{(iii)}$ If $\varrho_{i,n}<0$ and $\lambda_{i,n}\neq\lambda_{\mathbf{T},1},\lambda_{\mathbf{M},1}$,  then
\begin{align*}
\Omega_{i,n,k}(\delta)=\Omega'_{i,n,k}+\mathrm{i}\Omega''_{i,n,k},\quad k=1,2,3,
\end{align*}
where $\Omega'_{i,n,1}=\Omega'_{i,n,2}=\Omega'_{i,n,3}=0$ and
\begin{align}
\Omega''_{i,n,1}=&2\sqrt{-\frac{\frac{1}{2}-\lambda_{i,n}}{3\delta^2\varrho_{i,n}}+\frac{(\alpha+1)^2}{9\beta^2}}\cos\big(\frac{\theta'_i}{3}\big)-\frac{\alpha+1}{3\beta},\label{E:omega333}\\
\Omega''_{i,n,2}=&2\sqrt{-\frac{\frac{1}{2}-\lambda_{i,n}}{3\delta^2\varrho_{i,n}}+\frac{(\alpha+1)^2}{9\beta^2}}\cos\big(\frac{\theta'_i+2\pi}{3}\big)-\frac{\alpha+1}{3\beta},\label{E:omega444}\\
\Omega''_{i,n,3}=&2\sqrt{-\frac{\frac{1}{2}-\lambda_{i,n}}{3\delta^2\varrho_{i,n}}+\frac{(\alpha+1)^2}{9\beta^2}}\cos\big(\frac{\theta'_i+4\pi}{3}\big)-\frac{\alpha+1}{3\beta},\label{E:omega555}
\end{align}
where $\theta'_i=\arccos\bigg(-\frac{\frac{\alpha(\frac{1}{2}-\lambda_{i,n})-(1+\lambda_{i,n})}{3\beta\delta^2\varrho_{i,n}}+\frac{(\alpha+1)^3}{27\beta^3}}{\big(-\frac{\frac{1}{2}-\lambda_{i,n}}{3\delta^2\varrho_{i,n}}+\frac{(\alpha+1)^2}{9\beta^2}\big)^{\frac{3}{2}}}\bigg)$.
\end{prop}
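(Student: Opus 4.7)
The plan is to substitute $c(\Omega)=-\alpha+\mathrm{i}\beta\Omega$ into the two resonance equations and to solve the resulting algebraic equations in $\Omega=\Omega'+\mathrm{i}\Omega''$ by separating real and imaginary parts. For the static equation \eqref{SE:resonance}, the equation $\lambda_{i,n}=(c(\Omega)+1)/(2(c(\Omega)-1))$ is linear in $c(\Omega)$ and yields $c(\Omega)=-(\tfrac{1}{2}+\lambda_{i,n})/(\tfrac{1}{2}-\lambda_{i,n})$, which is purely real provided $\lambda_{i,n}\neq 1/2$. Since $c(\Omega)=-\alpha-\beta\Omega''+\mathrm{i}\beta\Omega'$, the imaginary part forces $\Omega'=0$ and the real part yields the stated formula for $\Omega''$; the inequality $\Omega''<0$ follows directly from hypothesis \eqref{E:alpha}.

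For \eqref{E:resonance}, case (i) with $\varrho_{i,n}=0$ is immediate, since the equation collapses to \eqref{SE:resonance}. Otherwise, setting $A=\tfrac{1}{2}-\lambda_{i,n}$ and $B=\delta^{2}\varrho_{i,n}$, expansion of \eqref{E:resonance} gives a cubic in $\Omega$ with complex coefficients,
\[
\mathrm{i}\beta B\,\Omega^{3}-(\alpha+1)B\,\Omega^{2}-\mathrm{i}\beta A\,\Omega+(\alpha+1)A-1=0.
\]
The key step is the substitution $\Omega=\mathrm{i}\eta$, which turns this into a cubic in $\eta$ with \emph{real} coefficients, and the further depression $\eta=s-(\alpha+1)/(3\beta)$ reduces it to $s^{3}+ps+q=0$ with
\[
p=\frac{A}{B}-\frac{(\alpha+1)^{2}}{3\beta^{2}},\qquad q=\frac{2(\alpha+1)^{3}}{27\beta^{3}}-\frac{3-2(\alpha+1)A}{3\beta B}.
\]
A direct calculation then identifies $\Delta_{i,1}$ as $\Delta/64$ and $\Delta_{i,2}$ as $\Delta$ itself, where $\Delta=(q/2)^{2}+(p/3)^{3}$ is the usual Cardano discriminant; the factor $1/8$ relating the respective cube-root arguments produces the $1/2$-factor later needed to recover the Cardano real root $s_{0}$.

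Cases (ii) and (iii) then correspond respectively to $\Delta>0$ (one real root $s_{0}$ and a complex-conjugate pair $s_{\pm}$) and $\Delta<0$ (three real roots). Lifting back via $\Omega=\mathrm{i}(s-(\alpha+1)/(3\beta))$, a real $s$ yields a purely imaginary $\Omega$, producing $\Omega_{i,n,3}$ in case (ii) and all three $\Omega_{i,n,k}$ in case (iii). The conjugate pair $s_{\pm}=-s_{0}/2\pm\mathrm{i}(\sqrt{3}/2)(s_{1}-s_{2})$, where $s_{1},s_{2}$ denote the two real Cardano cube roots, produces two $\Omega$'s whose imaginary parts are equal, $\Omega''_{i,n,1}=\Omega''_{i,n,2}=-s_{0}/2-(\alpha+1)/(3\beta)$, and whose real parts are opposite, $\Omega'_{i,n,1}=-\Omega'_{i,n,2}=(\sqrt{3}/2)|s_{1}-s_{2}|$. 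The closed form \eqref{E:omega11} then follows from the algebraic identity $3s_{0}^{2}/4+p=3(s_{1}-s_{2})^{2}/4$, obtained via Vieta's relation $s_{1}s_{2}=-p/3$. The trigonometric formulas \eqref{E:omega333}--\eqref{E:omega555} come from the classical substitution $s=2\sqrt{-p/3}\cos\phi$, giving $\cos 3\phi=-q/(2(-p/3)^{3/2})$, which matches the proposition's $\cos\theta'_{i}$ upon simplification of the numerator using $\alpha A-(1+\lambda_{i,n})=(\alpha+1)A-3/2$.

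The main technical obstacle is matching the sign of $\Delta$ with the stated hypotheses on $\varrho_{i,n}$ and $\lambda_{i,n}$ in the regime $\delta\ll 1$. Since $B=\delta^{2}\varrho_{i,n}\to 0$, leading-order expansions give $(p/3)^{3}\sim A^{3}/(27B^{3})=O(\delta^{-6})$ and $(q/2)^{2}\sim 1/(4\beta^{2}B^{2})=O(\delta^{-4})$, so whenever $A>0$ the sign of $\Delta$ is governed by $\mathrm{sign}(B^{3})=\mathrm{sign}(\varrho_{i,n})$, placing $\varrho_{i,n}>0$ in case (ii) and $\varrho_{i,n}<0$ in case (iii). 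The boundary subcase $\lambda_{i,n}\in\{\lambda_{\mathbf{T},1},\lambda_{\mathbf{M},1}\}=\{1/2\}$ makes $A=0$, so $(p/3)^{3}=O(1)$ while $(q/2)^{2}=O(\delta^{-4})$ still blows up, forcing $\Delta>0$ regardless of the sign of $\varrho_{i,n}$; this accounts for the placement of this subcase in case (ii) even when $\varrho_{i,n}<0$. Together with careful bookkeeping of signs through the Cardano and trigonometric formulas, these asymptotic comparisons are the only delicate ingredients.
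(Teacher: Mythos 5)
Your proposal is correct, and it reaches the same closed-form answers as the paper by a genuinely different organization of the cubic-solving step. The paper substitutes $c(\Omega)=-\alpha+\mathrm{i}\beta\Omega$ and immediately separates real and imaginary parts, obtaining the coupled real system \eqref{E:resnance2}; it then splits into the cases $\Omega'\neq0$ (eliminating $\Omega'$ via \eqref{E:Omega1} to get the cubic \eqref{E:Omega2} in $\Omega''$, whose Cardano discriminant is exactly $\Delta_{i,1}$) and $\Omega'=0$ (the cubic \eqref{E:RS-zero}, with discriminant $\Delta_{i,2}$), and in case (iii) it must run a contradiction argument showing $(\Omega')^2<0$ to exclude non-purely-imaginary roots. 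You instead keep the single complex cubic in $\Omega$, rotate by $\Omega=\mathrm{i}\eta$ to obtain one cubic with real coefficients (which is precisely \eqref{E:RS-zero} in the variable $\eta$), and read the entire root structure from the sign of the classical discriminant $\Delta=(q/2)^2+(p/3)^3$: three real $\eta$'s give three purely imaginary resonances, while one real root plus a conjugate pair gives one purely imaginary resonance and a pair with equal imaginary parts and opposite real parts. This buys you two things the paper's route does not have for free: the count of roots is automatic (so no contradiction argument is needed in case (iii)), and the two discriminants are unified, since your identifications $\Delta_{i,2}=\Delta$ and $\Delta_{i,1}=\Delta/64$ are correct (the depressed \eqref{E:Omega2} has $p_1=p/4$, $q_1=-q/8$, so its roots are $-1/2$ times those of $s^3+ps+q=0$, which is exactly why the common imaginary part equals $-s_0/2-\tfrac{\alpha+1}{3\beta}$ and matches \eqref{E2:omega}); your Vieta identity $\tfrac34 s_0^2+p=\tfrac34(s_1-s_2)^2$ likewise reproduces \eqref{E:omega11}--\eqref{E:omega22}, and your $\cos 3\phi$ computation reproduces $\theta_i'$ after the simplification $\alpha(\tfrac12-\lambda_{i,n})-(1+\lambda_{i,n})=(\alpha+1)(\tfrac12-\lambda_{i,n})-\tfrac32$. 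One minor imprecision: your leading-order constant $(q/2)^2\sim 1/(4\beta^2B^2)$ is exact only when $A=\tfrac12-\lambda_{i,n}=0$; generically $(q/2)^2\sim(3-2(\alpha+1)A)^2/(36\beta^2B^2)$, and this coefficient can even vanish for the special value $\alpha=\tfrac{1+\lambda_{i,n}}{1/2-\lambda_{i,n}}$, in which case $q=O(1)$ --- but none of this affects your sign conclusions, since for $A>0$ the term $(p/3)^3\sim A^3/(27B^3)=O(\delta^{-6})$ dominates and carries the sign of $\varrho_{i,n}$, while for $A=0$ one has $p<0$ of order one and $(q/2)^2\to+\infty$, forcing $\Delta>0$, exactly as you state.
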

\begin{proof}
Clearly, equations  \eqref{SE:resonance}, $i=\mathbf{T},\mathbf{M},\mathbf{N}$ are equivalent to
\begin{align*}
\left\{ \begin{aligned}
&\beta(\frac{1}{2}-\lambda_{i,n})\Omega''_{i,n}+\alpha(\frac{1}{2}-\lambda_{i,n})-(\frac{1}{2}+\lambda_{i,n})=0,\\
&\beta(\frac{1}{2}-\lambda_{i,n})\Omega'_{i,n}=0.
\end{aligned}
\right.
\end{align*}
Then it follows from \eqref{P:spectrum} that $\Omega_{i,n}$ satisfying \eqref{S:static} are the solutions of  the above system.

It remains to study equations  \eqref{E:resonance}, $i=\mathbf{T},\mathbf{M},\mathbf{N}$ which are equivalent to
\begin{align}
\left\{ \begin{aligned}\label{E:resnance2}
\beta\delta^2\varrho_{i,n}(\Omega''_{i,n})^3-3\beta\delta^2\varrho_{i,n}(\Omega'_{i,n})^2\Omega''_{i,n}
-(\alpha+1)\delta^2\varrho_{i,n}((\Omega'_{i,n})^2-(\Omega''_{i,n})^2)
+\beta(\frac{1}{2}-\lambda_{i,n})\Omega''_{i,n}\\+\alpha(\frac{1}{2}-\lambda_{i,n})-(\frac{1}{2}+\lambda_{i,n})=0,\\
\beta\delta^2\varrho_{i,n}(\Omega'_{i,n})^3-3\beta\delta^2\varrho_{i,n}(\Omega''_{i,n})^2\Omega'_{i,n}-2(\alpha+1)\delta^2\varrho_{i,n}\Omega''_{i,n}\Omega'_{i,n}-\beta(\frac{1}{2}-\lambda_{i,n})\Omega'_{i,n}=0.
\end{aligned}\right.\tag{$\mathrm{RE}'_{i}$}
\end{align}

The remainder of the discussion is divided into the following three cases.

\textbf{Case 1}: $\varrho_{i,n}=0$. It is clear that the corresponding solution of  \eqref{E:resnance2} has the form as seen in \eqref{S:static}.

\textbf{Case 2}: $\varrho_{i,n}>0$ or $\varrho_{i,n}<0$ and $\lambda_{i,n}=\lambda_{\mathbf{T},1}=\lambda_{\mathbf{M},1}$ (i.e. $\lambda_{i,n} =\frac{1}{2}$). If $\Omega'_{i,n}\neq 0$, then applying the second equality in \eqref{E:resnance2} yields that
\begin{align}\label{E:Omega1}
(\Omega'_{i,n})^2=3(\Omega''_{i,n})^2+\frac{2(\alpha+1)}{\beta}\Omega''_{i,n}+\frac{\frac{1}{2}-\lambda_{i,n}}{\delta^2\varrho_{i,n}}.
\end{align}
Hence the first one in \eqref{E:resnance2} becomes
 \begin{align}\label{E:Omega2}
8\beta\delta^2\varrho_{i,n}(\Omega''_{i,n})^3+8(\alpha+1)\delta^2\varrho_{i,n}(\Omega''_{i,n})^2+
\big(2\beta(\frac{1}{2}-\lambda_{i,n})+\frac{2(\alpha+1)^2}{\beta}\delta^2\varrho_{i,n}\big)\Omega''_{i,n}
+1=0.
\end{align}
For $\delta\ll1$, one has the discriminant  $\Delta_{i,1}>0$. Consequently, only one real root $\Omega''_{i,n}$ of equation \eqref{E:Omega2} has the form as seen in \eqref{E2:omega}. Thus two real roots $\Omega'_{i,n}$ of equation  \eqref{E:Omega1} have the forms given in  \eqref{E:omega11} and \eqref{E:omega22}.

If $\Omega'_{i,n}= 0$, then applying the first term in \eqref{E:resnance2} gives that
\begin{align}\label{E:RS-zero}
\beta\delta^2\varrho_{i,n}(\Omega''_{i,n})^3+(\alpha+1)\delta^2\varrho_{i,n}(\Omega''_{i,n})^2
+\beta(\frac{1}{2}-\lambda_{i,n})\Omega''_{i,n}+\alpha(\frac{1}{2}-\lambda_{i,n})-(\frac{1}{2}+\lambda_{i,n})=0.
\end{align}
For $\delta\ll1$, one arrives at the discriminant $\Delta_{i,2}>0$. Therefore, only one real root $\Omega''_{i,n}$ of equation  \eqref{E:RS-zero} has the form given in \eqref{E22:omega}.

\textbf{Case 3}: $\varrho_{i,n}<0$ and $\lambda_{i,n}\neq\lambda_{\mathbf{T},1},\lambda_{\mathbf{M},1}$ (i.e. $\lambda_{i,n}\neq\frac{1}{2}$). Observe that for $\delta\ll1$,
\begin{align*}
\frac{1}{\delta^2\varrho_{i,n}}(\frac{1}{2}-\lambda_{i,n})-\frac{(\alpha+1)^2}{3\beta^2}<0.
\end{align*}
For $\delta\ll1$, one has $\Delta_{i,1}<0$.
Hence the three distinct real roots of equation \eqref{E:Omega2} are
\begin{align*}
\Omega''_{i,n,1}=&2\sqrt{-\frac{\frac{1}{2}-\lambda_{i,n}}{12\delta^2\varrho_{i,n}}+\frac{(\alpha+1)^2}{36\beta^2}}\cos\big(\frac{\theta_i}{3}\big)-\frac{\alpha+1}{3\beta},\\
\Omega''_{i,n,2}=&2\sqrt{-\frac{\frac{1}{2}-\lambda_{i,n}}{12\delta^2\varrho_{i,n}}+\frac{(\alpha+1)^2}{36\beta^2}}\cos\big(\frac{\theta_i+2\pi}{3}\big)-\frac{\alpha+1}{3\beta},\\
\Omega''_{i,n,3}=&2\sqrt{-\frac{\frac{1}{2}-\lambda_{i,n}}{12\delta^2\varrho_{i,n}}+\frac{(\alpha+1)^2}{36\beta^2}}\cos\big(\frac{\theta_i+4\pi}{3}\big)-\frac{\alpha+1}{3\beta},
\end{align*}
where $\theta_i=\arccos\bigg(-\frac{\frac{3-2(\alpha+1)(\frac{1}{2}-\lambda_{i,n})}{48\beta\delta^2\varrho_{i,n}}
-\frac{(\alpha+1)^3}{216\beta^3}}{\big(-\frac{\frac{1}{2}-\lambda_{i,n}}{12\delta^2\varrho_{i,n}}
+\frac{(\alpha+1)^2}{36\beta^2}\big)^{\frac{3}{2}}}\bigg)\rightarrow\frac{\pi}{2}(~{\rm as}~\delta\rightarrow0)$, which can lead to
\begin{align*}
&\frac{\theta_i}{3}\rightarrow\frac{\pi}{6},\quad\frac{\theta_i+2\pi}{3}\rightarrow\frac{5\pi}{6},\quad\frac{\theta_i+4\pi}{3}\rightarrow\frac{3\pi}{2}.
\end{align*}
Substituting $\Omega''_{i,n,k},k=1,2,3$ into \eqref{E:Omega1} yields that for $\delta\ll1$,
\begin{align*}
&(\Omega'_{i,n,1})^2
=\big(\frac{\frac{1}{2}-\lambda_{i,n}}{\delta^2\varrho_{i,n}}-\frac{(\alpha+1)^2}{3\beta^2}\big)\sin^2\big(\frac{\theta_i}{3}\big)<0,\\
&(\Omega'_{i,n,2})^2
=\big(\frac{\frac{1}{2}-\lambda_{i,n}}{\delta^2\varrho_{i,n}}-\frac{(\alpha+1)^2}{3\beta^2}\big)\sin^2\big(\frac{\theta_i+2\pi}{3}\big)<0,\\
&(\Omega'_{i,n,3})^2
=\big(\frac{\frac{1}{2}-\lambda_{i,n}}{\delta^2\varrho_{i,n}}-\frac{(\alpha+1)^2}{3\beta^2}\big)\sin^2\big(\frac{\theta_i+4\pi}{3}\big)<0.
\end{align*}
This leads to a contradiction to $(\Omega'_{i,n,k})^2\geq0,k=1,2,3$.

If $\Omega'_{i,n}= 0$, then the discriminant $\Delta_{i,2}<0$ for $\delta\ll1$.
As a result, solutions $\Omega''_{i,n,k},k=1,2,3$ to equation \eqref{E:RS-zero} satisfy \eqref{E:omega333}--\eqref{E:omega555}.

The proof is complete.
\end{proof}

\begin{defi}
Define the resonance radius as
\begin{align*}
\mathscr{R}(\delta):=\max_{n=1,2,\cdots,N}\max\left\{|\Omega'_{i,n,1}|,|\Omega''_{i,n,1}|,|\Omega'_{i,n,2}|,|\Omega''_{i,n,2}|,
|\Omega'_{i,n,3}|,|\Omega''_{i,n,3}|,i=\mathbf{T},\mathbf{M},\mathbf{N}\right\},
\end{align*}
where, for simplicity, we write for $\varrho_{i,n}=0$,
\begin{align*}
&\Omega_{i,1,1}=\Omega_{i,1,2}=\Omega_{i,1,3}:=0,&&\text{if}\quad i=\mathbf{T},\mathbf{M},\\
&\Omega_{i,n,1}=\Omega_{i,n,2}=\Omega_{i,n,3}:=\Omega_{i,n},&&\text{if}\quad2\leq n\leq N,i=\mathbf{T},\mathbf{M},\\
&\Omega_{\mathbf{N},n,1}=\Omega_{\mathbf{N},n,2}=\Omega_{\mathbf{N},n,3}:=\Omega_{\mathbf{N},n},&&\text{if}\quad1\leq n\leq N.
\end{align*}
\end{defi}

\begin{rema}
In the proof, we require that $\omega\delta\ll1$.
This resonance radius provides our method a range of validity.  We compute resonant frequencies in a static regime and in a perturbative regime, respectively. In order to ensure that the largest polariton frequency lies in a region that is still considered as low-frequency for a quasiparticle of size $\delta$, we need to verify
\begin{align*}
\mathscr {R}(\delta)\delta<{1}/{2},
\end{align*}
If we pick the size $\mathscr {R}(\delta)\delta>1$, then the largest resonant frequency might not satisfy  $\omega \delta<1/2$. This leads to the fact that the method is not self-consistent.

Moreover, in view of \eqref{E:omega11}--\eqref{E:omega555}, we can see that for $\rho_{i,n}\neq0$,
\begin{align*}
\Omega'_{i,n,k}=\mathcal{O}(\frac{1}{\delta})\quad\text{or}\quad\Omega''_{i,n,k}=\mathcal{O}(\frac{1}{\delta}).
\end{align*}
This means that $\mathscr {R}(\delta)=\mathcal{O}(\frac{1}{\delta})$. Based on the above argument, if $\varrho_{i,n}\neq0$, then the method may be not self-consistent. In other words, if $c(\omega)$ satisfies the form as seen  in \eqref{c:omega}, then we just consider static polariton resonances, i.e., equalities \eqref{SE:resonance}, $\,i=\mathbf{T},\mathbf{M},\mathbf{N} $ hold. In particular, the resonance radius corresponding to static polariton resonances is given by
\begin{align*}
\mathscr{R}=&\max_{n=1,2,\cdots,N}\max\Big\{|\Omega'_{i,n}|,|\Omega''_{i,n}|,i=\mathbf{T},\mathbf{M},\mathbf{N},\lambda_{i,n}\neq\lambda_{\mathbf{T},1},\lambda_{\mathbf{M},1}\Big\}\\
=&\max_{n=1,2,\cdots,N}\max\Big\{\frac{\alpha\big(\frac{1}{2}-\lambda_{i,n}\big)-\big(\frac{1}{2}+\lambda_{i,n}\big)}{\beta(\frac{1}{2}-\lambda_{i,n})},i=\mathbf{T},\mathbf{M},\mathbf{N},\lambda_{i,n}\neq\lambda_{\mathbf{T},1},\lambda_{\mathbf{M},1}\Big\}.
\end{align*}

In addition, it is noticed that for $i=\mathbf{T},\mathbf{M},\mathbf{N},n=1,\cdots,N$,  every $\Omega_{i,n}$ is a simple pole of
\begin{align*}
\Omega\mapsto\frac{1}{-\frac{1}{2}(c(\Omega)+1)+(c(\Omega)-1)\lambda_{i,n}}.
\end{align*}
\end{rema}


\subsection{Scattered field for the Lam\'{e} system in the time domain}

The aim of this subsection is to establish a resonance expansion for the low-frequency part of the corresponding scattered field associated with system \eqref{S:Lame1} in the time domain.

For a fixed $\delta$, we can pick an excitation signal such that most of the frequency content is in the low frequencies but large enough to excite  polariton resonances. Recall that  $f:\omega\rightarrow f(\omega)$ is the Fourier transform of $\hat{f}$.
 We can pick $\eta_1\ll1,\eta_2\ll1$ and $\rho=\mathscr{R}>0$ such that
\begin{align}\label{E:delta}
\int_{\mathbb{R}\setminus[-\rho,\rho]}|f(\omega)|^2\mathrm{d}\omega\leq\eta_1,\quad\rho\delta\leq\eta_2.
\end{align}
\begin{rema}
In view of \eqref{E:delta}, the maximum size $\delta_{max}$ of the quasiparticle is taken as $\eta_2/\rho$.
\end{rema}

By means of \eqref{Fundamental2} and \eqref{E:incident}, the incident field in time domain is given by
\begin{align*}
\hat{\mathbf{u}}^{\mathrm{in}}(\boldsymbol x,t)=&\int_{\mathbb{R}}f(\omega)\boldsymbol\Gamma^{\omega}(\boldsymbol x,\mathbf{s})\mathbf{p}e^{-\mathrm{i}\omega t}\mathrm{d}\omega\\
=&-\int_{\mathbb{R}}f(\omega)\frac{e^{-\mathrm{i}\omega (t-\frac{1}{c_s}|\boldsymbol x-\mathbf{s}|)}}{4\pi\mu|\boldsymbol x-\mathbf{s}|}\mathrm{d}\omega\mathbf{p}
-\nabla\nabla\big(\int_{\mathbb{R}}\frac{f(\omega)}{\omega^2}\frac{e^{-{\mathrm{i}\omega(t-\frac{1}{c_s}|\boldsymbol x-\mathbf{s}|)}}}{4\pi|\boldsymbol x-\mathbf{s}|}\mathrm{d}\omega\big)\mathbf{p}\\
&+\nabla\nabla\big(\int_{\mathbb{R}}\frac{f(\omega)}{\omega^2}\frac{e^{-\mathrm{i}\omega(t-\frac{1}{c_p}|\boldsymbol x-\mathbf{s}|)}
}{4\pi|\boldsymbol x-\mathbf{s}|}\mathrm{d}\omega\big)\mathbf{p}\\
=&-\frac{\hat{f}(t-\frac{1}{c_s}|\boldsymbol x-\mathbf{s}|)}{4\pi\mu|\boldsymbol x-\mathbf{s}|}\mathbf{p}-\nabla\nabla\big(\int_{\mathbb{R}}\frac{f(\omega)}{\omega^2}\frac{e^{-{\mathrm{i}\omega(t-\frac{1}{c_s}|\boldsymbol x-\mathbf{s}|)}}}{4\pi|\boldsymbol x-\mathbf{s}|}\mathrm{d}\omega\big)\mathbf{p}\\
&+\nabla\nabla\big(\int_{\mathbb{R}}\frac{f(\omega)}{\omega^2}\frac{e^{-\mathrm{i}\omega(t-\frac{1}{c_p}|\boldsymbol x-\mathbf{s}|)}
}{4\pi|\boldsymbol x-\mathbf{s}|}\mathrm{d}\omega\big)\mathbf{p}.
\end{align*}

In the next lemma we establish  the following asymptotic expansion for the incident field.

\begin{lemm}\label{le:Ftaylor1}
If the quasiparticle's size $\delta\leq\eta_2/\rho$, 
then $\mathbf{F}^{\omega\delta}$ defined on $\partial D$ (see \eqref{tildeF}) is expressed as
\begin{align*}
\mathbf{F}^{\omega\delta}(\boldsymbol x)=&\delta\lambda f(\omega)(1-c(\omega))\sum^3_{j=1}\sum^3_{i=1}\partial_{i}\Gamma^{\omega}_{ij}(\boldsymbol z,\mathbf{s})p_{j}\boldsymbol\nu_{\boldsymbol x}\nonumber\\
&+\delta\mu f(\omega)(1-c(\omega))\big( \boldsymbol{\mathcal{D}}(\boldsymbol z,\mathbf{s})+\boldsymbol{\mathcal{D}}(\boldsymbol z,\mathbf{s})^\top\big)\boldsymbol\nu_{\boldsymbol x}+\mathcal{O}((\omega\delta)^2)+\mathcal{O}(\delta^2),
\end{align*}
where
$\boldsymbol{\mathcal{D}}(\boldsymbol z,\mathbf{s})=(\mathcal{D}_{ij})^3_{i,j=1}$ is a $3\times3$ matrix with entries:
\begin{align*}
\mathcal{D}_{ij}=\partial_{j}\Gamma^{\omega}_{i1}(\boldsymbol z,\mathbf{s})p_1+\partial_{j}\Gamma^{\omega}_{i2}(\boldsymbol z,\mathbf{s})p_2+\partial_{j}\Gamma^{\omega}_{i3}(\boldsymbol z,\mathbf{s})p_3.
\end{align*}
Here, $p_j$ is the $j$-th component of $\mathbf{p}$.
\end{lemm}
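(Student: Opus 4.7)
My plan is to Taylor-expand the incident field around the particle center $\boldsymbol z$, pass to the scaled boundary $\partial B$ via $\boldsymbol X=(\boldsymbol x-\boldsymbol z)/\delta$, and then combine the small-$\omega\delta$ expansions of the layer operators from Lemmas \ref{le:K-series} and \ref{le:IS-series} with the crucial annihilation of constants by $(-\tfrac12\mathbf{I}+\mathbf{K}^*_{\partial B})\mathbf{S}^{-1}_{\partial B}$ supplied by Lemma \ref{le:constant}.

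First, a first-order Taylor expansion of $\mathbf{u}^{\mathrm{in}}(\boldsymbol y)=f(\omega)\boldsymbol\Gamma^{\omega}(\boldsymbol y,\mathbf s)\mathbf p$ around $\boldsymbol z$ gives $\mathbf{u}^{\mathrm{in}}(\boldsymbol z+\delta\boldsymbol X)=\mathbf{u}^{\mathrm{in}}(\boldsymbol z)+\delta f(\omega)\boldsymbol{\mathcal{D}}(\boldsymbol z,\mathbf s)\boldsymbol X+\mathcal{O}(\delta^2)$ uniformly for $\boldsymbol X\in\partial B$; indeed, the matrix $\boldsymbol{\mathcal{D}}$ defined in the statement coincides with $f(\omega)^{-1}\nabla_{\boldsymbol y}\mathbf{u}^{\mathrm{in}}(\boldsymbol z)$ by a direct componentwise check. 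Using the chain rule $\nabla_{\boldsymbol X}=\delta\nabla_{\boldsymbol y}$, the conormal-derivative piece in \eqref{tildeF} becomes $\tilde{\mathbf{F}}_2(\boldsymbol X)=\delta\bigl[\lambda(\nabla\!\cdot\!\mathbf{u}^{\mathrm{in}})(\boldsymbol z)\boldsymbol\nu_{\boldsymbol X}+2\mu(\nabla^s\mathbf{u}^{\mathrm{in}})(\boldsymbol z)\boldsymbol\nu_{\boldsymbol X}\bigr]+\mathcal{O}(\delta^2)$, which at leading order is precisely $\delta f(\omega)\bigl[\lambda\,\mathrm{tr}(\boldsymbol{\mathcal{D}})\boldsymbol\nu_{\boldsymbol X}+\mu(\boldsymbol{\mathcal{D}}+\boldsymbol{\mathcal{D}}^\top)\boldsymbol\nu_{\boldsymbol X}\bigr]$.

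The main work is on the operator part $c(\omega)(-\tfrac12\mathbf{I}+\mathbf{K}^{\omega_1\delta,*}_{\partial B})(\mathbf{S}^{\omega_1\delta}_{\partial B})^{-1}[\tilde{\mathbf{F}}_1]$. By Lemmas \ref{le:K-series} and \ref{le:IS-series} the composition expands as $(-\tfrac12\mathbf{I}+\mathbf{K}^*_{\partial B})\mathbf{S}^{-1}_{\partial B}+\omega_1\delta(-\tfrac12\mathbf{I}+\mathbf{K}^*_{\partial B})\mathbf{R}_{\partial B,1}+\mathcal{O}((\omega\delta)^2)$; since $\mathbf{R}_{\partial B,1}=-\mathbf{S}^{-1}_{\partial B}\mathbf{R}_{\partial B}\mathbf{S}^{-1}_{\partial B}$ with $\mathbf{R}_{\partial B}$ taking values in constants (see \eqref{ri}), Lemma \ref{le:constant} forces the $\omega_1\delta$-term to vanish, exactly as in the proof of Lemma \ref{le:perturbation}. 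For the same reason, the constant summand $\mathbf{u}^{\mathrm{in}}(\boldsymbol z)$ of $\tilde{\mathbf{F}}_1$ is annihilated by $(-\tfrac12\mathbf{I}+\mathbf{K}^*_{\partial B})\mathbf{S}^{-1}_{\partial B}$, leaving only the linear piece $\delta f(\omega)\boldsymbol{\mathcal{D}}\boldsymbol X$ as an effective contribution. Because any linear vector field is Lam\'e-harmonic in $B$, uniqueness of the interior Dirichlet problem identifies its harmonic extension with itself; then the jump formula \eqref{Jump2} evaluates the static Dirichlet-to-Neumann action to $\delta f(\omega)\bigl[\lambda\,\mathrm{tr}(\boldsymbol{\mathcal{D}})\boldsymbol\nu_{\boldsymbol X}+\mu(\boldsymbol{\mathcal{D}}+\boldsymbol{\mathcal{D}}^\top)\boldsymbol\nu_{\boldsymbol X}\bigr]$.

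Subtracting $c(\omega)$ times this from $\tilde{\mathbf{F}}_2$ produces the common prefactor $(1-c(\omega))$, and reverting to the unscaled variable via $\boldsymbol x=\boldsymbol z+\delta\boldsymbol X$ (with $\boldsymbol\nu_{\boldsymbol X}=\boldsymbol\nu_{\boldsymbol x}$) yields the asserted expansion; the $\mathcal{O}(\delta^2)$ remainder comes from the Taylor truncation of $\mathbf{u}^{\mathrm{in}}$, while the $\mathcal{O}((\omega\delta)^2)$ remainder comes from the operator expansions after the intermediate $\omega\delta$ order has been killed by Lemma \ref{le:constant}. The main obstacle is precisely this bookkeeping of cancellations: one must check that \emph{every} first-order-in-$\omega\delta$ correction hits either a constant input or the constant-valued output of $\mathbf{R}_{\partial B}$, so that the error jumps straight from $\mathcal{O}(\omega\delta)$ to $\mathcal{O}((\omega\delta)^2)$; once this pattern is secured, the remaining computations are purely algebraic.
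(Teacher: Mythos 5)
Your proposal is correct and follows essentially the same route as the paper's proof: Taylor expansion of $\mathbf{u}^{\mathrm{in}}$ about $\boldsymbol z$, the small-$\omega\delta$ expansions from Lemmas \ref{le:K-series} and \ref{le:IS-series}, annihilation of the constant term and of the $\omega_1\delta$ correction (via $\mathbf{R}_{\partial B}$) by Lemma \ref{le:constant}, and evaluation of the linear piece through interior Dirichlet uniqueness together with the jump formula \eqref{Jump2}. The identifications $\mathrm{tr}(\boldsymbol{\mathcal{D}})=\sum_{i,j}\partial_i\Gamma^{\omega}_{ij}p_j$ and $2\nabla^s\mathbf{u}^{\mathrm{in}}(\boldsymbol z)=f(\omega)(\boldsymbol{\mathcal{D}}+\boldsymbol{\mathcal{D}}^\top)$ match the paper's expressions, so no gap remains.
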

\begin{proof}
It is clear that
\begin{align*}
\tilde{\mathbf{F}}^{\omega\delta}(\boldsymbol X)=&\frac{\partial}{\partial\boldsymbol\nu_{\boldsymbol X}}\mathbf{u}^{\mathrm{in}}(\boldsymbol z+\delta \boldsymbol X)-c(\omega)\big(-\frac{1}{2}\mathbf{I}+\mathbf{K}^{\omega_1\delta,*}_{\partial B}\big)(\mathbf{S}^{\omega_1\delta}_{\partial B})^{-1}\mathbf{u}^{\mathrm{in}}(\boldsymbol z+\delta \boldsymbol X)\\
=&f(\omega)\frac{\partial}{\partial\boldsymbol\nu_{\boldsymbol X}}\boldsymbol\Gamma^{\omega}(\boldsymbol z+\delta \boldsymbol X,\mathbf{s})\mathbf{p}-f(\omega)c(\omega)\big(-\frac{1}{2}\mathbf{I}+\mathbf{K}^{\omega_1\delta,*}_{\partial B}\big)(\mathbf{S}^{\omega_1\delta}_{\partial B})^{-1}\boldsymbol\Gamma^{\omega}(\boldsymbol z+\delta \boldsymbol X,\mathbf{s})\mathbf{p}.
\end{align*}
Applying Taylor's expansion yields that
\begin{align}\label{E:Taylor}
\boldsymbol\Gamma^{\omega}(\boldsymbol z+\delta \boldsymbol X,\mathbf{s})\mathbf{p}\mid_{\boldsymbol X\in\partial B}=&\boldsymbol\Gamma^{\omega}(\boldsymbol z,\mathbf{s})\mathbf{p}+\delta X_1 \partial_{1}\boldsymbol\Gamma^{\omega}(\boldsymbol z,\mathbf{s})\mathbf{p}+\delta X_2 \partial_{2}\boldsymbol\Gamma^{\omega}(\boldsymbol z,\mathbf{s})\mathbf{p}\nonumber\\
&+\delta X_3 \partial_{3}\boldsymbol\Gamma^{\omega}(\boldsymbol z,\mathbf{s})\mathbf{p}+\mathcal{O}(\delta^2).
\end{align}
Therefore, for $\boldsymbol\Gamma^{\omega}(\boldsymbol z+\delta \boldsymbol X,\mathbf{s})=(\Gamma^{\omega}_{ij})^3_{i,j=1}$,
\begin{align*}
\frac{\partial}{\partial\boldsymbol\nu_{\boldsymbol X}}\boldsymbol\Gamma^{\omega}(\boldsymbol z+\delta \boldsymbol X,\mathbf{s})\mathbf{p}=&\lambda(\nabla\cdot(\boldsymbol\Gamma^{\omega}(\boldsymbol z+\delta \boldsymbol X,\mathbf{s})\mathbf{p}))\boldsymbol\nu_{\boldsymbol X}+2\mu(\nabla^s(\boldsymbol\Gamma^{\omega}(\boldsymbol z+\delta \boldsymbol X,\mathbf{s})\mathbf{p}))\boldsymbol\nu_{\boldsymbol X}\\
=&\lambda\big(\delta\sum^3_{j=1}\sum^3_{i=1}\partial_{{i}}\Gamma^{\omega}_{ij}(\boldsymbol z,\mathbf{s})p_{j}+\mathcal{O}(\delta^2)\big)\boldsymbol\nu_{\boldsymbol X}\\
&+\mu\big(\delta( \boldsymbol{\mathcal{D}}(\boldsymbol z,\mathbf{s})+\boldsymbol{\mathcal{D}}(\boldsymbol z,\mathbf{s})^\top)+\mathcal{O}(\delta^2)\big)\boldsymbol\nu_{\boldsymbol X}\\
=&\delta\lambda\sum^3_{j=1}\sum^3_{i=1}\partial_{{i}}\Gamma^{\omega}_{ij}(\boldsymbol z,\mathbf{s})p_{j}\boldsymbol\nu_{\boldsymbol X}+\delta\mu\big( \boldsymbol{\mathcal{D}}(\boldsymbol z,\mathbf{s})+\boldsymbol{\mathcal{D}}(\boldsymbol z,\mathbf{s})^\top\big)\boldsymbol\nu_{\boldsymbol X}+\mathcal{O}(\delta^2).
\end{align*}

On the other hand, by Lemmas \ref{le:K-series} and \ref{le:IS-series}, one has
\begin{align*}
&(\mathbf{S}^{\omega_1\delta}_{\partial B})^{-1}
=\mathbf{S}^{-1}_{\partial B}+\omega_1\delta\mathbf{R}_{\partial B,1}+(\omega_1\delta)^2\mathbf{P}_{\partial B,1}+\mathcal{O}((\omega\delta)^3),\\
&\mathbf{K}^{\omega_1\delta,*}_{\partial B}=\mathbf{K}^*_{\partial B}+(\omega_1\delta)^2\mathbf{Q}_{\partial B}+\mathcal{O}((\omega\delta)^3),
\end{align*}
which in turn yield that
\begin{align*}
\big(-\frac{1}{2}\mathbf{I}+\mathbf{K}^{\omega_1\delta,*}_{\partial B}\big)(\mathbf{S}^{\omega_1\delta}_{\partial B})^{-1}=\big(-\frac{1}{2}\mathbf{I}+\mathbf{K}^{*}_{\partial B}\big)\mathbf{S}^{-1}_{\partial B}+\omega_1\delta\big(-\frac{1}{2}\mathbf{I}+\mathbf{K}^{*}_{\partial B}\big)\mathbf{R}_{\partial B,1}+\mathcal{O}((\omega\delta)^2).
\end{align*}
Note that $\left(-\frac{1}{2}\mathbf{I}+\mathbf{K}^{*}_{\partial B}\right)\mathbf{R}_{\partial B,1}[\boldsymbol\Gamma^{\omega}(\boldsymbol z+\delta \boldsymbol X,\mathbf{s})\mathbf{p}]=0$ for $\boldsymbol X\in\partial B$. Hence it follows from \eqref{E:Taylor} that
\begin{align*}
&c(\omega)\big(-\frac{1}{2}\mathbf{I}+\mathbf{K}^{\omega_1\delta,*}_{\partial B}\big)(\mathbf{S}^{\omega_1\delta}_{\partial B})^{-1}[\boldsymbol\Gamma^{\omega}(\boldsymbol z+\delta \boldsymbol X,\mathbf{s})\mathbf{p}]\\
&=c(\omega)\big(-\frac{1}{2}\mathbf{I}+\mathbf{K}^{*}_{\partial B}\big)\mathbf{S}^{-1}_{\partial B}[\boldsymbol\Gamma^{\omega}(\boldsymbol z+\delta \boldsymbol X,\mathbf{s})\mathbf{p}]+\mathcal{O}((\omega\delta)^2)\\
&=c(\omega)\big(-\frac{1}{2}\mathbf{I}+\mathbf{K}^{*}_{\partial B}\big)\mathbf{S}^{-1}_{\partial B}[\boldsymbol\Gamma^{\omega}(\boldsymbol z,\mathbf{s})\mathbf{p}]\\
&+c(\omega)\big(-\frac{1}{2}\mathbf{I}+\mathbf{K}^{*}_{\partial B}\big)\mathbf{S}^{-1}_{\partial B}[\delta X_1 \partial_{1}\boldsymbol\Gamma^{\omega}(\boldsymbol z,\mathbf{s})\mathbf{p}+\delta X_2 \partial_{2}\boldsymbol\Gamma^{\omega}(\boldsymbol z,\mathbf{s})\mathbf{p}+\delta X_3 \partial_{3}\boldsymbol\Gamma^{\omega}(\boldsymbol z,\mathbf{s})\mathbf{p}]\\
&+c(\omega)\big(-\frac{1}{2}\mathbf{I}+\mathbf{K}^{*}_{\partial B}\big)\mathbf{S}^{-1}_{\partial B}[\mathcal{O}(\delta^2)]+\mathcal{O}((\omega\delta)^2).
\end{align*}
It is straightforward to see that
\begin{align*}
c(\omega)\big(-\frac{1}{2}\mathbf{I}+\mathbf{K}^{*}_{\partial B}\big)\mathbf{S}^{-1}_{\partial B}[\boldsymbol\Gamma^{\omega}(\boldsymbol z,\mathbf{s})\mathbf{p}]=0.
\end{align*}
In addition, if we assume
\begin{align*}
\mathbf{S}^{-1}_{\partial B}[\delta X_1 \partial_{1}\boldsymbol\Gamma^{\omega}(\boldsymbol z,\mathbf{s})\mathbf{p}+\delta X_2 \partial_{2}\boldsymbol\Gamma^{\omega}(\boldsymbol z,\mathbf{s})\mathbf{p}+\delta X_3 \partial_{3}\boldsymbol\Gamma^{\omega}(\boldsymbol z,\mathbf{s})\mathbf{p}]=\boldsymbol\phi(\boldsymbol X),
\end{align*}
which leads to
\begin{align*}
\mathbf{S}_{\partial B}[\boldsymbol\phi](\boldsymbol X)=\delta X_1 \partial_{1}\boldsymbol\Gamma^{\omega}(\boldsymbol z,\mathbf{s})\mathbf{p}+\delta X_2 \partial_{2}\boldsymbol\Gamma^{\omega}(\boldsymbol z,\mathbf{s})\mathbf{p}+\delta X_3 \partial_{3}\boldsymbol\Gamma^{\omega}(\boldsymbol z,\mathbf{s})\mathbf{p},
\end{align*}
and then
\begin{align*}
\big(-\frac{1}{2}\mathbf{I}+\mathbf{K}^{*}_{\partial B}\big)[\boldsymbol\phi](\boldsymbol X)=&\frac{\partial}{\partial \boldsymbol\nu_{\boldsymbol X}}\mathbf{S}_{\partial B}[\boldsymbol\phi]\big|_{-}(\boldsymbol X)\\
=&\frac{\partial}{\partial \boldsymbol\nu_{\boldsymbol X}}(\delta X_1 \partial_{1}\boldsymbol\Gamma^{\omega}(\boldsymbol z,\mathbf{s})\mathbf{p}+\delta X_2 \partial_{2}\boldsymbol\Gamma^{\omega}(\boldsymbol z,\mathbf{s})\mathbf{p}+\delta X_3 \partial_{3}\boldsymbol\Gamma^{\omega}(\boldsymbol z,\mathbf{s})\mathbf{p})\\
=&\delta\lambda\sum^3_{j=1}\sum^3_{i=1}\partial_{{i}}\Gamma^{\omega}_{ij}(\boldsymbol z,\mathbf{s})p_{j}\boldsymbol\nu_{\boldsymbol X}+\delta\mu\big( \boldsymbol{\mathcal{D}}(\boldsymbol z,\mathbf{s})+\boldsymbol{\mathcal{D}}(\boldsymbol z,\mathbf{s})^\top\big)\boldsymbol\nu_{\boldsymbol X}+\mathcal{O}(\delta^2).
\end{align*}
As a consequence,
\begin{align*}
\tilde{\mathbf{F}}^{\omega\delta}(\boldsymbol X)=&\delta\lambda f(\omega)(1-c(\omega))\sum^3_{j=1}\sum^3_{i=1}\partial_{{i}}\Gamma^{\omega}_{ij}(\boldsymbol z,\mathbf{s})p_{j}\boldsymbol\nu_{\boldsymbol X}\\
&+\delta\mu f(\omega)(1-c(\omega))\big( \boldsymbol{\mathcal{D}}(\boldsymbol z,\mathbf{s})+\boldsymbol{\mathcal{D}}(\boldsymbol z,\mathbf{s})^\top\big)\boldsymbol\nu_{\boldsymbol X}++\mathcal{O}((\omega\delta)^2)+\mathcal{O}(\delta^2),
\end{align*}
which implies the conclusion of the lemma.

The proof is complete.
\end{proof}

From Lemma \ref{le:Ftaylor1}, we consider that for $\delta\leq\eta_2/\rho$,
\begin{align}\label{E:FFF}
\mathbf{F}^{\omega\delta}(\boldsymbol x)=&\delta\lambda f(\omega)(1-c(\omega))\sum^3_{j=1}\sum^3_{i=1}\partial_{i}\Gamma^{\omega}_{ij}(\boldsymbol z,\mathbf{s})p_{j}\boldsymbol\nu_{\boldsymbol x}\nonumber\\
&+\delta\mu f(\omega)(1-c(\omega))\big( \boldsymbol{\mathcal{D}}(\boldsymbol z,\mathbf{s})+\boldsymbol{\mathcal{D}}(\boldsymbol z,\mathbf{s})^\top\big)\boldsymbol\nu_{\boldsymbol x}.
\end{align}
Recall that $\boldsymbol z$ is the center of the resonator and $\delta $ is its radius. We define
\begin{align*}
t^{-}_0:=&\frac{|\boldsymbol z-\mathbf{s}|}{c_p}+\frac{|\boldsymbol x-\boldsymbol z|}{c_p}-\frac{\delta}{c_p}
-C_1,\\
t^{+}_0:=&\frac{|\boldsymbol z-\mathbf{s}|}{c_s}+\frac{|\boldsymbol x-\boldsymbol z|}{c_s}+\frac{\delta}{c_s}
+C_1,
\end{align*}
where $\frac{|\boldsymbol z-\mathbf{s}|+|\boldsymbol x-\boldsymbol z|}{c_i},i=s,p$ stand for the time it takes the wide-band signal to reach first the scatterer and then to the observation point $\boldsymbol x$. The term $- \frac{\delta}{c_p},\frac{\delta}{c_s}$ account for the maximal timespan spent inside the quasiparticle.

 For $\rho>0$, the truncated inverse Fourier transform of the scattered field $\mathbf{u}^{\mathrm{sca}}$ is given by
\begin{align*}
\mathbf{P}_{\rho}[\mathbf{u}^{\mathrm{sca}}](\boldsymbol x,t):=\int^{\rho}_{-\rho}\mathbf{u}^{\mathrm{sca}}(\boldsymbol x,\omega)e^{-\mathrm{i}\omega t}\mathrm{d}\omega.
\end{align*}
Moreover, define
\begin{align*}
\mathbf{e}_{\mathbf{T},n}(\boldsymbol x)=\mathbf{S}^{\mathrm{i}\Omega''_{\mathbf{T},n}}_{\partial D}[\breve{\mathbf{T}}^{m}_{n}](\boldsymbol x),\quad\mathbf{e}_{\mathbf{M},n}(\boldsymbol x)=\mathbf{S}^{\mathrm{i}\Omega''_{\mathbf{M},n}}_{\partial D}[\breve{\mathbf{M}}^{m}_{n}](\boldsymbol x),\quad\mathbf{e}_{\mathbf{N},n}(\boldsymbol x)=\mathbf{S}^{\mathrm{i}\Omega''_{\mathbf{N},n}}_{\partial D}[\breve{\mathbf{N}}^{m}_{n}](\boldsymbol x),
\end{align*}
where $\Omega''_{i,n},i=\mathbf{T},\mathbf{M},\mathbf{N}$ are given in \eqref{S:static}.

The following theorem corresponds to the expression of the scattered field in the time domain.

\begin{theo}\label{th:scattered}
Let $\alpha,\beta$ satisfy conditions \eqref{E:alpha}. For an integer $N>1$ large enough, if the incident wave $\mathbf{F}^{\omega\delta}$ satisfies \eqref{E:FFF}, then for $\delta\leq\eta_2/\rho$, there exists an integer $M\geq1$ such that  the truncated scattered field has the following form in the time domain for $\boldsymbol x\in\mathbb{R}^3\backslash \overline{D},t\leq t^{-}_0$,
\begin{align}\label{E:scattered}
\mathbf{P}_{\rho}[\mathbf{u}^{\mathrm{sca}}](\boldsymbol x,t)=
&\mathcal{O}\left(\delta^4\rho^{-M}\right),
\end{align}
and for $\boldsymbol x\in\mathbb{R}^3\backslash \overline{D},t\geq t^{+}_0$,
\begin{align}\label{EE:scattered}
\mathbf{P}_{\rho}[\mathbf{u}^{\mathrm{sca}}](\boldsymbol x,t)=&\sum_{n=2}^{N}\sum_{m=-n}^{n}\frac{-2\pi}{\delta\beta(\frac{1}{2}-\lambda_{\mathbf{T},n})}\langle \mathbf{F}^{\mathrm{i}\Omega''_{\mathbf{T},n}\delta},\breve{\mathbf{T}}^{m}_{n}\rangle_{L^{2}(\partial D)^3}\mathbf{e}_{\mathbf{T},n}(\boldsymbol x)e^{\Omega''_{\mathbf{T},n}t}\nonumber\\
&+\sum_{n=2}^{N}\sum_{m=-n}^{n}\frac{-2\pi}{\delta\beta(\frac{1}{2}-\lambda_{\mathbf{M},n})}\langle \mathbf{F}^{\mathrm{i}\Omega''_{\mathbf{M},n}\delta},\breve{\mathbf{M}}^{m}_{n}\rangle_{L^{2}(\partial D)^3}\mathbf{e}_{\mathbf{M},n}(\boldsymbol x)e^{\Omega''_{\mathbf{M},n}t}\nonumber\\
&+\sum_{n=1}^{N}\sum_{m=-(n-1)}^{n-1}\frac{-2\pi}{\delta\beta(\frac{1}{2}-\lambda_{\mathbf{N},n})}\langle \mathbf{F}^{\mathrm{i}\Omega''_{\mathbf{N},n}\delta},\breve{\mathbf{N}}^{m}_{n}\rangle_{L^{2}(\partial D)^3}\mathbf{e}_{\mathbf{N},n}(\boldsymbol x)e^{\Omega''_{\mathbf{N},n}t}\nonumber\\
&+\mathcal{O}\left({\delta^4\rho^{-M}}/{t}\right).
\end{align}
\end{theo}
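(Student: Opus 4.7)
The plan is to substitute the finite modal decomposition from Proposition~\ref{prop:approximation} into $\mathbf{P}_{\rho}[\mathbf{u}^{\mathrm{sca}}](\boldsymbol x,t)=\int_{-\rho}^{\rho}\mathbf{u}^{\mathrm{sca}}(\boldsymbol x,\omega)e^{-\mathrm{i}\omega t}\mathrm{d}\omega$ and then evaluate the resulting integrals by contour deformation in the complex $\omega$-plane. Because $\rho\delta\leq\eta_{2}\ll 1$, the low-frequency hypothesis $\omega\delta\ll 1$ needed for Proposition~\ref{prop:approximation} holds uniformly on $[-\rho,\rho]$; moreover, the remark following the definition of $\mathscr R(\delta)$ forces us to discard the first-order correction $(c(\omega)-1)(\omega\delta)^{2}\varrho_{i,n}$ in $\tau_{i,n}(\omega\delta)$ (otherwise the induced pole sits at scale $1/\delta$ and violates self-consistency), so I would work throughout with the static denominator $\tau_{i,n}(\omega)=-\tfrac12(c(\omega)+1)+(c(\omega)-1)\lambda_{i,n}$ and absorb the omitted term into the overall error. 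For $t\geq t_{0}^{+}$ the contour is closed by the lower semicircle $C_{\rho}^{-}=\{\rho e^{\mathrm{i}\theta}:\theta\in[-\pi,0]\}$ on which $|e^{-\mathrm{i}\omega t}|$ decays, while for $t\leq t_{0}^{-}$ it is closed by the upper semicircle $C_{\rho}^{+}$.

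On the lower-contour side, the only singularities of $1/\tau_{i,n}$ are simple poles at $\omega=\mathrm{i}\Omega''_{i,n}$, which lie strictly in the open lower half-plane by condition~\eqref{E:alpha}; the sums in~\eqref{EE:scattered} start at $n=2$ for $i=\mathbf{T},\mathbf{M}$ precisely because $\lambda_{\mathbf{T},1}=\lambda_{\mathbf{M},1}=\tfrac12$ makes $\tau_{i,1}$ independent of $\omega$ and hence contributes no pole. Since $\tau'_{i,n}(\mathrm{i}\Omega''_{i,n})=-\mathrm{i}\beta(\tfrac12-\lambda_{i,n})$, the residue theorem (after accounting for the clockwise orientation and the $1/\delta$ prefactor in Proposition~\ref{prop:approximation}) delivers the announced coefficient $-2\pi/[\delta\beta(\tfrac12-\lambda_{i,n})]$, together with the point-evaluation $\langle\mathbf{F}^{\mathrm{i}\Omega''_{i,n}\delta},\breve{\mathbf{T}}^{m}_{n}\rangle\mathbf{S}^{\mathrm{i}\Omega''_{i,n}}_{\partial D}[\breve{\mathbf{T}}^{m}_{n}]=\langle\mathbf{F}^{\mathrm{i}\Omega''_{i,n}\delta},\breve{\mathbf{T}}^{m}_{n}\rangle\mathbf{e}_{\mathbf{T},n}$ (and analogously for $\mathbf{M},\mathbf{N}$) and the decay factor $e^{-\mathrm{i}(\mathrm{i}\Omega''_{i,n})t}=e^{\Omega''_{i,n}t}$. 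In the pre-arrival case $t\leq t_{0}^{-}$, the upper-plane integrand is entire—$f$ comes from $\hat f\in C_{0}^{\infty}([0,C_{1}])$, the kernels $\boldsymbol\Gamma^{\omega}$ and $\mathbf{S}^{\omega}_{\partial D}$ are entire in $\omega$, and every pole of $1/\tau_{i,n}$ sits in the lower half-plane—so no residue is collected; the offsets $\delta/c_{p}$ and $C_{1}$ built into $t_{0}^{-}$ are exactly what is needed for the retarded exponentials $e^{\mathrm{i}\omega(|\boldsymbol z-\mathbf{s}|+|\boldsymbol x-\boldsymbol z|)/c_{p}}$ inside $\mathbf{u}^{\mathrm{sca}}(\boldsymbol x,\omega)$ to combine with $e^{-\mathrm{i}\omega t}$ so that the total exponent has positive imaginary part on $C_{\rho}^{+}$, producing exponential suppression on the arc.

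The delicate part is the remainder bookkeeping, where four error sources must be shown to collapse into $\delta^{4}\rho^{-M}$ (or $\delta^{4}\rho^{-M}/t$): (i) the dropped $(\omega\delta)^{2}$ correction to $\tau_{i,n}$ perturbs each residue by $\mathcal{O}(\delta^{2})$; (ii) the $\mathcal{O}((\omega\delta)^{2})+\mathcal{O}(\delta^{2})$ remainder in $\mathbf{F}^{\omega\delta}$ from Lemma~\ref{le:Ftaylor1} contributes another $\mathcal{O}(\delta^{2})$; (iii) the truncation of the modal sum at index $N$ is controlled by the Fourier decay estimates~\eqref{E:decay1}--\eqref{E:decay3}, requiring $N$ chosen large relative to $M$; and (iv) the semicircular arc is estimated using $|f(\omega)|\lesssim\rho^{-M}$ on $|\omega|=\rho$, a consequence of $M$ integrations by parts against $\hat f\in C_{0}^{\infty}$. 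The overall $\delta^{4}$ gain emerges by multiplying the $1/\delta$ of Proposition~\ref{prop:approximation} by the $\mathcal{O}(\delta^{3})$ size of the inner product $\langle\mathbf{F}^{\omega\delta},\breve{\mathbf{T}}^{m}_{n}\rangle$ (pointwise bound $\mathcal{O}(\delta)$ integrated over a surface of measure $\mathcal{O}(\delta^{2})$) and by the combined $\mathcal{O}(\delta^{2})$ from (i)--(ii); the extra $1/t$ in~\eqref{EE:scattered} comes from one integration by parts $e^{-\mathrm{i}\omega t}=-\frac{1}{\mathrm{i}t}\partial_{\omega}e^{-\mathrm{i}\omega t}$ applied to the residual integral, which is legitimate only when $t$ is bounded away from zero and is therefore absent from~\eqref{E:scattered}. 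The principal technical obstacle is ensuring that the semicircular arc estimates on $C_{\rho}^{\pm}$ are uniform in $\omega$ along the contour, especially for $\mathbf{S}^{\omega_{2}}_{\partial D}[\breve{\mathbf{T}}^{m}_{n}]$, which will require combining the expansion of Lemma~\ref{le:S-series} with the Sobolev estimates~\eqref{E:estimate}--\eqref{E:estimate2} to obtain bounds that survive multiplication by $\rho^{-M}$ on the arc.
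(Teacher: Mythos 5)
Your proposal follows essentially the same route as the paper's proof: insert the finite modal expansion of Proposition~\ref{prop:approximation} into the truncated inverse Fourier transform, close the contour in the upper half-plane for $t\le t_0^-$ and in the lower half-plane for $t\ge t_0^+$, pick up the residues at the static poles $\mathrm{i}\Omega''_{i,n}$ (none for $n=1$, $i=\mathbf{T},\mathbf{M}$, since $\tau_{i,1}\equiv-1$), with $\frac{d}{d\Omega}\tau_{i,n}=-\mathrm{i}\beta(\tfrac12-\lambda_{i,n})$ producing the stated coefficients, and control the semicircular arcs through the Paley--Wiener decay of $f$; this is exactly the argument in the paper.

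The only substantive deviation is your remainder bookkeeping, which does not reflect how the bound is actually obtained. The theorem's hypothesis is that $\mathbf{F}^{\omega\delta}$ is \emph{exactly} of the form \eqref{E:FFF}, so your error sources (i)--(ii) do not enter the paper's proof at all; the factor $\delta^4$ in $\mathcal O(\delta^4\rho^{-M})$ is simply the measure of $\partial D\times\partial D$ that appears once the arc integrand is rewritten, via Lemma~\ref{le:gamma}, as a double surface integral against the kernels $\mathscr P,\mathscr Q,\boldsymbol{\mathscr M},\boldsymbol{\mathscr N}$ and $\boldsymbol{\mathcal A},\boldsymbol{\mathcal B}$ (whose sup-norms are kept as constants), and the factor $1/t$ comes from the Jordan-type estimate $\int e^{\rho\sin\theta\,(t-t_0^+)}\,\mathrm d\theta\lesssim \rho^{-1}(t-t_0^+)^{-1}$ on the lower arc rather than from an integration by parts (your alternative could be made to work, but it is not what yields the stated bound here). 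Moreover, your size count for the inner product is off: by Lemma~\ref{le:scalar}, $\|\breve{\mathbf T}^m_n\|_{L^2(\partial D)^3}=1$ while $\breve{\mathbf T}^m_n=\delta^{-1}\mathbf T^m_n$ is of pointwise size $\delta^{-1}$, so $\langle\mathbf F^{\omega\delta},\breve{\mathbf T}^m_n\rangle_{L^2(\partial D)^3}=\mathcal O(\delta^2)$, not $\mathcal O(\delta^3)$; and multiplying the additive corrections (i)--(ii) into the arc error is conceptually wrong--the same multiplication would make the leading residue terms in \eqref{EE:scattered} appear $\mathcal O(\delta^4)$, which they are not. None of this affects the structure of your argument, but the final paragraph would need to be redone along the paper's lines to actually justify the claimed $\delta^4\rho^{-M}$ and $\delta^4\rho^{-M}/t$ rates.
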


\begin{rema}
By straightforward modifications,  we can deal with the other resonance case with the parameters $\alpha<0$ and $\beta<0$.
\end{rema}

Notice that
\begin{align}\label{Re:gamma}
\boldsymbol\Gamma^{\omega}(\boldsymbol x,\boldsymbol y) =&-e^{\mathrm{i}\frac{\omega}{c_s}|\boldsymbol x-\boldsymbol y|}\frac{\boldsymbol{\mathcal{A}}(\boldsymbol x,\boldsymbol y,\frac{\omega}{c_s})}{4\pi\omega^2|\boldsymbol x-\boldsymbol y|}
-e^{\mathrm{i}\frac{\omega}{c_p}|\boldsymbol x-\boldsymbol y|}\frac{\boldsymbol{\mathcal{B}}(\boldsymbol x,\boldsymbol y,\frac{\omega}{c_p})}{4\pi\omega^2|\boldsymbol x-\boldsymbol y|},
\end{align}
where $\boldsymbol{\mathcal{A}}(\boldsymbol x,\boldsymbol y,\frac{\omega}{c_s})$ and $\boldsymbol{\mathcal{B}}(\boldsymbol x,\boldsymbol y,\frac{\omega}{c_p})$ will be given later; see Lemma \ref{le:gamma} for details.
One obtains
\begin{align*}
&\mathbf{e}_{\mathbf{T},n}(\boldsymbol x)=\int_{\partial D}\big(e^{\frac{-\Omega''_{\mathbf{T},n}}{c_s}|\boldsymbol x-\boldsymbol y|}\frac{\boldsymbol{\mathcal{A}}(\boldsymbol x,\boldsymbol y,\frac{\mathrm{i}\Omega''_{\mathbf{T}}}{c_s})}{4\pi(\Omega''_{\mathbf{T},n})^2|\boldsymbol x-\boldsymbol y|}
+e^{-\frac{\Omega''_{\mathbf{T},n}}{c_p}|\boldsymbol x-\boldsymbol y|}\frac{\boldsymbol{\mathcal{B}}(\boldsymbol x,\boldsymbol y,\frac{\mathrm{i}\Omega''_{\mathbf{T}}}{c_p})}{4\pi(\Omega''_{\mathbf{T},n})^2|\boldsymbol x-\boldsymbol y|}\big)\breve{\mathbf{T}}^{m}_{n}(\boldsymbol y)\mathrm{d}\sigma(\boldsymbol y),\\
&\mathbf{e}_{\mathbf{M},n}(\boldsymbol x)=\int_{\partial D}\big(e^{\frac{-\Omega''_{\mathbf{M},n}}{c_s}|\boldsymbol x-\boldsymbol y|}\frac{\boldsymbol{\mathcal{A}}(\boldsymbol x,\boldsymbol y,\frac{\mathrm{i}\Omega''_{\mathbf{M}}}{c_s})}{4\pi(\Omega''_{\mathbf{M}})^2|\boldsymbol x-\boldsymbol y|}
+e^{\frac{-\Omega''_{\mathbf{M},n}}{c_p}|\boldsymbol x-\boldsymbol y|}\frac{\boldsymbol{\mathcal{B}}(\boldsymbol x,\boldsymbol y,\frac{\mathrm{i}\Omega''_{\mathbf{M}}}{c_p})}{4\pi(\Omega''_{\mathbf{M}})^2|\boldsymbol x-\boldsymbol y|}\big)\breve{\mathbf{M}}^{m}_{n}(\boldsymbol y)\mathrm{d}\sigma(\boldsymbol y),\\
&\mathbf{e}_{\mathbf{N},n}(\boldsymbol x)=\int_{\partial D}\big(e^{\frac{-\Omega''_{\mathbf{N},n}}{c_s}|\boldsymbol x-\boldsymbol y|}\frac{\boldsymbol{\mathcal{A}}(\boldsymbol x,\boldsymbol y,\frac{\mathrm{i}\Omega''_{\mathbf{N}}}{c_s})}{4\pi(\Omega''_{\mathbf{N},n})^2|\boldsymbol x-\boldsymbol y|}
+e^{\frac{-\Omega''_{\mathbf{N},n}}{c_p}|\boldsymbol x-\boldsymbol y|}\frac{\boldsymbol{\mathcal{B}}(\boldsymbol x,\boldsymbol y,\frac{\mathrm{i}\Omega''_{\mathbf{N}}}{c_p})}{4\pi(\Omega''_{\mathbf{N},n})^2|\boldsymbol x-\boldsymbol y|}\big)\breve{\mathbf{N}}^{m}_{n}(\boldsymbol y)\mathrm{d}\sigma(\boldsymbol y).
\end{align*}
Since $\alpha,\beta$ satisfy \eqref{E:alpha}, we can see $\Omega''_{i,n}<0,i=\mathbf{T},\mathbf{M},\mathbf{N}$ thanks  to   \eqref{S:static}. Using the expressions of $\boldsymbol{\mathcal{A}}(\boldsymbol x,\boldsymbol y,\frac{\omega}{c_s})$ and $\boldsymbol{\mathcal{B}}(\boldsymbol x,\boldsymbol y,\frac{\omega}{c_p})$, these terms $\mathbf{e}_{i,n}(\boldsymbol x),i=\mathbf{T},\mathbf{M},\mathbf{N}$ may tend to $\infty$ as $|x|$ tends to $\infty$. Even so, let us show that no terms  in \eqref{EE:scattered} diverge.

\begin{theo}
Let $\alpha,\beta$ satisfy conditions \eqref{E:alpha} and $M,N$ be two integers. For $N>1$ large enough, if we define
\begin{align*}
\mathbf{E}_{i,n}(\boldsymbol x):=\mathbf{e}_{i,n}(\boldsymbol x)e^{\frac{\Omega''_{i,n}|\boldsymbol x-\boldsymbol z|}{c_s}},\quad i=\mathbf{T},\mathbf{M},\mathbf{N},
\end{align*}
then there exists $M\geq1$ such that for $\delta\leq\eta_2/\rho$,  the scattered field has the following form in the time domain for $\boldsymbol x\in\mathbb{R}^3\backslash \overline{D},t\leq t^{-}_0$,
\begin{align*}
\mathbf{P}_{\rho}[\mathbf{u}^{\mathrm{sca}}](\boldsymbol x,t)=\mathcal{O}\left(\delta^4\rho^{-M}\right),
\end{align*}
and for $\boldsymbol x\in\mathbb{R}^3\backslash \overline{D},t\geq t^{+}_0$,
\begin{align*}
\mathbf{P}_{\rho}[\mathbf{u}^{\mathrm{sca}}](\boldsymbol x,t)=&\sum_{n=2}^{N}\sum_{m=-n}^{n}\mathscr{C}_{\mathbf{T},n}^{m}(\mathbf{u}^{\mathbf{in}},\delta)\mathbf{E}_{\mathbf{T},n}(\boldsymbol x)e^{\Omega''_{\mathbf{T},n} (t-t_0^+)}\\
&+\sum_{n=2}^{N}\sum_{m=-n}^{n}\mathscr{C}_{\mathbf{M},n}^{m}(\mathbf{u}^{\mathbf{in}},\delta)\mathbf{E}_{\mathbf{M},n}(\boldsymbol x)e^{\Omega''_{\mathbf{M},n} (t-t_0^+)}\\
&+\sum_{n=1}^{N}\sum_{m=-(n-1)}^{n-1}\mathscr{C}_{\mathbf{N},n}^{m}(\mathbf{u}^{\mathbf{in}},\delta)\mathbf{E}_{\mathbf{N},n}(\boldsymbol x)e^{\Omega''_{\mathbf{N},n} (t-t_0^+)}+\mathcal{O}\left({\delta^4\rho^{-M}}/{t}\right),
\end{align*}
where
\begin{align*}
&\mathscr{C}_{\mathbf{T},n}^{m}(\mathbf{u}^{\mathbf{in}},\delta)=\frac{-2\pi C_{\mathbf{T},\delta}}{\delta\beta(\frac{1}{2}-\lambda_{\mathbf{T},n})}\langle \mathbf{F}^{\mathrm{i}\Omega''_{\mathbf{T},n}\delta},\breve{\mathbf{T}}^{m}_{n}\rangle_{L^{2}(\partial D)^3},\\
&\mathscr{C}_{\mathbf{M},n}^{m}(\mathbf{u}^{\mathbf{in}},\delta)=\frac{-2\pi C_{\mathbf{M},\delta}}{\delta\beta(\frac{1}{2}-\lambda_{\mathbf{M},n})}\langle \mathbf{F}^{\mathrm{i}\Omega''_{\mathbf{M},n}\delta},\breve{\mathbf{M}}^{m}_{n}\rangle_{L^{2}(\partial D)^3},\\
&\mathscr{C}_{\mathbf{N},n}^{m}(\mathbf{u}^{\mathbf{in}},\delta)=\frac{-2\pi C_{\mathbf{N},\delta}}{\delta\beta(\frac{1}{2}-\lambda_{\mathbf{N},n})}\langle \mathbf{F}^{\mathrm{i}\Omega''_{\mathbf{N},n}\delta},\breve{\mathbf{N}}^{m}_{n}\rangle_{L^{2}(\partial D)^3}.
\end{align*}
\end{theo}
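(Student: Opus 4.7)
The plan is to deduce this statement from Theorem \ref{th:scattered} by an algebraic rearrangement, accompanied by a uniform spatial growth analysis of the modal functions $\mathbf{e}_{i,n}(\boldsymbol x)$. The causality bound for $t\leq t_0^-$ is identical to the one already established in \eqref{E:scattered}, so I would concentrate the work on the regime $t\geq t_0^+$.

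First I would write $e^{\Omega''_{i,n}t}=e^{\Omega''_{i,n}(t-t_0^+)}\,e^{\Omega''_{i,n}t_0^+}$ and use the explicit form of $t_0^+$ to split off the spatial factor $e^{\Omega''_{i,n}|\boldsymbol x-\boldsymbol z|/c_s}$. Absorbing this factor into $\mathbf{e}_{i,n}(\boldsymbol x)$ produces $\mathbf{E}_{i,n}(\boldsymbol x)$ by definition, while the leftover $\boldsymbol x$-independent piece $e^{\Omega''_{i,n}(|\boldsymbol z-\mathbf{s}|/c_s+\delta/c_s+C_1)}$ plays the role of the scalar $C_{i,\delta}$ that enters the coefficients $\mathscr{C}_{i,n}^{m}(\mathbf{u}^{\mathrm{in}},\delta)$. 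Term by term this rewrites \eqref{EE:scattered} in the claimed form, with the remainder $\mathcal{O}(\delta^4\rho^{-M}/t)$ unaffected.

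The substantive content, hidden behind this rearrangement, is to verify that each $\mathbf{E}_{i,n}$ is a genuine bounded vector field on $\mathbb{R}^3\setminus\overline{D}$, so that the resonance expansion is meaningful despite the apparent spatial blow-up of $\mathbf{e}_{i,n}$. Using \eqref{Re:gamma} together with the integral expressions for $\mathbf{e}_{i,n}$ given just before the statement, $\mathbf{e}_{i,n}$ is a superposition of kernels of the form $e^{-\Omega''_{i,n}|\boldsymbol x-\boldsymbol y|/c_s}/|\boldsymbol x-\boldsymbol y|$ and $e^{-\Omega''_{i,n}|\boldsymbol x-\boldsymbol y|/c_p}/|\boldsymbol x-\boldsymbol y|$ integrated against smooth densities on $\partial D$, and since $\Omega''_{i,n}<0$ by hypothesis \eqref{E:alpha} via \eqref{S:static}, both exponentials grow with $|\boldsymbol x|$. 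Multiplying by $e^{\Omega''_{i,n}|\boldsymbol x-\boldsymbol z|/c_s}$ converts the first kernel into one whose exponent is bounded uniformly by $|\Omega''_{i,n}|\delta/c_s$, since $\bigl||\boldsymbol x-\boldsymbol z|-|\boldsymbol x-\boldsymbol y|\bigr|\leq\delta$ for $\boldsymbol y\in\partial D$.

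The hard part will be controlling the second, $p$-wave kernel, which becomes $e^{\Omega''_{i,n}(|\boldsymbol x-\boldsymbol z|/c_s-|\boldsymbol x-\boldsymbol y|/c_p)}/|\boldsymbol x-\boldsymbol y|$. Here the strong convexity condition \eqref{convexity} is essential: it forces $\lambda+\mu\geq\mu/3>0$ and hence $c_p>c_s$, so the exponent is at most $\Omega''_{i,n}\bigl((1/c_s-1/c_p)|\boldsymbol x-\boldsymbol z|-\delta/c_p\bigr)$, which is actually negative for $|\boldsymbol x|$ large enough. A careful bookkeeping of the polynomial factors hidden in $\boldsymbol{\mathcal{A}}$ and $\boldsymbol{\mathcal{B}}$ in \eqref{Re:gamma}, together with a uniform bound on a fixed neighborhood of $\partial D$, will then yield $\mathbf{E}_{i,n}\in L^\infty(\mathbb{R}^3\setminus\overline{D})^3$. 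Combining this boundedness with the algebraic rearrangement above completes the proof.
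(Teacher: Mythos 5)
Your proposal is correct and follows essentially the same route as the paper: the paper's proof is precisely the factorization $e^{\Omega''_{i,n}t}=e^{\Omega''_{i,n}t_0^+}e^{\Omega''_{i,n}(t-t_0^+)}$, splitting $t_0^+$ so that the $\boldsymbol x$-dependent factor $e^{\Omega''_{i,n}|\boldsymbol x-\boldsymbol z|/c_s}$ is absorbed into $\mathbf{E}_{i,n}$ and the constant $C_{i,\delta}=e^{\Omega''_{i,n}(|\boldsymbol z-\mathbf{s}|/c_s+\delta/c_s+C_1)}$ into $\mathscr{C}_{i,n}^m$, combined with Theorem \ref{th:scattered}. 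Your additional verification that $\mathbf{E}_{i,n}$ is bounded (using $\bigl||\boldsymbol x-\boldsymbol z|-|\boldsymbol x-\boldsymbol y|\bigr|\leq\delta$ for the $s$-kernel and $c_p\geq c_s$ from \eqref{relation} for the $p$-kernel) is sound and simply makes explicit what the paper leaves implicit in its choice of the $c_s$-normalization.
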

\begin{proof}
Observe that for $i=\mathbf{T},\mathbf{M},\mathbf{N}$,
\begin{align*}
\mathbf{e}_{i,n}(\boldsymbol x)e^{\Omega''_{i,n} t}=&\mathbf{e}_{i,n}(\boldsymbol x)e^{\Omega''_{i,n} t^{+}_0}e^{\Omega''_{i,n} (t-t_0^+)}\\
=&\mathbf{e}_{i,n}(\boldsymbol x)e^{\Omega''_{i,n} (\frac{|\boldsymbol z-\mathbf{s}|}{c_s}+\frac{|\boldsymbol x-\boldsymbol z|}{c_s}+\frac{\delta}{c_s}
+C_1)}e^{\Omega''_{i,n} (t-t_0^+)}\\
=&C_{i,\delta}\mathbf{e}_{i,n}(\boldsymbol x)e^{\frac{\Omega''_{i,n}|\boldsymbol x-\boldsymbol z|}{c_s}}e^{\Omega''_{i,n} (t-t_0^+)},
\end{align*}
where $C_{i,\delta}=e^{\Omega''_{i,n} (\frac{|\boldsymbol z-\mathbf{s}|}{c_s}+\frac{\delta}{c_s}
+C_1)}$. Combining this with Theorem \ref{th:scattered}, we can derive the conclusion of the theorem. The proof is complete.
\end{proof}

\subsection{Proof of Theorem \ref{th:scattered}}
The rest of this section aims at completing the proof of of Theorem \ref{th:scattered}. Before  starting  the proof, we need the following lemma.
\begin{lemm}\label{le:gamma}
One has
\begin{align*}
\boldsymbol\Gamma^{\omega}(\boldsymbol x,\mathbf{s}) =&-e^{\mathrm{i}\frac{\omega}{c_s}|\boldsymbol x-\mathbf{s}|}\frac{\boldsymbol{\mathcal{A}}(\boldsymbol x,\mathbf{s},\frac{\omega}{c_s})}{4\pi\omega^2|\boldsymbol x-\mathbf{s}|}
-e^{\mathrm{i}\frac{\omega}{c_p}|\boldsymbol x-\mathbf{s}|}\frac{\boldsymbol{\mathcal{B}}(\boldsymbol x,\mathbf{s},\frac{\omega}{c_p})}{4\pi\omega^2|\boldsymbol x-\mathbf{s}|},
\end{align*}
where
$\boldsymbol{\mathcal{A}}(\boldsymbol x,\mathbf{s},\frac{\omega}{c_s})=(\mathcal{A}_{ij})^3_{i,j=1},\boldsymbol{\mathcal{B}}(\boldsymbol x,\mathbf{s},\frac{\omega}{c_p})=(\mathcal{B}_{ij})^3_{i,j=1}$ are two $3\times3$ matrices, and behave like a polynomial in $\omega$, with entries
\begin{align*}
\mathcal{A}_{ii}=&\frac{1}{|\boldsymbol x-\mathbf{s}|^4}\big(3(x_i-s_i)^2-3\mathrm{i}\frac{\omega}{c_s}(x_i-s_i)^2|\boldsymbol x-\mathbf{s}|-(\frac{\omega}{c_s})^2(x_i-s_i)^2|\boldsymbol x-\mathbf{s}|^2\\
&-|\boldsymbol x-\mathbf{s}|^2+\mathrm{i}\frac{\omega}{c_s}|\boldsymbol x-\mathbf{s}|^3
+(\frac{\omega}{c_s})^2|\boldsymbol x-\mathbf{s}|^4\big),\\
\mathcal{A}_{ij}=&\frac{(x_i-s_i)(x_j-s_j)}{|\boldsymbol x-\mathbf{s}|^4}\big(3-3\mathrm{i}\frac{\omega}{c_s}|\boldsymbol x-\mathbf{s}|
-(\frac{\omega}{c_s})^2|\boldsymbol x-\mathbf{s}|^2\big),i\neq j,
\end{align*}
and
\begin{align*}
\mathcal{B}_{ii}=&-\frac{1}{|\boldsymbol x-\mathbf{s}|^4}\big(3(x_i-s_i)^2-3\mathrm{i}\frac{\omega}{c_p}(x_i-s_i)^2|\boldsymbol x-\mathbf{s}|-(\frac{\omega}{c_p})^2(x_i-s_i)^2|\boldsymbol x-\mathbf{s}|^2\\
&-|\boldsymbol x-\mathbf{s}|^2+\mathrm{i}\frac{\omega}{c_p}|\boldsymbol x-\mathbf{s}|^3\big),\\
\mathcal{B}_{ij}=&-\frac{(x_i-s_i)(x_j-s_j)}{|\boldsymbol x-\mathbf{s}|^4}\big(3-3\mathrm{i}\frac{\omega}{c_p}|\boldsymbol x-\mathbf{s}|
-(\frac{\omega}{c_p})^2|\boldsymbol x-\mathbf{s}|^2\big),i\neq j.
\end{align*}

Moreover, one has that for $k=1,2,3$,
\begin{align*}
\partial_{k}\boldsymbol\Gamma^{\omega}(\boldsymbol x,\mathbf{s})=-e^{\mathrm{i}\frac{\omega}{c_s}|\boldsymbol x-\mathbf{s}|}\frac{\boldsymbol{\mathscr{A}}(\boldsymbol x,\mathbf{s},\frac{\omega}{c_s})}{4\pi\omega^2|\boldsymbol x-\mathbf{s}|}
-e^{\mathrm{i}\frac{\omega}{c_p}|\boldsymbol x-\mathbf{s}|}\frac{\boldsymbol{\mathscr{B}}(\boldsymbol x,\mathbf{s},\frac{\omega}{c_p})}{4\pi\omega^2|\boldsymbol x-\mathbf{s}|},
\end{align*}
where $\boldsymbol{\mathscr{A}}(\boldsymbol x,\mathbf{s},\frac{\omega}{c_s})=(\mathscr{A}_{ij})^3_{i,j=1},\boldsymbol{\mathscr{B}}(\boldsymbol x,\mathbf{s},\frac{\omega}{c_p})=(\mathscr{B}_{ij})^3_{i,j=1}$ are two $3\times3$ matrices, and behave like a polynomial in $\omega$,  with entries
\begin{align*}
\mathscr{A}_{kk}=&\frac{x_k-s_k}{|\boldsymbol x-\mathbf{s}|^6}(-15(x_k-s_k)^2+15\mathrm{i}\frac{\omega}{c_s}(x_k-s_k)^2|\boldsymbol x-\mathbf{s}|+6(\frac{\omega}{c_s})^2(x_k-s_k)^2|\boldsymbol x-\mathbf{s}|^2+9|\boldsymbol x-\mathbf{s}|^2\\
&-\mathrm{i}(\frac{\omega}{c_s})^3(x_k-s_k)^2|\boldsymbol x-\mathbf{s}|^3-9\mathrm{i}\frac{\omega}{c_s}|\boldsymbol x-\mathbf{s}|^3
-4(\frac{\omega}{c_s})^2|\boldsymbol x-\mathbf{s}|^4+\mathrm{i}(\frac{\omega}{c_s})^3|\boldsymbol x-\mathbf{s}|^5),\\
\mathscr{A}_{ii}=&\frac{x_k-s_k}{|\boldsymbol x-\mathbf{s}|^6}(-15(x_k-s_k)^2+15\mathrm{i}\frac{\omega}{c_s}(x_i-s_i)^2|\boldsymbol x-\mathbf{s}|+6(\frac{\omega}{c_s})^2(x_i-s_i)^2|\boldsymbol x-\mathbf{s}|^2+3|\boldsymbol x-\mathbf{s}|^2\\
&-\mathrm{i}(\frac{\omega}{c_s})^3(x_i-s_i)^2|\boldsymbol x-\mathbf{s}|^3-3\mathrm{i}\frac{\omega}{c_s}|\boldsymbol x-\mathbf{s}|^3
-2(\frac{\omega}{c_s})^2|\boldsymbol x-\mathbf{s}|^4+\mathrm{i}(\frac{\omega}{c_s})^3|\boldsymbol x-\mathbf{s}|^5),i\neq k,\\
\mathscr{A}_{ik}=&\frac{x_i-s_i}{|\boldsymbol x-\mathbf{s}|^4}(3-3\mathrm{i}\frac{\omega}{c_s}|\boldsymbol x-\mathbf{s}|
-(\frac{\omega}{c_s})^2|\boldsymbol x-\mathbf{s}|^2)\\
&+\frac{(x_i-s_i)(x_k-s_k)^2}{|\boldsymbol x-\mathbf{s}|^6}(-15+15\mathrm{i}\frac{\omega}{c_s}|\boldsymbol x-\mathbf{s}|
+6(\frac{\omega}{c_s})^2|\boldsymbol x-\mathbf{s}|^2-\mathrm{i}(\frac{\omega}{c_s})^3|\boldsymbol x-\mathbf{s}|^3),i\neq k,\\
\mathscr{A}_{kj}=&\frac{x_j-s_j}{|\boldsymbol x-\mathbf{s}|^4}(3-3\mathrm{i}\frac{\omega}{c_s}|\boldsymbol x-\mathbf{s}|
-(\frac{\omega}{c_s})^2|\boldsymbol x-\mathbf{s}|^2)\\
&+\frac{(x_j-s_j)(x_k-s_k)^2}{|\boldsymbol x-\mathbf{s}|^6}(-15+15\mathrm{i}\frac{\omega}{c_s}|\boldsymbol x-\mathbf{s}|
+6(\frac{\omega}{c_s})^2|\boldsymbol x-\mathbf{s}|^2-\mathrm{i}(\frac{\omega}{c_s})^3|\boldsymbol x-\mathbf{s}|^3),j\neq k, \\
\mathscr{A}_{ij}=&\frac{(x_i-s_i)(x_j-s_j)(x_k-s_k)}{|\boldsymbol x-\mathbf{s}|^6}(-15+15\mathrm{i}\frac{\omega}{c_s}|\boldsymbol x-\mathbf{s}|
+6(\frac{\omega}{c_s})^2|\boldsymbol x-\mathbf{s}|^2-\mathrm{i}(\frac{\omega}{c_s})^3|\boldsymbol x-\mathbf{s}|^3),\\
&i\neq j,i\neq k,j\neq k,
\end{align*}
and
\begin{align*}
\mathscr{B}_{kk}=&-\frac{x_k-s_k}{|\boldsymbol x-\mathbf{s}|^6}(-15(x_k-s_k)^2+15\mathrm{i}\frac{\omega}{c_p}(x_k-s_k)^2|\boldsymbol x-\mathbf{s}|+6(\frac{\omega}{c_p})^2(x_k-s_k)^2|\boldsymbol x-\mathbf{s}|^2+9|\boldsymbol x-\mathbf{s}|^2\\
&-\mathrm{i}(\frac{\omega}{c_p})^3(x_k-s_k)^2|\boldsymbol x-\mathbf{s}|^3-9\mathrm{i}\frac{\omega}{c_p}|\boldsymbol x-\mathbf{s}|^3
-3(\frac{\omega}{c_p})^2|\boldsymbol x-\mathbf{s}|^4),\\
\mathscr{B}_{ii}=&-\frac{x_k-s_k}{|\boldsymbol x-\mathbf{s}|^6}(-15(x_k-s_k)^2+15\mathrm{i}\frac{\omega}{c_p}(x_i-s_i)^2|\boldsymbol x-\mathbf{s}|+6(\frac{\omega}{c_p})^2(x_i-s_i)^2|\boldsymbol x-\mathbf{s}|^2+3|\boldsymbol x-\mathbf{s}|^2\\
&-\mathrm{i}(\frac{\omega}{c_p})^3(x_i-s_i)^2|\boldsymbol x-\mathbf{s}|^3-3\mathrm{i}\frac{\omega}{c_p}|\boldsymbol x-\mathbf{s}|^3
-(\frac{\omega}{c_p})^2|\boldsymbol x-\mathbf{s}|^4),i\neq k,\\
\mathscr{B}_{ik}=&-\frac{x_i-s_i}{|\boldsymbol x-\mathbf{s}|^4}(3-3\mathrm{i}\frac{\omega}{c_p}|\boldsymbol x-\mathbf{s}|
-(\frac{\omega}{c_p})^2|\boldsymbol x-\mathbf{s}|^2)\\
&-\frac{(x_i-s_i)(x_k-s_k)^2}{|\boldsymbol x-\mathbf{s}|^6}(-15+15\mathrm{i}\frac{\omega}{c_p}|\boldsymbol x-\mathbf{s}|
+6(\frac{\omega}{c_p})^2|\boldsymbol x-\mathbf{s}|^2-\mathrm{i}(\frac{\omega}{c_p})^3|\boldsymbol x-\mathbf{s}|^3),i\neq k, \\
\mathscr{B}_{kj}=&-\frac{x_j-s_j}{|\boldsymbol x-\mathbf{s}|^4}(3-3\mathrm{i}\frac{\omega}{c_p}|\boldsymbol x-\mathbf{s}|
-(\frac{\omega}{c_p})^2|\boldsymbol x-\mathbf{s}|^2)\\
&-\frac{(x_j-s_j)(x_k-s_k)^2}{|\boldsymbol x-\mathbf{s}|^6}(-15+15\mathrm{i}\frac{\omega}{c_p}|\boldsymbol x-\mathbf{s}|
+6(\frac{\omega}{c_p})^2|\boldsymbol x-\mathbf{s}|^2-\mathrm{i}(\frac{\omega}{c_p})^3|\boldsymbol x-\mathbf{s}|^3),j\neq k, \\
\mathscr{B}_{ij}=&-\frac{(x_i-s_i)(x_j-s_j)(x_k-s_k)}{|\boldsymbol x-\mathbf{s}|^6}(-15+15\mathrm{i}\frac{\omega}{c_p}|\boldsymbol x-\mathbf{s}|
+6(\frac{\omega}{c_p})^2|\boldsymbol x-\mathbf{s}|^2-\mathrm{i}(\frac{\omega}{c_p})^3|\boldsymbol x-\mathbf{s}|^3),\\
&i\neq j,i\neq k,j\neq k.
\end{align*}
\end{lemm}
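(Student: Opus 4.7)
The plan is to proceed by a direct computation starting from the closed-form expression \eqref{Fundamental2} of the Kupradze matrix. Setting $r:=|\boldsymbol x-\mathbf{s}|$ and $y_i:=x_i-s_i$, the basic identities $\partial_i r = y_i/r$ and $\partial_i\partial_j r = \delta_{ij}/r - y_iy_j/r^3$ give, after a careful application of the chain rule,
\begin{equation*}
\partial_i\partial_j\!\left(\frac{e^{\mathrm{i}kr}}{r}\right)
=\frac{e^{\mathrm{i}kr}}{r^5}\!\left[\,\delta_{ij}\,r^2(\mathrm{i}kr-1)+y_iy_j\bigl(3-3\mathrm{i}kr-k^2r^2\bigr)\right],
\end{equation*}
for any scalar wavenumber $k$. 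This single formula is the main workhorse and will be reused throughout.

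Next I would substitute $k=\omega/c_p$ and $k=\omega/c_s$ into the above Hessian and insert the resulting expressions into \eqref{Fundamental2}, being careful to combine the contribution of the first term $-e^{\mathrm{i}\omega r/c_s}/(4\pi\mu r)\,\boldsymbol{\mathcal{I}}$ with the $k_s$-part coming from the Hessian. Using $\mu=c_s^2$, so that $\omega^2/\mu = (\omega/c_s)^2$, the $e^{\mathrm{i}\omega r/c_s}$ coefficient can be arranged over the common denominator $4\pi\omega^2 r^5$. Grouping by the exponential factors $e^{\mathrm{i}\omega r/c_s}$ and $e^{\mathrm{i}\omega r/c_p}$, one then reads off $\mathcal{A}_{ij}$ and $\mathcal{B}_{ij}$: the diagonal entries accumulate the extra $\delta_{ij}$ contributions while the off-diagonal entries come purely from the $y_iy_j$ monomials. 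Matching term by term against the stated formulas for $\mathcal{A}_{ii},\mathcal{A}_{ij}$ and $\mathcal{B}_{ii},\mathcal{B}_{ij}$ concludes the first half of the lemma.

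For the derivative formula, I would differentiate the already-factored expression $\boldsymbol\Gamma^\omega(\boldsymbol x,\mathbf s) = -e^{\mathrm{i}\omega r/c_s}\boldsymbol{\mathcal{A}}/(4\pi\omega^2 r) - e^{\mathrm{i}\omega r/c_p}\boldsymbol{\mathcal{B}}/(4\pi\omega^2 r)$ with respect to $x_k$. Applying the Leibniz rule to each of the three factors (the exponential $e^{\mathrm{i}kr}$, the $1/r$ factor, and the rational polynomial in $y_i, y_j$ coming from $\boldsymbol{\mathcal{A}}$ or $\boldsymbol{\mathcal{B}}$) produces three additive contributions that can all be combined over the denominator $4\pi\omega^2 r^7$ (or $r^5$ after simplification); the exponential factor factors out, so the whole derivative again has the claimed structure $-e^{\mathrm{i}\omega r/c_s}\boldsymbol{\mathscr{A}}/(4\pi\omega^2 r) - e^{\mathrm{i}\omega r/c_p}\boldsymbol{\mathscr{B}}/(4\pi\omega^2 r)$. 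To match the explicit entries $\mathscr{A}_{ij}, \mathscr{B}_{ij}$ it is convenient to distinguish four cases according to how the index $k$ relates to the indices $(i,j)$: $(i,j)=(k,k)$, $i=k\neq j$, $j=k\neq i$, and $\{i,j,k\}$ all distinct. In each case the terms coming from $\partial_k y_i^2 = 2y_k\delta_{ik}$, $\partial_k(y_iy_j) = \delta_{ik}y_j+\delta_{jk}y_i$, $\partial_k r = y_k/r$, and $\partial_k r^{-n} = -ny_k/r^{n+2}$ combine to produce the listed polynomial structure.

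The main obstacle is not conceptual — there is no analytic subtlety beyond the chain rule — but organizational: keeping track of the multitude of monomials $y_i^a y_j^b y_k^c r^{-n}$ times $(\mathrm{i}\omega/c_*)^m$ and verifying that, after simplification over a common denominator, the diagonal $\mathscr{A}_{kk}$, the two ``mixed'' cases $\mathscr{A}_{ii}$ $(i\neq k)$ and $\mathscr{A}_{ik}$, $\mathscr{A}_{kj}$, and the fully off-diagonal $\mathscr{A}_{ij}$ ($i,j,k$ distinct) each agree with the stated formulas (and likewise for $\mathscr{B}$, where the absence of the $-e^{\mathrm{i}\omega r/c_s}/(4\pi\mu r)$ term manifests as the lack of a $(\omega/c_p)^3 r^5$ contribution on the diagonal). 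A clean bookkeeping is most easily done by introducing polynomials $P(\xi)=3-3\mathrm{i}\xi-\xi^2$ and $Q(\xi) = -3+3\mathrm{i}\xi+\mathrm{i}\xi^2+\xi^2$ (or similar) and differentiating their product with $y_iy_j/r^4$ symbolically, so that the case analysis reduces to elementary algebraic identities.
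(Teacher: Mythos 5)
Your proposal is correct and follows essentially the same route as the paper, which proves the lemma by direct computation from the Kupradze matrix formula \eqref{Fundamental2}; your Hessian identity $\partial_i\partial_j(e^{\mathrm{i}kr}/r)=\frac{e^{\mathrm{i}kr}}{r^5}\bigl[\delta_{ij}r^2(\mathrm{i}kr-1)+y_iy_j(3-3\mathrm{i}kr-k^2r^2)\bigr]$ checks out, and combining it with $\mu=c_s^2$ reproduces the stated $\mathcal{A}_{ij},\mathcal{B}_{ij}$, after which differentiating in $x_k$ with the indicated case analysis yields $\mathscr{A}_{ij},\mathscr{B}_{ij}$. The only difference is that you spell out the bookkeeping the paper leaves implicit.
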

\begin{proof}
The proof of the lemma is a direct result of formula \eqref{Fundamental2}.
\end{proof}
\begin{rema}
It is apparent from Lemma \ref{le:gamma} that \eqref{Re:gamma} holds.
\end{rema}
Let us define
\begin{align*}
&\boldsymbol{\Xi}^{m}_{\mathbf{T},n}(\boldsymbol x,\omega):=\frac{1}{\delta}
\frac{1}{\tau_{\mathbf{T},n}(\omega\delta)}\langle \mathbf{F}^{\omega\delta},\breve{\mathbf{T}}^{m}_{n}\rangle_{L^{2}(\partial D)^3}\mathbf{S}^{\omega}_{\partial D}[\breve{\mathbf{T}}^{m}_{n}](\boldsymbol x),\\
&\boldsymbol{\Xi}^{m}_{\mathbf{M},n}(\boldsymbol x,\omega):=\frac{1}{\delta}
\frac{1}{\tau_{\mathbf{M},n}(\omega\delta)}\langle \mathbf{F}^{\omega\delta},\breve{\mathbf{M}}^{m}_{n}\rangle_{L^{2}(\partial D)^3}\mathbf{S}^{\omega}_{\partial D}[\breve{\mathbf{M}}^{m}_{n}](\boldsymbol x),\\
&\boldsymbol{\Xi}^{m}_{\mathbf{N},n}(\boldsymbol x,\omega):=\frac{1}{\delta}
\frac{1}{\tau_{\mathbf{N},n}(\omega\delta)}\langle \mathbf{F}^{\omega\delta},\breve{\mathbf{N}}^{m}_{n}\rangle_{L^{2}(\partial D)^3}\mathbf{S}^{\omega}_{\partial D}[\breve{\mathbf{N}}^{m}_{n}](\boldsymbol x).
\end{align*}
Now we are devoted to verifying Theorem \ref{th:scattered}.
\begin{proof}[Proof of Theorem \ref{th:scattered}]
Since $\omega_2=\omega$ (see \eqref{Omega2}), by Proposition \ref{prop:approximation}, we recall the following spectral decomposition in the frequency domain
\begin{align*}
\mathbf{u}^{\mathrm{sca}}(\boldsymbol x,\omega)=\sum_{n=1}^{N}\sum_{m=-n}^{n}(\boldsymbol{\Xi}^{m}_{\mathbf{T},n}(\boldsymbol x,\omega)+\boldsymbol{\Xi}^{m}_{\mathbf{M},n}(\boldsymbol x,\omega))+\sum_{n=1}^{N}\sum_{m=-(n-1)}^{n-1}\boldsymbol{\Xi}^{m}_{\mathbf{N},n}(\boldsymbol x,\omega),\quad \boldsymbol x\in\mathbb{R}^3\backslash \overline{D}.
\end{align*}
It is straightforward to verify that
\begin{align*}
\int^{\rho}_{-\rho}\mathbf{u}^{\mathrm{sca}}(\boldsymbol x,\omega)e^{-\mathrm{i}\omega t}\mathrm{d}\omega
=&\sum_{n=1}^{N}\sum_{m=-n}^{n}\big(\int^{\rho}_{-\rho}\boldsymbol{\Xi}^{m}_{\mathbf{T},n}(\boldsymbol x,\omega)e^{-\mathrm{i}\omega t}\mathrm{d}\omega+\int^{\rho}_{-\rho}\boldsymbol{\Xi}^{m}_{\mathbf{M},n}(\boldsymbol x,\omega)e^{-\mathrm{i}\omega t}\mathrm{d}\omega\big)\\
&+\sum_{n=1}^{N}\sum_{m=-(n-1)}^{n-1}\int^{\rho}_{-\rho}\boldsymbol{\Xi}^{m}_{\mathbf{N},n}(\boldsymbol x,\omega)e^{-\mathrm{i}\omega t}\mathrm{d}\omega.
\end{align*}
The key is to estimate $\int^{\rho}_{-\rho}\boldsymbol{\Xi}^{m}_{i,n}(\boldsymbol x,\omega)e^{-\mathrm{i}\omega t}\mathrm{d}\omega,i=\mathbf{T},\mathbf{M},\mathbf{N}.$

We next apply the residue theorem to obtain an asymptotic expansion in the time domain. Let the integration contour $\mathcal{C}^{\pm}_{\rho}$ be a semicircular arc of radius $\rho$ in the upper $(+)$ or lower $(-)$ half-plane and $\mathcal{C}^{\pm}$ be the closed contour $\mathcal{C}^{\pm}_{\rho}\cup[-\rho,\rho]$. The integral on the closed contour is the main contribution to the scattered field by the mode $(\breve{\mathbf{T}}^{m}_{n},\breve{\mathbf{M}}^{m}_{n},\breve{\mathbf{N}}^{m}_{n})$. Clearly, for $i=\mathbf{T},\mathbf{M},\mathbf{N}$,
\begin{align*}
\int^{\rho}_{-\rho}\boldsymbol{\Xi}^{m}_{i,n}(\boldsymbol x,\omega)e^{-\mathrm{i}\omega t}\mathrm{d}\omega=\oint_{\mathcal{C}^\pm}\boldsymbol{\Xi}^{m}_{i,n}(\boldsymbol x,\Omega)e^{-\mathrm{i}\Omega t}\mathrm{d}\Omega-\int_{\mathcal{C}^\pm_\rho}\boldsymbol{\Xi}^{m}_{i,n}(\boldsymbol x,\Omega)e^{-\mathrm{i}\Omega t}\mathrm{d}\Omega.
\end{align*}
Since $\Omega''_{i,n}<0,i=\mathbf{T},\mathbf{M},\mathbf{N}$ by \eqref{S:static}, one can show that
\begin{align*}
&\oint_{\mathcal{C}^+}\boldsymbol{\Xi}^{m}_{i,n}(\boldsymbol x,\Omega)e^{-\mathrm{i}\Omega t}\mathrm{d}\Omega=0,\quad i=\mathbf{T},\mathbf{M},\mathbf{N},\\
&\oint_{\mathcal{C}^-}\boldsymbol{\Xi}^{m}_{\mathbf{T},1}(\boldsymbol x,\Omega)e^{-\mathrm{i}\Omega t}\mathrm{d}\Omega=0,\quad\oint_{\mathcal{C}^-}\boldsymbol{\Xi}^{m}_{\mathbf{M},1}(\boldsymbol x,\Omega)e^{-\mathrm{i}\Omega t}\mathrm{d}\Omega=0,\\
&\oint_{\mathcal{C}^-}\boldsymbol{\Xi}^{m}_{\mathbf{T},n}(\boldsymbol x,\Omega)e^{-\mathrm{i}\Omega t}\mathrm{d}\Omega=2\pi\mathrm{i}\mathrm{Res}(\boldsymbol{\Xi}^{m}_{\mathbf{T},n}(\boldsymbol x,\Omega)e^{-\mathrm{i}\Omega t},\Omega_{\mathbf{T},n}),\quad&&\forall 2\leq n\leq N,\forall| m|\leq n, \\
&\oint_{\mathcal{C}^-}\boldsymbol{\Xi}^{m}_{\mathbf{M},n}(\boldsymbol x,\Omega)e^{-\mathrm{i}\Omega t}\mathrm{d}\Omega=2\pi\mathrm{i}\mathrm{Res}(\boldsymbol{\Xi}^{m}_{\mathbf{M},n}(\boldsymbol x,\Omega)e^{-\mathrm{i}\Omega t},\Omega_{\mathbf{M},n}),\quad&&\forall 2\leq n\leq N,\forall| m|\leq n,\\
&\oint_{\mathcal{C}^-}\boldsymbol{\Xi}^{m}_{\mathbf{N},n}(\boldsymbol x,\Omega)e^{-\mathrm{i}\Omega t}\mathrm{d}\Omega=2\pi\mathrm{i}\mathrm{Res}(\boldsymbol{\Xi}^{m}_{\mathbf{N},n}(\boldsymbol x,\Omega)e^{-\mathrm{i}\Omega t},\Omega_{\mathbf{N},n}),\quad&&\forall 1\leq n\leq N,\forall| m|\leq n-1.
\end{align*}
As $\Omega_{i,n}$ is a simple pole of $\Omega\mapsto\frac{1}{-\frac{1}{2}(c(\Omega)+1)+(c(\Omega)-1)\lambda_{i,n}}$, we can derive that
\begin{align*}
\oint_{\mathcal{C}^-}\boldsymbol{\Xi}^{m}_{\mathbf{T},n}(\boldsymbol x,\Omega)e^{-\mathrm{i}\Omega t}\mathrm{d}\Omega=&2\pi\mathrm{i}\mathrm{Res}(\boldsymbol{\Xi}^{m}_{\mathbf{T},n}(\boldsymbol x,\Omega),\Omega_{\mathbf{T},n})e^{-\mathrm{i}\Omega_{\mathbf{T},n} t}\\
=&2\pi\mathrm{i}\frac{1}{\delta}\frac{1}{-\mathrm{i}\beta(\frac{1}{2}-\lambda_{\mathbf{T},n})}\langle \mathbf{F}^{\Omega_{\mathbf{T},n}\delta},\breve{\mathbf{T}}^{m}_{n}\rangle_{L^{2}(\partial D)^3}\mathbf{S}^{\Omega_{\mathbf{T},n}}_{\partial D}[\breve{\mathbf{T}}^{m}_{n}](\boldsymbol x)e^{-\mathrm{i}\Omega_{\mathbf{T},n} t}\\
=&-\frac{2\pi}{\delta\beta(\frac{1}{2}-\lambda_{\mathbf{T},n})}\langle \mathbf{F}^{\mathrm{i}\Omega''_{\mathbf{T},n}\delta},\breve{\mathbf{T}}^{m}_{n}\rangle_{L^{2}(\partial D)^3}\mathbf{S}^{\mathrm{i}\Omega''_{\mathbf{T},n}}_{\partial D}[\breve{\mathbf{T}}^{m}_{n}](\boldsymbol x)e^{\Omega''_{\mathbf{T},n}t}.
\end{align*}
Applying the similar technique as above yields that
\begin{align*}
&\oint_{\mathcal{C}^-}\boldsymbol{\Xi}^{m}_{\mathbf{M},n}(\boldsymbol x,\Omega)e^{-\mathrm{i}\Omega t}\mathrm{d}\Omega=-\frac{2\pi}{\delta\beta(\frac{1}{2}-\lambda_{\mathbf{M},n})}\langle \mathbf{F}^{\mathrm{i}\Omega''_{\mathbf{M},n}\delta},\breve{\mathbf{M}}^{m}_{n}\rangle_{L^{2}(\partial D)^3}\mathbf{S}^{\mathrm{i}\Omega''_{\mathbf{M},n}}_{\partial D}[\breve{\mathbf{M}}^{m}_{n}](\boldsymbol x)e^{\Omega''_{\mathbf{M},n} t},\\
&\oint_{\mathcal{C}^-}\boldsymbol{\Xi}^{m}_{\mathbf{N},n}(\boldsymbol x,\Omega)e^{-\mathrm{i}\Omega t}\mathrm{d}\Omega=-\frac{2\pi}{\delta\beta(\frac{1}{2}-\lambda_{\mathbf{N},n})}\langle \mathbf{F}^{\mathrm{i}\Omega''_{\mathbf{N},n}\delta},\breve{\mathbf{N}}^{m}_{n}\rangle_{L^{2}(\partial D)^3}\mathbf{S}^{\mathrm{i}\Omega''_{\mathbf{N},n}}_{\partial D}[\breve{\mathbf{N}}^{m}_{n}](\boldsymbol x)e^{\Omega''_{\mathbf{N},n} t}.
\end{align*}
The remainder of the proof is to estimate
\begin{align*}
\int_{\mathcal{C}^{\pm}_{\rho}}\boldsymbol{\Xi}^{m}_{i,n}(\boldsymbol x,\Omega)e^{-\mathrm{i}\Omega t}\mathrm{d}\Omega,\quad i=\mathbf{T},\mathbf{M},\mathbf{N}.
\end{align*}
We only consider  $\int_{\mathcal{C}^{\pm}_{\rho}}\boldsymbol{\Xi}^{m}_{\mathbf{T},n}(\boldsymbol x,\Omega)e^{-\mathrm{i}\Omega t}\mathrm{d}\Omega$.  Then two  integrals $\int_{\mathcal{C}^{\pm}_{\rho}}\boldsymbol{\Xi}^{m}_{i,n}(\boldsymbol x,\Omega)e^{-\mathrm{i}\Omega t}\mathrm{d}\Omega,i=\mathbf{M},\mathbf{N}$ can be estimated in a completely similar way.

From Lemma \ref{le:gamma}, we have
\begin{align}
&\sum^3_{j=1}\sum^3_{i=1}\partial_{{i}}\Gamma^{\Omega}_{ij}(\boldsymbol z,\mathbf{s})p_{j}\boldsymbol\nu_{\boldsymbol x}=-e^{\mathrm{i}\frac{\Omega}{c_s}|\boldsymbol z-\mathbf{s}|}
\frac{\mathscr{P}(\boldsymbol z,\mathbf{s},\frac{\Omega}{c_s})\boldsymbol\nu_{\boldsymbol x}}{4\pi\Omega^2|\boldsymbol z-\mathbf{s}|}-
e^{\mathrm{i}\frac{\Omega}{c_p}|\boldsymbol z-\mathbf{s}|}\frac{\mathscr{Q}(\boldsymbol z,\mathbf{s},\frac{\Omega}{c_p})\boldsymbol\nu_{\boldsymbol x}}{{4\pi\Omega^2|\boldsymbol z-\mathbf{s}|}},\label{E:F1}\\
&\big( \boldsymbol{\mathcal{D}}(\boldsymbol z,\mathbf{s})+\boldsymbol{\mathcal{D}}(\boldsymbol z,\mathbf{s})^\top\big)\boldsymbol\nu_{\boldsymbol x}=-e^{\mathrm{i}\frac{\Omega}{c_s}|\boldsymbol z-\mathbf{s}|}\frac{\boldsymbol{\mathscr{M}}(\boldsymbol z,\mathbf{s},\frac{\Omega}{c_s})\boldsymbol\nu_{\boldsymbol x}}{4\pi\Omega^2|\boldsymbol z-\mathbf{s}|}
-e^{\mathrm{i}\frac{\Omega}{c_p}|\boldsymbol z-\mathbf{s}|}\frac{\boldsymbol{\mathscr{N}}(\boldsymbol z,\mathbf{s},\frac{\Omega}{c_p})\boldsymbol\nu_{\boldsymbol x}}{4\pi\Omega^2|\boldsymbol z-\mathbf{s}|},\label{E:F2}
\end{align}
where $\mathscr{P}(\boldsymbol z,\mathbf{s},\frac{\Omega}{c_s})=\sum^3_{j=1}\sum^3_{i=1}\mathscr{P}_{ij}p_{j},\mathscr{Q}(\boldsymbol z,\mathbf{s},\frac{\Omega}{c_p})= \sum^3_{j=1}\sum^3_{i=1}\mathscr{Q}_{ij}p_{j}$ with
\begin{align*}
\mathscr{P}_{ii}=\mathscr{A}_{kk}|_{k=i,\boldsymbol x=\boldsymbol z},\quad\mathscr{P}_{ij}=\mathscr{A}_{kj}|_{k=i, \boldsymbol x=\boldsymbol z},i\neq j,\\
\mathscr{Q}_{ii}=\mathscr{B}_{kk}|_{k=i,\boldsymbol x=\boldsymbol z},\quad \mathscr{Q}_{ij}=\mathscr{B}_{kj}|_{k=i, \boldsymbol x=\boldsymbol z},i\neq j,
\end{align*}
and $\boldsymbol{\mathscr{M}}(\boldsymbol z,\mathbf{s},\frac{\Omega}{c_s})=(\mathscr{M}_{ij})^3_{i,j=1},\boldsymbol{\mathscr{N}}(\boldsymbol z,\mathbf{s},\frac{\Omega}{c_p})=(\mathscr{N}_{ij})^3_{i,j=1}$ are $3\times3$ matrix whose entries are composed of $\mathscr{A}_{ij}|_{\boldsymbol x=\boldsymbol z},\mathscr{B}_{ij}|_{\boldsymbol x=\boldsymbol z}$ and $p_j$, $i,j=1,2,3$.
Then it follows from  \eqref{E:FFF}, \eqref{E:F1} and \eqref{E:F2} that
\begin{align*}
\boldsymbol{\Xi}^{m}_{\mathbf{T},n}(\boldsymbol x,\Omega)=&\lambda f(\Omega)\frac{1-c(\Omega)}{\tau_{\mathbf{T},n}(\Omega\delta)}\int_{\partial D}\big(e^{\mathrm{i}\frac{\Omega}{c_s}|\boldsymbol z-\mathbf{s}|}\frac{\mathscr{P}(\boldsymbol z,\mathbf{s},\frac{\Omega}{c_s})\boldsymbol\nu_{\boldsymbol x}}{4\pi\Omega^2|\boldsymbol z-\mathbf{s}|}
+e^{\mathrm{i}\frac{\Omega}{c_p}|\boldsymbol z-\mathbf{s}|}\frac{\mathscr{Q}(\boldsymbol z,\mathbf{s},\frac{\Omega}{c_p})\boldsymbol\nu_{\boldsymbol x}}{4\pi\Omega^2|\boldsymbol z-\mathbf{s}|}\big)\cdot \breve{\mathbf{T}}^{m}_{n}(\boldsymbol x)\mathbf{d}\sigma(\boldsymbol x) \\
&\quad\quad\quad\quad\quad\quad\times\int_{\partial D}\big(e^{\mathrm{i}\frac{\Omega}{c_s}|\boldsymbol x-\boldsymbol y|}\frac{\boldsymbol{\mathcal{A}}(\boldsymbol x,\boldsymbol y,\frac{\Omega}{c_s})}{4\pi\Omega^2|\boldsymbol x-\boldsymbol y|}
+e^{\mathrm{i}\frac{\Omega}{c_p}|\boldsymbol x-\boldsymbol y|}\frac{\boldsymbol{\mathcal{B}}(\boldsymbol x,\boldsymbol y,\frac{\Omega}{c_p})}{4\pi\Omega^2|\boldsymbol x-\boldsymbol y|}\big)\breve{\mathbf{T}}^{m}_{n}(\boldsymbol y)\mathbf{d}\sigma(\boldsymbol y)\\
&+\mu f(\Omega)\frac{1-c(\Omega)}{\tau_{\mathbf{T},n}(\Omega\delta)}\int_{\partial D}\big(e^{\mathrm{i}\frac{\Omega}{c_s}|\boldsymbol z-\mathbf{s}|}\frac{\boldsymbol{\mathscr{M}}(\boldsymbol z,\mathbf{s},\frac{\Omega}{c_s})\boldsymbol\nu_{\boldsymbol x}}{4\pi\Omega^2|\boldsymbol z-\mathbf{s}|}
+e^{\mathrm{i}\frac{\Omega}{c_p}|\boldsymbol z-\mathbf{s}|}\frac{\boldsymbol{\mathscr{N}}(\boldsymbol z,\mathbf{s},\frac{\Omega}{c_p})\boldsymbol\nu_{\boldsymbol x}}{4\pi\Omega^2|\boldsymbol z-\mathbf{s}|}\big)\cdot \breve{\mathbf{T}}^{m}_{n}(\boldsymbol x)\mathbf{d}\sigma(\boldsymbol x) \\
&\quad\quad\quad\quad\quad\quad\times\int_{\partial D}\big(e^{\mathrm{i}\frac{\Omega}{c_s}|\boldsymbol x-\boldsymbol y|}\frac{\boldsymbol{\mathcal{A}}(\boldsymbol x,\boldsymbol y,\frac{\Omega}{c_s})}{4\pi\Omega^2|\boldsymbol x-\boldsymbol y|}
+e^{\mathrm{i}\frac{\Omega}{c_p}|\boldsymbol x-\boldsymbol y|}\frac{\boldsymbol{\mathcal{B}}(\boldsymbol x,\boldsymbol y,\frac{\Omega}{c_p})}{4\pi\Omega^2|\boldsymbol x-\boldsymbol y|}\big)\breve{\mathbf{T}}^{m}_{n}(\boldsymbol y)\mathbf{d}\sigma(\boldsymbol y)\\
=&\boldsymbol{\Xi}^{m}_{\lambda,\mathbf{T},n}(\boldsymbol x,\Omega)+\boldsymbol{\Xi}^{m}_{\mu,\mathbf{T},n}(\boldsymbol x,\Omega)
\end{align*}
with
\begin{align*}
\boldsymbol{\Xi}^{m}_{\lambda,\mathbf{T},n}(\boldsymbol x,\Omega):=&\lambda f(\Omega)\frac{1-c(\Omega)}{\tau_{\mathbf{T},n}(\Omega\delta)}\\
&\times\big(\int_{\partial D\times\partial D}e^{\mathrm{i}\frac{\Omega}{c_s}|\boldsymbol z-\mathbf{s}|+\mathrm{i}\frac{\Omega}{c_s}|\boldsymbol x-\boldsymbol y|}
\frac{\mathscr{P}(\boldsymbol z,\mathbf{s},\frac{\Omega}{c_s})\boldsymbol\nu_{\mathbf{v}}\cdot\breve{\mathbf{T}}^{m}_{n}(\mathbf{v})}{4\pi\Omega^2|\boldsymbol z-\mathbf{s}|}
\frac{\boldsymbol{\mathcal{A}}(\boldsymbol x,\boldsymbol y,\frac{\Omega}{c_s})\breve{\mathbf{T}}^{m}_{n}(\boldsymbol y)}{4\pi\Omega^2|\boldsymbol x-\boldsymbol y|}\mathrm{d}\sigma(\mathbf{v})\mathrm{d}\sigma(\boldsymbol y)\\
&+\int_{\partial D\times\partial D}e^{\mathrm{i}\frac{\Omega}{c_s}|\boldsymbol z-\mathbf{s}|+\mathrm{i}\frac{\Omega}{c_p}|\boldsymbol x-\boldsymbol y|}
\frac{\mathscr{P}(\boldsymbol z,\mathbf{s},\frac{\Omega}{c_s})\boldsymbol\nu_{\mathbf{v}}\cdot\breve{\mathbf{T}}^{m}_{n}(\mathbf{v})}{4\pi\Omega^2|\boldsymbol z-\mathbf{s}|}
\frac{\boldsymbol{\mathcal{B}}(\boldsymbol x,\boldsymbol y,\frac{\Omega}{c_p})\breve{\mathbf{T}}^{m}_{n}(\boldsymbol y)}{4\pi\Omega^2|\boldsymbol x-\boldsymbol y|}\mathrm{d}\sigma(\mathbf{v})\mathrm{d}\sigma(\boldsymbol y)\\
&+\int_{\partial D\times\partial D}e^{\mathrm{i}\frac{\Omega}{c_p}|\boldsymbol z-\mathbf{s}|+\mathrm{i}\frac{\Omega}{c_s}|\boldsymbol x-\boldsymbol y|}
\frac{\mathscr{Q}(\boldsymbol z,\mathbf{s},\frac{\Omega}{c_p})\boldsymbol\nu_{\mathbf{v}}\cdot\breve{\mathbf{T}}^{m}_{n}(\mathbf{v})}{4\pi\Omega^2|\boldsymbol z-\mathbf{s}|}
\frac{\boldsymbol{\mathcal{A}}(\boldsymbol x,\boldsymbol y,\frac{\Omega}{c_s})\breve{\mathbf{T}}^{m}_{n}(\boldsymbol y)}{4\pi\Omega^2|\boldsymbol x-\boldsymbol y|}\mathrm{d}\sigma(\mathbf{v})\mathrm{d}\sigma(\boldsymbol y)\\
&+\int_{\partial D\times\partial D}e^{\mathrm{i}\frac{\Omega}{c_p}|\boldsymbol z-\mathbf{s}|+\mathrm{i}\frac{\Omega}{c_p}|\boldsymbol x-\boldsymbol y|}
\frac{\mathscr{Q}(\boldsymbol z,\mathbf{s},\frac{\Omega}{c_p})\boldsymbol\nu_{\mathbf{v}}\cdot\breve{\mathbf{T}}^{m}_{n}(\mathbf{v})}{4\pi\Omega^2|\boldsymbol z-\mathbf{s}|}
\frac{\boldsymbol{\mathcal{B}}(\boldsymbol x,\boldsymbol y,\frac{\Omega}{c_p})\breve{\mathbf{T}}^{m}_{n}(\boldsymbol y)}{4\pi\Omega^2|\boldsymbol x-\boldsymbol y|}\mathrm{d}\sigma(\mathbf{v})\mathrm{d}\sigma(\boldsymbol y)\big),
\end{align*}
and
\begin{align*}
\boldsymbol{\Xi}^{m}_{\mu,\mathbf{T},n}(\boldsymbol x,\Omega):=&\mu f(\Omega)\frac{1-c(\Omega)}{\tau_{\mathbf{T},n}(\Omega\delta)}\\
&\times\big(\int_{\partial D\times\partial D}e^{\mathrm{i}\frac{\Omega}{c_s}|\boldsymbol z-\mathbf{s}|+\mathrm{i}\frac{\Omega}{c_s}|\boldsymbol x-\boldsymbol y|}
\frac{\boldsymbol{\mathscr{M}}(\boldsymbol z,\mathbf{s},\frac{\Omega}{c_s})\boldsymbol\nu_{\mathbf{v}}\cdot\breve{\mathbf{T}}^{m}_{n}(\mathbf{v})}{4\pi\Omega^2|\boldsymbol z-\mathbf{s}|}
\frac{\boldsymbol{\mathcal{A}}(\boldsymbol x,\boldsymbol y,\frac{\Omega}{c_s})\breve{\mathbf{T}}^{m}_{n}(\boldsymbol y)}{4\pi\Omega^2|\boldsymbol x-\boldsymbol y|}\mathrm{d}\sigma(\mathbf{v})\mathrm{d}\sigma(\boldsymbol y)\\
&+\int_{\partial D\times\partial D}e^{\mathrm{i}\frac{\Omega}{c_s}|\boldsymbol z-\mathbf{s}|+\mathrm{i}\frac{\Omega}{c_p}|\boldsymbol x-\boldsymbol y|}
\frac{\boldsymbol{\mathscr{M}}(\boldsymbol z,\mathbf{s},\frac{\Omega}{c_s})\boldsymbol\nu_{\mathbf{v}}\cdot\breve{\mathbf{T}}^{m}_{n}(\mathbf{v})}{4\pi\Omega^2|\boldsymbol z-\mathbf{s}|}
\frac{\boldsymbol{\mathcal{B}}(\boldsymbol x,\boldsymbol y,\frac{\Omega}{c_p})\breve{\mathbf{T}}^{m}_{n}(\boldsymbol y)}{4\pi\Omega^2|\boldsymbol x-\boldsymbol y|}\mathrm{d}\sigma(\mathbf{v})\mathrm{d}\sigma(\boldsymbol y)\\
&+\int_{\partial D\times\partial D}e^{\mathrm{i}\frac{\Omega}{c_p}|\boldsymbol z-\mathbf{s}|+\mathrm{i}\frac{\Omega}{c_s}|\boldsymbol x-\boldsymbol y|}
\frac{\boldsymbol{\mathscr{N}}(\boldsymbol z,\mathbf{s},\frac{\Omega}{c_p})\boldsymbol\nu_{\mathbf{v}}\cdot\breve{\mathbf{T}}^{m}_{n}(\mathbf{v})}{4\pi\Omega^2|\boldsymbol z-\mathbf{s}|}
\frac{\boldsymbol{\mathcal{A}}(\boldsymbol x,\boldsymbol y,\frac{\Omega}{c_s})\breve{\mathbf{T}}^{m}_{n}(\boldsymbol y)}{4\pi\Omega^2|\boldsymbol x-\boldsymbol y|}\mathrm{d}\sigma(\mathbf{v})\mathrm{d}\sigma(\boldsymbol y)\\
&+\int_{\partial D\times\partial D}e^{\mathrm{i}\frac{\Omega}{c_p}|\boldsymbol z-\mathbf{s}|+\mathrm{i}\frac{\Omega}{c_p}|\boldsymbol x-\boldsymbol y|}
\frac{\boldsymbol{\mathscr{N}}(\boldsymbol z,\mathbf{s},\frac{\Omega}{c_p})\boldsymbol\nu_{\mathbf{v}}\cdot\breve{\mathbf{T}}^{m}_{n}(\mathbf{v})}{4\pi\Omega^2|\boldsymbol z-\mathbf{s}|}
\frac{\boldsymbol{\mathcal{B}}(\boldsymbol x,\boldsymbol y,\frac{\Omega}{c_p})\breve{\mathbf{T}}^{m}_{n}(\boldsymbol y)}{4\pi\Omega^2|\boldsymbol x-\boldsymbol y|}\mathrm{d}\sigma(\mathbf{v})\mathrm{d}\sigma(\boldsymbol y)\big).
\end{align*}
As a consequence,
\begin{align*}
\int_{\mathcal{C}^{\pm}_{\rho}}\boldsymbol{\Xi}^{m}_{\mathbf{T},n}(\boldsymbol x,\Omega)e^{-\mathrm{i}\Omega t}\mathrm{d}\Omega=&
\int_{\mathcal{C}^{\pm}_{\rho}}\boldsymbol{\Xi}^{m}_{\lambda,\mathbf{T},n}(\boldsymbol x,\Omega)e^{-\mathrm{i}\Omega t}\mathrm{d}\Omega+\int_{\mathcal{C}^{\pm}_{\rho}}\boldsymbol{\Xi}^{m}_{\mu,\mathbf{T},n}(\boldsymbol x,\Omega)e^{-\mathrm{i}\Omega t}\mathrm{d}\Omega\\=&\boldsymbol{\Phi}^m_{1,n}+\boldsymbol{\Phi}^m_{2,n}+\boldsymbol{\Phi}^m_{3,n}+\boldsymbol{\Phi}^m_{4,n},
\end{align*}
with
\begin{align*}
\boldsymbol{\Phi}^m_{1,n}:=&\int_{\mathcal{C}^{\pm}_{\rho}}f(\Omega)\int_{\partial D\times\partial D}\boldsymbol{\mathscr{A}}^m_{n}(\mathbf{v},\boldsymbol y,\Omega)e^{\mathrm{i}\Omega
(\frac{|\boldsymbol z-\mathbf{s}|}{c_s}+\frac{|\boldsymbol x-\boldsymbol y|}{c_s}-t)}\mathrm{d}\sigma(\mathbf{v})\mathrm{d}\sigma(\boldsymbol y)\mathrm{d}\Omega,\\
\boldsymbol{\Phi}^m_{2,n}:=&\int_{\mathcal{C}^{\pm}_{\rho}}f(\Omega)\int_{\partial D\times\partial D}\boldsymbol{\mathscr{B}}^m_{n}(\mathbf{v},\boldsymbol y,\Omega)e^{\mathrm{i}\Omega
(\frac{|\boldsymbol z-\mathbf{s}|}{c_s}+\frac{|\boldsymbol x-\boldsymbol y|}{c_p}-t)}\mathrm{d}\sigma(\mathbf{v})\mathrm{d}\sigma(\boldsymbol y)\mathrm{d}\Omega,\\
\boldsymbol{\Phi}^m_{3,n}:=&\int_{\mathcal{C}^{\pm}_{\rho}}f(\Omega)\int_{\partial D\times\partial D}\boldsymbol{\mathscr{C}}^m_{n}(\mathbf{v},\boldsymbol y,\Omega)e^{\mathrm{i}\Omega
(\frac{|\boldsymbol z-\mathbf{s}|}{c_p}+\frac{|\boldsymbol x-\boldsymbol y|}{c_s}-t)}\mathrm{d}\sigma(\mathbf{v})\mathrm{d}\sigma(\boldsymbol y)\mathrm{d}\Omega,\\
\boldsymbol{\Phi}^m_{4,n}:=&\int_{\mathcal{C}^{\pm}_{\rho}}f(\Omega)\int_{\partial D\times\partial D}\boldsymbol{\mathscr{D}}^m_{n}(\mathbf{v},\boldsymbol y,\Omega)e^{\mathrm{i}\Omega
(\frac{|\boldsymbol z-\mathbf{s}|}{c_p}+\frac{|\boldsymbol x-\boldsymbol y|}{c_p}-t)}\mathrm{d}\sigma(\mathbf{v})\mathrm{d}\sigma(\boldsymbol y)\mathrm{d}\Omega,
\end{align*}
where
\begin{align*}
\boldsymbol{\mathscr{A}}^m_{n}(\mathbf{v},\boldsymbol y,\Omega)=&\frac{\lambda (1-c(\Omega))}{\tau_{\mathbf{T},n}(\Omega\delta)}\frac{\mathscr{P}(\boldsymbol z,\mathbf{s},\frac{\Omega}{c_s})\boldsymbol{\nu}_{\mathbf{v}}\cdot\breve{\mathbf{T}}^{m}_{n}(\mathbf{v})
\boldsymbol{\mathcal{A}}(\boldsymbol x,\boldsymbol y,\frac{\Omega}{c_s})\breve{\mathbf{T}}^{m}_{n}(\boldsymbol y)}{16\pi^2\Omega^4|\boldsymbol z-\mathbf{s}||\boldsymbol x-\boldsymbol y|}\\
&+\frac{\mu (1-c(\Omega))}{\tau_{\mathbf{T},n}(\Omega\delta)}\frac{\boldsymbol{\mathscr{M}}(\boldsymbol z,\mathbf{s},\frac{\Omega}{c_s})\boldsymbol{\nu}_{\mathbf{v}}\cdot\breve{\mathbf{T}}^{m}_{n}(\mathbf{v})
\boldsymbol{\mathcal{A}}(\boldsymbol x,\boldsymbol y,\frac{\Omega}{c_s})\breve{\mathbf{T}}^{m}_{n}(\boldsymbol y)}{16\pi^2\Omega^4|\boldsymbol z-\mathbf{s}||\boldsymbol x-\boldsymbol y|},\\
\boldsymbol{\mathscr{B}}^m_{n}(\mathbf{v},\boldsymbol y,\Omega)=&\frac{\lambda (1-c(\Omega))}{\tau_{\mathbf{T},n}(\Omega\delta)}\frac{\mathscr{P}(\boldsymbol z,\mathbf{s},\frac{\Omega}{c_s})\boldsymbol{\nu}_{\mathbf{v}}\cdot\breve{\mathbf{T}}^{m}_{n}(\mathbf{v})
\boldsymbol{\mathcal{B}}(\boldsymbol x,\boldsymbol y,\frac{\Omega}{c_p})\breve{\mathbf{T}}^{m}_{n}(\boldsymbol y)}{16\pi^2\Omega^4|\boldsymbol z-\mathbf{s}||\boldsymbol x-\boldsymbol y|}\\
&+\frac{\mu (1-c(\Omega))}{\tau_{\mathbf{T},n}(\Omega\delta)}\frac{\boldsymbol{\mathscr{M}}(\boldsymbol z,\mathbf{s},\frac{\Omega}{c_s})\boldsymbol{\nu}_{\mathbf{v}}\cdot\breve{\mathbf{T}}^{m}_{n}(\mathbf{v})
\boldsymbol{\mathcal{B}}(\boldsymbol x,\boldsymbol y,\frac{\Omega}{c_p})\breve{\mathbf{T}}^{m}_{n}(\boldsymbol y)}{16\pi^2\Omega^4|\boldsymbol z-\mathbf{s}||\boldsymbol x-\boldsymbol y|},\\
\boldsymbol{\mathscr{C}}^m_{n}(\mathbf{v},\boldsymbol y,\Omega)=&\frac{\lambda (1-c(\Omega))}{\tau_{\mathbf{T},n}(\Omega\delta)}\frac{\mathscr{Q}(\boldsymbol z,\mathbf{s},\frac{\Omega}{c_p})\boldsymbol{\nu}_{\mathbf{v}}\cdot\breve{\mathbf{T}}^{m}_{n}(\mathbf{v})
\boldsymbol{\mathcal{A}}(\boldsymbol x,\boldsymbol y,\frac{\Omega}{c_s})\breve{\mathbf{T}}^{m}_{n}(\boldsymbol y)}{16\pi^2\Omega^4|\boldsymbol z-\mathbf{s}||\boldsymbol x-\boldsymbol y|}\\
&+\frac{\mu (1-c(\Omega))}{\tau_{\mathbf{T},n}(\Omega\delta)}\frac{\boldsymbol{\mathscr{N}}(\boldsymbol z,\mathbf{s},\frac{\Omega}{c_p})\boldsymbol{\nu}_{\mathbf{v}}\cdot\breve{\mathbf{T}}^{m}_{n}(\mathbf{v})
\boldsymbol{\mathcal{A}}(\boldsymbol x,\boldsymbol y,\frac{\Omega}{c_s})\breve{\mathbf{T}}^{m}_{n}(\boldsymbol y)}{16\pi^2\Omega^4|\boldsymbol z-\mathbf{s}||\boldsymbol x-\boldsymbol y|},\\
\boldsymbol{\mathscr{D}}^m_{n}(\mathbf{v},\boldsymbol y,\Omega)=&\frac{\lambda (1-c(\Omega))}{\tau_{\mathbf{T},n}(\Omega\delta)}\frac{\mathscr{Q}(\boldsymbol z,\mathbf{s},\frac{\Omega}{c_p})\boldsymbol{\nu}_{\mathbf{v}}\cdot\breve{\mathbf{T}}^{m}_{n}(\mathbf{v})
\boldsymbol{\mathcal{B}}(\boldsymbol x,\boldsymbol y,\frac{\Omega}{c_p})\breve{\mathbf{T}}^{m}_{n}(\boldsymbol y)}{16\pi^2\Omega^4|\boldsymbol z-\mathbf{s}||\boldsymbol x-\boldsymbol y|}\\
&+\frac{\mu (1-c(\Omega))}{\tau_{\mathbf{T},n}(\Omega\delta)}\frac{\boldsymbol{\mathscr{N}}(\boldsymbol z,\mathbf{s},\frac{\Omega}{c_p})\boldsymbol{\nu}_{\mathbf{v}}\cdot\breve{\mathbf{T}}^{m}_{n}(\mathbf{v})
\boldsymbol{\mathcal{B}}(\boldsymbol x,\boldsymbol y,\frac{\Omega}{c_p})\breve{\mathbf{T}}^{m}_{n}(\boldsymbol y)}{16\pi^2\Omega^4|\boldsymbol z-\mathbf{s}||\boldsymbol x-\boldsymbol y|}.
 \end{align*}
Observe that $\boldsymbol{\mathscr{A}}^m_{n}(\cdot,\cdot,\Omega),\boldsymbol{\mathscr{B}}^m_{n}(\cdot,\cdot,\Omega),\boldsymbol{\mathscr{C}}^m_{n}(\cdot,\cdot,\Omega),\boldsymbol{\mathscr{D}}^m_{n}(\cdot,\cdot,\Omega)$ behave like a polynomial in $\Omega$ when $|\Omega|\rightarrow\infty$. Provided the regularity of the wideband signal $\hat{f}:t\mapsto \hat{f}(t)\in C^{\infty}_0([0,C_1])$, the Paley-Wiener theorem ensures the decay property of its Fourier transform at infinity. For all $M\in\mathbb{N}^{+}$, there is some constant $C_{M}>0$ such that for all $\Omega\in\mathbb{C}$,
\begin{align*}
|f(\Omega)|\leq C_{M}(1+|\Omega|)^{-M}e^{C_1|\Im(\Omega)|}.
\end{align*}

The rest of the discussion is divided into the following two cases.

Case {\rm(i)}: $t<t^{-}_0$. Let us consider the upper-half integration contour $\mathcal{C}^+$. Provided the polar coordinate transform 
\begin{align*}
\Omega=\rho e^{\mathrm{i}\theta},\quad \theta\in[0,\pi],
\end{align*}
we can derive that
\begin{align*}
\boldsymbol\Phi^m_{1,n}=&\int_{0}^{\pi}\mathrm{i}\rho e^{\mathrm{i}\theta}f(\rho e^{\mathrm{i}\theta})\int_{\partial D\times\partial D}\boldsymbol{\mathscr{A}}^m_{n}(\mathbf{v},\boldsymbol y,\rho e^{\mathrm{i}\theta})e^{\mathrm{i}\rho e^{\mathrm{i}\theta}
(\frac{|\boldsymbol z-\mathbf{s}|}{c_s}+\frac{|\boldsymbol x-\boldsymbol y|}{c_s}-t)}\mathrm{d}\sigma(\mathbf{v})\mathrm{d}\sigma(\boldsymbol y)\mathrm{d}\theta,\\
\boldsymbol\Phi^m_{2,n}=&\int_{0}^{\pi}\mathrm{i}\rho e^{\mathrm{i}\theta}f(\rho e^{\mathrm{i}\theta})\int_{\partial D\times\partial D}\boldsymbol{\mathscr{B}}^m_{n}(\mathbf{v},\boldsymbol y,\rho e^{\mathrm{i}\theta})e^{\mathrm{i}\rho e^{\mathrm{i}\theta}
(\frac{|\boldsymbol z-\mathbf{s}|}{c_s}+\frac{|\boldsymbol x-\boldsymbol y|}{c_p}-t)}\mathrm{d}\sigma(\mathbf{v})\mathrm{d}\sigma(\boldsymbol y)\mathrm{d}\theta,\\
\boldsymbol\Phi^m_{3,n}=&\int_{0}^{\pi}\mathrm{i}\rho e^{\mathrm{i}\theta}f(\rho e^{\mathrm{i}\theta})\int_{\partial D\times\partial D}\boldsymbol{\mathscr{C}}^m_{n}(\mathbf{v},\boldsymbol y,\rho e^{\mathrm{i}\theta})e^{\mathrm{i}\rho e^{\mathrm{i}\theta}
(\frac{|\boldsymbol z-\mathbf{s}|}{c_p}+\frac{|\boldsymbol x-\boldsymbol y|}{c_s}-t)}\mathrm{d}\sigma(\mathbf{v})\mathrm{d}\sigma(\boldsymbol y)\mathrm{d}\theta,\\
\boldsymbol\Phi^m_{4,n}=&\int_{0}^{\pi}\mathrm{i}\rho e^{\mathrm{i}\theta}f(\rho e^{\mathrm{i}\theta})\int_{\partial D\times\partial D}\boldsymbol{\mathscr{D}}^m_{n}(\mathbf{v},\boldsymbol y,\rho e^{\mathrm{i}\theta})e^{\mathrm{i}\rho e^{\mathrm{i}\theta}
(\frac{|\boldsymbol z-\mathbf{s}|}{c_p}+\frac{|\boldsymbol x-\boldsymbol y|}{c_p}-t)}\mathrm{d}\sigma(\mathbf{v})\mathrm{d}\sigma(\boldsymbol y)\mathrm{d}\theta.
\end{align*}

Let us first estimate $\boldsymbol\Phi^m_{1,n}$. For $t<\frac{|\boldsymbol z-\mathbf{s}|}{c_s}+\frac{|\boldsymbol x-\boldsymbol z|}{c_s}-\frac{\delta}{c_s}-C_1$, we obtain
\begin{align*}
|e^{\mathrm{i}\rho e^{\mathrm{i}\theta}
(\frac{|\boldsymbol z-\mathbf{s}|}{c_s}+\frac{|\boldsymbol x-\boldsymbol y|}{c_s}-C_1-t)}|\leq e^{-\rho\sin \theta(\frac{|\boldsymbol z-\mathbf{s}|}{c_s}+\frac{|\boldsymbol x-\boldsymbol z|}{c_s}-\frac{\delta}{c_s}-C_1-t)},\quad \forall\boldsymbol y\in\partial D.
\end{align*}
This arrives at
\begin{align*}
|\boldsymbol\Phi^m_{1,n}|
\leq&2 C_M\delta^4\rho(1+\rho)^{-M}\max_{\theta\in[0,\pi]}\|\boldsymbol{\mathscr{A}}^m_{n}(\cdot,\cdot,\rho  e^{\mathrm{i}\theta})\|_{L^{\infty}(\partial D\times \partial D)}\int^{\frac{\pi}{2}}_{0}e^{-\rho\sin \theta(\frac{|\boldsymbol z-\mathbf{s}|}{c_s}+\frac{|\boldsymbol x-\boldsymbol z|}{c_s}-\frac{\delta}{c_s}-C_1-t)}\mathrm{d}\theta.
\end{align*}
Moreover, it is clear that $\sin\theta\geq\frac{2}{\pi}\theta,\forall \theta\in[0,\frac{\pi}{2}].$
Hence, for $t<\frac{|\boldsymbol z-\mathbf{s}|}{c_s}+\frac{|\boldsymbol x-\boldsymbol z|}{c_s}-\frac{\delta}{c_s}-C_1$,
\begin{align*}
e^{-\rho\sin \theta(\frac{|\boldsymbol z-\mathbf{s}|}{c_s}+\frac{|\boldsymbol x-\boldsymbol z|}{c_s}-\frac{\delta}{c_s}-C_1-t)}\leq e^{-\frac{2\rho}{\pi}\theta(\frac{|\boldsymbol z-\mathbf{s}|}{c_s}+\frac{|\boldsymbol x-\boldsymbol z|}{c_s}-\frac{\delta}{c_s}-C_1-t)},
\end{align*}
which then gives
\begin{align*}
\int^{\frac{\pi}{2}}_0e^{-\rho\sin \theta(\frac{|\boldsymbol z-\mathbf{s}|}{c_s}+\frac{|\boldsymbol x-\boldsymbol z|}{c_s}-\frac{\delta}{c_s}-C_1-t)}\mathrm{d}\theta\leq & \int^{\frac{\pi}{2}}_0e^{-\frac{2\rho}{\pi}\theta(\frac{|\boldsymbol z-\mathbf{s}|}{c_s}+\frac{|\boldsymbol x-\boldsymbol z|}{c_s}-\frac{\delta}{c_s}-C_1-t)}\mathrm{d}\theta\\
=&\frac{\pi\big(1-e^{-\rho(\frac{|\boldsymbol z-\mathbf{s}|}{c_s}+\frac{|\boldsymbol x-\boldsymbol z|}{c_s}-\frac{\delta}{c_s}-C_1-t)}\big)}{2\rho\big(\frac{|\boldsymbol z-\mathbf{s}|}{c_s}+\frac{|\boldsymbol x-\boldsymbol z|}{c_s}-\frac{\delta}{c_s}-C_1-t\big)}.
\end{align*}
Hence,
\begin{align*}
|\boldsymbol\Phi^m_{1,n}|\leq C_M\delta^4(1+\rho)^{-M}\max_{\theta\in[0,\pi]}\|\boldsymbol{\mathscr{A}}^m_{n}(\cdot,\cdot,\rho  e^{\mathrm{i}\theta})\|_{L^{\infty}(\partial D\times \partial D)}\frac{\pi\big(1-e^{-\rho(\frac{|\boldsymbol z-\mathbf{s}|}{c_s}+\frac{|\boldsymbol x-\boldsymbol z|}{c_s}-\frac{\delta}{c_s}-C_1-t)}\big)}{\frac{|\boldsymbol z-\mathbf{s}|}{c_s}+\frac{|\boldsymbol x-\boldsymbol z|}{c_s}-\frac{\delta}{c_s}-C_1-t}.
\end{align*}
As a consequence,
\begin{align*}
|\boldsymbol\Phi^m_{1,n}|=\mathcal{O}\big(\frac{\delta^4\rho^{-M}}{\frac{|\boldsymbol z-\mathbf{s}|}{c_s}+\frac{|\boldsymbol x-\boldsymbol z|}{c_s}-\frac{\delta}{c_s}-C_1-t}\big).
\end{align*}
By proceeding the similar technique as above, one has
\begin{align*}
&|\boldsymbol\Phi^m_{2,n}|=\mathcal{O}\big(\frac{\delta^4\rho^{-M}}{\frac{|\boldsymbol z-\mathbf{s}|}{c_s}+\frac{|\boldsymbol x-\boldsymbol z|}{c_p}-\frac{\delta}{c_p}-C_1-t}\big),
\quad\text{if }t<\frac{|\boldsymbol z-\mathbf{s}|}{c_s}+\frac{|\boldsymbol x-\boldsymbol z|}{c_p}-\frac{\delta}{c_p}-C_1,\\
&|\boldsymbol\Phi^m_{3,n}|=\mathcal{O}\big(\frac{\delta^4\rho^{-M}}{\frac{|\boldsymbol z-\mathbf{s}|}{c_p}+\frac{|\boldsymbol x-\boldsymbol z|}{c_s}-\frac{\delta}{c_s}-C_1-t}\big),
\quad\text{if }t<\frac{|\boldsymbol z-\mathbf{s}|}{c_p}+\frac{|\boldsymbol x-\boldsymbol z|}{c_s}-\frac{\delta}{c_s}-C_1,\\
&|\boldsymbol\Phi^m_{4,n}|=\mathcal{O}\big(\frac{\delta^4\rho^{-M}}{\frac{|\boldsymbol z-\mathbf{s}|}{c_p}+\frac{|\boldsymbol x-\boldsymbol z|}{c_p}-\frac{\delta}{c_p}-C_1-t}\big),
\quad\text{if }t<\frac{|\boldsymbol z-\mathbf{s}|}{c_p}+\frac{|\boldsymbol x-\boldsymbol z|}{c_p}-\frac{\delta}{c_p}-C_1.
\end{align*}
Furthermore, according to formula \eqref{E:speed}, we get
\begin{align}\label{relation}
c_p=\sqrt{(\lambda+\mu)+\mu}=\sqrt{\frac{1}{3}\cdot3(\lambda+\mu)+\mu}\stackrel{\eqref{convexity}}{\geq}\sqrt{\frac{1}{3}\mu+\mu}\geq c_s.
\end{align}
Hence  the first term in \eqref{E:scattered} holds for all $t\leq t^{-}_0$.

Case {\rm(ii)}: $t>t^{+}_0$. We study the lower-half integration contour $\mathcal{C}^-$. By using the polar coordinate transform
\begin{align*}
\Omega=\rho e^{\mathrm{i}\theta},\quad \theta\in[\pi,2\pi],
\end{align*}
we can deduce
\begin{align*}
\boldsymbol\Phi^m_{1,n}=&\int_{\pi}^{2\pi}\mathrm{i}\rho e^{\mathrm{i}\theta}f(\rho e^{\mathrm{i}\theta})\int_{\partial D\times\partial D}\boldsymbol{\mathscr{A}}^m_{n}(\mathbf{v},\boldsymbol y,\rho e^{\mathrm{i}\theta})e^{\mathrm{i}\rho e^{\mathrm{i}\theta}(\frac{|\boldsymbol z-\mathbf{s}|}{c_s}+\frac{|\boldsymbol x-\boldsymbol y|}{c_s}-t)}\mathrm{d}\sigma(\mathbf{v})\mathrm{d}\sigma(\boldsymbol y)\mathrm{d}\theta,\\
\boldsymbol\Phi^m_{2,n}=&\int_{\pi}^{2\pi}\mathrm{i}\rho e^{\mathrm{i}\theta}f(\rho e^{\mathrm{i}\theta})\int_{\partial D\times\partial D}\boldsymbol{\mathscr{B}}^m_{n}(\mathbf{v},\boldsymbol y,\rho e^{\mathrm{i}\theta})e^{\mathrm{i}\rho e^{\mathrm{i}\theta}
(\frac{|\boldsymbol z-\mathbf{s}|}{c_s}+\frac{|\boldsymbol x-\boldsymbol y|}{c_p}-t)}\mathrm{d}\sigma(\mathbf{v})\mathrm{d}\sigma(\boldsymbol y)\mathrm{d}\theta,\\
\boldsymbol\Phi^m_{3,n}=&\int_{\pi}^{2\pi}\mathrm{i}\rho e^{\mathrm{i}\theta}f(\rho e^{\mathrm{i}\theta})\int_{\partial D\times\partial D}\boldsymbol{\mathscr{C}}^m_{n}(\mathbf{v},\boldsymbol y,\rho e^{\mathrm{i}\theta})e^{\mathrm{i}\rho e^{\mathrm{i}\theta}
(\frac{|\boldsymbol z-\mathbf{s}|}{c_p}+\frac{|\boldsymbol x-\boldsymbol y|}{c_s}-t)}\mathrm{d}\sigma(\mathbf{v})\mathrm{d}\sigma(\boldsymbol y)\mathrm{d}\theta,\\
\boldsymbol\Phi^m_{4,n}=&\int_{\pi}^{2\pi}\mathrm{i}\rho e^{\mathrm{i}\theta}f(\rho e^{\mathrm{i}\theta})\int_{\partial D\times\partial D}\boldsymbol{\mathscr{D}}^m_{n}(\mathbf{v},\boldsymbol y,\rho e^{\mathrm{i}\theta})e^{\mathrm{i}\rho e^{\mathrm{i}\theta}
(\frac{|\boldsymbol z-\mathbf{s}|}{c_p}+\frac{|\boldsymbol x-\boldsymbol y|}{c_p}-t)}\mathrm{d}\sigma(\mathbf{v})\mathrm{d}\sigma(\boldsymbol y)\mathrm{d}\theta.
\end{align*}
For $t>\frac{|\boldsymbol z-\mathbf{s}|}{c_s}+\frac{|\boldsymbol x-\boldsymbol z|}{c_s}+\frac{\delta}{c_s}+C_1$, we have that for all $\boldsymbol y\in\partial D$,
\begin{align*}
|e^{\mathrm{i}\rho e^{\mathrm{i}\theta}
(\frac{|\boldsymbol z-\mathbf{s}|}{c_s}+\frac{|\boldsymbol x-\boldsymbol y|}{c_s}+C_1-t)}|
\leq& e^{\rho\sin \theta(t-(\frac{|\boldsymbol z-\mathbf{s}|}{c_s}+\frac{|\boldsymbol x-\boldsymbol z|}{c_s}+\frac{\delta}{c_s}+C_1))},
\end{align*}
which leads to
\begin{align*}
|\boldsymbol\Phi^m_{1,n}|
\leq&2 C_M\delta^4\rho(1+\rho)^{-M}\max_{\theta\in[\pi,2\pi]}\|\boldsymbol{\mathscr{A}}^m_{n}(\cdot,\cdot,\rho  e^{\mathrm{i}\theta})\|_{L^{\infty}(\partial D\times \partial D)}\int^{\frac{3\pi}{2}}_{\pi}e^{\rho\sin \theta(t-(\frac{|\boldsymbol z-\mathbf{s}|}{c_s}+\frac{|\boldsymbol x-\boldsymbol z|}{c_s}+\frac{\delta}{c_s}+C_1))}\mathrm{d}\theta.
\end{align*}
Moreover, it is straightforward that $-\frac{2}{\pi}\theta+2\geq\sin\theta, \forall \theta\in[\pi,\frac{3\pi}{2}].$
One has that for $t>\frac{|\boldsymbol z-\mathbf{s}|}{c_s}+\frac{|\boldsymbol x-\boldsymbol z|}{c_s}+\frac{\delta}{c_s}+C_1$,
\begin{align*}
e^{\rho\sin \theta(t-(\frac{|\boldsymbol z-\mathbf{s}|}{c_s}+\frac{|\boldsymbol x-\boldsymbol z|}{c_s}+\frac{\delta}{c_s}+C_1))}\leq e^{\rho(-\frac{2}{\pi}\theta+2)(t-(\frac{|\boldsymbol z-\mathbf{s}|}{c_s}+\frac{|\boldsymbol x-\boldsymbol z|}{c_s}+\frac{\delta}{c_s}+C_1))},
\end{align*}
which then gives
\begin{align*}
\int^{\frac{3\pi}{2}}_{\pi}e^{\rho\sin \theta(t-(\frac{|\boldsymbol z-\mathbf{s}|}{c_s}+\frac{|\boldsymbol x-\boldsymbol z|}{c_s}+\frac{\delta}{c_s}+C_1))}\mathrm{d}\theta\leq & \int^{\frac{3\pi}{2}}_{\pi}e^{\rho(-\frac{2}{\pi}\theta+2)(t-(\frac{|\boldsymbol z-\mathbf{s}|}{c_s}+\frac{|\boldsymbol x-\boldsymbol z|}{c_s}+\frac{\delta}{c_s}+C_1))}\mathrm{d}\theta\\
=&\frac{\pi\big(1-e^{-\rho(t-(\frac{|\boldsymbol z-\mathbf{s}|}{c_s}+\frac{|\boldsymbol x-\boldsymbol z|}{c_s}+\frac{\delta}{c_s}+C_1))}\big)}{2\rho\big(t-(\frac{|\boldsymbol z-\mathbf{s}|}{c_s}+\frac{|\boldsymbol x-\boldsymbol z|}{c_s}+\frac{\delta}{c_s}+C_1)\big)}.
\end{align*}
Therefore,
\begin{align*}
|\boldsymbol\Phi^m_{1,n}|\leq C_M\delta^4(1+\rho)^{-M}\max_{\theta\in[\pi,2\pi]}\|\boldsymbol{\mathscr{A}}^m_{n}(\cdot,\cdot,\rho  e^{\mathrm{i}\theta})\|_{L^{\infty}(\partial D\times \partial D)}\frac{\pi\big(1-e^{-\rho(t-(\frac{|\boldsymbol z-\mathbf{s}|}{c_s}+\frac{|\boldsymbol x-\boldsymbol z|}{c_s}+\frac{\delta}{c_s}+C_1))}\big)}{t-(\frac{|\boldsymbol z-\mathbf{s}|}{c_s}+\frac{|\boldsymbol x-\boldsymbol z|}{c_s}+\frac{\delta}{c_s}+C_1)},
\end{align*}
that is,
\begin{align*}
|\boldsymbol\Phi^m_{1,n}|=\mathcal{O}\big(\frac{\delta^4\rho^{-M}}{t-(\frac{|\boldsymbol z-\mathbf{s}|}{c_s}+\frac{|\boldsymbol x-\boldsymbol z|}{c_s}+\frac{\delta}{c_s}+C_1)}\big).
\end{align*}
Proceeding the similar technique as above yields that
\begin{align*}
&|\boldsymbol\Phi^m_{2,n}|=\mathcal{O}\big(\frac{\delta^4\rho^{-M}}{t-(\frac{|\boldsymbol z-\mathbf{s}|}{c_s}+\frac{|\boldsymbol x-\boldsymbol z|}{c_p}+\frac{\delta}{c_p}+C_1)}\big),
\quad\text{if }t>\frac{|\boldsymbol z-\mathbf{s}|}{c_s}+\frac{|\boldsymbol x-\boldsymbol z|}{c_p}+\frac{\delta}{c_p}+C_1,\\
&|\boldsymbol\Phi^m_{3,n}|=\mathcal{O}\big(\frac{\delta^4\rho^{-M}}{t-(\frac{|\boldsymbol z-\mathbf{s}|}{c_p}+\frac{|\boldsymbol x-\boldsymbol z|}{c_s}+\frac{\delta}{c_s}+C_1)}\big),
\quad\text{if }t>\frac{|\boldsymbol z-\mathbf{s}|}{c_p}+\frac{|\boldsymbol x-\boldsymbol z|}{c_s}+\frac{\delta}{c_s}+C_1,\\
&|\boldsymbol\Phi^m_{4,n}|=\mathcal{O}\big(\frac{\delta^4\rho^{-M}}{t-(\frac{|\boldsymbol z-\mathbf{s}|}{c_p}+\frac{|\boldsymbol x-\boldsymbol z|}{c_p}+\frac{\delta}{c_p}+C_1)}\big),
\quad\text{if }t>\frac{|\boldsymbol z-\mathbf{s}|}{c_p}+\frac{|\boldsymbol x-\boldsymbol z|}{c_p}+\frac{\delta}{c_p}+C_1.
\end{align*}
Then it follows from \eqref{relation} that formula \eqref{EE:scattered} holds for all $t\geq t^{+}_0$.


The proof of the theorem is now complete.
\end{proof}

\section{Concluding remarks}\label{sec:conclusion}

In this paper, we provided the modal analysis for the time-dependent field scattered by metamaterial quasiparticles in elastodynamics. By Fourier analysis and layer potential techniques, we first established the modal expansion of the elastic field in the frequency domain. Then through delicate spectral and asymptotic analysis, we show that the low-frequency part of the scattered field in the time domain can be well approximated by using the polariton resonant modal expansion with sharp error estimates. To our best knowledge, this work presents the first result in the literature on the modal analysis of time-dependent scattering due to negative elastic metamaterial structures. It paves the way for many potential applications of analysing the performance of elastic metamaterials  interacting with wide-band signals. Finally, we would like to remark that as also pointed in Section \ref{sec:1}, our study is mainly confined within the radial geometry and the metamaterial structure of the form \eqref{E:A}--\eqref{c:omega}, which constitutes a dispersion relation of the viscoelastic metamaterial. Nevertheless, we would like to emphasize that the theoretical framework laid in this paper can be readily extended to analysing more general elastic metamaterial structures, say e.g. the one investigated in \cite{LLL18} or elastic metamaterials with more general but specific dispersion relations. Meanwhile, the extension to non-radial geometry will lead to significant challenges due to the non-compactness and non-self-adjointness of the layer potential operators involved. We shall investigate those extensions in our forthcoming work.


\begin{thebibliography}{10}


\bibitem{ACKLM13}
H.~Ammari, G. Ciraolo, H. Kang, H. Lee and G. W. Milton.
\newblock Spectral theory of a Neumann-Poincar\'e-type operator and
analysis of cloaking due to anomalous localized resonance.
\newblock {\em Arch. Ration.Mech. Anal.}, 208(2): 667--692, 2013.

\bibitem{ADM}
H. Ammari, Y. Deng and P. Millien.
\newblock Surface plasmon resonance of nanoparticles and applications in
imaging.
\newblock {\em Arch. Ration. Mech. Anal.}, 220 (1),109--153, 2016.

\bibitem{APRYZ19}
H. Ammari, M. Putinar, M. Ruiz, S. Yu and H. Zhang.
\newblock Shape reconstruction of nanoparticles from their associated plasmonic resonances.
\newblock {\em J. Math. Pures Appl. (9)}, 122: 23--48, 2019.

\bibitem{ammari2022modal}
H.~Ammari, P.~Millien and A.~L. Vanel.
\newblock Modal approximation for strictly convex plasmonic resonators in the
  time domain: the {M}axwell's equations.
\newblock {\em J. Differential Equations}, 309:676--703, 2022.

\bibitem{AKKY17}
K. Ando, H. Kang, K. Kim and S. Yu.
\newblock Spectrum of Neumann-Poincar\'e operator on annuli and cloaking by anomalous
localized resonance for linear elasticity.
\newblock {\em SIAM J. Appl.Math.} 49(5): 4232--4250, 2017.


\bibitem{Baldassari2021modal}
L.~Baldassari, P.~Millien and A.~L. Vanel.
\newblock Modal approximation for plasmonic resonators in the time domain: the
  scalar case.
\newblock {\em Partial Differ. Equ. Appl.}, 2(4):Paper No. 46, 40, 2021.

\bibitem{BLLW20}
E.~Bl{\aa}sten, H.~Li, H.~Liu and Y.~Wang.
\newblock Localization and geometrization in plasmon resonances and geometric structures of Neumann-Poincar\'e eigenfunctions.
\newblock {\em ESAIM Math. Model. Numer. Anal.}, 54(3): 957--976, 2020.

\bibitem{cassier2017spectrum}
M.~Cassier, C.~Hazard and P.~Joly.
\newblock Spectral theory for {M}axwell's equations at the interface of a
  metamaterial. {P}art {I}: {G}eneralized {F}ourier transform.
\newblock {\em Comm. Partial Differential Equations}, 42(11):1707--1748, 2017.

\bibitem{cassier2017mathematical}
M.~Cassier, P.~Joly and M.~Kachanovska.
\newblock Mathematical models for dispersive electromagnetic waves: an
  overview.
\newblock {\em Comput. Math. Appl.}, 74(11):2792--2830, 2017.

\bibitem{chang2007spectral}
T.~K. Chang and H.~J. Choe.
\newblock Spectral properties of the layer potentials associated with
  elasticity equations and transmission problems on {L}ipschitz domains.
\newblock {\em J. Math. Anal. Appl.}, 326(1):179--191, 2007.

\bibitem{C11}
C. Cro\"enne et al.
\newblock Negative refraction of longitudinal waves in a two dimensional solid-solid phononic crystal.
\newblock {\em Phys. Rev. B}, 83:054301, 2011.

\bibitem{Dahlbrg1988boundary}
B.~E.~J. Dahlberg, C.~E. Kenig and G.~C. Verchota.
\newblock Boundary value problems for the systems of elastostatics in
  {L}ipschitz domains.
\newblock {\em Duke Math. J.}, 57(3):795--818, 1988.

\bibitem{demchenko2019on}
H.~Demchenko, A.~Anikushyn and M.~Pokojovy.
\newblock On a {K}elvin-{V}oigt viscoelastic wave equation with strong delay.
\newblock {\em SIAM J. Math. Anal.}, 51(6):4382--4412, 2019.

\bibitem{Deng2019spectral}
Y.~Deng, H.~Li, and H.~Liu.
\newblock On spectral properties of {N}euman-{P}oincar\'{e} operator and
  plasmonic resonances in 3{D} elastostatics.
\newblock {\em J. Spectr. Theory}, 9(3):767--789, 2019.

\bibitem{DLL20a}
Y.~Deng, H.~Li and H.~Liu.
\newblock Analysis of surface polariton resonance for nanoparticles in elastic system.
\newblock {\em SIAM J. Math. Anal.}, 52(2): 1786--1805, 2020.

\bibitem{DLL20b}
Y.~Deng, H.~Li and H.~Liu.
\newblock Spectral properties of Neumann-Poincar\'e operator and anomalous localized resonance in elasticity beyond quasi-static limit.
\newblock {\em J. Elasticity}, 140(2): 213--242, 2020.

\bibitem{DLZ21}
Y.~Deng, H.~Liu and G.~Zheng.
\newblock Mathematical analysis of plasmon resonances for curved nanorods.
\newblock {\em J. Math. Pures Appl. (9)}, 153: 248--280, 2021.

\bibitem{DLZ22}
Y.~Deng, H.~Liu and G.~Zheng.
\newblock Plasmon resonances of nanorods in transverse electromagnetic scattering.
\newblock {\em J. Differential Equations}, 318: 502--536, 2022.

\bibitem{LGBLLPA}
F. Legrand et al.
\newblock Cloaking, trapping and superlensing of lamb waves with negative refraction.
\newblock {\em Scientific Reports}, 11:23901, 2021.

\bibitem{LLL15}
H.~Li, J.~Li and H.~Liu.
\newblock On quasi-static cloaking due to anomalous localized resonance in $\mathbb{R}^3$.
\newblock {\em SIAM J. Appl. Math.}, 75(3): 1245--1260, 2015.

\bibitem{LLL18}
H.~Li, J.~Li and H.~Liu.
\newblock On novel elastic structures inducing polariton resonances with finite frequencies and cloaking due to anomalous localized resonances.
\newblock {\em J. Math. Pures Appl. (9)}, 120: 195--219, 2018.

\bibitem{LLLW19}
H.~Li, S.~Li, H.~Liu and X.~Wang.
\newblock Analysis of electromagnetic scattering from plasmonic inclusions beyond the quasi-static approximation and applications.
\newblock {\em ESAIM Math. Model. Numer. Anal.}, 53(4):1351--1371, 2019.

\bibitem{LL18}
H.~Li and H.~Liu.
\newblock On anomalous localized resonance and plasmonic cloaking beyond the quasistatic limit.
\newblock {\em Proc. R. Soc. A}, 474:20180165, 2018.

\bibitem{LL17}
H.~Li and H.~Liu.
\newblock On three-dimensional plasmon resonances in elastostatics.
\newblock {\em Ann. Mat. Pura Appl. (4)}, 196(3):1113--1135, 2017.

\bibitem{LL16}
H.~Li and H.~Liu.
\newblock On anomalous localized resonance for the elastostatic system.
\newblock {\em SIAM J. Math. Anal.}, 48(5): 3322--3344, 2016.

\bibitem{LLZ21}
H.~Li, H.~Liu and J.~Zou.
\newblock Minnaert resonances for bubbles in soft elastic materials.
\newblock {\em SIAM J. Appl. Math.}, 82(1): 119--141, 2022.

\bibitem{LLZ22}
H.~Li, H.~Liu and J.~Zou.
\newblock Elastodynamical resonances and cloaking of negative material structures beyond quasistatic approximation.
\newblock arXiv:2203.08432

\bibitem{liu2013enhanced}
H.~Liu and H.~Sun.
\newblock Enhanced near-cloak by {FSH} lining.
\newblock {\em J. Math. Pures Appl. (9)}, 99(1):17--42, 2013.

\bibitem{LHHS11}
X.~Liu, G.~Hu, G.~Huang and C.~Sun.
\newblock An elastic metamaterial with simultaneously negative mass density and bulk modulus.
\newblock {\em Appl. Phys. Lett.}, 98:251907, 2011.

\bibitem{MM20}
R. C. McPhedran and G. W. Milton.
\newblock A review of anomalous resonance, its associated cloaking and superlensing.
\newblock {\em Comptes Rendus Physique}, 21(4--5):409--423, 2020.

\bibitem{MN06}
G. W. Milton and  N.-A. P. Nicorovici.
\newblock On the cloaking effects associated with anomalous localized resonance.
\newblock {\em Proc. R. Soc. A}, 462(2074): 3027--3059, 2006.

\bibitem{Irina1999spectral}
I.~Mitrea.
\newblock Spectral radius properties for layer potentials associated with the
  elastostatics and hydrostatics equations in nonsmooth domains.
\newblock {\em J. Fourier Anal. Appl.}, 5(4):385--408, 1999.

\bibitem{Jean2001acoustic}
J.-C. N\'{e}d\'{e}lec.
\newblock {\em Acoustic and electromagnetic equations}, 
\newblock Integral representations for harmonic problems,
volume 144 of {\em
  Applied Mathematical Sciences}.
\newblock Springer-Verlag, New York, 2001.


\bibitem{Oestreicher1951field}
H.~L. Oestreicher.
\newblock Field and impedance of an oscillating sphere in a viscoelastic medium
  with an application to biophysics.
\newblock {\em J. Acoust. Soc. Amer.}, 23:707--714, 1951.

\bibitem{popov1999distribution}
G.~Popov and G.~Vodev.
\newblock Distribution of the resonances and local energy decay in the
  transmission problem.
\newblock {\em Asymptot. Anal.}, 19(3-4):253--265, 1999.

\bibitem{popov1999resnances}
G.~Popov and G.~Vodev.
\newblock Resonances near the real axis for transparent obstacles.
\newblock {\em Comm. Math. Phys.}, 207(2):411--438, 1999.

\bibitem{ramm1982mathematical}
A.~G. Ramm.
\newblock Mathematical foundations of the singularity and eigenmode expansion
  methods ({SEM} and {EEM}).
\newblock {\em J. Math. Anal. Appl.}, 86(2):562--591, 1982.

\bibitem{RS19}
M. Ruiz and O. Schnitzer.
\newblock Slender-body theory for plasmonic resonance.
\newblock Proc. A., 475 (2229): 20190294, 2019.

\bibitem{WLZ11}
Y.~Wu, Y.~Lai and Z.~Zhang.
\newblock Elastic metamaterials with simultaneously negative
effective shear modulus and mass density.
\newblock {\em Phys. Rev. Lett.}, 107:105506, 2011.


\bibitem{ZLHSH14}
R. Zhu, X. Liu, G. Hu, C. Sun and G. Huang.
\newblock Negative refraction of elastic waves at the deep-subwavelength scale in a single-phase metamaterial.
\newblock {\em Nature Comm.}, 5:5510, 2014.












\end{thebibliography}

 \end{document}